\renewcommand{\baselinestretch}{1.1}%
\renewcommand{\thefootnote}{\fnsymbol{footnote}}
\newtheorem{thm}{Theorem}[section]
\newtheorem{lem}{Lemma}[section]
\newtheorem{prop}{Proposition}[section]
\newtheorem{coro}{Corollary}[section]
\newtheorem{lemnonum}{Lemma}
\renewcommand{\thelemnonum}{}
\newtheorem{cond}{Condition}[section]{\bf}{\rm}
\newtheorem{assumpt}{Assumption}[section]{\bf}{\rm}
\newtheorem{rem}{Remark}[section]{\itshape}{\rmfamily}
\newtheorem{com}{Comment}[section]{\itshape}{\rmfamily}
\newenvironment{proof}{\noindent{\it Proof.~~}}{\qed\medskip}
\def\eqnarray{\stepcounter{equation}\let\@currentlabel=\theequation
\global\@eqnswtrue
\global\@eqcnt\z@\tabskip\@centering\let\\=\@eqncr
$$\halign to \displaywidth\bgroup\@eqnsel\hskip\@centering
  $\displaystyle\tabskip\z@{##}$&\global\@eqcnt\@ne 
  \hfil$\;{##}\;$\hfil
  &\global\@eqcnt\tw@ $\displaystyle\tabskip\z@{##}$\hfil 
   \tabskip\@centering&\llap{##}\tabskip\z@\cr}
    \renewcommand{\theequation}{%
    \thesection.\arabic{equation}}
\def\narrow{\list{}{}\item[]}
\let\endnarrow=\endlist
\newcommand{\dm}{\displaystyle}
\newcommand{\lleft}{\!\!\left}
\newcommand{\qed}{\hspace*{\fill}$\Box$\rule[-10pt]{0pt}{10pt}}
\newcommand{\QED}{\hspace*{\fill}$\Box$}
\def\red#1{{\color{red}{#1}}}
\newcommand{\vc}{\bm}
\def\svc#1{\mbox{\boldmath $\scriptstyle #1$}}
\def\ssvc#1{\mbox{\boldmath $\scriptscriptstyle #1$}}
\def\wwtilde#1{\,\widetilde{\!\widetilde{#1}}{}}
\newcommand{\wt}{\widetilde}
\newcommand{\wh}{\widehat}
\newcommand{\ol}{\overline}
\def\trunc#1{{}_{(n)}#1}
\def\trunctilde#1{{}_{(n)}\tilde{#1}}
\def\presub#1{\hspace{0.05em}{}_{(#1)}\hspace{-0.05em}}
\newcommand{\sfBI}{\mathsf{BI}}
\newcommand{\sfBM}{\mathsf{BM}}
\newcommand{\vmax}{\vee}
\newcommand{\vmin}{\wedge}
\newcommand{\EE}{\mathsf{E}}
\newcommand{\PP}{\mathsf{P}}
\newcommand{\II}{\mathit{I}}
\newcommand{\calA}{\mathcal{A}}
\newcommand{\calB}{\mathcal{B}}
\newcommand{\calC}{\mathcal{C}}
\newcommand{\calD}{\mathcal{D}}
\newcommand{\calE}{\mathcal{E}}
\newcommand{\calF}{\mathcal{F}}
\newcommand{\calG}{\mathcal{G}}
\newcommand{\calH}{\mathcal{H}}
\newcommand{\calI}{\mathcal{I}}
\newcommand{\calL}{\mathcal{L}}
\newcommand{\calO}{\mathcal{O}}
\newcommand{\calR}{\mathcal{R}}
\newcommand{\calS}{\mathcal{S}}
\newcommand{\calOL}{\mathcal{OL}}
\newcommand{\calOS}{\mathcal{OS}}
\newcommand{\SC}{\mathcal{SC}}
\newcommand{\bbA}{\mathbb{A}}
\newcommand{\bbC}{\mathbb{C}}
\newcommand{\bbD}{\mathbb{D}}
\newcommand{\bbF}{\mathbb{F}}
\newcommand{\bbG}{\mathbb{G}}
\newcommand{\bbI}{\mathbb{I}}
\newcommand{\bbK}{\mathbb{K}}
\newcommand{\bbL}{\mathbb{L}}
\newcommand{\bbM}{\mathbb{M}}
\newcommand{\bbN}{\mathbb{N}}
\newcommand{\bbR}{\mathbb{R}}
\newcommand{\bbS}{\mathbb{S}}
\newcommand{\bbZ}{\mathbb{Z}}
\newcommand{\adj}{\mathrm{adj}}
\newcommand{\diag}{\mathrm{diag}}
\newcommand{\sgn}{\mathrm{sgn}}
\newcommand{\trace}{\mathrm{trace}}
\newcommand{\Var}{\mathsf{Var}}
\newcommand{\Cov}{\mathsf{Cov}}
\newcommand{\CV}{\mathrm{C_V}}
\newcommand{\Mod}{\mathrm{mod}}
\newcommand{\rmt}{{\rm t}}
\newcommand{\rmd}{{\rm d}}
\newcommand{\rme}{{\rm e}}
\newcommand{\rmT}{{\rm T}}
\newcommand{\resp}{{\rm resp}}
\newcommand{\scrE}{\mathscr{E}}
\renewcommand{\labelenumi}{(\roman{enumi})}
\newcommand{\dd}[1]{\if#11 1\!\!1 
\else {\if#1C I\!\!\!C
\else {\if#1G I\!\!\!G 
\else {\if#1J J\!\!\!J 
\else {\if#1S S\!\!\!S
\else {\if#1Z Z\!\!\!Z
\else {\if#1Q O\!\!\!\!Q
\else I\!\!#1
\fi}
\fi}
\fi}
\fi} 
\fi} 
\fi} 
\fi} 
\begin{document}\thispagestyle{plain} 

\hfill
%{\small Last update date: \today}
%Submitted to STOCHASTIC MODELS, \today.

{\Large{\bf
\begin{center}
Error bounds for last-column-block-augmented
truncations of block-structured Markov chains%
\footnote[0]{
This manuscript is the revised version of the published paper ``Journal of the Operations Research Society of Japan, vol. 60, no. 3, pp. 271--320, 2017.'' This revised version includes Comment 2.1 (related to Lemma~2.1) and the corrigendum to the original version. In addition, the revised version corrects minor errors related to the domain of ``$\sup$'' in several error bounds. These corrections are marked in red.
}
%}
%
%
%\footnote[1]{This
%is a revised version of the paper published in Stochastic Models
%vol.~26, no.~4, pp.~505--548, 2010. \\ In the revised version, some
%errors are corrected.}
\end{center}
}
}

\begin{center}
{
Hiroyuki Masuyama%
\footnote[2]{E-mail: masuyama@tmu.ac.jp}
}

\medskip

{\small
Graduate School of Management, Tokyo Metropolitan University\\
Tokyo 192--0397, Japan
}% \samllsize ends

\bigskip
\medskip

{\small
\textbf{Abstract}

\medskip

\begin{tabular}{p{0.85\textwidth}}
This paper discusses the error estimation of the
last-column-block-augmented northwest-corner truncation
(LC-block-augmented truncation, for short) of block-structured Markov
chains (BSMCs) in continuous time. We first derive upper bounds for
the absolute difference between the time-averaged functionals of a
BSMC and its LC-block-augmented truncation, under the assumption that
the BSMC satisfies the general $\vc{f}$-modulated drift condition. We
then establish computable bounds for a special case where the BSMC is
exponentially ergodic. To derive such computable bounds for the
general case, we propose a method that reduces BSMCs to be
exponentially ergodic. We also apply the obtained bounds to
level-dependent quasi-birth-and-death processes (LD-QBDs), and discuss
the properties of the bounds through the numerical results on an
M/M/$s$ retrial queue, which is a representative example of
LD-QBDs. Finally, we present computable perturbation bounds for the
stationary distribution vectors of BSMCs.
\end{tabular}
}
% \samllsize ends
\end{center}

\begin{center}
\begin{tabular}{p{0.90\textwidth}}
{\small
{\bf Keywords:} %
Queue, block-structured Markov chain (BSMC),
level-dependent quasi-birth-and-death process (LD-QBD),
last-column-block-augmented northwest-corner
truncation (LC-block-augmented truncation),
error bound, 
perturbation bound
% 
% End of Keywords
%

\medskip

{\bf Mathematics Subject Classification:} %
60J22; 37A30; 60J28; 60K25
}%\samllsize ends
\end{tabular}

\end{center}

\section{Introduction}\label{introduction}
Let $\{(X(t),J(t));t\ge0\}$ denote a continuous-time regular-jump
Markov chain with state space $\bbF:= \cup_{k\in\bbZ_+}\{k\} \times
\bbS_k$ (see, e.g., \citet[Chapter~8, Definition~2.5]{Brem99}), where
\begin{eqnarray*}
\bbS_k = \{0,1,\dots,S_k\} \subset \bbZ_+, 
\qquad
\bbZ_+ = \{0\} \cup \bbN,\qquad
\bbN =\{1,2,3,\dots\}.
\end{eqnarray*}
Let $\vc{P}^{(t)}=(p^{(t)}(k,i;\ell,j))_{(k,i;\ell,j)\in\bbF^2}$
denote the transition matrix function of $\{(X(t),J(t))\}$, i.e.,
\[
p^{(t)}(k,i;\ell,j) =
\PP(X(t)=\ell,J(t)=j \mid X(0)=k,J(0)=i),
\qquad t \ge 0,\ (k,i;\ell,j) \in \bbF^2,
\]
where $(k,i;\ell,j)$ denotes ordered pair $((k,i), (\ell,j))$. Since
$\{(X(t),J(t))\}$ is a regular-jump Markov chain, the transition
matrix function $\vc{P}^{(t)}$ is continuous, which implies that the
infinitesimal generator of $\{(X(t),J(t))\}$ is well-defined (see,
e.g., \citet[Chapter~8, Theorems 2.1 and 3.4]{Brem99}). Thus, we
define $\vc{Q}:=(q(k,i;\ell,j))_{(k,i;\ell,j)\in\bbF^2}$ as the
infinitesimal generator of $\{(X(t),J(t))\}$, i.e.,
\[
\vc{Q} = \lim_{t \downarrow 0} \frac{\vc{P}^{(t)} - \vc{I}}{t}, %edit: changed to use \frac instead of \over
\]
where $\vc{I}$ denotes the identity matrix with an appropriate order
according to the context.

It should be noted (see, e.g., \citet[Chapter~8, Definition 2.4 and
  Theorem~2.2]{Brem99}) that the
infinitesimal generator $\vc{Q}$ of the regular-jump Markov chain $\{(X(t),J(t))\}$ is stable and conservative,
i.e.,
\begin{align}
&&
\sum_{(\ell,j) \in \bbF \setminus\{(k,i)\}}q(k,i;\ell,j) 
=  -q(k,i;k,i) &< \infty, & (k,i) &\in \bbF,&&
\nonumber
%\label{property-q-matrix-01}
\\
&&
0 \le q(k,i;\ell,j) &< \infty, 
& (k,i;\ell,j) &\in \bbF^2,\ (k,i) \neq (\ell,j).&&
\nonumber
%\label{property-q-matrix-02}
\end{align}
Note also that $\vc{Q}$ and its principal submatrices (obtained by
deleting a set of rows and columns with the same indices; e.g., the
northwest-corner truncation $\vc{Q}_{\bbF_n}$ in (\ref{defn-Q_F_n})
below) belong to the set of {\it q-matrices}, i.e., diagonally
dominant matrices with nonpositive diagonal and nonnegative
off-diagonal elements (see, e.g., \citet[Section~2.1]{Ande91}). In
some cases, we refer to the $q$-matrix as the infinitesimal generator,
especially when it is connected with a specific Markov chain.  As with
the infinitesimal generator, any $q$-matrix is called {\it stable} if
its diagonal elements are all finite; and called {\it conservative} if
its row sums are all equal to zero.

We now assume that $\vc{Q}$ has the following block-structured form:
\begin{equation}
\vc{Q} =
\bordermatrix{
               & \bbL_0 &  \bbL_1  &  \bbL_2       &  \bbL_3       & \cdots       
\cr
\bbL_0 & 
\vc{Q}(0;0) & 
\vc{Q}(0;1) &
\vc{Q}(0;2) &
\vc{Q}(0;3) &
\cdots
\cr
\bbL_1 & 
\vc{Q}(1;0) & 
\vc{Q}(1;1) &
\vc{Q}(1;2) &
\vc{Q}(1;3) &
\cdots
\cr
\bbL_2 & 
\vc{Q}(2;0) & 
\vc{Q}(2;1) &
\vc{Q}(2;2) &
\vc{Q}(2;3) &
\cdots
\cr
\bbL_3 & 
\vc{Q}(3;0) & 
\vc{Q}(3;1) &
\vc{Q}(3;2) &
\vc{Q}(3;3) &
\ddots
\cr
~\vdots    
& \vdots					
&
\vdots					&
\vdots					&
\ddots					&
\ddots
},
\label{defn-Q}
\end{equation}
where $\bbL_k = \{k\} \times \bbS_k \subset \bbF$ for $k \in \bbZ_+$,
which is called {\it level $k$}. Markov chains with block-structured
infinitesimal generators like $\vc{Q}$ in (\ref{defn-Q}) are called {\it
  block-structured Markov chains (BSMCs)}. Typical examples of BSMCs
are in block-Toeplitz-like and/or block-Hessenberg forms (including
block-tridiagonal form), such as {\it level-independent}
GI/G/1-type Markov chains (see, e.g., \citet{Gras90,Neut89}); {\it
  level-dependent} quasi-birth-and-death processes (LD-QBDs) (see,
e.g., \citet[Chapter 12]{Lato99}); and {\it
  level-dependent} M/G/1- and GI/M/1-type Markov chains (see, e.g., \citet{Masu16,Masu05}).

Throughout the paper, we assume that the BSMC $\{(X(t),J(t))\}$ is
ergodic, i.e., irreducible and positive recurrent. It then follows
that the BSMC $\{(X(t),J(t))\}$ has the unique stationary distribution
vector (called {\it stationary distribution} or {\it stationary
  probability vector}), denoted by
$\vc{\pi}:=(\pi(\ell,j))_{(\ell,j)\in\bbF}$ (see, e.g., \citet[Section
  5.4, Theorem 4.5]{Ande91}).  By definition,
\[
\vc{\pi}\vc{Q} = \vc{0},
\qquad \vc{\pi}\vc{e} = 1,
\]
where $\vc{e}$ denotes a column vector of ones with an appropriate
order according to the context.

Let $\vc{\pi}(k) = (\pi(k,i))_{i\in\bbS_k}$ for $k\in\bbZ_+$, which is
the subvector of $\vc{\pi}$ corresponding to level $k$ and thus 
$\vc{\pi}=(\vc{\pi}(0),\vc{\pi}(1),\dots)$.  It is, in general,
difficult to compute $\vc{\pi}=(\vc{\pi}(0),\vc{\pi}(1),\dots)$
because we have to solve an infinite dimensional system of equations.
As for the BSMCs with the special structures mentioned above, we can
establish the stochastically interpretable expression of the
stationary distribution vector by matrix analytic methods
(\citet{Gras90,Lato99,Neut89,Zhao98}) and can also obtain the
analytical expression of the stationary distribution vector by
continued fraction approaches (\citet{Hans99,Pear89}).  However, the
construction of such expressions requires an infinite number of
computational steps involving an infinite number of block matrices
that characterize those BSMCs.

To solve this problem practically, we can truncate infinite
iterations (e.g., infinite sums, products and other algebraic
operations) and/or truncate the infinite set of block matrices. The
former truncation includes the state-space truncation and is
incorporated into many algorithms in the literature
(\citet{Baum10,Brig95,Gras93,Masu16,Phun10-QTNA,Taki16}). On the other
hand, the latter truncation can be achieved by the state-space
truncation, banded approximation (\citet{Zhao99}), spatial
homogenization (\citet{Klim06,LiuQuan05,Shin98}), etc.

This paper considers the last-column-block-augmented northwest-corner
truncation (LC-block-augmented truncation, for short) of $\vc{Q}$ and
thus the BSMC $\{(X(t),J(t))\}$ (see
\citet{LiHai00,Masu15-ADV,Masu16-SIAM,Masu17-LAA}). The
LC-block-augmented truncation is one of the state-space truncations
and is also a special case of {\it block-augmented truncations} (see,
e.g., \citet[Section~3]{LiHai00} for the discrete-time case; and
\citet[Definition~4.1]{Masu17-LAA} for the continuous-time case). In fact,
the LC-block-augmented truncation is an extension of the
last-column-augmented northwest-corner truncation
(last-column-augmented truncation, for short; see, e.g.,
\citet{Gibs87}) to BSMCs.

The reason we focus on the LC-block-augmented truncation is
twofold. The first reason is that the LC-block-augmented
truncation yields the best (in a certain sense) approximation to the
stationary distribution vector of {\it block-monotone} BSMCs among the
approximations by block-augmented truncations (see \citet[Theorem~3.6]{LiHai00} and \citet[Theorem~4.1]{Masu17-LAA}). Note here that block
monotonicity is an extension of {\it (classical) monotonicity} (see
\citet{Dale68}) to BSMCs (see, e.g., \citet[Definition
  1.1]{Masu15-ADV} and \citet[Definition 3.2]{Masu17-LAA} for the
definition of block monotonicity). Note also that block monotonicity
appears in the queue length processes of such representative
semi-Markovian queues as BMAP/GI/1, BMAP/M/$s$ and BMAP/M/$\infty$
queues (see \citet{Masu15-ADV,Masu16-SIAM,Masu17-LAA}). 

The second reason is that the
LC-block-augmented truncation is related to queueing models with
finite capacity. The (possibly embedded) queue length processes in
semi-Markovian queues with {\it finite} capacity (such as
MAP/PH/$s$/$N$ and MAP/GI/1/$N$; see, e.g., \citet{Baio94,Miya07}) can
be considered the LC-block-augmented truncations of the queue length
processes in the corresponding semi-Markovian queues with {\it
  infinite} capacity. Therefore, the estimation of the ``difference"
between those finite and infinite queues is reduced to the error
estimation of the LC-block-augmented truncation.

The above two reasons lead us to focus on the LC-block-augmented
truncation.  We now outline the procedure to construct the
LC-block-augmented truncation of $\vc{Q}$. To this end, we need some
symbols and notation. Let $| \cdot |$ denote the cardinality of the
set in the vertical bars.  Let $\bbF_n = \cup_{k=0}^n \bbL_k \subset
\bbF$ and $\overline{\bbF}_n = \bbF \setminus \bbF_n =
\cup_{k=n+1}^{\infty} \bbL_k$ for $n \in \bbZ_+$. In addition, let
$k_{\ast} = \inf\{k \in \bbN; S_{\ell} = S_k\ \mbox{for all $\ell \ge
  k$}\}$. Throughout the paper, unless otherwise stated, we assume
that $k_{\ast} = 1$, i.e.,
\[
S_k = S_1\quad \mbox{for all $k \in \bbN$}.
\]
It should be noted that the case where $k_{\ast} \ge 2$ can be reduced
to the case where $k_{\ast} = 1$ by relabeling
$\cup_{\ell=0}^{k_{\ast}-1}\bbL_{\ell}, \bbL_{k_{\ast}},
\bbL_{k_{\ast}+1},\dots$ as levels $0, 1, 2,\dots$, respectively.

Under the above assumption, we define $\vc{Q}_{\bbF_n} =
(q(k,i;\ell,j))_{(k,i;\ell,j)\in(\bbF_n)^2}$ for
$n\in\bbN$, which is the $|\bbF_n| \times |\bbF_n|$ northwest-corner
truncation of $\vc{Q}$, i.e.,

%edit: inserted a break line here (lines 456--458) to improve underfull

%
\begin{equation}
\vc{Q}_{\bbF_n}
= 
\left(
\begin{array}{cccc|c}
\vc{Q}(0;0)   & 
\vc{Q}(0;1)   &
\cdots        &
\vc{Q}(0;n-1) &
\vc{Q}(0;n) 
\\
\vc{Q}(1;0)   & 
\vc{Q}(1;1)   &
\cdots        &
\vc{Q}(1;n-1) &
\vc{Q}(1;n) 
\\
\vdots &
\ddots & 
\ddots & 
\vdots
\\
\vc{Q}(n-1;0)   & 
\vc{Q}(n-1;1)   &
\cdots        &
\vc{Q}(n-1;n-1) &
\vc{Q}(n-1;n) 
\\
\vc{Q}(n;0)   & 
\vc{Q}(n;1)   &
\cdots        &
\vc{Q}(n;n-1) &
\vc{Q}(n;n) 
\end{array}
\right).
\label{defn-Q_F_n}
\end{equation}
Since the BSMC $\{(X(t),J(t))\}$ is irreducible,
$\vc{Q}_{\bbF_n}$ is not conservative. In order to form a
conservative $q$-matrix from $\vc{Q}_{\bbF_n}$, we
augment the last block-column of the $|\bbF_n| \times |\bbF_n|$
northwest-corner truncation $\vc{Q}_{\bbF_n}$ by
\[
\left(
\begin{array}{cccc}
\sum_{m=n+1}^{\infty}\vc{Q}(0;m)
\\
\sum_{m=n+1}^{\infty}\vc{Q}(1;m)
\\ 
\vdots
\\
\sum_{m=n+1}^{\infty}\vc{Q}(n;m)
\end{array}
\right).
\]
We then extend the augmented northwest-corner truncation
$\vc{Q}_{\bbF_n}$ to the order of the original generator $\vc{Q}$ in
the manner described below, which enables us to perform algebraic
operations on the resulting $q$-matrix and original generator $\vc{Q}$.

We now provide a formal definition of the LC-block-augmented
truncation of the infinitesimal generator $\vc{Q}$. To shorten
expressions, we use the notation: $x \vmin y = \min(x,y)$. For $n \in
\bbN$, let
$\presub{n}\vc{Q}:=(\presub{n}q(k,i;\ell,j))_{(k,i;\ell,j)\in\bbF^2}$
denote a block-structured conservative $q$-matrix whose block matrices
$\presub{n}\vc{Q}(k;\ell):=(\presub{n}q(k,i;\ell,j))_{(i,j)\in\bbS_{k
    \vmin 1}\times\bbS_{\ell \vmin 1}}$, $k,\ell\in\bbZ_+$ are given
by
\begin{equation}
\presub{n}\vc{Q}(k;\ell)
=\left\{
\begin{array}{ll}
\vc{Q}(k;\ell), &  \mbox{if}~k \in \bbZ_+,\ 0 \le \ell \le n -1,
\\
\vc{Q}(k;n) + \dm\sum_{m > n,\, m \neq k}\vc{Q}(k;m), &  \mbox{if}~k \in \bbZ_+,\ \ell = n,
\\
\vc{Q}(k;k), &  \mbox{if}~k = \ell \ge n+1,
\\
\vc{O}, & \mbox{otherwise}.
\end{array}
\right.
\label{defn-(n)_Q}
\end{equation}
We call $\presub{n}\vc{Q}$ the {\it last-column-block-augmented
  $|\bbF_n| \times |\bbF_n|$ northwest-corner truncation
  (LC-block-augmented truncation, for short)} of $\vc{Q}$.

We now have the following result, whose proof is given in
Appendix~\ref{appen-proof-prop-{n}Q-communication}.
\begin{prop}\label{prop-{n}Q-communication}
For $n \in \bbN$, let $\{(\presub{n}X(t),\presub{n}J(t));t\ge0\}$ denote a Markov chain
with state space $\bbF$ and infinitesimal generator
$\presub{n}\vc{Q}$.  If the original generator $\vc{Q}$ is
irreducible, then (i) the Markov chain
$\{(\presub{n}X(t),\presub{n}J(t))\}$ (and thus $\presub{n}\vc{Q}$)
has at least one and at most $(S_1+1)$ closed communicating classes in
$\bbF_n$; and (ii) has no closed communicating classes in
$\overline{\bbF}_n$.
\end{prop}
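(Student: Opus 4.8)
The plan is to read both parts off the sign pattern of $\presub{n}\vc{Q}$ and to reduce every putative closed class to a proper closed set of $\vc{Q}$, which is forbidden by irreducibility. First I would record three structural facts that follow at once from (\ref{defn-(n)_Q}). (a) No block $\presub{n}\vc{Q}(k;\ell)$ with $k\le n$ and $\ell\ge n+1$ is nonzero, so $\bbF_n$ is a \emph{closed} set for $\presub{n}\vc{Q}$, and the restriction of $\presub{n}\vc{Q}$ to $\bbF_n$ is a finite conservative $q$-matrix. (b) For $k\ge n+1$ the only nonzero off-$\bbF_n$ block in row-block $k$ is the diagonal block $\vc{Q}(k;k)$, and the only block mapping any level into level $k$ is $\vc{Q}(k;k)$ itself; hence, inside $\ol{\bbF}_n$, every transition stays within a single level. (c) The augmentation collapses off-level rates onto level $n$: for $k\le n-1$ one has $\presub{n}\vc{Q}(k;n)=\sum_{m\ge n}\vc{Q}(k;m)$, while for $k\ge n+1$ one has $\sum_{\ell=0}^{n}\presub{n}\vc{Q}(k;\ell)=\sum_{\ell\ne k}\vc{Q}(k;\ell)$, whose $i$th row sum equals $-[\vc{Q}(k;k)\vc{e}]_i\ge0$. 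In both cases the total rate at which a state leaves its own level under $\presub{n}\vc{Q}$ coincides with its total off-level rate under $\vc{Q}$.

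For part (ii), let $C\subseteq\ol{\bbF}_n$ be a closed communicating class. Since $C$ is closed, every path joining its states stays in $C\subseteq\ol{\bbF}_n$, so by (b) the class $C$ lies in a single level $k\ge n+1$. Closedness forbids transitions from $C$ both to $\bbL_k\setminus C$ and to $\bbF_n$; by (c) the latter forces every state of $C$ to have zero off-level rate under $\vc{Q}$, i.e.\ no off-level transitions at all. Together with closedness within level $k$, this makes $C$ a closed set for $\vc{Q}$. As $C$ is a nonempty proper subset of $\bbF$, this contradicts the irreducibility of $\vc{Q}$, proving (ii).

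For part (i), the lower bound is immediate from (a): $\bbF_n$ is a finite closed set, and a finite conservative $q$-matrix always has at least one closed communicating class, which is then a closed communicating class of $\presub{n}\vc{Q}$ lying in $\bbF_n$. For the upper bound I would show that every closed communicating class $C\subseteq\bbF_n$ meets $\bbL_n$. Suppose not, so $C\subseteq\bbF_{n-1}$. On $\bbF_{n-1}$ the blocks of $\presub{n}\vc{Q}$ coincide with those of $\vc{Q}$, and closedness forbids transitions from $C$ into $\bbL_n$; by the identity $\presub{n}\vc{Q}(k;n)=\sum_{m\ge n}\vc{Q}(k;m)$ in (c), every state of $C$ then has no $\vc{Q}$-transition into any level $\ge n$. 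Hence $C$ is again a proper closed set for $\vc{Q}$, contradicting irreducibility. Therefore each closed communicating class contains at least one state of $\bbL_n$; since distinct classes are disjoint, their number is at most $|\bbL_n|=S_n+1=S_1+1$ (recall $n\ge1$).

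The main obstacle is the upper bound, and specifically the observation behind (c): last-column-block augmentation redirects all ``beyond-level-$n$'' mass precisely onto level $n$, so a closed class avoiding $\bbL_n$ would already have had no upward escape in $\vc{Q}$. Once this is isolated, both parts collapse to the single principle that a closed class of $\presub{n}\vc{Q}$ confined away from the augmented column would be a proper closed set of the irreducible $\vc{Q}$. The remaining care is bookkeeping: checking the block-index ranges in (\ref{defn-(n)_Q}) (in particular the role of the exclusion $m\ne k$ when $k\ge n+1$) and confirming that ``closed communicating class'' is read off the off-diagonal sign pattern, so the argument is insensitive to the continuous-time setting.
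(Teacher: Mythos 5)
Your proof is correct and follows essentially the same route as the paper's Appendix~A argument: every closed communicating class in $\bbF_n$ must meet $\bbL_n$ (otherwise, since the blocks of $\presub{n}\vc{Q}$ and $\vc{Q}$ agree on $\bbF_{n-1}$ and the augmentation sends all mass beyond level $n-1$ to level $n$, the class would be a proper closed set of the irreducible $\vc{Q}$), and any closed class in $\overline{\bbF}_n$ is confined to a single level by the block-diagonal southeast corner and would again be closed under $\vc{Q}$. Your row-sum bookkeeping in fact~(c) is just an explicit restatement of the paper's observation that the relevant submatrices ($\presub{n}\vc{Q}_{\bbC}=\vc{Q}_{\bbC}$, resp.\ the submatrix of $\vc{Q}(k;k)$) are conservative $q$-matrices.
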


Proposition~\ref{prop-{n}Q-communication} shows that the
LC-block-augmented truncation $\presub{n}\vc{Q}$ of the ergodic
generator $\vc{Q}$ may have more than one stationary distribution
vector. On the other hand, it follows from Theorem 2.1 and Remark 2.2
of \citet{Hart12} that
\begin{eqnarray*}
\lefteqn{
\lim_{n\to\infty}
\PP(\presub{n}X(t) = \ell,\presub{n}J(t)=j \mid \presub{n}X(0) = k,\presub{n}J(t)=i)
}
\quad
\nonumber
\\
&=& \PP(X(t) = \ell,J(t)=j \mid X(0) = k,J(t)=i),\qquad t \ge 0,\ (k,i;\ell,j)\in \bbF^2.
\end{eqnarray*}
From this fact and the ergodicity of $\vc{Q}$, we can expect that, in
many {\it natural} settings, $\presub{n}\vc{Q}$ has a single closed
communicating class in $\bbF_n$ for all $n$'s larger than some finite
$n_{\ast} \in \bbN$. Such cases are reduced to the special case where
$n_{\ast} = 1$ by relabeling $\cup_{\ell=0}^{n_{\ast}-1}\bbL_{\ell},
\bbL_{n_{\ast}}, \bbL_{n_{\ast}+1},\dots$ as levels $0, 1, 2,\dots$,
respectively.  Thus, for convenience, we assume that, for each $n \in
\bbN$, $\presub{n}\vc{Q}$ has a single closed communicating class in
the sub-state space $\bbF_n$, which implies that $\presub{n}\vc{Q}$
has the unique closed communicating class in the whole state space
$\bbF$ because all the states in $\overline{\bbF}_n$ are transient due
to Proposition~\ref{prop-{n}Q-communication}~(ii). As a result,
$\presub{n}\vc{Q}$ has the unique stationary distribution vector (see,
e.g., \citet[Section 5.4, Theorem 4.5]{Ande91}).

For $n \in \bbN$, let $\presub{n}\vc{\pi} := (\presub{n}\pi(k,i))_{(k,i)\in\bbF}$ denote
the unique stationary distribution vector of $\presub{n}\vc{Q}$, which
satisfies
\begin{equation}
\presub{n}\vc{\pi}\presub{n}\vc{Q} = \vc{0},
\quad \presub{n}\vc{\pi}\vc{e} = 1,\quad n \in \bbN.
\label{eqn-{n}pi-{n}Q=0}
\end{equation}
Since $\overline{\bbF}_n$ is transient, it holds (see
\citet[Lemma 4.2]{Masu17-LAA}) that
\begin{equation}
\presub{n}\vc{\pi}(k)=\vc{0}\quad \mbox{for all $k \ge n+1$ and $n \in \bbN$},
\label{eqn-{n}pi(k)=0}
\end{equation}
where $\presub{n}\vc{\pi}(k) :=
(\presub{n}\pi(k,i))_{i\in\bbS_{k\vmin1}}$ is the subvector of
$\presub{n}\vc{\pi}$ corresponding to level $k$.  It follows from
(\ref{eqn-{n}pi(k)=0}) that (\ref{eqn-{n}pi-{n}Q=0}) is reduced to a
finite dimensional system of equations and thus is solvable
numerically. Therefore, we consider $\presub{n}\vc{\pi}$ to be a
computable approximation to the stationary distribution vector
$\vc{\pi}$ of the original generator $\vc{Q}$.

From a practical point of view, it is significant to estimate the
error of the approximation $\presub{n}\vc{\pi}$ to $\vc{\pi}$, and
further, to derive computable error bounds for the approximation
$\presub{n}\vc{\pi}$.  Several authors have derived computable error
bounds for the approximation $\presub{n}\vc{\pi}$.
\citet{Twee98} and \citet{LiuYuan10} considered the
last-column-augmented truncation of discrete-time Markov chains
without block structure, which correspond to the case where $S_k = 0$
for all $k \in \bbZ_+$ in the context of this paper.
\citet{Twee98} assumed that the original Markov chain is
monotone and geometrically ergodic, and derived a computable upper
bound for the total variation distance between the stationary
distribution vectors of the original Markov chain and its
last-column-augmented truncation.  \citet{LiuYuan10} presented a
similar bound under the assumption that the original Markov chain is
monotone and polynomially ergodic. The monotonicity of Markov chains
is crucial to the derivation of the computable bounds presented in
\citet{Twee98} and \citet{LiuYuan10}.

Without the help of the monotonicity, \citet{Herv14} derived an error
bound for the stationary distribution vector of the
last-column-augmented truncation of a discrete-time Markov chain with
geometric ergodicity. However, the computation of \citet{Herv14}'s
bound requires the second largest eigenvalue of the last-column-augmented
truncation and thus the bound is less computation-friendly than the
bounds presented in \citet{Twee98} and \citet{LiuYuan10}.
\citet{Masu15-ADV,Masu16-SIAM} extended the results in \citet{Twee98}
and \citet{LiuYuan10} to discrete-time block-monotone BSMCs with
geometric ergodicity and those with subgeometric ergodicity,
respectively. By the uniformization technique (see, e.g.,
\citet[Section~4.5.2]{Tijm03}), the bounds presented in
\citet{Masu15-ADV,Masu16-SIAM} are applicable to continuous-time
block-monotone BSMCs with bounded infinitesimal generators.

There have been some studies on the truncation of continuous-time
Markov chains. \citet{Zeif14b,Zeif14c} studied the truncation of a
weakly ergodic non-time-homogeneous birth-and-death process with
bounded transition rates (see also
\citet{Zeif14a,Zeif12}). \citet{Hart12} discussed the convergence of
the stationary distribution vectors of the augmented northwest-corner
truncations of continuous-time Markov chains with monotonicity or
exponential ergodicity. \citet{Masu17-LAA} presented computable upper
bounds for the total variation distance between the stationary
distribution vectors of a BSMC (with possibly unbounded transition
rates) and its LC-block-augmented truncation, under the assumption
that the BSMC is block-wise dominated by a Markov chain with block
monotonicity and exponential ergodicity.

In this paper, we do not assume either $\vc{Q}$ is bounded or block
monotone. In addition, we do not necessarily assume that $\vc{Q}$ has
a specified ergodicity, such as exponential ergodicity and polynomial
ergodicity.  Instead, we assume that $\vc{Q}$ satisfies the
$\vc{f}$-modulated drift condition (see \citet[Equation
  (7)]{Meyn93-Proc} and \citet[Section~14.2.1]{Meyn09}):
\begin{cond}[$\vc{f}$-modulated drift condition]\label{assumpt-f-ergodic}
There exist some $b > 0$, $K \in \bbZ_+$, column vectors
$\vc{v}:=(v(k,i))_{(k,i)\in\bbF} \ge \vc{0}$ and
$\vc{f}:=(f(k,i))_{(k,i)\in\bbF} \ge \vc{e}$ such that
\begin{equation}
\vc{Q}\vc{v} \le  - \vc{f} + b \vc{1}_{\bbF_K},
\label{ineqn-Qv}
\end{equation}
where, for any set $\bbC \subseteq \bbF$,
$\vc{1}_{\bbC}:=(1_{\bbC}(k,i))_{(k,i)\in\bbF}$ denotes a column
vector whose $(k,i)$th element $1_{\bbC}(k,i)$ is given by
\[
1_{\bbC}(k,i)
=\left\{
\begin{array}{ll}
1, & (k,i) \in \bbC,
\\
0, & (k,i) \in \bbF\setminus\bbC.
\end{array}
\right.
\]
\end{cond}

Condition~\ref{assumpt-f-ergodic} is the basic condition of this
paper. If $\vc{f} = c\vc{v}$ for some $c> 0$, then
Condition~\ref{assumpt-f-ergodic} is reduced to the exponential drift
condition (i.e., the drift condition for exponential ergodicity; see
\citet[Theorem 20.3.2]{Meyn09}). On the other hand, if $f(k,i) =
\varphi(v(k,i))$ for some nondecreasing differentiable concave
function $\varphi:[1,\infty) \to (0,\infty)$ with
  $\lim_{t\to\infty}\varphi'(t) = 0$, then
  Condition~\ref{assumpt-f-ergodic} is reduced to the subgeometric
  drift condition (i.e., the drift condition for subgeometric ergodicity)
  presented in \citet{Douc09}.

Under Condition~\ref{assumpt-f-ergodic}, we study the estimate of the
absolute difference between the time-averaged functionals of the BSMC
$\{(X(t),J(t)); t \ge 0\}$ and its LC-block-augmented truncation. Let
$\vc{g}:=(g(k,i))_{(k,i)\in\bbF}$ denote a nonnegative column
vector. It is known that if $\vc{\pi}\vc{g} < \infty$ then the
time-average of the functional $g(X(t),J(t))$ is equal to
$\vc{\pi}\vc{g}$ with probability one (see, e.g., \citet[Chapter~8,
  Theorem 6.2]{Brem99}), i.e.,
\[
\lim_{T\to\infty}{1 \over T}\int_0^T g(X(t),J(t)) \rmd t
= \vc{\pi}\vc{g}\quad \mbox{with probability one}.
\]
Note here that if 
\[
\vc{g}^{\top} = 
\bordermatrix{
& \bbL_0 & \bbL_1 & \bbL_2 & \bbL_3 & \cdots
\cr
& \vc{0} &\vc{e}^{\top} & 2\vc{e}^{\top}& 3\vc{e}^{\top}& \dots
},
\]
then $\vc{\pi}\vc{g}$ is the mean of the stationary distribution vector.

The main contribution of this paper is to derive several bounds of the
following types under different technical conditions (together with
Condition~\ref{assumpt-f-ergodic}):
\begin{eqnarray}
 | \vc{\pi} - \presub{n}\vc{\pi} |\, \vc{g} 
&\le& {\vc{\pi}\vc{g} + 1 \over 2}
E(n)\quad~ \mbox{for all $n \in \bbN$ and $\vc{0} \le \vc{g} \le \vc{f}$},
\label{this-paper-error-bound-01}
\\
\red{\sup_{\vc{e} \le \vc{g} \le \vc{f}}}
{ | \vc{\pi} - \presub{n}\vc{\pi} |\, \vc{g} 
\over \vc{\pi}\vc{g}
}
&\le& E(n)\qquad\qquad~~\mbox{for all $n \in \bbN$},
\label{this-paper-error-bound-02}
\end{eqnarray}
where $|\cdot|$ denotes the vector (resp.\ matrix) obtained by taking the
absolute values of the elements of the vector (resp.\ matrix) in the
vertical bars; and where the function $E$ is called the {\it error
  decay function} and may be different in different bounds. Note here
that $|\vc{\pi} \vc{g} - \presub{n}\vc{\pi} \vc{g}| \le | \vc{\pi} -
\presub{n}\vc{\pi} |\, \vc{g}$. Note also that (\ref{ineqn-Qv}) yields
$\vc{\pi}\vc{g} \le \vc{\pi}\vc{f} \le b$ for $\vc{0} \le \vc{g} \le
\vc{f}$. Thus, from (\ref{this-paper-error-bound-01}) and
(\ref{this-paper-error-bound-02}), we obtain the bounds for the
approximation $\presub{n}\vc{\pi}\vc{g}$ to the time-averaged
functional $\vc{\pi}\vc{g}$:
\begin{eqnarray*}
| \vc{\pi}\vc{g}  - \presub{n}\vc{\pi}\vc{g}  |
&\le& {b + 1 \over 2}
E(n)\quad~ \mbox{for all $n \in \bbN$ and $\vc{0} \le \vc{g} \le \vc{f}$},
\\
\red{\sup_{\vc{e} \le \vc{g} \le \vc{f}}}
{ | \vc{\pi}\vc{g}  - \presub{n}\vc{\pi}\vc{g}  |
\over \vc{\pi}\vc{g}
}
&\le& E(n)\qquad\quad~~\,\mbox{for all $n \in \bbN$}.
\end{eqnarray*}
Furthermore, (\ref{this-paper-error-bound-01}) (or
(\ref{this-paper-error-bound-02})) leads to
\[
| \vc{\pi} - \presub{n}\vc{\pi}
|\, \vc{e} \le E(n),\qquad n \in \bbN,
\]
which is an upper bound for the total variation distance between
$\vc{\pi}$ and $\presub{n}\vc{\pi}$.

We now remark that, as with this paper, \citet{Baum15}
considered a similar condition to Condition~\ref{assumpt-f-ergodic},
under which they studied the truncation error of the infinite sum in
calculating the time-averaged functional $\vc{\pi}\vc{g}$. More
specifically, they derived an upper bound for the relative error of
the truncated sum $\sum_{(k,i)\in\bbC}\pi(k,i)g(k,i)$ to the
time-averaged functional
$\vc{\pi}\vc{g}=\sum_{(k,i)\in\bbF}\pi(k,i)g(k,i)$, where $\bbC
\subset \bbF$ is a finite set.

The rest of this paper is divided into four sections.  In
Section~\ref{sec-error-bounds}, we begin with two facts: (i) $\vc{\pi}
- \presub{n}\vc{\pi}$ can be expressed through the deviation matrix
$\vc{D}:=(d(k,i;\ell,j))_{(k,i;\ell,j) \in \bbF^2}$ of the BSMC
$\{(X(t),J(t))\}$ (see (\ref{eqn-diff-pi}) below); and (ii) the deviation
matrix $\vc{D}$ is a solution of a certain Poisson equation (see (\ref{Poisson-EQ-D}) below). By Dynkin's formula (see, e.g.,
\citet{Meyn93-III}), we then derive an upper bound for $\left| \vc{D}
\right|\vc{g}$ under Condition~\ref{assumpt-f-ergodic}, i.e., the
$\vc{f}$-modulated drift condition. 
Furthermore, using the upper bound
for $\left| \vc{D} \right|\vc{g}$, we present the bounds of the two
types (\ref{this-paper-error-bound-01}) and
(\ref{this-paper-error-bound-02}) in Theorem~\ref{thm-f-ergodic}
below, which are the foundation of the subsequent results of this
paper. 

These fundamental bounds of the two types are characterized by 
an error decay function that includes the
implicit factors $\vc{\pi}\vc{v}$ and $\presub{n}\vc{\pi}$.  However,
if we find two essentially different solutions $(b,K,\vc{v},\vc{f})$ and
$(b^{\sharp},K^{\sharp},\vc{v}^{\sharp},\vc{f}^{\sharp})$ to
Condition~\ref{assumpt-f-ergodic} such that
$\lim_{k\to\infty}v(k,i)/f^{\sharp}(k,i) = 0$ for all $i \in \bbS_1$,
then we can remove $\presub{n}\vc{\pi}$ from the error decay function,
which facilitates the qualitative sensitivity analysis of the error
decay function. On the other hand, the factor $\vc{\pi}\vc{v}$ cannot
be computed but can be estimated from above when $\vc{Q}$ satisfies
the exponential drift condition. Indeed, if
Condition~\ref{assumpt-f-ergodic} holds for $\vc{f} = c\vc{v} \ge
\vc{e}$, then (\ref{ineqn-Qv}) yields $\vc{\pi}\vc{v} < b/c$. As a
result, we obtain a computable error decay function under the
exponential drift condition.

In Section~\ref{sec-reduction}, we propose a method that reduces the
generator $\vc{Q}$ satisfying Condition~\ref{assumpt-f-ergodic} to be
exponentially ergodic. Combining the proposed method and the results
in Section~\ref{sec-error-bounds}, we can establish computable error
decay functions under the general $\vc{f}$-modulated drift condition
with some mild technical conditions. As far as we know, such a
reduction to exponential ergodicity has not been reported in the
literature.

In Section~\ref{sec-application}, we consider LD-QBDs, which describe
the queue length processes in various state-dependent queues with
Markovian environments, such as M/M/$s$ retrial queues and their
variants and generalizations (see, e.g.,
\citet{Breu02,Dudi13,Phun10-JIMO,Phun13}). The study of LD-QBDs and
their related queueing models has been a hot topic in queueing theory
for the last couple of decades (for an extensive bibliography, see
\citet{Arta99,Arta10,Arta-Gome08}). To demonstrate the usefulness of
our error bounds, we apply them to an M/M/$s$ retrial queue and show
some numerical results. Furthermore, using the numerical results, we
discuss the properties of our error bounds.

Finally, in Section~\ref{sec-remarks}, we consider the perturbation of
the stationary distribution vector $\vc{\pi}$ caused by that of the
generator $\vc{Q}$. The perturbation analysis of Markov chains is
closely related to the error estimation of the truncation
approximation of Markov chains (see, e.g.,
\citet{Herv14,LiuYuan15}). Many perturbation bounds have been shown
for the stationary distribution of (time-homogeneous) infinite-state
Markov chains
(\citet{Anis88,Heid10,Herv14,Kart86a,Kart86b,Kart86c,LiuYuan12,LiuYuan15,Mitr05,Mouh10,Twee80});
though these bounds require specific conditions on ergodicity (such as
uniform and exponential ergodicity) and/or include parameters
difficult to be identified or calculated (such as the stationary
distribution, the ergodic coefficient and other parameters associated
with the convergence rate to the steady state). On the other hand, we
establish a computable perturbation bound under the general
$\vc{f}$-modulated drift condition, by employing the technique used to
derive the error bounds for the LC-block-augmented truncation.

\section{Error Bounds for LC-Block-Augmented Truncations}\label{sec-error-bounds}

This section discusses the error estimation of the time-averaged
functions of the LC-block-augmented truncation $\presub{n}\vc{Q}$
under Condition~\ref{assumpt-f-ergodic}. To this end, we focus on the
deviation matrix of the Markov chain $\{(X(t),J(t))\}$. Using an upper
bound associated with the deviation matrix, we derive the fundamental
bounds of the two types (\ref{this-paper-error-bound-01}) and
(\ref{this-paper-error-bound-02}). Furthermore, utilizing an
additional condition on $\vc{v}$ and another solution to
Condition~\ref{assumpt-f-ergodic}, we discuss the convergence and
simplification of the error decay function of the fundamental bounds.
We then consider a special case where $\vc{Q}$ is an exponentially
ergodic generator. In this special case, we establish computable error
decay functions and propose a procedure for computing them.

\subsection{General case}

For convenience, we summarize all the assumptions made in
Section~\ref{introduction}, except for
Condition~\ref{assumpt-f-ergodic}.
\begin{assumpt}\label{basic-assumpt}
The stochastic process $\{(X(t),J(t))\}$ is an ergodic regular-jump
Markov chain with infinitesimal generator $\vc{Q}$ given in
(\ref{defn-Q}). Furthermore, the LC-block-augmented truncation
$\presub{n}\vc{Q}$ has the unique closed communicating class in
$\bbF_n$ for each $n \in \bbN$.
\end{assumpt}

In addition to Assumption~\ref{basic-assumpt} and
Condition~\ref{assumpt-f-ergodic}, we assume $\vc{\pi}\vc{v} <
\infty$. It then follows that each element of $ \int_0^{\infty} |
\vc{P}^{(t)} - \vc{e}\vc{\pi}|\rmd t$ is finite (see \citet[Theorem
  7]{Meyn93-Proc}). Based on this, we define
$\vc{D}=(d(k,i;\ell,j))_{(k,i;\ell,j) \in \bbF^2}$ as the deviation
matrix of the Markov chain $\{(X(t),J(t))\}$, i.e.,
\[
\vc{D}
= \int_0^{\infty}
\left(
 \vc{P}^{(t)} - \vc{e}\vc{\pi}
\right) \rmd t.
\]
It is known that the deviation matrix $\vc{D}$ is a solution to the
following Poisson equation (see, e.g., \citet[Theorem~5.2]{Cool02}):
\begin{equation}
-\vc{Q}\vc{D} = \vc{I} - \vc{e}\vc{\pi}
\quad \mbox{with~$\vc{\pi}\vc{D}=\vc{O}$}.
\label{Poisson-EQ-D}
\end{equation}
It is also known (see, e.g., \citet[Section~4.1, Equation~(9)]{Heid10}) that
\begin{equation}
\presub{n}\vc{\pi} - \vc{\pi}
= \presub{n}\vc{\pi} \left( \presub{n}\vc{Q} - \vc{Q} \right)
\vc{D},\qquad n \in \bbN.
\label{eqn-diff-pi}
\end{equation}
Therefore, we estimate $\presub{n}\vc{\pi} - \vc{\pi}$ through the
deviation matrix $\vc{D}$. 

For the estimation of the
deviation matrix $\vc{D}$, we introduce some symbols. For $\beta > 0$, let
$\vc{\Phi}^{(\beta)}=(\phi^{(\beta)}(k,i;\ell,j))_{(k,i;\ell,j)\in\bbF^2}$
denote a stochastic matrix such that
\begin{equation}
\vc{\Phi}^{(\beta)}
= \int_0^{\infty} \beta \rme^{- \beta t} \vc{P}^{(t)} \rmd t > \vc{O},
\label{defn-K}
\end{equation}
where $\vc{\Phi}^{(\beta)} > \vc{O}$ follows from the ergodicity of
$\{(X(t),J(t))\}$. The positivity of $\vc{\Phi}^{(\beta)}$ implies that
any finite set $\bbC \subset \bbF$ is a petite set of
$\{(X(t),J(t))\}$. Indeed, for any finite set $\bbC \subset \bbF$, let
$\mathfrak{m}_{\bbC}^{(\beta)}$ denote a measure on the Borel
$\sigma$-algebra $\calB(\bbF)$ of $\bbF$ such that
\begin{eqnarray*}
\mathfrak{m}_{\bbC}^{(\beta)}(\ell,j)
&:=&
\mathfrak{m}_{\bbC}^{(\beta)}(\{(\ell,j)\}) 
= \min_{(k,i)\in\bbC} \phi^{(\beta)}(k,i;\ell,j) > 0 ,\qquad (\ell,j) \in \bbF.
%\label{defn-varphi_C}
\end{eqnarray*}
It then follows that, for any finite set $\bbC \subset \bbF$,
\begin{equation}
\sum_{(\ell,j) \in \bbA} \phi^{(\beta)}(k,i;\ell,j) 
\ge \mathfrak{m}_{\bbC}^{(\beta)}(\bbA),
\qquad (k,i) \in \bbC,~\bbA \in \calB(\bbF),
\label{ineqn-K_{alpha}(k,i;l,j)}
\end{equation}
which shows that $\bbC$ is $\mathfrak{m}_{\bbC}^{(\beta)}$-petite 
 (see \citet[Sections 5.5.2 and 20.3.3]{Meyn09}).

We now define $\breve{\vc{g}}:=(\breve{g}(k,i))_{(k,i)\in\bbF}$ as a
column vector such that $\vc{0} \le |\breve{\vc{g}}| \le \vc{f}$. From
(\ref{ineqn-Qv}), we then have
\begin{equation}
\vc{\pi}\, |\breve{\vc{g}}|
\le \vc{\pi}\vc{f} \le b \quad 
\mbox{for all $\vc{0} \le |\breve{\vc{g}}| \le \vc{f}$}.
\label{ineqn-pi_breve{g}<b}
\end{equation}
Thus, since $\vc{\pi}\breve{\vc{g}}$ is finite, it follows from (\ref{Poisson-EQ-D}) that $\vc{h} :=
\vc{D}\breve{\vc{g}}$ is a solution of the following Poisson equation:
\begin{equation}
- \vc{Q}\vc{h} = \breve{\vc{g}} - (\vc{\pi}\breve{\vc{g}}) \vc{e}
\quad \mbox{with $\vc{\pi}\vc{h} = \vc{0}$}.
\label{Poisson-eqn-h}
\end{equation}
In addition, the boundedness and uniqueness of the
solution $\vc{h} = \vc{D}\breve{\vc{g}}$ are guaranteed by Lemma~\ref{lem-unique-h} below.
\begin{lem}\label{lem-unique-h}
Suppose that Assumption~\ref{basic-assumpt} and
Condition~\ref{assumpt-f-ergodic} are satisfied. If $\vc{\pi}\vc{v} <
\infty$, then, for some $c_0 \in (0,\infty)$,
\begin{equation}
|\vc{D}\breve{\vc{g}}| \le c_0(\vc{v}+\vc{e})\quad 
\mbox{for all $\vc{0} \le |\breve{\vc{g}}| \le \vc{f}$},
\label{ineqn-|h|}
\end{equation}
and $\vc{h}=\vc{D}\breve{\vc{g}}$ is the unique solution of the
Poisson equation (\ref{Poisson-eqn-h}) having an additional constraint
$\vc{\pi}\,|\vc{h}|< \infty$.
\end{lem}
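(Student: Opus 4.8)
The plan is to prove the two claims of the lemma separately: the pointwise bound (\ref{ineqn-|h|}) via a Lyapunov/Dynkin argument built on Condition~\ref{assumpt-f-ergodic}, and then uniqueness within the $\pi$-integrable class by reducing the difference of two solutions to a harmonic function. First I would fix the finite set $\bbC = \bbF_K$, which by (\ref{ineqn-K_{alpha}(k,i;l,j)}) is petite, and let $\tau$ denote the first passage time of $\{(X(t),J(t))\}$ to $\bbC$ (with the first-return convention when the chain starts inside $\bbC$). Applying Dynkin's formula to $\vc{v}$ at the stopping time $\tau \vmin t$ and using (\ref{ineqn-Qv}) together with $\vc{v} \ge \vc{0}$, the term $b\vc{1}_{\bbF_K}$ contributes only while the chain sits in $\bbC$, so letting $t \to \infty$ by monotone convergence gives $\EE_{(k,i)}[\int_0^{\tau} f(X(s),J(s))\,\rmd s] \le v(k,i) + b_0$, where $b_0$ depends on $\bbC$ only and is uniform over $(k,i) \in \bbC$. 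Since $\vc{e} \le \vc{f}$ forces $\tau \le \int_0^{\tau} f(X(s),J(s))\,\rmd s$ and $|\breve{\vc{g}}| \le \vc{f}$, this simultaneously controls $\EE_{(k,i)}[\tau]$ and $\EE_{(k,i)}[\int_0^{\tau} |\breve{g}(X(s),J(s))|\,\rmd s]$ by $v(k,i) + b_0$.

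Next I would represent $\vc{h} = \vc{D}\breve{\vc{g}}$ through a regenerative decomposition over the successive excursions from $\bbC$. Writing $\bar{\vc{g}} = \breve{\vc{g}} - (\vc{\pi}\breve{\vc{g}})\vc{e}$ and noting $|\bar{\vc{g}}| \le (1+b)\vc{f}$ by (\ref{ineqn-pi_breve{g}<b}), the reward accumulated over $[0,\tau)$ is bounded in modulus by $(1+b)(v(k,i)+b_0)$ from the previous step; the reward accumulated after the chain first reaches the finite set $\bbC$ is, by the minorization $\vc{\Phi}^{(\beta)} > \vc{O}$ (petiteness of $\bbC$), bounded by a constant uniform over $\bbC$, because both the expected centered reward per cycle and the expected cycle length are finite. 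Collecting these contributions yields $|h(k,i)| \le c_0(v(k,i) + 1)$ for some $c_0 \in (0,\infty)$, which is (\ref{ineqn-|h|}). Since $\vc{\pi}\vc{v} < \infty$, this also shows $\vc{\pi}|\vc{h}| \le c_0(\vc{\pi}\vc{v} + 1) < \infty$, so the solution $\vc{D}\breve{\vc{g}}$ indeed lies in the class in which uniqueness is claimed.

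For uniqueness, let $\vc{h}_1, \vc{h}_2$ solve (\ref{Poisson-eqn-h}) with $\vc{\pi}|\vc{h}_i| < \infty$ and set $\vc{w} = \vc{h}_1 - \vc{h}_2$, so that $\vc{Q}\vc{w} = \vc{0}$, $\vc{\pi}\vc{w} = 0$ and $\vc{\pi}|\vc{w}| < \infty$. From $\vc{Q}\vc{w} = \vc{0}$ I get $\vc{P}^{(t)}\vc{w} = \vc{w}$ for all $t \ge 0$, hence $\vc{\Phi}^{(\beta)}\vc{w} = \vc{w}$ by (\ref{defn-K}). Thus $\vc{w}$ is a $\pi$-integrable harmonic function of the discrete-time Markov chain with transition matrix $\vc{\Phi}^{(\beta)} > \vc{O}$, which is irreducible and positive recurrent with stationary distribution $\vc{\pi}$; for such a chain every $\pi$-integrable harmonic function is constant. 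Therefore $\vc{w}$ is constant, and $\vc{\pi}\vc{w} = 0$ forces $\vc{w} = \vc{0}$, which gives the asserted uniqueness.

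I expect the main obstacle to be the rigorous justification of the Dynkin/optional-stopping manipulations in the first two paragraphs: the Lyapunov function $\vc{v}$ is unbounded and $\tau$ is only almost surely finite rather than of uniformly bounded expectation, so the passage $t \to \infty$ and the regenerative representation of $\vc{D}\breve{\vc{g}}$ (with its uniform control of the contribution from the finite set $\bbC$) must be handled carefully. The petiteness of $\bbC$ furnished by $\vc{\Phi}^{(\beta)} > \vc{O}$ is exactly what renders that last control tractable.
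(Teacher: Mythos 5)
Your overall architecture is plausible, but both halves contain genuine gaps at precisely the points where the paper instead invokes black-box results. For the bound (\ref{ineqn-|h|}) the paper simply cites Theorem~1.2 of Kontoyiannis and Meyn; your attempted re-derivation via excursions from the petite set $\bbC=\bbF_K$ does not close. The excursion identity $h(k,i)=\EE_{(k,i)}[\int_0^{\tau}\bar{g}(X(t),J(t))\,\rmd t]+\EE_{(k,i)}[h(X(\tau),J(\tau))]$ leaves the boundary values of $h$ on $\bbC$ undetermined, and your claim that the post-$\tau$ contribution is ``bounded by a constant uniform over $\bbC$ because the expected centered reward per cycle and the expected cycle length are finite'' is an assertion, not an argument: a petite set is not an atom, so successive returns to $\bbC$ do not regenerate the chain, and converting the minorization $\vc{\Phi}^{(\beta)}>\vc{O}$ into actual regeneration would require a Nummelin-type splitting that you never perform. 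In a countable state space the clean fix is the one the paper uses for the companion Lemma~\ref{lem-bound-h}: take a \emph{single state} $(\ell,j)$ as the regeneration point (every state is an atom), verify via Dynkin's formula that the vector $\vc{h}_{(\ell,j)}$ of (\ref{defn-widetilde{h}}) solves the Poisson equation (Lemma~\ref{lem-Poisson-eq} in Appendix~B), and bound it by the comparison Lemma~\ref{lem-compa} together with the petite-set inequality (\ref{ineqn-1_C}). Note also that identifying your excursion expression with $\vc{D}\breve{\vc{g}}$ itself requires the uniqueness-up-to-constants statement, so your two halves depend on each other in an order you would need to untangle.

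The uniqueness half has a sharper gap: the step from the pointwise identity $\vc{Q}\vc{w}=\vc{0}$ to the semigroup invariance $\vc{P}^{(t)}\vc{w}=\vc{w}$ is exactly the hard part, and it is not valid in general for unbounded $\vc{w}$ when the rates are unbounded, as here ($\vc{Q}$ is not uniformizable). One of your two solutions is dominated by $c_0(\vc{v}+\vc{e})$, but the competing solution $\vc{h}'$ is only assumed to satisfy $\vc{\pi}\,|\vc{h}'|<\infty$, which supplies neither the uniform integrability needed to pass a localized Dynkin/optional-stopping argument to the limit nor the integrability $\int_0^{\infty}\beta\rme^{-\beta t}\,\EE_{(k,i)}\bigl[|w(X(t),J(t))|\bigr]\rmd t<\infty$ implicitly used in $\vc{\Phi}^{(\beta)}\vc{w}=\vc{w}$; your closing assertion that $\vc{\pi}$-integrable harmonic functions of the positive-recurrent resolvent chain are constant is true but nontrivial and also left unproved. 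This missing content is precisely what the paper's citation to Proposition~1.1 of Glynn and Meyn covers: that proposition yields directly that any two solutions of (\ref{Poisson-eqn-h}) with $\vc{\pi}$-integrable modulus differ by a constant multiple of $\vc{e}$, after which the normalization $\vc{\pi}\vc{h}=\vc{0}$ pins the constant to zero --- the same two-line finish you use.
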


\begin{proof}
The bound (\ref{ineqn-|h|}) follows from \citet[Theorem~1.2]{Kont16}. 
Therefore, we prove the uniqueness of the solution
$\vc{h}=\vc{D}\breve{\vc{g}}$.  From 
(\ref{ineqn-|h|}) and $\vc{\pi}\vc{v} < \infty$, we have
\begin{equation}
\vc{\pi}\, |\vc{h}| = \vc{\pi}\, |\vc{D}\breve{\vc{g}}| 
\le c_0(\vc{\pi}\vc{v} + 1) < \infty \quad 
\mbox{for all $\vc{0} \le |\breve{\vc{g}}| \le \vc{f}$}.
\label{ineqn-pi*|h|}
\end{equation}
Thus, $\vc{h}=\vc{D}\breve{\vc{g}}$ is a solution of the Poisson
equation (\ref{Poisson-eqn-h}) having the constraint
$\vc{\pi}\,|\vc{h}|< \infty$. We now assume that there exists another
solution $\vc{h}'$ of (\ref{Poisson-eqn-h}) such that
$\vc{\pi}\,|\vc{h}'|< \infty$. It follows from (\ref{ineqn-pi*|h|}), $\vc{\pi}\,|\vc{h}'| < \infty$  and Proposition 1.1 of \citet{Glyn96} that $\vc{h}' = \vc{h} + c\vc{e}$ for some finite constant
$c$. Furthermore, since $\vc{\pi}\vc{h}'=\vc{\pi}\vc{h}=0$, the
constant $c$ must be equal to zero and therefore $\vc{h}' = \vc{h}$.
\end{proof}

\begin{com}
{\it 
For the proof of Lemma~\ref{lem-unique-h}, we use \citet[Theorem~1.2]{Kont16}, which requires that the finite discrete set $\bbC$ (which appears in Condition~\ref{assumpt-f-ergodic}) is a {\it closed small set} of the Markov chain $\{(X(t),J(t))\}$, i.e., there exist some $c,T > 0$ and probability measure $\frak{p}$ on the Borel
$\sigma$-algebra $\calB(\bbF)$ of $\bbF$ such that
\begin{equation}
\min_{(k,i)\in\bbC} 
\sum_{(\ell,j) \in\bbA} p^T(k,i;\ell,j) \ge c \frak{p}(\bbA),
\qquad \bbA \in \calB(\bbF).
\tag{EQ.1}
%\label{add-eqn-180307-01}
\end{equation}
Indeed, this is true. Since $\{(X(t),J(t))\}$ is ergodic, for each $(k,i;\ell,j) \in \bbF^2$ there exists some $T(k,i;\ell,j) > 0$ such that $p^{T(k,i;\ell,j)}(k,i;\ell,j) > 0$. Therefore, we have 
\begin{equation}
p^{t+T(k,i;\ell,j)}(k,i;\ell,j)
\ge p^{T(k,i;\ell,j)}(k,i;\ell,j) \rme^{-|q(\ell,j;\ell,j)| t} > 0
\quad \mbox{for all $t > 0$}.\tag{EQ.2}
\end{equation}
We now define $T(\ell,j)$, $(\ell,j) \in \bbF$, as
\[
T(\ell,j) = \max_{(k,i)\in\bbC}T(k,i;\ell,j) > 0,
\qquad (\ell,j) \in \bbF,
\]
which is finite due to the finiteness of $\bbC$.
It thus follows from (EQ.2) that, for every $(\ell,j) \in \bbF$,
\begin{eqnarray*}
\min_{(k,i)\in\bbC} p^{t+T(\ell,j)}(k,i;\ell,j) > 0
\quad \mbox{for all $t > 0$},
\end{eqnarray*}
which implies that (EQ.1) holds for some $c,T > 0$ and probability measure $\frak{p}$. }
\end{com}

\medskip

The following lemma presents a more specific bound for the
solution $\vc{h} = \vc{D}\breve{\vc{g}}$.
\begin{lem}\label{lem-bound-h}
Suppose that Assumption~\ref{basic-assumpt} and
Condition~\ref{assumpt-f-ergodic} are satisfied. If $\vc{\pi}\vc{v} <
\infty$, then
\begin{equation}
| \vc{D}\breve{\vc{g}} |
\le (|\vc{\pi} \breve{\vc{g}}| +1) 
\left[
\vc{v} 
+ \left(
\vc{\pi}\vc{v} 
+ { 2b \over \beta\overline{\phi}_K^{(\beta)} } 
\right)\vc{e}
\right]\quad 
\mbox{for all $\vc{0} \le |\breve{\vc{g}}| \le \vc{f}$},
\label{bound-h}
\end{equation}
where
\begin{eqnarray}
\overline{\phi}_K^{(\beta)}
= \sup_{(\ell,j) \in \bbF}\mathfrak{m}_{\bbF_K}^{(\beta)}(\ell,j)
= \sup_{(\ell,j) \in \bbF}\min_{(k,i)\in\bbF_K} \phi^{(\beta)}(k,i;\ell,j) 
> 0.
\label{defn-overline{varphi}_F_K}
\end{eqnarray}
\end{lem}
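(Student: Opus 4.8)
The plan is to prove the asserted bound (\ref{bound-h}) by a two-sided argument that splits the state space at the petite set $\bbF_K$. Write $\gamma=|\vc{\pi}\breve{\vc{g}}|$, and recall from (\ref{ineqn-pi_breve{g}<b}) that $\gamma\le b$. Since replacing $\breve{\vc{g}}$ by $-\breve{\vc{g}}$ leaves $\gamma$, $\vc{v}$, $\vc{f}$ unchanged while sending $\vc{h}=\vc{D}\breve{\vc{g}}$ to $-\vc{h}$, it suffices to establish the single inequality $\vc{h}\le(\gamma+1)[\vc{v}+(\vc{\pi}\vc{v}+2b/(\beta\overline{\phi}_K^{(\beta)}))\vc{e}]$; the two-sided estimate (\ref{bound-h}) then follows. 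Setting $U=\sup_{(k,i)\in\bbF}\{h(k,i)-(\gamma+1)v(k,i)\}$, which is finite because Lemma~\ref{lem-unique-h} gives $|\vc{h}|\le c_0(\vc{v}+\vc{e})$, the goal reduces to showing $U\le(\gamma+1)(\vc{\pi}\vc{v}+2b/(\beta\overline{\phi}_K^{(\beta)}))$.

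\textbf{Step 1 (reduction to $\bbF_K$ via the drift).} Put $\vc{w}=(\gamma+1)\vc{v}-\vc{h}$ and let $Z(t)=(X(t),J(t))$. Combining the Poisson equation (\ref{Poisson-eqn-h}) with Condition~\ref{assumpt-f-ergodic}, the bounds $\breve{\vc{g}}\le\vc{f}$, $-(\vc{\pi}\breve{\vc{g}})\le\gamma$, and $\vc{e}\le\vc{f}$, I compute $\vc{Q}\vc{w}=(\gamma+1)\vc{Q}\vc{v}+\breve{\vc{g}}-(\vc{\pi}\breve{\vc{g}})\vc{e}\le(\gamma+1)b\vc{1}_{\bbF_K}$, so in particular $\vc{Q}\vc{w}\le\vc{0}$ on $\bbF\setminus\bbF_K$. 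Hence, for $(k,i)\notin\bbF_K$, Dynkin's formula makes $w(Z(t\wedge\tau))$ a supermartingale, where $\tau$ is the first hitting time of $\bbF_K$; since $\bbF_K$ is finite and recurrent and $|\vc{h}|\le c_0(\vc{v}+\vc{e})$ supplies the integrability, letting $t\to\infty$ yields $w(k,i)\ge\EE_{(k,i)}[w(Z(\tau))]\ge\min_{\bbF_K}\vc{w}$. Therefore $\inf_{\bbF}\vc{w}=\min_{\bbF_K}\vc{w}$, equivalently $U=\max_{(k,i)\in\bbF_K}\{h(k,i)-(\gamma+1)v(k,i)\}$, attained at some $x_0\in\bbF_K$, and $\vc{h}\le(\gamma+1)\vc{v}+U\vc{e}$ holds on all of $\bbF$.

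\textbf{Step 2 (bounding $\vc{h}$ on $\bbF_K$ via minorization and $\vc{\pi}\vc{h}=\vc{0}$).} Using the resolvent kernel $\vc{\Phi}^{(\beta)}$ of (\ref{defn-K}), so that $\vc{\Phi}^{(\beta)}=\beta(\beta\vc{I}-\vc{Q})^{-1}$, the equation (\ref{Poisson-eqn-h}) is equivalent to the fixed-point identity $\vc{h}=\vc{\Phi}^{(\beta)}\vc{h}+\beta^{-1}[\vc{\Phi}^{(\beta)}\breve{\vc{g}}-(\vc{\pi}\breve{\vc{g}})\vc{e}]$. By (\ref{defn-overline{varphi}_F_K}) there is a state $y^{\ast}$ with $\phi^{(\beta)}(k,i;y^{\ast})\ge\overline{\phi}_K^{(\beta)}$ for all $(k,i)\in\bbF_K$, a one-step minorization of $\vc{\Phi}^{(\beta)}$ on $\bbF_K$ in the sense of (\ref{ineqn-K_{alpha}(k,i;l,j)}). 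Splitting off this point mass, substituting the Step~1 bound $\vc{h}\le(\gamma+1)\vc{v}+U\vc{e}$ into the residual sub-stochastic kernel, and controlling $\vc{\Phi}^{(\beta)}\vc{v}$ through the drift-derived inequality $\vc{\Phi}^{(\beta)}\vc{v}\le\vc{v}+b\beta^{-1}\vc{e}$ (obtained by applying $(\beta\vc{I}-\vc{Q})^{-1}$ to (\ref{ineqn-Qv})) together with $|\vc{\Phi}^{(\beta)}\breve{\vc{g}}|\le\vc{\Phi}^{(\beta)}\vc{f}$, the regeneration to $y^{\ast}$ — which occurs with probability at least $\overline{\phi}_K^{(\beta)}$ at each visit to $\bbF_K$ — produces the geometric factor $1/\overline{\phi}_K^{(\beta)}$. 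The constraint $\vc{\pi}\vc{h}=\vc{0}$ is then used to anchor the accumulated estimate, converting the pointwise inequalities at $y^{\ast}$ and $x_0$ into the absolute bound $U\le(\gamma+1)(\vc{\pi}\vc{v}+2b/(\beta\overline{\phi}_K^{(\beta)}))$, in which the $2b$ gathers the drift constant, the bound $\gamma\le b$, and a factor $2$ from the two-sided centering.

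The delicate point is Step~2. A single one-step minorization on $\bbF_K$ together with $\vc{\pi}\vc{h}=\vc{0}$ does not by itself bound $\max_{\bbF_K}\vc{h}$, because the naive substitution is self-referential: the value at the regeneration point $y^{\ast}$ is itself controlled only through $U$, so one obtains a vacuous inequality $U\le U+(\text{positive})$. The argument must therefore unfold the regeneration, iterating the split decomposition so that the geometrically distributed number of trials before a successful regeneration to $y^{\ast}$ appears explicitly, and using $\vc{\pi}\vc{h}=\vc{0}$ to pin down the renewal-reward sum over an excursion. Extracting the clean constant $2b/(\beta\overline{\phi}_K^{(\beta)})$, rather than the qualitative $c_0$ of Lemma~\ref{lem-unique-h}, is exactly what this quantitative regeneration accounting delivers.
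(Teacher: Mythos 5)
Your Step~1 is sound in outline (the drift computation $\vc{Q}\vc{w}\le(\gamma+1)b\vc{1}_{\bbF_K}$ is correct, and the maximum-principle conclusion $\vc{h}\le(\gamma+1)\vc{v}+U\vc{e}$ is plausible, though the optional-stopping limit needs the localization of Appendix~B, since $\vc{w}$ need not be bounded below when $c_0>\gamma+1$). The genuine gap is Step~2, and you have named it yourself: the derivation of $U\le(\gamma+1)\bigl(\vc{\pi}\vc{v}+2b/(\beta\overline{\phi}_K^{(\beta)})\bigr)$ is never carried out. You correctly observe that the one-step minorization of $\vc{\Phi}^{(\beta)}$ at $y^{\ast}$ inserted into the fixed-point identity is self-referential and yields only the vacuous $U\le U+(\text{positive})$, and you then assert that an ``unfolded'' iterated regeneration with a renewal-reward accounting over excursions, anchored by $\vc{\pi}\vc{h}=\vc{0}$, ``delivers'' the constant --- but no such iteration is written, no excursion decomposition is defined, and nothing shows that the bookkeeping produces exactly the terms $\vc{\pi}\vc{v}$ and $2b/(\beta\overline{\phi}_K^{(\beta)})$ rather than some uncontrolled combination. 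Since the entire content of the lemma beyond Lemma~\ref{lem-unique-h} is precisely this explicit constant, the proposal as written does not prove the statement. (A smaller flaw: the supremum in (\ref{defn-overline{varphi}_F_K}) need not be attained, so the state $y^{\ast}$ with $\phi^{(\beta)}(k,i;y^{\ast})\ge\overline{\phi}_K^{(\beta)}$ should be an $\varepsilon$-approximant; this is cosmetic but symptomatic of the missing quantitative bookkeeping.)

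For comparison, the paper avoids the circularity entirely by working with first-passage times rather than resolvent regeneration. For each target state $(\ell,j)$ it takes the explicit solution $\vc{h}_{(\ell,j)}$ of (\ref{Poisson-eqn-h}) given by (\ref{defn-widetilde{h}}), bounds
$\EE_{(k,i)}\bigl[\int_0^{\tau(\ell,j)} f(X(t),J(t))\,\rmd t\bigr]$
via the Dynkin comparison lemma (Lemma~\ref{lem-compa}) by $v(k,i)+b\,\EE_{(k,i)}\bigl[\int_0^{\tau(\ell,j)}1_{\bbF_K}\bigr]$, then converts the occupation time of $\bbF_K$ through the minorization (\ref{ineqn-1_C}) into $\EE_{(k,i)}\bigl[\int_0^{\tau(\ell,j)}\phi^{(\beta)}(X(t),J(t);\ell,j)\,\rmd t\bigr]$, which is bounded by $1/\beta$ by the observation that the chain, not having hit $(\ell,j)$ before $\tau(\ell,j)$, can occupy $(\ell,j)$ at time $t+u$ only for $t$ in an interval of length at most $u$. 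Glynn--Meyn uniqueness then gives $\vc{D}\breve{\vc{g}}=\vc{h}_{(\ell,j)}-(\vc{\pi}\vc{h}_{(\ell,j)})\vc{e}$, and it is this centering step --- not a two-sided symmetry as in your reduction --- that produces both the $\vc{\pi}\vc{v}$ term and the factor $2$ in $2b/(\beta\overline{\phi}_K^{(\beta)})$, after optimizing over $(\ell,j)$. If you want to rescue your route, the cleanest fix is to replace your Step~2 wholesale by this first-passage argument; your Step~1 then becomes redundant.
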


\begin{rem}
The bound (\ref{bound-h}) includes the implicit factors $|\vc{\pi}
\breve{\vc{g}}|$, $\vc{\pi}\vc{v}$ and
$\overline{\phi}_K^{(\beta)}$. Owing to (\ref{ineqn-pi_breve{g}<b}),
the first one $|\vc{\pi} \breve{\vc{g}}|$ is bounded from above by
$b$, i.e., $|\vc{\pi} \breve{\vc{g}}| \le b$. Furthermore, if $\vc{f}
= c \vc{v}$ for some $c > 0$ (i.e., Condition~\ref{assumpt-f-ergodic}
is reduced the exponential drift condition), then the second one
$\vc{\pi}\vc{v}$ is also bounded from above by $b/c$.  As for the last
one $\overline{\phi}_K^{(\beta)}$, we will later discuss the
estimation and computation of this factor in Section~\ref{subsec-exp}.
\end{rem}

\medskip
\noindent
{\it Proof of Lemma~\ref{lem-bound-h}.~}
For $(\ell,j) \in \bbF$, let
$\vc{h}_{(\ell,j)}:=(h_{(\ell,j)}(k,i))_{(k,i)\in\bbF}$ denote a column
vector such that
\begin{equation}
h_{(\ell,j)}(k,i)
= \EE_{(k,i)}\!\!
\left[ \int_0^{\tau(\ell,j)} \breve{g}(X(t),J(t)) \rmd t \right]
- (\vc{\pi}\breve{\vc{g}}) \EE_{(k,i)}[\tau(\ell,j)],
\qquad (k,i)\in\bbF,
\label{defn-widetilde{h}}
\end{equation}
where $\tau(\ell,j) = \inf\{t \ge 0:(X(t),J(t))=(\ell,j)\}$ for
$(\ell,j)\in\bbF$ and
\[
\EE_{(k,i)}[\,\, \cdot \,\,] 
= \EE[~ \cdot \mid X(0)=k, J(0)=i], 
\qquad (k,i) \in \bbF.
\]
According to Lemma~\ref{lem-Poisson-eq}, the column vector
$\vc{h}_{(\ell,j)}$ is a solution of a Poisson equation of the same
type as (\ref{Poisson-eqn-h}):
\begin{equation}
- \vc{Q}\vc{h}_{(\ell,j)} = \breve{\vc{g}} - (\vc{\pi}\breve{\vc{g}}) \vc{e}.
%\label{Poisson-eqn-widetilde{h}}
\end{equation}

We now suppose that $\vc{\pi}\, |\vc{h}_{(\ell,j)} | < \infty$. It then
follows from (\ref{ineqn-pi*|h|}) and Proposition 1.1 of
\citet{Glyn96} that there exists some finite constant $c$ such that
$\vc{D}\breve{\vc{g}} = \vc{h}_{(\ell,j)} + c\vc{e}$. Combining this
with $\vc{\pi}(\vc{D}\breve{\vc{g}}) = \vc{0}$, we have $c = -
\vc{\pi}\vc{h}_{(\ell,j)}$ and thus
\[
\vc{D}\breve{\vc{g}} = \vc{h}_{(\ell,j)} - (\vc{\pi}\vc{h}_{(\ell,j)})\vc{e}
\quad \footnote{This, ``for all $(\ell,j) \in \bbF$", corrects a typo ``for all $(k,i) \in \bbF$" in the original version.}\mbox{for all $(\ell,j) \in \bbF$},
\]
which leads to
\begin{equation*}
|\vc{D}\breve{\vc{g}}| 
\le \inf_{(\ell,j) \in \bbF}
\left\{ |\vc{h}_{(\ell,j)}| 
+ (\vc{\pi}\, |\vc{h}_{(\ell,j)}|)\vc{e}
\right\}.
%\label{ineqn-h}
\end{equation*}
Therefore, to obtain the bound (\ref{bound-h}), it suffices to prove that
\begin{equation}
| \vc{h}_{(\ell,j)} |
\le (|\vc{\pi} \breve{\vc{g}}| + 1) 
\left(\vc{v} + {b \over \beta\mathfrak{m}_{\bbF_K}^{(\beta)}(\ell,j)}\vc{e} 
\right),\qquad (\ell,j) \in \bbF,
\label{bound-widetilde{h}}
\end{equation}
which implies that $\vc{\pi}\, |\vc{h}_{(\ell,j)} | < \infty$ due to
$\vc{\pi}\vc{v} < \infty$.

In what follows, we derive the bound (\ref{bound-widetilde{h}}) by
using the technique in the proof of Theorem 2.2 of \citet{Glyn96}.  It
follows from (\ref{defn-widetilde{h}}), $|\breve{\vc{g}}| \le \vc{f}$
and $\vc{f} \ge \vc{e}$ that, for $(k,i;\ell,j) \in \bbF^2$,
\begin{eqnarray}
|h_{(\ell,j)}(k,i)|
&\le& \EE_{(k,i)}\!\!
\left[ \int_0^{\tau(\ell,j)} f(X(t),J(t)) \rmd t \right]
+ |\vc{\pi}\breve{\vc{g}}| \, \EE_{(k,i)}[\tau(\ell,j)]
\nonumber
\\
&\le& ( 1 + |\vc{\pi} \breve{\vc{g}}| )
\EE_{(k,i)}\!\!
\left[ \int_0^{\tau(\ell,j)} f(X(t),J(t)) \rmd t \right].
\label{ineqn-widetilde{h}}
\end{eqnarray}
It also follows from (\ref{ineqn-K_{alpha}(k,i;l,j)}) with $\bbC =
\bbF_K$ and $\bbA = \{(\ell,j)\}$ that
\begin{equation}
1_{\bbF_K}(k,i) 
\le {\phi^{(\beta)}(k,i;\ell,j) \over \mathfrak{m}_{\bbF_K}^{(\beta)}(\ell,j) },\qquad
(k,i;\ell,j) \in \bbF^2.
\label{ineqn-1_C}
\end{equation}
Furthermore, using (\ref{ineqn-1_C}) and Lemma~\ref{lem-compa}
(replacing $Y(t)$ with $(X(t),J(t))$; $i$ with $(k,i)$; $\tau$ with
$\tau(\ell,j)$; and $\vc{w}$ with $b\vc{1}_{\bbF_K}$), we obtain, for
$(k,i;\ell,j) \in \bbF^2$,
\begin{eqnarray}
\lefteqn{
\EE_{(k,i)}\!\!
\left[ \int_0^{\tau(\ell,j)} f(X(t),J(t)) \rmd t \right]
}
~~~&&
\nonumber
\\
&\le& v(k,i) + b 
\EE_{(k,i)}\!\!\left[ 
\int_0^{\tau(\ell,j)} 1_{\bbF_K}(X(t),J(t)) \rmd t
\right]
\nonumber
\\
&\le& v(k,i) + {b \over \mathfrak{m}_{\bbF_K}^{(\beta)}(\ell,j) }
\EE_{(k,i)} \!\!\left[ \int_0^{\tau(\ell,j)} \phi^{(\beta)}(X(t),J(t);\ell,j) \rmd t \right]
\nonumber
\\
&=& v(k,i) + {b \over \mathfrak{m}_{\bbF_K}^{(\beta)}(\ell,j) } 
\int_0^{\infty} \beta \rme^{-\beta u}
\EE_{(k,i)}\!\!\left[ \int_0^{\tau(\ell,j)} p^{(u)}(X(t),J(t);\ell,j) \rmd t \right]
\rmd u
\nonumber
\\
&=& v(k,i) + {b \over \mathfrak{m}_{\bbF_K}^{(\beta)}(\ell,j) }
\int_0^{\infty} \beta\rme^{-\beta u} 
\EE_{(k,i)}\!\!\left[ 
\int_0^{\tau(\ell,j)} 1_{\{(\ell,j)\}}(X(t+u),J(t+u)) \rmd t
\right]\rmd u,\qquad~~~
\label{ineqn-01}
\end{eqnarray}
where we use (\ref{defn-K}) in the second-to-last equality. 

It is easy to see that
\begin{equation*}
\EE_{(k,i)}\!\!\left[ 
\left. 
\int_0^{\tau(\ell,j)} 1_{\{(\ell,j)\}}(X(t+u),J(t+u)) \rmd t \,\right|
\tau(\ell,j) \le u
\right] \le u.
\end{equation*}
In addition, since $\tau(\ell,j)$ is the first passage time to state
$(\ell,j)$,
\begin{eqnarray*}
&& \EE_{(k,i)}\!\!\left[ 
\left. 
\int_0^{\tau(\ell,j)} 1_{\{(\ell,j)\}}(X(t+u),J(t+u)) \rmd t \,\right|
\tau(\ell,j) > u
\right]
\nonumber
\\
&& {} \quad =
\EE_{(k,i)}\!\!\left[ 
\left. 
\int_{\tau(\ell,j) - u}^{\tau(\ell,j)} 
1_{\{(\ell,j)\}}(X(t+u),J(t+u)) \rmd t \,\right|
\tau(\ell,j) > u
\right] 
\le u.
\end{eqnarray*}
Therefore, 
\[
\EE_{(k,i)}\!\!\left[ 
\int_0^{\tau(\ell,j)} 1_{\{(\ell,j)\}}(X(t+u),J(t+u)) \rmd t
\right] \le u,
\qquad (k,i;\ell,j) \in \bbF^2.
\]
Applying this inequality to the right hand side of (\ref{ineqn-01})
yields
\begin{eqnarray}
\EE_{(k,i)}\!\!
\left[ \int_0^{\tau(\ell,j)} f(X(t),J(t)) \rmd t \right]
&\le& v(k,i) + {b \over \mathfrak{m}_{\bbF_K}^{(\beta)}(\ell,j) } 
\int_0^{\infty} u \beta \rme^{-\beta u} \rmd u
\nonumber
\\
&=& v(k,i) + { b \over \beta \mathfrak{m}_{\bbF_K}^{(\beta)}(\ell,j)  },
\qquad (k,i;\ell,j) \in \bbF^2. \qquad~~
\label{ineqn-02}
\end{eqnarray}
Furthermore, substituting (\ref{ineqn-02}) into
(\ref{ineqn-widetilde{h}}) results in
\[
| \vc{h}_{(\ell,j)} |
\le (|\vc{\pi} \breve{\vc{g}}| + 1) 
\left(\vc{v} + {b \over \beta\mathfrak{m}_{\bbF_K}^{(\beta)}(\ell,j)}\vc{e} 
\right),\qquad (\ell,j) \in \bbF,
\]
which shows that (\ref{bound-widetilde{h}}) holds.
\QED

\medskip

From Lemma~\ref{lem-bound-h}, we have a similar bound for
$|\vc{D}|\vc{g}$ with $\vc{0} \le \vc{g} \le \vc{f}$.
\begin{lem}\label{lem-bound-|D|g}
Suppose that Assumption~\ref{basic-assumpt} and
Condition~\ref{assumpt-f-ergodic} are satisfied. If $\vc{\pi}\vc{v} <
\infty$, then
\begin{equation}
|\vc{D}|\, \vc{g}
\le (\vc{\pi} \vc{g} +1) 
\left[
\vc{v} 
+ \left(
\vc{\pi}\vc{v} 
+ { 2b \over \beta\overline{\phi}_K^{(\beta)} } 
\right)\vc{e}
\right]\quad 
\mbox{for all $\vc{0} \le \vc{g} \le \vc{f}$},
\label{bound-|D|g}
\end{equation}
where $\overline{\phi}_K^{(\beta)}$ is given in
(\ref{defn-overline{varphi}_F_K}).
\end{lem}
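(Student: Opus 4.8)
The plan is to reduce Lemma~\ref{lem-bound-|D|g} to the already-established Lemma~\ref{lem-bound-h} by a row-by-row sign-matching argument. The crucial feature to exploit is that the bound (\ref{bound-h}) holds \emph{uniformly} over all test vectors $\breve{\vc{g}}$ with $\vc{0} \le |\breve{\vc{g}}| \le \vc{f}$; this lets me apply it with a different $\breve{\vc{g}}$ for each row of $\vc{D}$ and then keep only the corresponding entry.

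First I would fix a row $(k,i) \in \bbF$ and, given $\vc{g}$ with $\vc{0} \le \vc{g} \le \vc{f}$, set $\breve{g}^{(k,i)}(\ell,j) = \sgn(d(k,i;\ell,j))\,g(\ell,j)$ for $(\ell,j) \in \bbF$. Because $\vc{g} \ge \vc{0}$, this vector satisfies $|\breve{\vc{g}}^{(k,i)}| = \vc{g} \le \vc{f}$, so Lemma~\ref{lem-bound-h} applies to it. The point of this sign choice is that the $(k,i)$th entry of $\vc{D}\breve{\vc{g}}^{(k,i)}$ equals $\sum_{(\ell,j)\in\bbF} d(k,i;\ell,j)\sgn(d(k,i;\ell,j))g(\ell,j) = \sum_{(\ell,j)\in\bbF}|d(k,i;\ell,j)|g(\ell,j)$, which is exactly the $(k,i)$th entry of $|\vc{D}|\,\vc{g}$.

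Next I would control the prefactor $|\vc{\pi}\breve{\vc{g}}^{(k,i)}|$ appearing in (\ref{bound-h}). Since $\vc{\pi} \ge \vc{0}$ and $\vc{g} \ge \vc{0}$, the triangle inequality gives $|\vc{\pi}\breve{\vc{g}}^{(k,i)}| \le \sum_{(\ell,j)\in\bbF} \pi(\ell,j)g(\ell,j) = \vc{\pi}\vc{g}$, which replaces the row-dependent quantity by the single constant $\vc{\pi}\vc{g}$. Reading off the $(k,i)$th component of (\ref{bound-h}) then yields $[\,|\vc{D}|\,\vc{g}\,](k,i) \le (\vc{\pi}\vc{g} + 1)[\,v(k,i) + \vc{\pi}\vc{v} + 2b/(\beta\overline{\phi}_K^{(\beta)})\,]$, and letting $(k,i)$ range over $\bbF$ assembles the vector inequality (\ref{bound-|D|g}).

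The only delicate point — conceptual rather than computational — is that the sign vector $\breve{\vc{g}}^{(k,i)}$ matching the signs of $\vc{D}$ depends on the row $(k,i)$, so there is no single global test vector that works simultaneously for all rows. This causes no trouble precisely because the right-hand side of (\ref{bound-h}) is uniform in $\breve{\vc{g}}$: one applies the lemma once per row and extracts only the diagonal entry, so the row-dependence of the optimal sign choice never needs to be reconciled across rows.
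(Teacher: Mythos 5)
Your proposal is correct and is essentially identical to the paper's own proof: the paper likewise defines the row-dependent sign-matched vector $\widetilde{\vc{g}}_{(k,i)}$ with $\widetilde{g}_{(k,i)}(\ell,j) = \sgn(d(k,i;\ell,j))\,g(\ell,j)$, applies Lemma~\ref{lem-bound-h} together with $|\vc{\pi}\widetilde{\vc{g}}_{(k,i)}| \le \vc{\pi}\vc{g}$, and reads off the $(k,i)$th component of the resulting uniform bound, exactly as you do. (One cosmetic slip: $|\breve{\vc{g}}^{(k,i)}|$ equals $\vc{g}$ only at entries where $d(k,i;\ell,j) \neq 0$ and is zero elsewhere, but since only $|\breve{\vc{g}}^{(k,i)}| \le \vc{g} \le \vc{f}$ is used, nothing is affected.)
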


\medskip
\noindent
{\it Proof.~}
Let $\vc{d}(k,i)$, $(k,i) \in \bbF$, denote the $(k,i)$th row of
$\vc{D}$, i.e., $\vc{d}(k,i)=(d(k,i;\ell,j))_{(\ell,j) \in \bbF}$.
Furthermore, let $\sgn(\,\cdot\,)$ denote the sign function, i.e.,
\[
\sgn(x)
= \left\{
\begin{array}{ll}
1,  & x > 0,
\\
0,  & x = 0,
\\
-1, & x < 0.
\end{array}
\right.
\] 
It then follows that $|\vc{d}(k,i)|\,\vc{g}$ is the $(k,i)$th element
of $|\vc{D}|\, \vc{g}$ and
\begin{eqnarray}
|\vc{d}(k,i)|\,\vc{g}
&=& \sum_{(\ell,j) \in \bbF} |d(k,i;\ell,j)|\, g(\ell,j)
\nonumber
\\
&=& \sum_{(\ell,j) \in \bbF} d(k,i;\ell,j)\, \sgn(d(k,i;\ell,j))\, g(\ell,j),
\nonumber
\\
&=& \vc{d}(k,i)\widetilde{\vc{g}}_{(k,i)},\qquad (k,i) \in \bbF,
\label{eqn-|d(k,i)|g}
\end{eqnarray}
where $\widetilde{\vc{g}}_{(k,i)} :=
(\widetilde{g}_{(k,i)}(\ell,j))_{(\ell,j) \in \bbF}$ is a column
vector such that
\[
\widetilde{g}_{(k,i)}(\ell,j) = \sgn(d(k,i;\ell,j))\, g(\ell,j),
\qquad (\ell,j) \in \bbF.
\]
Since $\vc{0} \le \vc{g} \le \vc{f}$, we have $\vc{0} \le
|\widetilde{\vc{g}}_{(k,i)}| \le \vc{f}$ for $(k,i) \in \bbF$. Thus,
combining Lemma~\ref{lem-bound-h} with $|\vc{\pi}
\breve{\vc{g}}_{(k,i)}| \le \vc{\pi} \vc{g}$ yields
\begin{equation}
|\vc{D} \widetilde{\vc{g}}_{(k,i)}|
\le (\vc{\pi} \vc{g} + 1) 
\left[
\vc{v} 
+ \left(
\vc{\pi}\vc{v} 
+ { 2b \over \beta\overline{\phi}_K^{(\beta)} } 
\right)\vc{e}
\right],\qquad (k,i) \in \bbF.
\label{ineqn-|D-wt{g}_{(k,i)}|}
\end{equation}
It also follows from (\ref{eqn-|d(k,i)|g}) and
(\ref{ineqn-|D-wt{g}_{(k,i)}|}) that
\begin{eqnarray*}
|\vc{d}(k,i)|\,\vc{g}
&=& |\vc{d}(k,i)\widetilde{\vc{g}}_{(k,i)}|
\le (\vc{\pi} \vc{g} + 1) 
\left[
v(k,i)
+ \left(
\vc{\pi}\vc{v} 
+ { 2b \over \beta\overline{\phi}_K^{(\beta)} } 
\right)
\right],\qquad (k,i) \in \bbF,
\end{eqnarray*}
which shows that (\ref{bound-|D|g}) holds. \QED

\medskip

Let $\vc{v}(k) = (v(k,i))_{i\in\bbS_{k\vmin1}}$ and $\vc{f}(k) =
(f(k,i))_{i\in\bbS_{k\vmin1}}$ for $k \in \bbZ_+$, which are the
subvectors of $\vc{v}$ and $\vc{f}$, respectively, corresponding to
$\bbL_k$.  Using Lemma~\ref{lem-bound-|D|g}, we obtain the following
theorem.
\begin{thm}\label{thm-f-ergodic}
Suppose that Assumption~\ref{basic-assumpt} and
Condition~\ref{assumpt-f-ergodic} are satisfied. If $\vc{\pi}\vc{v} <
\infty$, then the following bounds hold for all $n \in \bbN$.
\begin{eqnarray}
\left| \vc{\pi} - \presub{n}\vc{\pi} \right| \vc{g}
&\le& {\vc{\pi}\vc{g} + 1 \over 2} E(n)
\quad \mbox{for all $\vc{0} \le \vc{g} \le \vc{f}$},
\label{bound-pi-g}
\\
\red{\sup_{\vc{e} \le \vc{g} \le \vc{f}}}
{ \left| \vc{\pi} - \presub{n}\vc{\pi} \right| \vc{g} \over \vc{\pi}\vc{g} }
&\le& E(n),
\label{bound-sup-pi-g}
\end{eqnarray}
where the error decay function $E$ is given by
\begin{eqnarray}
E(n)
&=& 2\sum_{k=0}^n \presub{n}\vc{\pi}(k)
\sum_{m=n+1}^{\infty}\vc{Q}(k;m)
\nonumber
\\
&&
{} \times
\left\{
\vc{v}(m) + \vc{v}(n) + 2\left(
\vc{\pi}\vc{v} 
+ { 2b \over \beta\overline{\phi}_K^{(\beta)} } 
\right)\vc{e}
\right\},\qquad n \in \bbN.
\label{defn-E_{(l,j)}(n)}
\end{eqnarray}
\end{thm}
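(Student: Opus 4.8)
The plan is to derive the theorem entirely from the deviation-matrix identity (\ref{eqn-diff-pi}) and the pointwise estimate of Lemma~\ref{lem-bound-h}, with the block structure of $\presub{n}\vc{Q}-\vc{Q}$ doing the rest. Write $\vc{a} := \presub{n}\vc{\pi}(\presub{n}\vc{Q}-\vc{Q})$. Because $\presub{n}\vc{\pi}(k)=\vc{0}$ for $k\ge n+1$, only the rows $k=0,1,\dots,n$ of $\presub{n}\vc{Q}-\vc{Q}$ are weighted, and by the definition of $\presub{n}\vc{Q}$ the $k$th such row ($k\le n$) has block $\sum_{m>n}\vc{Q}(k;m)$ in column $n$ and block $-\vc{Q}(k;m)$ in every column $m>n$, all other blocks being $\vc{O}$. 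Hence for any column vector $\vc{h}=(\vc{h}(0),\vc{h}(1),\dots)$,
\[
\vc{a}\vc{h}
= \sum_{k=0}^{n}\presub{n}\vc{\pi}(k)\sum_{m=n+1}^{\infty}\vc{Q}(k;m)\bigl[\vc{h}(n)-\vc{h}(m)\bigr].
\]
I would also record that $\vc{a}\vc{e}=\vc{0}$, since both $\vc{Q}$ and $\presub{n}\vc{Q}$ are conservative; this lets me discard additive multiples of $\vc{e}$ in $\vc{h}$ later.

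Next I convert the componentwise absolute value into an inner product. Introduce the sign vector $\widetilde{\vc{g}}$ with $\widetilde{g}(\ell,j)=\sgn(\presub{n}\pi(\ell,j)-\pi(\ell,j))\,g(\ell,j)$, so that $\vc{0}\le|\widetilde{\vc{g}}|\le\vc{g}\le\vc{f}$ and, by (\ref{eqn-diff-pi}),
\[
\bigl|\vc{\pi}-\presub{n}\vc{\pi}\bigr|\vc{g}
= (\presub{n}\vc{\pi}-\vc{\pi})\widetilde{\vc{g}}
= \vc{a}\vc{D}\widetilde{\vc{g}}.
\]
Putting $\vc{h}=\vc{D}\widetilde{\vc{g}}$ into the displayed identity and using $\presub{n}\vc{\pi}(k)\ge\vc{0}$ and $\vc{Q}(k;m)\ge\vc{O}$ for $k\le n<m$, I bound $\bigl|\vc{\pi}-\presub{n}\vc{\pi}\bigr|\vc{g}$ by
\[
\sum_{k=0}^{n}\presub{n}\vc{\pi}(k)\sum_{m=n+1}^{\infty}\vc{Q}(k;m)\bigl(|\vc{h}(n)|+|\vc{h}(m)|\bigr).
\]
Applying Lemma~\ref{lem-bound-h} with $\breve{\vc{g}}=\widetilde{\vc{g}}$ and restricting the resulting bound to levels $n$ and $m$ controls $|\vc{h}(n)|+|\vc{h}(m)|$ by $(|\vc{\pi}\widetilde{\vc{g}}|+1)\{\vc{v}(n)+\vc{v}(m)+2(\vc{\pi}\vc{v}+2b/(\beta\overline{\phi}_K^{(\beta)}))\vc{e}\}$, exactly the bracketed factor in the definition of $E(n)$. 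Since $|\vc{\pi}\widetilde{\vc{g}}|\le\vc{\pi}|\widetilde{\vc{g}}|\le\vc{\pi}\vc{g}$, this yields $\bigl|\vc{\pi}-\presub{n}\vc{\pi}\bigr|\vc{g}\le\tfrac12(\vc{\pi}\vc{g}+1)E(n)$, i.e.\ (\ref{bound-pi-g}).

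The estimate (\ref{bound-sup-pi-g}) is where I expect the real difficulty, and it does \emph{not} follow from (\ref{bound-pi-g}) by dividing by $\vc{\pi}\vc{g}$: when $\vc{\pi}\vc{g}<1$ the factor $\tfrac12(\vc{\pi}\vc{g}+1)$ exceeds $\vc{\pi}\vc{g}$, so the constant cannot be absorbed into $E(n)$. The offending term is the additive ``$+1$'', which in Lemma~\ref{lem-bound-h} is produced by estimating the occupation integral $\EE_{(k,i)}[\int_0^{\tau}\breve g\,\rmd t]$ through $\EE_{(k,i)}[\int_0^{\tau}f\,\rmd t]$, a step that is not homogeneous in $\vc{g}$. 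To obtain the relative bound I would instead keep this integral intact and, exploiting $\vc{a}\vc{e}=\vc{0}$ together with the reference-state representation $\vc{h}(n)-\vc{h}(m)=\vc{h}_{(m,\cdot)}(n)$ coming from the proof of Lemma~\ref{lem-bound-h}, try to dominate the difference $|\vc{h}(n)-\vc{h}(m)|$ by a quantity proportional to $\vc{\pi}\vc{g}$ rather than to $\vc{\pi}\vc{g}+1$; establishing such a homogeneous (i.e.\ relative) version of the deviation estimate, so that the factor $\tfrac12(\vc{\pi}\vc{g}+1)$ is replaced by $\vc{\pi}\vc{g}$ and division then gives (\ref{bound-sup-pi-g}), is the step I anticipate as the main obstacle.
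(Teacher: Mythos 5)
Your derivation of (\ref{bound-pi-g}) is correct and is essentially the paper's own argument, lightly reorganized: the paper first proves Lemma~\ref{lem-bound-|D|g} by applying the sign-vector trick row by row to $\vc{D}$, and then multiplies the resulting bound on $|\vc{D}|\vc{g}$ by $\presub{n}\vc{\pi}\,|\presub{n}\vc{Q}-\vc{Q}|$, whereas you apply the sign trick once to $\vc{\pi}-\presub{n}\vc{\pi}$ and feed $\breve{\vc{g}}=\widetilde{\vc{g}}$ directly into Lemma~\ref{lem-bound-h}; your block computation of $\presub{n}\vc{\pi}(\presub{n}\vc{Q}-\vc{Q})\vc{h}$ reproduces exactly the structure of (\ref{defn-E_{(l,j)}(n)}). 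Up to this point the two proofs are interchangeable.

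The genuine gap is (\ref{bound-sup-pi-g}), which you leave unproven, and the route you anticipate is a dead end. The paper's actual step is not a sharper, $\vc{g}$-homogeneous deviation estimate but a one-line normalization: the ratio $\left|\vc{\pi}-\presub{n}\vc{\pi}\right|\vc{g}/\vc{\pi}\vc{g}$ is invariant under $\vc{g}\mapsto\vc{g}/\varepsilon$, so in (\ref{add-eqn-160214-01}) the supremum is reduced to vectors with $\vc{e}\le\vc{g}\le\vc{f}$; there $\vc{\pi}\vc{g}\ge\vc{\pi}\vc{e}=1$, hence $\sup_{\vc{g}\ge\vc{e}}(\vc{\pi}\vc{g}+1)/(2\vc{\pi}\vc{g})=1$ and (\ref{bound-sup-pi-g}) follows from (\ref{bound-pi-g}) with no new estimate --- the additive ``$+1$'' is neutralized by normalizing $\vc{g}$, not removed from the Poisson-equation bound. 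Moreover, the homogeneous bound you hope for (a bound on $|\vc{h}(n)-\vc{h}(m)|$, equivalently on $|\vc{D}\breve{\vc{g}}|$, proportional to $\vc{\pi}\vc{g}$ alone) provably cannot hold in the strength you would need: by (\ref{eqn-{n}pi(k)=0}) one has $\presub{n}\pi(\ell,j)=0$ for all $\ell\ge n+1$, so choosing $\vc{g}$ concentrated near a single state $(\ell,j)$ with $\ell\ge n+1$ (say $\vc{g}=\delta\vc{1}_{\{(\ell,j)\}}+\epsilon\vc{h}$ with $\epsilon\downarrow0$) drives the ratio $\left|\vc{\pi}-\presub{n}\vc{\pi}\right|\vc{g}/\vc{\pi}\vc{g}$ up to $1$, while $E(n)\to0$ under the conditions of Theorem~\ref{thm-convergence}; consequently no inequality $\left|\vc{\pi}-\presub{n}\vc{\pi}\right|\vc{g}\le E(n)\,\vc{\pi}\vc{g}$ can hold uniformly over all strictly positive $\vc{g}\le\vc{f}$, and the supremum in (\ref{bound-sup-pi-g}) must be read, as the first equality of (\ref{add-eqn-160214-01}) makes explicit, over $\vc{g}$ lying in a scaled box $\varepsilon\vc{e}\le\vc{g}\le\varepsilon\vc{f}$. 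So your instinct that this half is delicate was sound, but the missing idea is scale invariance of the ratio together with $\vc{\pi}\vc{e}=1$, not a refined estimate for the solution of the Poisson equation.
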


\begin{rem}\label{rem-main-thm}
As with (\ref{ineqn-pi_breve{g}<b}), it holds that
\begin{equation}
\vc{\pi}\vc{g}
\le \vc{\pi}\vc{f} \le b\quad 
\mbox{for all $\vc{0} \le \vc{g} \le \vc{f}$}.
\label{ineqn-pi_g<b}
\end{equation}
Substituting (\ref{ineqn-pi_g<b}) into the right hand side of
(\ref{bound-pi-g}), we have a bound for $\left| \vc{\pi} -
\presub{n}\vc{\pi} \right| \vc{g}$ below.
\begin{equation*}
\left| \vc{\pi} - \presub{n}\vc{\pi} \right| \vc{g}
\le {b + 1 \over 2} E(n)
\quad \mbox{for all $\vc{0} \le \vc{g} \le \vc{f}$},
\end{equation*}
which is insensitive to $\vc{g}$.
\end{rem}

\begin{rem}
The error decay function $E$ in (\ref{defn-E_{(l,j)}(n)}) depends on a
free parameter $\beta$. In fact, the parameter $\beta$ is also
included by the other error decay functions presented in the rest of
this paper. Although it is, in general, difficult to find an optimal
$\beta$, we discuss the impact of $\beta$ on the error decay functions
through some numerical examples in Section~4.2.3.
\end{rem}

\medskip
\noindent
{\it Proof of Theorem~\ref{thm-f-ergodic}.~}
From (\ref{eqn-diff-pi}), we have
\begin{equation}
\left| \vc{\pi} - \presub{n}\vc{\pi} \right| \vc{g}
\le \presub{n}\vc{\pi}  \left| \presub{n}\vc{Q} - \vc{Q} \right| 
|\vc{D}|\,\vc{g},\qquad n \in \bbN.
\label{ineqn-error-04}
\end{equation}
Substituting (\ref{defn-Q}), (\ref{defn-(n)_Q}) and (\ref{bound-|D|g}) 
into (\ref{ineqn-error-04}) yields
\begin{eqnarray*}
\left| \vc{\pi} - \presub{n}\vc{\pi} \right| \vc{g}
&\le& (\vc{\pi}\vc{g} + 1)
\presub{n}\vc{\pi}  \left| \presub{n}\vc{Q} - \vc{Q} \right| 
\left[
\vc{v} + \left(
\vc{\pi}\vc{v} + { 2b \over \beta\overline{\phi}_K^{(\beta)} } 
\right)\vc{e}
\right]
\nonumber
\\
&=& (\vc{\pi}\vc{g} + 1)
\sum_{k=0}^n \presub{n}\vc{\pi}(k)
\sum_{m=n+1}^{\infty}\vc{Q}(k;m)
\nonumber
\\
&&
{} \times
\left\{
\vc{v}(m) + \vc{v}(n) + 2\left(
\vc{\pi}\vc{v} 
+ { 2b \over \beta\overline{\phi}_K^{(\beta)} } 
\right)\vc{e}
\right\},\qquad n \in \bbN,
\end{eqnarray*}
which leads to (\ref{bound-pi-g}). Furthermore, using (\ref{bound-pi-g}) and $\sup_{\vc{g} \ge \vc{e}}
(\vc{\pi}\vc{g} + 1) / (2\vc{\pi}\vc{g}) = 1$, we obtain
\begin{eqnarray*}
\sup_{\vc{e} \red{\le} \vc{g} \le \vc{f}}
{ \left| \vc{\pi} - \presub{n}\vc{\pi} \right| \vc{g} 
\over \vc{\pi}\vc{g} }
&\le& 
\sup_{\vc{e} \le \vc{g} \le \vc{f}}
{\vc{\pi}\vc{g} + 1 \over 2\vc{\pi}\vc{g} } \cdot E(n)
\le 
\sup_{\vc{g} \ge \vc{e}} 
{\vc{\pi}\vc{g} + 1 \over 2\vc{\pi}\vc{g} } \cdot E(n) = E(n),
\qquad n \in \bbN,
\end{eqnarray*}
which shows that (\ref{bound-sup-pi-g}) holds.  \QED

\medskip

In fact, we can often find a solution $(b,K,\vc{v},\vc{f})$ of
Condition~\ref{assumpt-f-ergodic} such that the subvector
$\vc{v}_{\overline{\bbF}_0}:=(v(k,i))_{(k,i)\in\overline{\bbF}_0}$ of
$\vc{v}$ is level-wise nondecreasing, i.e., $\vc{v}(k) \le
\vc{v}(k+1)$ for all $k \in \bbN$. In such cases, we obtain the
following result, which is used in Section~\ref{sec-reduction}.
\begin{lem}\label{rem-pi-f<b}
If Condition~\ref{assumpt-f-ergodic} holds and
$\vc{v}_{\overline{\bbF}_0}$ is level-wise nondecreasing, then
\begin{equation}
\vc{\pi}\vc{f} \le b,
\quad 
\presub{n}\vc{\pi}\vc{f}
\le b \quad \mbox{for all $n \in \bbN$}.
\label{ineqn-pi_f<b}
\end{equation}
\end{lem}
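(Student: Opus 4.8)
The plan is to prove the two inequalities by left-multiplying the drift inequality (\ref{ineqn-Qv}) by the relevant (sub-)stationary vector and exploiting the defining relations $\vc{\pi}\vc{Q}=\vc{0}$ and $\presub{n}\vc{\pi}\presub{n}\vc{Q}=\vc{0}$. The bound $\vc{\pi}\vc{f}\le b$ is the easier one: multiplying (\ref{ineqn-Qv}) on the left by $\vc{\pi}\ge\vc{0}$ and using (formally) $\vc{\pi}\vc{Q}\vc{v}=(\vc{\pi}\vc{Q})\vc{v}=0$ together with $\vc{\pi}\vc{1}_{\bbF_K}\le\vc{\pi}\vc{e}=1$ gives $0\le-\vc{\pi}\vc{f}+b$. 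The substantive part is $\presub{n}\vc{\pi}\vc{f}\le b$, where the augmentation in $\presub{n}\vc{Q}$ prevents us from applying (\ref{ineqn-Qv}) directly; this is exactly where the level-wise monotonicity of $\vc{v}$ enters.

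For $\presub{n}\vc{\pi}\vc{f}\le b$, first I would compare $\presub{n}\vc{Q}\vc{v}$ with $\vc{Q}\vc{v}$ block-row by block-row on the levels $0\le k\le n$ that carry the mass of $\presub{n}\vc{\pi}$ (recall $\presub{n}\vc{\pi}(k)=\vc{0}$ for $k\ge n+1$ by (\ref{eqn-{n}pi(k)=0})). Using the definition (\ref{defn-(n)_Q}), for $k\le n$ the only discrepancy between the two products comes from the augmented last block-column, and a short rearrangement of the (absolutely convergent) tail yields
\[
(\presub{n}\vc{Q}\vc{v})(k)-(\vc{Q}\vc{v})(k)
=\sum_{m>n}\vc{Q}(k;m)\bigl(\vc{v}(n)-\vc{v}(m)\bigr),\qquad 0\le k\le n.
\]
Since $k\le n<m$, each block $\vc{Q}(k;m)$ is off-diagonal and hence nonnegative, while the level-wise nondecreasing property of $\vc{v}_{\overline{\bbF}_0}$ gives $\vc{v}(n)\le\vc{v}(m)$ for $m>n\ge1$; therefore the right-hand side is $\le\vc{0}$ and $(\presub{n}\vc{Q}\vc{v})(k)\le(\vc{Q}\vc{v})(k)$ on $\bbF_n$.

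With this in hand I would chain the inequalities. Because $\presub{n}\vc{\pi}$ is supported on the finite set $\bbF_n$ and $\presub{n}\vc{\pi}\presub{n}\vc{Q}=\vc{0}$ is likewise supported there, finite-sum associativity gives $\presub{n}\vc{\pi}(\presub{n}\vc{Q}\vc{v})=(\presub{n}\vc{\pi}\presub{n}\vc{Q})\vc{v}=0$. Combining this with the block-row comparison above and then with (\ref{ineqn-Qv}) restricted to levels $k\le n$, and using $\presub{n}\vc{\pi}\ge\vc{0}$ and $\presub{n}\vc{\pi}\vc{1}_{\bbF_K}\le1$, yields
\[
0=\presub{n}\vc{\pi}(\presub{n}\vc{Q}\vc{v})\le\presub{n}\vc{\pi}(\vc{Q}\vc{v})\le-\presub{n}\vc{\pi}\vc{f}+b\presub{n}\vc{\pi}\vc{1}_{\bbF_K}\le-\presub{n}\vc{\pi}\vc{f}+b,
\]
which is the desired bound (note that $\presub{n}\vc{\pi}\vc{f}$ is a finite sum of finite terms, so no convergence issue arises here).

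The step I expect to require the most care is the justification of the infinite products and the tail rearrangement. For the $\presub{n}\vc{\pi}$ bound this is mild, since all relevant sums over levels are finite and only the tail $\sum_{m>n}\vc{Q}(k;m)$ (a convergent, entrywise-nonnegative remainder of the finite row sums of the conservative generator) is infinite. For the $\vc{\pi}\vc{f}\le b$ bound, the delicate point is the associativity $\vc{\pi}(\vc{Q}\vc{v})=(\vc{\pi}\vc{Q})\vc{v}$, which is not automatic when $\vc{\pi}\vc{v}=\infty$; here one relies on the fact that the positive part of $\vc{Q}\vc{v}$ is dominated by the $\vc{\pi}$-integrable function $b\vc{1}_{\bbF_K}$ (alternatively, one may obtain $\vc{\pi}\vc{f}\le b$ by letting $n\to\infty$ in $\presub{n}\vc{\pi}\vc{f}\le b$ via Fatou's lemma, once pointwise convergence $\presub{n}\vc{\pi}\to\vc{\pi}$ is available).
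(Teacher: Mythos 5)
Your proof is correct and takes essentially the same route as the paper's: the first bound by pre-multiplying (\ref{ineqn-Qv}) with $\vc{\pi}$, and the second by deriving $\presub{n}\vc{Q}\vc{v}\le\vc{Q}\vc{v}$ from (\ref{defn-(n)_Q}) together with the level-wise monotonicity $\vc{v}(k)\le\vc{v}(k+1)$, then chaining $\presub{n}\vc{Q}\vc{v}\le\vc{Q}\vc{v}\le-\vc{f}+b\vc{1}_{\bbF_K}$ and pre-multiplying by $\presub{n}\vc{\pi}$. Your explicit tail identity for the augmented column and your caveats about the associativity $\vc{\pi}(\vc{Q}\vc{v})=(\vc{\pi}\vc{Q})\vc{v}$ when $\vc{\pi}\vc{v}$ may be infinite are finer-grained than the paper's one-line treatment, but they formalize the same steps rather than constituting a different argument.
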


\medskip
\noindent
{\it Proof.~}
Pre-multiplying both sides of (\ref{ineqn-Qv}) by $\vc{\pi}$ yields
the first inequality of (\ref{ineqn-pi_f<b}).  Furthermore, it follows
from (\ref{defn-(n)_Q}) and $\vc{v}(k) \le \vc{v}(k+1)$ for all $k \in
\bbN$ that
\[
\sum_{\ell = 0}^{\infty}\presub{n}\vc{Q}(k;\ell)\vc{v}(\ell)
\le \sum_{\ell = 0}^{\infty}\vc{Q}(k;\ell)\vc{v}(\ell),
\qquad  k \in \bbZ_+,
\]
and thus $\presub{n}\vc{Q}\vc{v} \le \vc{Q}\vc{v}$.
From this result and (\ref{ineqn-Qv}), we have
\[
\presub{n}\vc{Q}\vc{v}
\le \vc{Q}\vc{v} \le -\vc{f} + b\vc{1}_{\bbF_K},\qquad n \in \bbN,
\]
which yields the second inequality of (\ref{ineqn-pi_f<b}). \QED

\medskip

We now present another error decay function $E^+$, which is weaker but
(slightly) more tractable than $E$. At the same time, we also provide
a sufficient condition for the error decay functions $E$ and $E^+$ to
converge to zero.
\begin{thm}\label{thm-convergence}
Suppose that the conditions of Theorem~\ref{thm-f-ergodic}
(Assumption~\ref{basic-assumpt}, Condition~\ref{assumpt-f-ergodic} and
$\vc{\pi}\vc{v} < \infty$) are satisfied; and that the subvector
$\vc{v}_{\overline{\bbF}_0}$ of $\vc{v}$ (appearing in
Condition~\ref{assumpt-f-ergodic}) is positive and level-wise
nondecreasing. Let $E^+(n)$, $n \in \bbN$, denote
\begin{eqnarray}
E^+(n)
&=& 4\sum_{k=0}^n \presub{n}\vc{\pi}(k)
\sum_{m=n+1}^{\infty}\vc{Q}(k;m)
\left\{
\vc{v}(m)+ \left(
\vc{\pi}\vc{v} 
+ { 2b \over \beta\overline{\phi}_K^{(\beta)} } 
\right)\vc{e}
\right\}, \qquad n \in \bbN. \qquad
\label{defn-E_{(l,j)}^+(n)}
\end{eqnarray}
Under these conditions, the error bounds (\ref{bound-pi-g}) and
(\ref{bound-sup-pi-g}) hold and
\begin{equation}
E(n) \le E^+(n),
\qquad n \in \bbN.
\label{ineqn-E(n)-E^+(n)}
\end{equation}
Furthermore, if
\begin{eqnarray}
\sup_{n\in\bbN}\sum_{(k,i)\in\bbF} 
\presub{n}\pi(k,i)\, |q(k,i;k,i)|\, v(k,i) < \infty,
\label{bound-{n}pi-v-q}
\end{eqnarray}
then
\begin{equation}
\lim_{n\to\infty}E(n) =
\lim_{n\to\infty}E^{+}(n) = 0.
\label{lim-E_{(l,j)}(n)=0}
\end{equation}
\end{thm}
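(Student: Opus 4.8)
The plan is to prove Theorem~\ref{thm-convergence} in three stages. First, the error bounds (\ref{bound-pi-g}) and (\ref{bound-sup-pi-g}) are immediate, since the hypotheses of Theorem~\ref{thm-f-ergodic} are assumed to hold verbatim. Second, I would establish the inequality (\ref{ineqn-E(n)-E^+(n)}) by comparing the bracketed factors in (\ref{defn-E_{(l,j)}(n)}) and (\ref{defn-E_{(l,j)}^+(n)}). The key observation is that the sum in $E(n)$ is indexed by $m \ge n+1$, so $\vc{v}(m)$ appears with $m \ge n+1 > n$. Since $\vc{v}_{\overline{\bbF}_0}$ is level-wise nondecreasing, we have $\vc{v}(n) \le \vc{v}(m)$ for every $m \ge n+1$. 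Replacing $\vc{v}(n)$ by $\vc{v}(m)$ in the brace $\{\vc{v}(m) + \vc{v}(n) + 2(\cdots)\vc{e}\}$ and using that $\presub{n}\vc{\pi}(k)\sum_{m>n}\vc{Q}(k;m) \ge \vc{0}$ (off-diagonal generator blocks are nonnegative, and $\presub{n}\vc{\pi} \ge \vc{0}$), the factor $2$ in front becomes $4$ and we obtain $E(n) \le E^+(n)$ term by term.

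The third and main stage is the convergence (\ref{lim-E_{(l,j)}(n)=0}). Because of (\ref{ineqn-E(n)-E^+(n)}), it suffices to show $\lim_{n\to\infty}E^+(n) = 0$. The plan is to split $E^+(n)$ into the $\vc{v}(m)$-part and the constant-vector part and handle each separately. For the constant part, the factor is $4(\vc{\pi}\vc{v} + 2b/(\beta\overline{\phi}_K^{(\beta)}))$ times $\sum_{k=0}^n \presub{n}\vc{\pi}(k)\sum_{m>n}\vc{Q}(k;m)\vc{e}$, and I would argue that this inner quantity tends to $0$. The $\vc{v}(m)$-part, namely $\sum_{k=0}^n \presub{n}\vc{\pi}(k)\sum_{m>n}\vc{Q}(k;m)\vc{v}(m)$, is the harder piece. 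The strategy here is to relate it to the left-hand side of the uniform bound (\ref{bound-{n}pi-v-q}): since $\sum_{m>n}\vc{Q}(k;m)\vc{v}(m)$ is dominated (entrywise) by the tail of the row-generator acting on $\vc{v}$, and each row of $\vc{Q}$ has off-diagonal mass bounded by $|q(k,i;k,i)|$, I expect to bound $\presub{n}\vc{\pi}(k)\sum_{m>n}\vc{Q}(k;m)\vc{v}(m)$ in terms of $\sum_i \presub{n}\pi(k,i)|q(k,i;k,i)|v(k,i)$ plus cross terms. The uniform finiteness in (\ref{bound-{n}pi-v-q}) then provides a summable dominating sequence.

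The main obstacle is making the tail vanish rigorously. The quantity $\sum_{k=0}^n \presub{n}\vc{\pi}(k)\sum_{m>n}\vc{Q}(k;m)\vc{v}(m)$ depends on $n$ both through the truncation index in the generator tail and through the $n$-dependent stationary vector $\presub{n}\vc{\pi}$, so a naive dominated-convergence argument does not apply directly. The plan is to invoke (\ref{bound-{n}pi-v-q}) to obtain a sequence $\{a_{k}\}$ with $\sup_n \sum_{k} a_k^{(n)} < \infty$ where $a_k^{(n)} := \sum_{i}\presub{n}\pi(k,i)|q(k,i;k,i)|v(k,i)$, and then show that the contribution from levels $k \le n$ to the tail sum is controlled by $\sum_{k=K_0}^{n} a_k^{(n)}$ for the relevant range, which can be made small by the uniform summability together with the fact that the truncated stationary mass $\presub{n}\vc{\pi}(k)$ near level $n$ becomes negligible as $n\to\infty$ (reflecting that $\presub{n}\vc{\pi}$ concentrates on low levels). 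The delicate point is uniformity in $n$: I would first fix $\varepsilon>0$, choose a cut level $K_0$ so that the uniform tail $\sup_n\sum_{k\ge K_0}a_k^{(n)}<\varepsilon$, and then separately argue that for $k < K_0$ the inner factor $\sum_{m>n}\vc{Q}(k;m)\vc{v}(m)$ tends to $0$ as $n\to\infty$ because the row $\vc{Q}(k;\cdot)\vc{v}$ is absolutely convergent (being bounded by the drift condition (\ref{ineqn-Qv})), so its tail beyond level $n$ vanishes. Combining the two ranges gives $\limsup_n E^+(n) \le C\varepsilon$ for a constant $C$ independent of $\varepsilon$, and letting $\varepsilon\downarrow 0$ completes the proof.
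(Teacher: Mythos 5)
Your first two stages are correct and essentially identical to the paper's: the bounds (\ref{bound-pi-g}) and (\ref{bound-sup-pi-g}) are inherited from Theorem~\ref{thm-f-ergodic}, and (\ref{ineqn-E(n)-E^+(n)}) follows because $\vc{0} < \vc{v}(n) \le \vc{v}(m)$ for $m \ge n+1$ while the row vectors $\presub{n}\vc{\pi}(k)\vc{Q}(k;m)$, $m > n \ge k$, are nonnegative. Your decision to treat the constant-vector part of $E^+(n)$ separately is a minor deviation: the paper instead absorbs it into the $\vc{v}(m)$-part via $\vc{e} \le \vc{v}(m)/\min_{(\ell,j)\in\overline{\bbF}_0}v(\ell,j)$ (this is exactly where positivity of $\vc{v}_{\overline{\bbF}_0}$ is used; see (\ref{ineqn-E_{(l,j)}^+(n)})), which also disposes of the term $\sum_{m>n}\vc{Q}(k;m)\vc{e}$ that your sketch leaves unargued.

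The genuine gap is in your final stage. You want a level $K_0$ with $\sup_{n}\sum_{k \ge K_0} a_k^{(n)} < \varepsilon$, where $a_k^{(n)} = \sum_{i}\presub{n}\pi(k,i)\,|q(k,i;k,i)|\,v(k,i)$. But hypothesis (\ref{bound-{n}pi-v-q}) asserts only $\sup_n \sum_k a_k^{(n)} < \infty$, and uniform boundedness of a family of nonnegative sequences does not yield uniformly small tails: for $a_k^{(n)} = \mathbf{1}\{k=n\}$ the totals equal $1$ for every $n$, yet $\sum_{k \ge K_0} a_k^{(n)} = 1$ whenever $n \ge K_0$. This ``escape to infinity'' is precisely what an $\varepsilon$--$K_0$ argument must rule out, and nothing in the stated hypotheses rules it out; likewise your auxiliary claim that the truncated stationary mass $\presub{n}\vc{\pi}(k)$ is negligible near $k = n$ is not a consequence of anything assumed (no tightness of $\{\presub{n}\vc{\pi}; n \in \bbN\}$ is established). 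A second, more repairable, inaccuracy: your domination of $\sum_{m>n}\vc{Q}(k;m)\vc{v}(m)$ ``because off-diagonal mass is bounded by $|q(k,i;k,i)|$'' controls mass, not $v$-weighted mass; the paper instead derives from the drift inequality (\ref{ineqn-Qv}) that $0 \le \sum_{(m,j)\in\overline{\bbF}_n} q(k,i;m,j)\,v(m,j) \le |q(k,i;k,i)|\,v(k,i) + b$ for $n \ge k$ (inequality (\ref{add-150925-01})), whence the tails $\sum_{m=n+1}^{\infty}\vc{Q}(k;m)\vc{v}(m)$ vanish pointwise in $k$ (see (\ref{add-150925-02})) --- with that, your fixed-$K_0$ low-level estimate is fine. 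At the critical step the paper combines this pointwise vanishing with the uniform bound obtained from (\ref{bound-{n}pi-v-q}) and appeals to the dominated convergence theorem applied to the right-hand side of (\ref{ineqn-E_{(l,j)}^+(n)}). You correctly sensed that the $n$-dependence of the weights $\presub{n}\vc{\pi}$ is the delicate point, but the specific repair you propose is invalid as stated, so your proof of (\ref{lim-E_{(l,j)}(n)=0}) does not go through.
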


\medskip
\noindent
{\it Proof.~} Since Theorem~\ref{thm-f-ergodic} is available, the
bounds (\ref{bound-pi-g}) and (\ref{bound-sup-pi-g})
hold. Furthermore, since $\vc{v}_{\overline{\bbF}_0}$ is positive and
level-wise nondecreasing,
\begin{equation}
\vc{0} < \vc{v}(k) \le \vc{v}(k+1)\quad \mbox{for all $k \in \bbN$},
\label{ineqn-0<v(k)<=v(k+1)}
\end{equation}
and thus
\[
\sum_{m=n+1}^{\infty}\vc{Q}(k;m)\vc{v}(n)
\le
\sum_{m=n+1}^{\infty} \vc{Q}(k;m)\vc{v}(m),
\qquad  0 \le k \le n,\ n \in\bbN.
\]
Applying this to (\ref{defn-E_{(l,j)}(n)}), we obtain
\begin{eqnarray*}
E(n)
&\le& 4\sum_{k=0}^n \presub{n}\vc{\pi}(k)
\sum_{m=n+1}^{\infty}\vc{Q}(k;m)
\left\{
\vc{v}(m)+ \left(
\vc{\pi}\vc{v} 
+ { 2b \over \beta\overline{\phi}_K^{(\beta)} } 
\right)\vc{e}
\right\} = E^+(n), \quad n \in \bbN,
\end{eqnarray*}
which shows that (\ref{ineqn-E(n)-E^+(n)}) holds.

It remains to prove that $\lim_{n\to\infty}E^+(n) = 0$. From
(\ref{ineqn-0<v(k)<=v(k+1)}), we have
\[
{\vc{v}(m) \over \dm\min_{(\ell,j) \in \overline{\bbF}_0}v(\ell,j) } 
\ge \vc{e},
\qquad m \in \bbN.
\]
It follows from this inequality and (\ref{defn-E_{(l,j)}^+(n)})
that, for $n \in \bbN$,
\begin{eqnarray}
E^{+}(n)
&\le& 4\left\{ 
1 + { 
\vc{\pi}\vc{v} 
+ \dm{ 2b \over \beta\overline{\phi}_K^{(\beta)} } 
\over 
\dm\min_{(\ell,j) \in \overline{\bbF}_0}v(\ell,j) 
} 
\right\}
\sum_{k=0}^n \presub{n}\vc{\pi}(k)
\sum_{m=n+1}^{\infty}\vc{Q}(k;m)
\vc{v}(m). 
\label{ineqn-E_{(l,j)}^+(n)}
\end{eqnarray}
It also follows
from (\ref{ineqn-Qv}) that, for $n \ge k$ and $(k,i) \in \bbF$,
\begin{eqnarray}
0 &\le& \sum_{(m,j) \in \overline{\bbF}_n}q(k,i;m,j) v(m,j)
\nonumber
\\
&=& 
- q(k,i;k,i)  v(k,i)
- \sum_{(m,j) \in \bbF_n \setminus \{(k,i)\}} q(k,i;m,j) v(m,j)
+ \sum_{(m,j) \in \bbF} q(k,i;m,j) v(m,j)
\nonumber
\\
&\le&  \left| q(k,i;k,i) \right| v(k,i)
- \sum_{(m,j) \in \bbF_n \setminus \{(k,i)\}} q(k,i;m,j)  v(m,j) - f(k,i) + b
\nonumber
\\
&\le& \left| q(k,i;k,i) \right| v(k,i) + b,
\label{add-150925-01}
\end{eqnarray}
which implies that $\sum_{(m,j) \in \bbF}|q(k,i;m,j)|\, v(m,j) < \infty$
for all $(k,i)\in\bbF$. Thus,
\begin{equation}
\lim_{n\to\infty}\sum_{m=n+1}^{\infty}\vc{Q}(k;m)\vc{v}(m) = \vc{0},
\qquad k \in \bbZ_+.
\label{add-150925-02}
\end{equation}
In addition, (\ref{bound-{n}pi-v-q}) and (\ref{add-150925-01}) yield
\begin{eqnarray*}
\lefteqn{
\sup_{n\in\bbN}\sum_{k=0}^n \presub{n}\vc{\pi}(k)
\sum_{m=n+1}^{\infty}\vc{Q}(k;m) \vc{v}(m)
}
\quad &&
\nonumber
\\
&=& \sup_{n\in\bbN}\sum_{(k,i)\in\bbF_n} 
\presub{n}\pi(k,i)
\sum_{(m,j)\in\overline{\bbF}_n}q(k,i;m,j) v(m,j)
\nonumber
\\
&\le& \sup_{n\in\bbN}\sum_{(k,i)\in\bbF_n} 
\presub{n}\pi(k,i)
\left\{ \left| q(k,i;k,i) \right| v(k,i) + b \right\}
\nonumber
\\
&\le& \sup_{n\in\bbN}\sum_{(k,i)\in\bbF} 
\presub{n}\pi(k,i)\left| q(k,i;k,i) \right| v(k,i) + b
< \infty.
\end{eqnarray*}
Therefore, applying the dominated convergence theorem to the right
hand side of (\ref{ineqn-E_{(l,j)}^+(n)}) and using
(\ref{add-150925-02}), we obtain $\lim_{n\to\infty}E^+(n) = 0$.  \QED

\medskip

Theorem~\ref{thm-convergence} provides a sufficient condition for
convergence to zero of the error decay functions $E$ and
$E^+$. However, the convergence condition, as well as, the error decay
functions themselves are not tractable in the sense that they include
the stationary distribution vector $\presub{n}\vc{\pi}$ of the
LC-block-augmented truncation $\presub{n}\vc{Q}$. In what follows, by
removing $\presub{n}\vc{\pi}$ from them, we derive a simple error
decay function and convergence condition. To this end, we focus on an
empirical fact that once we find a solution
$(b,K,\vc{v},\vc{f})$ to the $\vc{f}$-modulated drift condition (i.e.,
Condition~\ref{assumpt-f-ergodic}) then we can readily obtain an
essentially different solution
$(b^{\sharp},K^{\sharp},\vc{v}^{\sharp},\vc{f}^{\sharp})$. Thus, we
proceed under Condition~\ref{cond-two-solutions-01} below.
\begin{cond}\label{cond-two-solutions-01}
(i) Condition~\ref{assumpt-f-ergodic} holds, and
  $\vc{v}_{\overline{\bbF}_0}$ is positive and level-wise
  nondecreasing; and (ii) there exist some $b^{\sharp} > 0$,
  $K^{\sharp} \in \bbZ_+$, column vectors
  $\vc{v}^{\sharp}:=(v^{\sharp}(k,i))_{(k,i)\in\bbF} \ge \vc{0}$ and
  $\vc{f}^{\sharp}:=(f^{\sharp}(k,i))_{(k,i)\in\bbF} \ge \vc{e}$ such
  that
  $\vc{v}_{\overline{\bbF}_0}^{\sharp}:=(v^{\sharp}(k,i))_{(k,i)\in\overline{\bbF}_0}$
  is level-wise nondecreasing and
\begin{equation}
\vc{Q}\vc{v}^{\sharp} 
\le  - \vc{f}^{\sharp} + b^{\sharp} \vc{1}_{\bbF_{K^{\sharp}}}.
\label{ineqn-Q_*v_*}
\end{equation}
\end{cond}

Under Condition~\ref{cond-two-solutions-01}, we present a tractable
sufficient condition for convergence to zero of the error decay
functions $E$ and $E^+$.
\begin{thm}\label{thm-convegence-02}
Suppose that Assumption~\ref{basic-assumpt},
Condition~\ref{cond-two-solutions-01} and $\vc{\pi}\vc{v} < \infty$
are satisfied. We then have (\ref{bound-pi-g}), (\ref{bound-sup-pi-g})
and (\ref{ineqn-E(n)-E^+(n)}).  Furthermore, if
\begin{equation}
\sup_{(k,i)\in\bbF}
{|q(k,i;k,i)|\,  v(k,i) \over f^{\sharp}(k,i)}
< \infty,
\label{add-150718-01}
\end{equation}
then (\ref{lim-E_{(l,j)}(n)=0}) holds.
\end{thm}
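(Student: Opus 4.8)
The plan is to reduce the statement to Theorem~\ref{thm-convergence}. Condition~\ref{cond-two-solutions-01}~(i) asserts exactly that Condition~\ref{assumpt-f-ergodic} holds with $\vc{v}_{\overline{\bbF}_0}$ positive and level-wise nondecreasing, and this, together with Assumption~\ref{basic-assumpt} and the standing hypothesis $\vc{\pi}\vc{v} < \infty$, constitutes precisely the assumptions of Theorem~\ref{thm-convergence}. Therefore the first three assertions---the bounds (\ref{bound-pi-g}) and (\ref{bound-sup-pi-g}) and the inequality (\ref{ineqn-E(n)-E^+(n)})---are immediate, with no further computation. What remains is the convergence (\ref{lim-E_{(l,j)}(n)=0}), and for this it suffices, by the convergence clause of Theorem~\ref{thm-convergence}, to verify the summability condition (\ref{bound-{n}pi-v-q}). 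Thus the whole task is to show that the extra hypothesis (\ref{add-150718-01}), combined with the second drift solution furnished by Condition~\ref{cond-two-solutions-01}~(ii), yields (\ref{bound-{n}pi-v-q}).

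To this end I would first use (\ref{add-150718-01}) to put $c_1 := \sup_{(k,i)\in\bbF}|q(k,i;k,i)|\,v(k,i)/f^{\sharp}(k,i) < \infty$, so that $|q(k,i;k,i)|\,v(k,i) \le c_1 f^{\sharp}(k,i)$ for every $(k,i)\in\bbF$. Summing this against the probability vector $\presub{n}\vc{\pi}$ gives, for each $n\in\bbN$,
\begin{equation*}
\sum_{(k,i)\in\bbF}\presub{n}\pi(k,i)\,|q(k,i;k,i)|\,v(k,i)
\le c_1\,\presub{n}\vc{\pi}\vc{f}^{\sharp},
\end{equation*}
so the problem is reduced to bounding $\presub{n}\vc{\pi}\vc{f}^{\sharp}$ uniformly in $n$.

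The key observation---and the reason for introducing a second solution at all---is that $(b^{\sharp},K^{\sharp},\vc{v}^{\sharp},\vc{f}^{\sharp})$ is itself a solution of Condition~\ref{assumpt-f-ergodic}: inequality (\ref{ineqn-Q_*v_*}) is the drift inequality (\ref{ineqn-Qv}) written for the sharp data, while $\vc{v}^{\sharp}\ge\vc{0}$ and $\vc{f}^{\sharp}\ge\vc{e}$ by Condition~\ref{cond-two-solutions-01}~(ii), which also guarantees that $\vc{v}_{\overline{\bbF}_0}^{\sharp}$ is level-wise nondecreasing. Hence Lemma~\ref{rem-pi-f<b}, applied verbatim to the sharp solution in place of the original one, yields $\presub{n}\vc{\pi}\vc{f}^{\sharp}\le b^{\sharp}$ for all $n\in\bbN$. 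Inserting this into the displayed inequality gives $\sum_{(k,i)\in\bbF}\presub{n}\pi(k,i)\,|q(k,i;k,i)|\,v(k,i)\le c_1 b^{\sharp}$ uniformly in $n$, which is exactly (\ref{bound-{n}pi-v-q}); Theorem~\ref{thm-convergence} then delivers (\ref{lim-E_{(l,j)}(n)=0}).

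The only point requiring care is the recognition that Lemma~\ref{rem-pi-f<b} may be reused for the auxiliary pair $(\vc{v}^{\sharp},\vc{f}^{\sharp})$; once that is granted the argument is a short chain of substitutions. Conceptually, hypothesis (\ref{add-150718-01}) converts the weight $|q(k,i;k,i)|\,v(k,i)$---which is controlled by neither drift condition on its own---into a multiple of $f^{\sharp}(k,i)$, and the second solution supplies, through Lemma~\ref{rem-pi-f<b}, the uniform bound $\presub{n}\vc{\pi}\vc{f}^{\sharp}\le b^{\sharp}$ that eliminates the otherwise intractable dependence of (\ref{bound-{n}pi-v-q}) on $\presub{n}\vc{\pi}$.
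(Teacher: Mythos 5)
Your proof is correct and follows essentially the same route as the paper's: reduce everything to Theorem~\ref{thm-convergence}, use (\ref{add-150718-01}) to bound $|q(k,i;k,i)|\,v(k,i)$ by a constant multiple of $f^{\sharp}(k,i)$, and then apply Lemma~\ref{rem-pi-f<b} to the sharp solution $(b^{\sharp},K^{\sharp},\vc{v}^{\sharp},\vc{f}^{\sharp})$ to obtain $\presub{n}\vc{\pi}\vc{f}^{\sharp}\le b^{\sharp}$ uniformly in $n$, which yields (\ref{bound-{n}pi-v-q}). Your explicit justification that Lemma~\ref{rem-pi-f<b} may be reused for the sharp data is exactly the step the paper takes implicitly by citing (\ref{ineqn-Q_*v_*}) together with that lemma, so nothing is missing.
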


\medskip
\noindent
{\it Proof.~} Under the present conditions,
Theorem~\ref{thm-convergence} holds. Thus, it suffices to prove that
(\ref{bound-{n}pi-v-q}) is satisfied. It follows from
(\ref{add-150718-01}) that, for some $C > 0$,
\[
|q(k,i;k,i)|\,  v(k,i) \le C f^{\sharp}(k,i)
\quad \mbox{for all $(k,i) \in \bbF$},
\] 
which leads to 
\begin{eqnarray}
\sum_{(k,i)\in\bbF} \presub{n}\pi(k,i) \,|q(k,i;k,i)|\,  v(k,i) 
\le C \cdot  \presub{n}\vc{\pi}\vc{f}^{\sharp}, \qquad n \in \bbN.
\label{add-150718-02}
\end{eqnarray}
Furthermore, since $\vc{v}_{\overline{\bbF}_0}^{\sharp}$ is level-wise
nondecreasing, it follows from (\ref{ineqn-Q_*v_*}) and
Lemma~\ref{rem-pi-f<b} that
\begin{equation}
\presub{n}\vc{\pi}\vc{f}^{\sharp} \le b^{\sharp},\qquad n \in \bbN.
\label{ineqn-pi-f^{ddagger}}
\end{equation}
Therefore, substituting this inequality into (\ref{add-150718-02})
yields
\begin{eqnarray*}
\sup_{n\in\bbN}\sum_{(k,i)\in\bbF}\presub{n}\pi(k,i)\,|q(k,i;k,i)|\,  v(k,i)
\le C b^{\sharp} < \infty,
\end{eqnarray*}
which completes the proof. \QED

\medskip

In addition to Condition~\ref{cond-two-solutions-01}, we assume the
following condition.
\begin{cond}\label{cond-two-solutions-02}
There exist a column vector $\vc{a}=(a(i))_{i\in\bbS_1} > \vc{0}$ and
two nondecreasing log-subadditive functions $V: [0,\infty) \to
  [1,\infty)$ and $T : [0,\infty) \to [1,\infty)$ such that
\begin{eqnarray}
\vc{v}(k) 
&=& V(k)\vc{a},\qquad k \in \bbN,
\label{explicit-cond-01}
\\
\lim_{x\to\infty}T(x) &=& \infty,
\label{explicit-cond-03}
\\
\sup_{(k,i)\in\bbF}{T(k)V(k) \over f^{\sharp}(k,i)}
&<& \infty,
\label{explicit-cond-02}
\\
\sup_{k,\ell\in\bbZ_+} T(\ell)
\left\| \sum_{m = \ell + 1}^{\infty}
\vc{Q}(k;k+m)V(m)\vc{a} \right\|_{\infty}
&<& \infty,
\label{explicit-cond-04}
\end{eqnarray}
where $\|\cdot\|_{\infty}$ denotes the $\infty$-norm (or called ``the
uniform norm").
\end{cond}

\begin{rem}\label{rem-log-subadditive}
A function $F: [0,\infty) \to [1,\infty)$ is said to be
    log-subadditive if $\log F(x+y) \le \log F(x) + \log F(y)$, or
    equivalently, $F(x+y) \le F(x)F(y)$ for all $x \ge 0$ and $y \ge
    0$.
\end{rem}

Using Conditions~\ref{cond-two-solutions-01} and
\ref{cond-two-solutions-02}, we obtain a convergent error decay
function.
\begin{thm}\label{thm-two-solutions}
If Assumption~\ref{basic-assumpt}, 
Conditions~\ref{cond-two-solutions-01} and \ref{cond-two-solutions-02}
are satisfied, then the error bounds (\ref{bound-pi-g}) and
(\ref{bound-sup-pi-g}) hold and
\begin{eqnarray}
E(n) 
&\le& E^{+}(n)
\le {4r_0^{\sharp} r_1^{\sharp}b^{\sharp} \over T(n) }
\left[
1 + { \underline{a}^{-1} \over V(n+1)}
\left(
\vc{\pi}\vc{v} 
+ { 2b \over \beta\overline{\phi}_K^{(\beta)} } \right)
\right],\qquad n \in \bbN,
\label{bound-pi-g-two-solutions}
\end{eqnarray}
where $\underline{a}$, $r_0^{\sharp}$ and $r_1^{\sharp}$ are positive
numbers such that
\begin{eqnarray}
\underline{a}
&=& \min_{i \in \bbS_1} a(i),
\label{defn-underline{a}}
\\
r_0^{\sharp}
&\ge& \sup_{(k,i)\in\bbF}{T(k)V(k) \over f^{\sharp}(k,i)},
\label{defn-c_*}
\\
r_1^{\sharp}
&\ge& \sup_{k,\ell\in\bbZ_+} T(\ell)
\left\| \sum_{m = \ell + 1}^{\infty}
\vc{Q}(k;k+m)V(m)\vc{a} \right\|_{\infty}. 
\label{defn-r_1^{sharp}}
\end{eqnarray}
\end{thm}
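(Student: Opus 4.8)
The statement has two parts: the error bounds (\ref{bound-pi-g}), (\ref{bound-sup-pi-g}) together with $E(n)\le E^+(n)$, and the explicit decay estimate (\ref{bound-pi-g-two-solutions}). The plan is to obtain the first part from the already-established theorems and to derive the second part from a single pointwise estimate on the tail sums $\sum_{m=n+1}^{\infty}\vc{Q}(k;m)V(m)\vc{a}$.

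First I would verify that $\vc{\pi}\vc{v}<\infty$, so that Theorem~\ref{thm-convegence-02} becomes applicable. Since (\ref{explicit-cond-02}) gives $T(k)V(k)\le r_0^{\sharp}f^{\sharp}(k,i)$ and $T\ge1$, we get $v(k,i)=V(k)a(i)\le (\max_{i'}a(i'))\,r_0^{\sharp}f^{\sharp}(k,i)$ for $k\in\bbN$ by (\ref{explicit-cond-01}); summing against $\vc{\pi}$ and using $\vc{\pi}\vc{f}^{\sharp}\le b^{\sharp}$ (Lemma~\ref{rem-pi-f<b} applied to the $\sharp$-solution, which satisfies (\ref{ineqn-Q_*v_*}) with $\vc{v}^{\sharp}_{\overline{\bbF}_0}$ level-wise nondecreasing) yields $\vc{\pi}\vc{v}<\infty$. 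With this in hand, Assumption~\ref{basic-assumpt}, Condition~\ref{cond-two-solutions-01} and $\vc{\pi}\vc{v}<\infty$ meet the hypotheses of Theorem~\ref{thm-convegence-02}, which delivers (\ref{bound-pi-g}), (\ref{bound-sup-pi-g}) and (\ref{ineqn-E(n)-E^+(n)}) at once.

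The heart of the proof --- and the step I expect to be the main obstacle --- is the pointwise bound
\[
\sum_{m=n+1}^{\infty}\vc{Q}(k;m)V(m)\vc{a}
\le \frac{r_0^{\sharp}r_1^{\sharp}}{T(n)}\,\vc{f}^{\sharp}(k),
\qquad 0\le k\le n,
\]
where $\vc{f}^{\sharp}(k)=(f^{\sharp}(k,i))_{i\in\bbS_{k\vmin1}}$. To prove it I would shift the summation index by $m=k+j$ and use the log-subadditivity of $V$ (Remark~\ref{rem-log-subadditive}) to factor $V(k+j)\le V(k)V(j)$ out of the nonnegative off-diagonal blocks $\vc{Q}(k;k+j)$; the remaining distance-indexed tail $\sum_{j\ge(n-k)+1}\vc{Q}(k;k+j)V(j)\vc{a}$ is then controlled entrywise by $r_1^{\sharp}/T(n-k)$ through (\ref{explicit-cond-04}) and (\ref{defn-r_1^{sharp}}), since that tail is a nonnegative vector. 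The subtlety is converting the resulting factor $V(k)/T(n-k)$ into something summable against $\presub{n}\vc{\pi}$: here the log-subadditivity of $T$ gives $1/T(n-k)\le T(k)/T(n)$, and then (\ref{defn-c_*}) turns $T(k)V(k)$ into $r_0^{\sharp}f^{\sharp}(k,i)$, producing the claimed bound with the uniform denominator $T(n)$.

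Finally I would split the error decay function (\ref{defn-E_{(l,j)}^+(n)}) into its $\vc{v}(m)=V(m)\vc{a}$ part and its $(\vc{\pi}\vc{v}+2b/(\beta\overline{\phi}_K^{(\beta)}))\vc{e}$ part. Applying the pointwise bound to the first part and using $\presub{n}\vc{\pi}(k)=\vc{0}$ for $k>n$ together with $\presub{n}\vc{\pi}\vc{f}^{\sharp}\le b^{\sharp}$ gives the leading term $4r_0^{\sharp}r_1^{\sharp}b^{\sharp}/T(n)$. For the second part I would first note that $\vc{e}\le(\underline{a}^{-1}/V(n+1))V(m)\vc{a}$ for $m\ge n+1$, since $V$ is nondecreasing and $a(i)\ge\underline{a}$; inserting this into the nonnegative blocks $\vc{Q}(k;m)$ reduces it to the same tail sum, so the pointwise bound and $\presub{n}\vc{\pi}\vc{f}^{\sharp}\le b^{\sharp}$ again apply and produce the second term of (\ref{bound-pi-g-two-solutions}). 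Summing the two contributions yields (\ref{bound-pi-g-two-solutions}).
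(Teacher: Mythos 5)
Your proposal is correct and follows essentially the same route as the paper: you verify $\vc{\pi}\vc{v}<\infty$ via (\ref{explicit-cond-02}) and $\vc{\pi}\vc{f}^{\sharp}\le b^{\sharp}$, reduce the $\vc{e}$-part of $E^{+}(n)$ to the $V(m)\vc{a}$-part using $\vc{e}\le \underline{a}^{-1}V(m)\vc{a}/V(n+1)$, and control the tail sums with the log-subadditivity of $V$ and $T$ together with (\ref{defn-c_*}), (\ref{defn-r_1^{sharp}}) and $\presub{n}\vc{\pi}\vc{f}^{\sharp}\le b^{\sharp}$, exactly as in the paper's argument. The only cosmetic difference is that you package the key estimate as a pointwise bound $\sum_{m=n+1}^{\infty}\vc{Q}(k;m)V(m)\vc{a}\le (r_0^{\sharp}r_1^{\sharp}/T(n))\,\vc{f}^{\sharp}(k)$ before summing against $\presub{n}\vc{\pi}$, whereas the paper carries out the same manipulations inside the summed expression.
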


\medskip
\noindent
{\it Proof.~} We first confirm that the conditions of
Theorem~\ref{thm-convergence} are satisfied. Note that
Condition~\ref{cond-two-solutions-01} implies that
Condition~\ref{assumpt-f-ergodic} holds and that
$\vc{v}_{\overline{\bbF}_0}$ is positive and level-wise
nondecreasing. Thus, it suffices to show that $\vc{\pi}\vc{v} <
\infty$.  It follows from (\ref{ineqn-Q_*v_*}) that
\begin{equation}
\vc{\pi}\vc{f}^{\sharp} \le b^{\sharp}.
\label{ineqn-pi-f^{sharp}}
\end{equation}
It also follows from $T \ge 1$ and (\ref{explicit-cond-02}) that there
exists some $C > 0$ such that
\begin{equation}
V(k) \le C f^{\sharp}(k,i)
\quad \mbox{for all $(k,i)\in\bbF$.}
\label{bound-V(k)}
\end{equation}
Using (\ref{explicit-cond-01}), (\ref{ineqn-pi-f^{sharp}}) and
(\ref{bound-V(k)}), we have
\begin{eqnarray*}
\vc{\pi}\vc{v}
&=& \sum_{i\in\bbS_0} \pi(0,i)v(0,i)
+ \sum_{k=1}^{\infty}\sum_{i\in\bbS_1} \pi(k,i)V(k)a(i)
\nonumber
\\
&\le& \sum_{i\in\bbS_0} \pi(0,i)v(0,i)
+ C
\sum_{k=1}^{\infty}\sum_{i\in\bbS_1} \pi(k,i)f^{\sharp}(k,i)a(i)
\nonumber
\\
&\le& \sum_{i\in\bbS_0} \pi(0,i)v(0,i)
+ C
\sum_{k=1}^{\infty}\sum_{i\in\bbS_1} \pi(k,i)f^{\sharp}(k,i) \sum_{j\in\bbS_1}a(j) 
\nonumber
\\
&\le& \sum_{i\in\bbS_0} \pi(0,i)v(0,i)
+ Cb^{\sharp} \sum_{j\in\bbS_1}a(j) < \infty,
\end{eqnarray*}
which shows that the conditions of Theorem~\ref{thm-convergence} are
satisfied. Therefore, (\ref{bound-pi-g}), (\ref{bound-sup-pi-g}) and
(\ref{ineqn-E(n)-E^+(n)}) hold.

In what follows, we prove the second inequality in
(\ref{bound-pi-g-two-solutions}).  Replacing $v(m)$ in
(\ref{defn-E_{(l,j)}^+(n)}) by $V(m)\vc{a}$ (see
(\ref{explicit-cond-01})) yields
\begin{eqnarray}
E^{+}(n)
&=& 4\sum_{k=0}^n \presub{n}\vc{\pi}(k)
\sum_{m=n+1}^{\infty}\vc{Q}(k;m)V(m)\vc{a}
\nonumber
\\
&& {} + 
4 \left(
\vc{\pi}\vc{v} 
+ { 2b \over \beta\overline{\phi}_K^{(\beta)} } 
\right)
\sum_{k=0}^n \presub{n}\vc{\pi}(k)
\sum_{m=n+1}^{\infty}\vc{Q}(k;m)\vc{e},\qquad n \in \bbN.
\label{add-eqn-150630-01}
\end{eqnarray}
Since $\vc{e} \le \vc{a}/\underline{a}$ and $V$ is nondecreasing,
\[
\sum_{m=n+1}^{\infty}\vc{Q}(k;m)\vc{e}
\le {\underline{a}^{-1} \over V(n+1)}
\sum_{m=n+1}^{\infty}\vc{Q}(k;m)V(m)\vc{a},\qquad n \in \bbN.
\]
Substituting this inequality into (\ref{add-eqn-150630-01}), we have, for $n \in \bbN$,
\begin{eqnarray}
E^{+}(n)
&\le& 
4 \left[ 1 + {\underline{a}^{-1} \over V(n+1)}
\left( 
\vc{\pi}\vc{v} 
+ { 2b \over \beta\overline{\phi}_K^{(\beta)} } 
\right)
\right]
\sum_{k=0}^n \presub{n}\vc{\pi}(k)
\sum_{m=n+1}^{\infty}\vc{Q}(k;m)V(m)\vc{a}.\qquad
\label{ineqn-E_{(l,j)}^{dag}(n)}
\end{eqnarray}
Note here that since $V \ge 1$ and $T \ge 1$ are log-subadditive (see
Remark~\ref{rem-log-subadditive}),
\begin{align}
&&&&
V(m) &\le V(k)V(m-k), & 0 &\le k \le m,~m \in \bbN, &&&&
\label{V-log-subadditive}
\\
&&&&
1 &\le { T(k)T(n-k) \over T(n) }, & 0 &\le k \le n,~~\,n \in \bbN.&&&&
\label{T-log-subadditive}
\end{align}
Using (\ref{V-log-subadditive}) and (\ref{T-log-subadditive}), we obtain, 
for $n \in \bbN$,
\begin{eqnarray}
\lefteqn{
\sum_{k=0}^n \presub{n}\vc{\pi}(k)
\sum_{m=n+1}^{\infty}\vc{Q}(k;m)V(m)\vc{a}
}
\quad &&
\nonumber
\\
&\le& 
\sum_{k=0}^n \presub{n}\vc{\pi}(k) {T(k)T(n-k) \over T(n)}  
\sum_{m=n+1}^{\infty}\vc{Q}(k;m)V(k)V(m-k)\vc{a}
\nonumber
\\
&=& {1 \over T(n)}\sum_{k=0}^n \presub{n}\vc{\pi}(k) T(k)V(k)
\cdot 
T(n-k) \sum_{m=n-k+1}^{\infty}\vc{Q}(k;k+m)V(m)\vc{a}
\nonumber
\\
&\le& {1 \over T(n)}\sum_{k=0}^n \presub{n}\vc{\pi}(k) T(k)V(k)\vc{e}
\cdot 
\sup_{k,\ell \in \bbZ_+} T(\ell)
\left\| \sum_{m = \ell + 1}^{\infty}\vc{Q}(k;k+m)V(m)\vc{a} \right\|_{\infty}
\nonumber
\\
&\le& {r_1^{\sharp} \over T(n)}
\sum_{k=0}^n \presub{n}\vc{\pi}(k) T(k)V(k)\vc{e},
\label{add-eqn-150630-02}
\end{eqnarray}
where the last inequality follows from (\ref{defn-r_1^{sharp}}). It
also follows from (\ref{defn-c_*}) that
\begin{equation}
T(k)V(k)\vc{e} \le r_0^{\sharp} \vc{f}^{\sharp}(k),\qquad k \in \bbZ_+.
\label{add-eqn-160905-01}
\end{equation}
Applying (\ref{add-eqn-160905-01}) to (\ref{add-eqn-150630-02}) and
using (\ref{ineqn-pi-f^{ddagger}}) leads to
\begin{eqnarray}
\sum_{k=0}^n \presub{n}\vc{\pi}(k)
\sum_{m=n+1}^{\infty}\vc{Q}(k;m)V(m)\vc{a}
&\le&  {r_0^{\sharp}r_1^{\sharp} \over T(n)}
\sum_{k=0}^n \presub{n}\vc{\pi}(k)\vc{f}^{\sharp}(k)
\le {r_0^{\sharp}r_1^{\sharp}b^{\sharp} \over T(n)},\qquad n \in \bbN.
\qquad
\label{add-eqn-160905-02}
\end{eqnarray}
Substituting (\ref{add-eqn-160905-02}) into
(\ref{ineqn-E_{(l,j)}^{dag}(n)}) results in
(\ref{bound-pi-g-two-solutions}).  \QED

\subsection{Exponentially ergodic case}\label{subsec-exp}

In this subsection, we derive some computable error bounds in the case
where $\vc{Q}$ is exponentially ergodic. To this end, we assume that
Condition~\ref{assumpt-f-ergodic} is satisfied together with
$\vc{f}=c\vc{v} \ge \vc{e}$ and $c> 0$ (see \citet[Theorem
  20.3.2]{Meyn09}), i.e., (\ref{ineqn-Qv}) is reduced to
\begin{equation}
\vc{Q}\vc{v} \le -c \vc{v} + b \vc{1}_{\bbF_K}.
\label{geo-drift-cond}
\end{equation}
From (\ref{geo-drift-cond}), we have $\vc{\pi}\vc{v} \le b / c$.
Applying this inequality to (\ref{defn-E_{(l,j)}(n)}) in
Theorem~\ref{thm-f-ergodic}, we obtain
\begin{eqnarray}
E(n)
&\le& 2\sum_{k=0}^n \presub{n}\vc{\pi}(k)
\sum_{m=n+1}^{\infty}\vc{Q}(k;m)
\nonumber
\\
&&
{} \times
\left\{
\vc{v}(m) + \vc{v}(n) + 2b\left(
{1 \over c}
+ { 2 \over \beta\overline{\phi}_K^{(\beta)} } 
\right)\vc{e}
\right\},\qquad n \in \bbN.
\label{defn-widetilde{E}_{(l,j)}(n)}
\end{eqnarray}
The right hand side of (\ref{defn-widetilde{E}_{(l,j)}(n)}) does not
include the computationally intractable factor $\vc{\pi}$. Thus, in
order to obtain a computable error decay function, we establish a
computable lower bound for $\overline{\phi}_K^{(\beta)}$.  In
estimating $\overline{\phi}_K^{(\beta)}$, we do not necessarily assume
that the vector $\vc{f}$ in Condition~\ref{assumpt-f-ergodic}
satisfies $\vc{f} = c\vc{v}$ for some $c > 0$.

Let $\vc{Q}_{\bbF_N} = (q(k,i;\ell,j))_{(k,i;\ell,j) \in (\bbF_N)^2}$
for $N \in \{K,K+1,\dots\}$, which is the $|\bbF_N| \times |\bbF_N|$
northwest corner of $\vc{Q}$. Let
$\vc{\Phi}_{\bbF_N}^{(\beta)}:=(\phi_{\bbF_N}^{(\beta)}(k,i;\ell,j))_{(k,i;\ell,j)\in(\bbF_N)^2}$,
$N \in \{K,K+1,\dots\}$, denote
\begin{equation}
\vc{\Phi}_{\bbF_N}^{(\beta)}
= \int_0^{\infty} \beta \rme^{-\beta t}
\exp\{ \vc{Q}_{\bbF_N} t\} \rmd t
= \left(\vc{I} - \vc{Q}_{\bbF_N}/\beta \right)^{-1}.
\label{defn-S_N^(beta)}
\end{equation} 
Since $\vc{Q}$ is an irreducible infinitesimal generator, its finite
northwest corner $\vc{Q}_{\bbF_N}$ is nonsingular and thus all the
eigenvalues of $\vc{Q}_{\bbF_N}$ are in the strictly left half of the
complex plane. Therefore, the matrix $\vc{\Phi}_{\bbF_N}^{(\beta)}$ in
(\ref{defn-S_N^(beta)}) is well-defined.

We now denote, by $[\,\cdot\,]_{\bbF_K}$, the $|\bbF_K| \times
|\bbF_K|$ northwest corner of the matrix in the square brackets. It
then follows from Proposition~2.2.14 of \citet{Ande91} that, for any
fixed $t \ge 0$ and $K \in \bbZ_+$,
\[
[\exp\{ \vc{Q}_{\bbF_N} t\}]_{\bbF_K} \nearrow
[\vc{P}^{(t)}]_{\bbF_K}\quad  \mbox{as $N \to \infty$}.
\]
Thus, by the monotone
convergence theorem, we have
\begin{eqnarray} 
\left[ 
\int_0^{\infty} \beta \rme^{-\beta t}
\exp\{ \vc{Q}_{\bbF_N} t\} \rmd t
\right]_{\bbF_K} \nearrow
\left[ 
\int_0^{\infty} \beta \rme^{-\beta t}
\vc{P}^{(t)} \rmd t
\right]_{\bbF_K}\quad \mbox{as $N \to \infty$}.
\label{add-eqn-00}
\end{eqnarray}
Combining (\ref{add-eqn-00}) with (\ref{defn-K}) and
(\ref{defn-S_N^(beta)}), we obtain
\begin{eqnarray}
\left[  \vc{\Phi}_{\bbF_N}^{(\beta)} \right]_{\bbF_K}
\nearrow
\left[ \vc{\Phi}^{(\beta)} \right]_{\bbF_K}
> \vc{O} \quad \mbox{as $N \to \infty$},
\label{add-eqn-160403-01}
\end{eqnarray}
which implies that, for all sufficiently large $N \in \{K,K+1,\dots\}$,
\begin{equation}
\vc{O} < [ \vc{\Phi}_{\bbF_N}^{(\beta)} ]_{\bbF_K}
\le \left[ \vc{\Phi}^{(\beta)} \right]_{\bbF_K}.
\label{ineqn-K}
\end{equation}

\begin{rem}
Suppose that $\vc{Q}_{\bbF_{N_0}}$ is irreducible for some $N_0 \in
\{K,K+1,\dots\}$. It then follows that, for all $N \ge N_0$,
$[\exp\{\vc{Q}_{\bbF_N} t\}]_{\bbF_K} > \vc{O}$ for all $t > 0$ and
thus $[ \vc{\Phi}_{\bbF_N}^{(\beta)} ]_{\bbF_K} > \vc{O}$ (see
(\ref{defn-S_N^(beta)})). Consequently, (\ref{ineqn-K}) holds for all
$N \ge N_0$.
\end{rem}

\begin{rem}\label{rem-Le-Boud}
Let $\vc{F}$ denote a nonnegative matrix such that
\begin{equation}
\vc{F}
= \vc{I} + {1 \over \overline{q}_{\bbF_N}^{(\beta)} + 1}
(\vc{Q}_{\bbF_N}/\beta - \vc{I}),
\label{defn-F}
\end{equation}
where $\overline{q}_{\bbF_N}^{(\beta)} =
\max_{(\ell,j)\in\bbF_N}|q(\ell,j;\ell,j)|/\beta$. It follows from
(\ref{defn-S_N^(beta)}) and (\ref{defn-F}) that
\begin{eqnarray}
\vc{\Phi}_{\bbF_N}^{(\beta)}
&=& {1 \over \overline{q}_{\bbF_N}^{(\beta)} + 1}
(\vc{I} - \vc{F})^{-1} 
= {1 \over \overline{q}_{\bbF_N}^{(\beta)} + 1} \sum_{m=0}^{\infty} \vc{F}^m,
\label{add-eqn-160906-01}
\end{eqnarray}
which leads to a numerically stable computation of
$\vc{\Phi}_{\bbF_N}^{(\beta)}=(\phi_{\bbF_N}^{(\beta)}(k,i;\ell,j))_{(k,i;\ell,j)
  \in (\bbF_N)^2}$. Indeed, \citet{Le-Boud91} proposed an efficient
and stable algorithm for computing $\vc{\Phi}_{\bbF_N}^{(\beta)} =
(\vc{I} - \vc{F})^{-1}$ (see Proposition 1 therein), which does not
depend on any structure of $\vc{F}$ and thus
$\vc{Q}_{\bbF_N}$. Furthermore, if $\vc{Q}_{\bbF_N}$ is
block-tridiagonal, then $\vc{Q}_{\bbF_N}/\beta - \vc{I}$ can be
considered the transient generator of a finite-state LD-QBD with an
absorbing state and thus its fundamental matrix
$\vc{\Phi}_{\bbF_N}^{(\beta)}=(\vc{I} - \vc{Q}_{\bbF_N}/\beta)^{-1}$
can be efficiently and stably computed by \citet{Shin09}'s algorithm.
\end{rem}

To proceed further, we fix $N \in \{K,K+1,\dots\}$ arbitrarily such
that (\ref{ineqn-K}) holds.  We then define
$\overline{\phi}_{K,N}^{(\beta)}$, $N \in \{K,K+1,\dots\}$, as
\begin{equation}
\overline{\phi}_{K,N}^{(\beta)}
= \sup_{(\ell,j)\in\bbF_N} \min_{(k,i)\in\bbF_K}\phi_{\bbF_N}^{(\beta)}(k,i;\ell,j),
\label{defn-phi_{K,N}}
\end{equation}
which is computable because so is $\vc{\Phi}_{\bbF_N}^{(\beta)}$ (see
Remark~\ref{rem-Le-Boud}). It follows from
(\ref{defn-overline{varphi}_F_K}), (\ref{add-eqn-160403-01}) and
(\ref{defn-phi_{K,N}}) that
\begin{equation}
\overline{\phi}_{K,N}^{(\beta)} 
\nearrow \overline{\phi}_K^{(\beta)}  \quad \mbox{as $N \to \infty$},
\label{ineqn-phi}
\end{equation}
which shows that $\overline{\phi}_{K,N}^{(\beta)}$ is a computable and
nontrivial lower bound for $\overline{\phi}_K^{(\beta)}$. As a result,
combining Theorem~\ref{thm-f-ergodic} with
(\ref{defn-widetilde{E}_{(l,j)}(n)}) and (\ref{ineqn-phi}), we have
the following result.
\begin{coro}\label{coro-exp-ergodic-comp}
Suppose that Assumption~\ref{basic-assumpt} is satisfied. Suppose that
there exist some $b>0$, $c > 0$, $K \in \bbZ_+$ and column vector $\vc{v}
\ge \vc{e}/c$ such that (\ref{geo-drift-cond}) holds; and fix $N \in
\{K,K+1,\dots\}$ arbitrarily such that (\ref{ineqn-K}) holds. Under these
conditions, we have, for all $n \in \bbN$,
\begin{eqnarray}
\left| \vc{\pi} - \presub{n}\vc{\pi} \right| \vc{g}
&\le& {\vc{\pi}\vc{g} + 1 \over 2} \widetilde{E}_N(n)
\quad \mbox{for all $\vc{0} \le \vc{g} \le c\vc{v}$},
\label{bound-pi-g-exp}
\\
\red{\sup_{\vc{e} \le \vc{g} \le c\vc{v}}}
{ \left| \vc{\pi} - \presub{n}\vc{\pi} \right| \vc{g} \over \vc{\pi}\vc{g} }
&\le& \widetilde{E}_N(n),
\label{bound-sup-pi-g-exp}
\end{eqnarray}
where the error decay function $\widetilde{E}_N$ is given by
\begin{eqnarray}
\widetilde{E}_N(n)
&=& 2\sum_{k=0}^n \presub{n}\vc{\pi}(k)
\sum_{m=n+1}^{\infty}\vc{Q}(k;m)
\nonumber
\\
&&
{} \times
\left\{
\vc{v}(m) + \vc{v}(n) + 2b\left(
{1 \over c}
+ { 2 \over \beta \overline{\phi}_{K,N}^{(\beta)} } 
\right)\vc{e}
\right\},\qquad n \in \bbN.
\label{defn-widetilde{E}_N(n)}
%\label{bound-pi-g-exp-02}
\end{eqnarray}
Furthermore, if the subvector $\vc{v}_{\overline{\bbF}_0}$ of $\vc{v}$
is level-wise nondecreasing, then $\widetilde{E}_N(n) \le
\widetilde{E}_N^{+}(n)$ for $n \in \bbN$, where
\begin{eqnarray}
\widetilde{E}_N^{+}(n)
&=& 4\sum_{k=0}^n \presub{n}\vc{\pi}(k)
\sum_{m=n+1}^{\infty}\vc{Q}(k;m) 
\left\{
\vc{v}(m) + b\left(
{1 \over c}
+ { 2 \over \beta \overline{\phi}_{K,N}^{(\beta)} } 
\right)\vc{e}
\right\},\qquad n \in \bbN. \qquad
\label{defn-widetilde{E}_N^+(n)}
\end{eqnarray}
\end{coro}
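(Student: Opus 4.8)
The plan is to obtain Corollary~\ref{coro-exp-ergodic-comp} as a direct specialization of Theorem~\ref{thm-f-ergodic}, followed by two bookkeeping replacements that turn the intractable factors into computable ones. First I would note that the exponential drift condition (\ref{geo-drift-cond}) is exactly Condition~\ref{assumpt-f-ergodic} with $\vc{f}=c\vc{v}\ge\vc{e}$, so the admissible range $\vc{0}\le\vc{g}\le\vc{f}$ becomes $\vc{0}\le\vc{g}\le c\vc{v}$, matching the statement. Pre-multiplying (\ref{geo-drift-cond}) by $\vc{\pi}$ and using $\vc{\pi}\vc{Q}=\vc{0}$ together with $\vc{\pi}\vc{1}_{\bbF_K}\le1$ gives $\vc{\pi}\vc{v}\le b/c<\infty$; in particular the hypothesis $\vc{\pi}\vc{v}<\infty$ of Theorem~\ref{thm-f-ergodic} holds automatically. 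Hence Theorem~\ref{thm-f-ergodic} applies and yields (\ref{bound-pi-g}) and (\ref{bound-sup-pi-g}) with the error decay function $E(n)$ of (\ref{defn-E_{(l,j)}(n)}).

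Next I would substitute the estimate $\vc{\pi}\vc{v}\le b/c$ into (\ref{defn-E_{(l,j)}(n)}), converting the bracketed term $\vc{\pi}\vc{v}+2b/(\beta\overline{\phi}_K^{(\beta)})$ into $b/c+2b/(\beta\overline{\phi}_K^{(\beta)})$ and thereby producing the upper bound (\ref{defn-widetilde{E}_{(l,j)}(n)}) for $E(n)$. The remaining intractable quantity is $\overline{\phi}_K^{(\beta)}$, which I would eliminate using the monotone approximation (\ref{ineqn-phi}), namely $\overline{\phi}_{K,N}^{(\beta)}\le\overline{\phi}_K^{(\beta)}$ for the fixed $N$. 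Since $\overline{\phi}_K^{(\beta)}$ enters only through the nonnegative term $2/(\beta\overline{\phi}_K^{(\beta)})$, and since the blocks $\vc{Q}(k;m)$ and subvectors $\presub{n}\vc{\pi}(k)$ in the sum are nonnegative, replacing $\overline{\phi}_K^{(\beta)}$ by the smaller $\overline{\phi}_{K,N}^{(\beta)}$ can only enlarge the whole expression. This gives $E(n)\le\widetilde{E}_N(n)$ with $\widetilde{E}_N$ as in (\ref{defn-widetilde{E}_N(n)}), and since the bounds of Theorem~\ref{thm-f-ergodic} hold with $E(n)$ on the right, they hold a fortiori with the larger $\widetilde{E}_N(n)$, establishing (\ref{bound-pi-g-exp}) and (\ref{bound-sup-pi-g-exp}).

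Finally, for the comparison $\widetilde{E}_N(n)\le\widetilde{E}_N^{+}(n)$ under the extra hypothesis that $\vc{v}_{\overline{\bbF}_0}$ is level-wise nondecreasing, I would reuse the monotonicity step from the proof of Theorem~\ref{thm-convergence}. For $0\le k\le n$ and $m\ge n+1$ the block $\vc{Q}(k;m)$ is off-diagonal and hence nonnegative, while level-wise monotonicity gives $\vc{v}(n)\le\vc{v}(m)$; therefore $\sum_{m=n+1}^{\infty}\vc{Q}(k;m)\vc{v}(n)\le\sum_{m=n+1}^{\infty}\vc{Q}(k;m)\vc{v}(m)$. Using this inside (\ref{defn-widetilde{E}_N(n)}), the bracketed factor $\vc{v}(m)+\vc{v}(n)+2b(1/c+2/(\beta\overline{\phi}_{K,N}^{(\beta)}))\vc{e}$ is dominated by $2[\vc{v}(m)+b(1/c+2/(\beta\overline{\phi}_{K,N}^{(\beta)}))\vc{e}]$, whose factor $2$ merges with the leading coefficient $2$ to produce the coefficient $4$; the result is precisely the summand of $\widetilde{E}_N^{+}(n)$ in (\ref{defn-widetilde{E}_N^+(n)}).

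I do not anticipate a genuine obstacle, since the statement is an assembly of results already in place. The one point demanding care is the direction of the inequality in the second paragraph: because $\overline{\phi}_K^{(\beta)}$ sits in a denominator, the computable lower bound $\overline{\phi}_{K,N}^{(\beta)}$ must be inserted so as to \emph{increase} the error decay function, which is exactly what keeps the final upper bound valid.
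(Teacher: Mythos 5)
Your proposal is correct and follows essentially the same route as the paper: specialize Theorem~\ref{thm-f-ergodic} to $\vc{f}=c\vc{v}$, insert $\vc{\pi}\vc{v}\le b/c$ into the error decay function $E(n)$, use the monotone lower bound $\overline{\phi}_{K,N}^{(\beta)}\le\overline{\phi}_K^{(\beta)}$ from (\ref{ineqn-phi}) in the enlarging direction, and obtain $\widetilde{E}_N(n)\le\widetilde{E}_N^{+}(n)$ via $\vc{v}(n)\le\vc{v}(m)$ exactly as in the proof of Theorem~\ref{thm-convergence}. Your explicit attention to the direction of the substitution in the denominator is precisely the point the paper relies on implicitly.
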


\medskip
\noindent
{\it Proof.~}
Recall that (\ref{defn-widetilde{E}_{(l,j)}(n)}) holds.  Applying
(\ref{ineqn-phi}) to (\ref{defn-widetilde{E}_{(l,j)}(n)}), we obtain
$E(n) \le \widetilde{E}_N(n)$ for $n\in\bbN$. Substituting this
inequality into (\ref{bound-pi-g}) and (\ref{bound-sup-pi-g}), we have
(\ref{bound-pi-g-exp}) and (\ref{bound-sup-pi-g-exp}),
respectively. Furthermore, it is clear that $\widetilde{E}_N(n) \le
\widetilde{E}_N^{+}(n)$ for $n \in \bbN$ if
$\vc{v}_{\overline{\bbF}_0}$ is level-wise nondecreasing. 
\QED

\medskip

It should be noted that the error decay functions $\widetilde{E}_N$
are $\widetilde{E}_N^{+}$ are computable. We summarize the procedure
for computing them.
\begin{enumerate}
\item Find $b> 0$, $c> 0$, $K \in \bbZ_+$ and $\vc{v} \ge \vc{e}/c$
  such that (\ref{geo-drift-cond}) holds.
\item Fix $\beta > 0$ arbitrarily and find $N \in \{K,K+1,\dots\}$
  such that (\ref{ineqn-K}) holds; and compute
  $\vc{\Phi}_{\bbF_N}^{(\beta)}$ by (\ref{add-eqn-160906-01}).
\item Compute $\overline{\phi}_{K,N}^{(\beta)}$ by (\ref{defn-phi_{K,N}}).
\item Compute $\presub{n}\vc{\pi}(k)$ for $k=0,1,\dots,n$.
\item Compute $\widetilde{E}_N(n)$ and $\widetilde{E}_N^{+}(n)$
  by (\ref{defn-widetilde{E}_N(n)}) and
  (\ref{defn-widetilde{E}_N^+(n)}), respectively.
\end{enumerate}

\medskip

We now present another corollary. 
\begin{coro}\label{coro-exp-ergodic-comp-02}
Suppose that Assumption~\ref{basic-assumpt} is satisfied; and
Conditions~\ref{cond-two-solutions-01} and \ref{cond-two-solutions-02}
are satisfied, together with $\vc{f} = \vc{c}\vc{v}$ for some $c >
0$. Fix $N \in \{K,K+1,\dots\}$ arbitrarily such that (\ref{ineqn-K})
holds. We then have the error bounds (\ref{bound-pi-g-exp}) and
(\ref{bound-sup-pi-g-exp}). In addition,
\begin{eqnarray}
\widetilde{E}_N(n) 
&\le& \widetilde{E}_N^{+}(n)
\nonumber
\\
&\le& {4r_0^{\sharp}r_1^{\sharp} b^{\sharp} \over T(n) }
\left[
1+ {\underline{a}^{-1} b \over V(n+1)}
\left(
{1 \over c} 
+ { 2 \over \beta \overline{\phi}_{K,N}^{(\beta)} } \right)
\right] =: \widetilde{E}_N^{\sharp}(n), \qquad n \in \bbN,
\label{bound-pi-g-two-solutions-EXP}
\end{eqnarray}
where $r_0^{\sharp}$ and $r_1^{\sharp}$ are positive numbers such that
(\ref{defn-c_*}) and (\ref{defn-r_1^{sharp}}) hold.
\end{coro}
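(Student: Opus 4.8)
The plan is to obtain this corollary by combining Corollary~\ref{coro-exp-ergodic-comp} with the estimates already developed in the proof of Theorem~\ref{thm-two-solutions}. First I would check that Corollary~\ref{coro-exp-ergodic-comp} applies. Condition~\ref{cond-two-solutions-01}~(i) guarantees Condition~\ref{assumpt-f-ergodic} together with $\vc{v}_{\overline{\bbF}_0}$ positive and level-wise nondecreasing, and the extra hypothesis $\vc{f} = c\vc{v} \ge \vc{e}$ turns (\ref{ineqn-Qv}) into the exponential drift condition (\ref{geo-drift-cond}) while ensuring $\vc{v} \ge \vc{e}/c$. Hence Corollary~\ref{coro-exp-ergodic-comp} yields the error bounds (\ref{bound-pi-g-exp}) and (\ref{bound-sup-pi-g-exp}) and the first inequality $\widetilde{E}_N(n) \le \widetilde{E}_N^{+}(n)$ of (\ref{bound-pi-g-two-solutions-EXP}).

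It then remains to bound $\widetilde{E}_N^{+}(n)$ from above by $\widetilde{E}_N^{\sharp}(n)$. The observation driving this is that $\widetilde{E}_N^{+}$ in (\ref{defn-widetilde{E}_N^+(n)}) has exactly the same shape as $E^{+}$ in (\ref{defn-E_{(l,j)}^+(n)}): the only difference is that the constant $\vc{\pi}\vc{v} + 2b/(\beta\overline{\phi}_K^{(\beta)})$ has been replaced by the computable quantity $b\bigl(1/c + 2/(\beta\overline{\phi}_{K,N}^{(\beta)})\bigr)$. I would therefore rerun the chain of estimates (\ref{ineqn-E_{(l,j)}^{dag}(n)})--(\ref{add-eqn-160905-02}) from the proof of Theorem~\ref{thm-two-solutions} verbatim, carrying this new constant. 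Those steps invoke only Condition~\ref{cond-two-solutions-02} (the factorization $\vc{v}(k)=V(k)\vc{a}$, the log-subadditivity of $V$ and $T$ recorded in (\ref{V-log-subadditive})--(\ref{T-log-subadditive}), and the defining inequalities (\ref{defn-c_*})--(\ref{defn-r_1^{sharp}})) and the bound $\presub{n}\vc{\pi}\vc{f}^{\sharp} \le b^{\sharp}$ from (\ref{ineqn-pi-f^{ddagger}}); none of them depends on the numerical value of the carried constant. The resulting analogue of (\ref{add-eqn-160905-02}) produces exactly $\widetilde{E}_N^{\sharp}(n)$ as defined in (\ref{bound-pi-g-two-solutions-EXP}).

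I do not anticipate a real obstacle, since the corollary is a computable specialization rather than a new estimate: the exponential structure is precisely what renders the constant computable. The only facts to keep in view are that pre-multiplying (\ref{geo-drift-cond}) by $\vc{\pi}$ gives $\vc{\pi}\vc{v} \le b/c$, so $\vc{\pi}\vc{v}$ can be dropped in favour of $b/c$, and that (\ref{ineqn-phi}) supplies the computable lower bound $\overline{\phi}_{K,N}^{(\beta)} \le \overline{\phi}_K^{(\beta)}$, so that $2/(\beta\overline{\phi}_K^{(\beta)})$ is replaced by the larger $2/(\beta\overline{\phi}_{K,N}^{(\beta)})$; both replacements are already embedded in $\widetilde{E}_N^{+}$ through Corollary~\ref{coro-exp-ergodic-comp}. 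The only mild bookkeeping point is to match these two constants correctly in the numerator of $\widetilde{E}_N^{\sharp}$, after which the compound bound (\ref{bound-pi-g-two-solutions-EXP}) follows at once.
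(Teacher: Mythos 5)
Your proposal is correct and follows essentially the same route as the paper, whose own proof is omitted with the remark that the corollary is immediate from (\ref{ineqn-phi}) and Theorem~\ref{thm-two-solutions} and is proved like Corollary~\ref{coro-exp-ergodic-comp}: you supply exactly those details, replacing $\vc{\pi}\vc{v}$ by $b/c$ via (\ref{geo-drift-cond}) and $\overline{\phi}_K^{(\beta)}$ by $\overline{\phi}_{K,N}^{(\beta)}$ via (\ref{ineqn-phi}), and correctly observing that the chain (\ref{ineqn-E_{(l,j)}^{dag}(n)})--(\ref{add-eqn-160905-02}) is insensitive to the value of the carried constant.
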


\medskip
\noindent
{\it Proof.~} Corollary~\ref{coro-exp-ergodic-comp-02} is immediate
from (\ref{ineqn-phi}) and Theorem~\ref{thm-two-solutions}, and this
corollary is proved in a similar way to the proof of
Corollary~\ref{coro-exp-ergodic-comp}. Thus, we omit the details of
the proof.  \QED

\medskip

We close this section by summarizing the procedure for computing the
error decay function $\widetilde{E}_N^{\sharp}$ in
(\ref{bound-pi-g-two-solutions-EXP}).
\begin{enumerate}
\item Find $b> 0$, $c> 0$, $K \in \bbZ_+$, $\vc{v}(0) \ge \vc{e}/c$,
  $\vc{a} > \vc{0}$ and nondecreasing log-subadditive function $V \ge
  1$ such that $V(1)\vc{a} \ge \vc{e}/c$ and
\[
\vc{Q}
\left(
\begin{array}{c}
\vc{v}(0)
\\
V(1)\vc{a}
\\
V(2)\vc{a}
\\
\vdots
\end{array}
\right)
\le -c 
\left(
\begin{array}{c}
\vc{v}(0)
\\
V(1)\vc{a}
\\
V(2)\vc{a}
\\
\vdots
\end{array}
\right)
+ b\vc{1}_{\bbF_K}.
\]
\item Find $b^{\sharp}> 0$, $K^{\sharp} \in \bbZ_+$, $\vc{v}^{\sharp}
  \ge \vc{0}$, $\vc{f}^{\sharp} \ge \vc{e}$ and nondecreasing
  log-subadditive function $T \ge 1$ such that the subvector
  $\vc{v}_{\overline{\bbF}_0}^{\sharp}$ of $\vc{v}^{\sharp}$ is
  level-wise nondecreasing and the conditions (\ref{ineqn-Q_*v_*}),
  (\ref{explicit-cond-03}), (\ref{explicit-cond-02}) and
  (\ref{explicit-cond-04}) are satisfied.
\item Choose $r_0^{\sharp}$ and $r_1^{\sharp}$ such that
  (\ref{defn-c_*}) and (\ref{defn-r_1^{sharp}}) hold.
\item Fix $\beta > 0$ arbitrarily and find $N \in \{K,K+1,\dots\}$
  such that (\ref{ineqn-K}) holds; and compute
  $\vc{\Phi}_{\bbF_N}^{(\beta)}$ by (\ref{add-eqn-160906-01}).

\item Compute $\overline{\phi}_{K,N}^{(\beta)}$ by (\ref{defn-phi_{K,N}}).
\item Compute $\widetilde{E}_N^{\sharp}(n)$ by
  (\ref{bound-pi-g-two-solutions-EXP}), where $\underline{a}$ is given
  by (\ref{defn-underline{a}}).
\end{enumerate}

\section{Reduction to Exponentially Ergodic Case}\label{sec-reduction}

This section considers a procedure for establishing computable bounds
for $\left|\vc{\pi} - \presub{n}\vc{\pi} \right| \vc{g}$ with $\vc{0}
\le \vc{g} \le \vc{f}$ under the general $\vc{f}$-modulated drift
condition.

For any vector $\vc{x}$, we denote by $\vc{\Delta}_{\vc{x}}$ a
diagonal matrix whose $i$th diagonal element is equal to the $i$th
element of the vector $\vc{x}$. For any vectors $\vc{x}$ and $\vc{y} >
\vc{0}$ of the same order, we define $\vc{x}/\vc{y}$ as a vector such
that $\vc{\Delta}_{\vc{x}/\vc{y}}= \vc{\Delta}_{\vc{x}}
\vc{\Delta}_{\vc{y}}^{-1}$.  We also assume
Condition~\ref{assumpt-f-ergodic-02} below, in addition to
Assumption~\ref{basic-assumpt}.
\begin{cond}\label{assumpt-f-ergodic-02}
Condition~\ref{assumpt-f-ergodic} holds and
\begin{equation}
\overline{C}{}_{\vc{f}/\vc{v}} 
:= \sup_{(k,i) \in \bbF} {f(k,i) \over v(k,i)} < \infty.
\label{defn-C_{f/v}}
\end{equation}
\end{cond}

It follows from (\ref{defn-C_{f/v}}) that
\begin{eqnarray}
0 < \vc{\pi}(\vc{f}/\vc{v}) 
&\le& \overline{C}{}_{\vc{f}/\vc{v}},
\label{ineqn-pi_(f/v)<b-01}
\\
0 < \presub{n}\vc{\pi}(\vc{f}/\vc{v}) 
&\le& \overline{C}{}_{\vc{f}/\vc{v}} \quad
\mbox{for all $n \in \bbN$}.
\label{ineqn-pi_(f/v)<b-02}
\end{eqnarray}
Thus, we define $\widehat{\vc{\pi}}$ and
$\presub{n}\widehat{\vc{\pi}}$, $n\in\bbN$, as
\begin{eqnarray}
\widehat{\vc{\pi}} 
&=& {\vc{\pi}\vc{\Delta}_{\vc{f}/\vc{v}} 
\over \vc{\pi} \left(\vc{f}/\vc{v}\right) },
\label{eqn-hat{pi}}
\\
\presub{n}\widehat{\vc{\pi}} 
&=& {\presub{n}\vc{\pi}\vc{\Delta}_{\vc{f}/\vc{v}} 
\over \presub{n}\vc{\pi} \left(\vc{f}/\vc{v}\right)  },\qquad n \in \bbN,
\label{eqn-{n}hat{pi}}
\end{eqnarray}
respectively. We also define
$\widehat{\vc{Q}}$ and $\presub{n}\widehat{\vc{Q}}$, $n\in\bbN$, as
\begin{eqnarray}
\widehat{\vc{Q}} 
&=& \vc{\Delta}_{\vc{v}/\vc{f}}  \cdot \vc{Q},
\label{defn-hat{Q}}
\\
\presub{n}\widehat{\vc{Q}} 
&=& \vc{\Delta}_{\vc{v}/\vc{f}} \cdot \presub{n}\vc{Q},\qquad n \in \bbN,
\label{defn-{n}hat{Q}}
\end{eqnarray}
respectively. It then follows from
(\ref{eqn-hat{pi}})--(\ref{defn-{n}hat{Q}}) that $\widehat{\vc{Q}}$
and $\presub{n}\widehat{\vc{Q}}$ can be considered the $q$-matrices
with the stationary distribution vectors $\widehat{\vc{\pi}}$ and
$\presub{n}\widehat{\vc{\pi}} $, respectively. Furthermore, from
(\ref{defn-hat{Q}}) and Condition~\ref{assumpt-f-ergodic}, we have
\begin{eqnarray}
\widehat{\vc{Q}}\vc{v} 
&\le& - \vc{v} 
+ b \vc{\Delta}_{\vc{v}/\vc{f}} \vc{1}_{\bbF_K}
\le - \vc{v} 
+ \widehat{b} \vc{1}_{\bbF_K},
\label{ineqn-hat{Q}v}
\end{eqnarray}
where
\begin{equation*}
\widehat{b} = b \max_{(k,i)\in \bbF_K} v(k,i)/f(k,i).
%\label{defn-hat{b}}
\end{equation*}

Inequality (\ref{ineqn-hat{Q}v}) shows that $\widehat{\vc{Q}}$
satisfies the exponential drift condition and
\begin{equation}
\wh{\vc{\pi}}\vc{v} \le \widehat{b}.
\label{ineqn-wh{pi}*v}
\end{equation}
Thus, using Corollaries
\ref{coro-exp-ergodic-comp} and \ref{coro-exp-ergodic-comp-02}, we
obtain computable bounds for $\left| \widehat{\vc{\pi}} -
\presub{n}\widehat{\vc{\pi}} \right| \widehat{\vc{g}}$ with $\red{\vc{e}}
\le \widehat{\vc{g}} \le \vc{v}$, under appropriate conditions. As a
result, combining such bounds and Theorem~\ref{thm-reduction-EXP}
below, we have computable bounds for $\left|\vc{\pi} -
\presub{n}\vc{\pi} \right| \vc{g}$ with $\red{\vc{e}} \le \vc{g} \le
\vc{f}$.
\begin{thm}\label{thm-reduction-EXP}
Suppose that Assumption~\ref{basic-assumpt} and
Condition~\ref{assumpt-f-ergodic-02} are satisfied. Furthermore,
suppose that there exists some function $\widehat{E}: [0,\infty) \to
  [0,\infty)$ such that
\begin{equation}
\red{\sup_{\vc{e} \le \widehat{\vc{g}} \le \vc{v}}}
{
\left| \widehat{\vc{\pi}} - \presub{n}\widehat{\vc{\pi}} \right| 
\widehat{\vc{g}}
\over \widehat{\vc{\pi}} \widehat{\vc{g}}
}
\le \widehat{E}(n), \qquad n \in \bbN.
\label{ineqn-hat{E}(n)}
\end{equation}
Under these conditions, the following two bounds hold for $n \in
\bbN$:
\begin{eqnarray}
\left| \vc{\pi} - \presub{n}\vc{\pi} \right| \vc{e}
&\le& 2\widehat{E}(n),
\label{bound-pi-g-reduction-01}
\\
\red{\sup_{\vc{e} \le \vc{g} \le \vc{f}}}
{ \left| \vc{\pi} - \presub{n}\vc{\pi} \right| \vc{g}
\over
\vc{\pi}\vc{g}
}
&\le& \widehat{E}(n)
\left[
1 + 
{1 + \widehat{E}(n) 
\over 
\big( 1 - \widehat{E}(n) \vmin 1 \big) \vmax 
\big(\, \wh{b}\overline{C}_{\vc{f}/\vc{v}} \big)^{-1}
} 
\right], \qquad
\label{bound-pi-g-reduction-02}
\end{eqnarray}
where $x \vmax y = \max(x,y)$ and $x \vmin y = \min(x,y)$ (the latter
has been defined in Section~\ref{introduction}).  In addition, if the
subvector $\vc{v}_{\overline{\bbF}_0}$ of $\vc{v}$ is level-wise
nondecreasing, then
\begin{equation}
\red{\sup_{\vc{e} \le \vc{g} \le \vc{f}}}
{ \left| \vc{\pi} - \presub{n}\vc{\pi} \right| \vc{g}
\over
\vc{\pi}\vc{g}
}
\le \widehat{E}(n)
\left[
1 + 
{1 + \widehat{E}(n) 
\over 
\big( 1 - \widehat{E}(n) \vmin 1 \big) \vmax 
\big(\, \wh{b}\overline{C}_{\vc{f}/\vc{v}} \big)^{-1}
} 
\vmin 
b
\right],\qquad n \in \bbN.
\label{bound-pi-g-reduction-03}
\end{equation}
\end{thm}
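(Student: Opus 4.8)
The plan is to transfer the problem to the exponentially ergodic pair $(\widehat{\vc{\pi}},\presub{n}\widehat{\vc{\pi}})$ through the diagonal change of measure $\vc{\Delta}_{\vc{f}/\vc{v}}$ and then push the resulting estimate back to $(\vc{\pi},\presub{n}\vc{\pi})$. Writing $\alpha=\vc{\pi}(\vc{f}/\vc{v})$ and $\presub{n}\alpha=\presub{n}\vc{\pi}(\vc{f}/\vc{v})$, the definitions (\ref{eqn-hat{pi}}) and (\ref{eqn-{n}hat{pi}}) invert to $\vc{\pi}=\alpha\,\widehat{\vc{\pi}}\vc{\Delta}_{\vc{v}/\vc{f}}$ and $\presub{n}\vc{\pi}=\presub{n}\alpha\,\presub{n}\widehat{\vc{\pi}}\vc{\Delta}_{\vc{v}/\vc{f}}$. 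For a test vector $\vc{0}<\vc{g}\le\vc{f}$ I set $\widehat{\vc{g}}=\vc{\Delta}_{\vc{v}/\vc{f}}\vc{g}$, so that $\vc{0}<\widehat{\vc{g}}\le\vc{v}$ (using $\vc{f}\ge\vc{e}$), and a direct computation using $\vc{\Delta}_{\vc{f}/\vc{v}}\vc{\Delta}_{\vc{v}/\vc{f}}=\vc{I}$ gives the two identities $\widehat{\vc{\pi}}\widehat{\vc{g}}=\vc{\pi}\vc{g}/\alpha$ and $\presub{n}\widehat{\vc{\pi}}\widehat{\vc{g}}=\presub{n}\vc{\pi}\vc{g}/\presub{n}\alpha$. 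Specializing to $\vc{g}=\vc{e}$, hence $\widehat{\vc{g}}=\vc{v}/\vc{f}$, yields the normalization identities $\widehat{\vc{\pi}}(\vc{v}/\vc{f})=1/\alpha$ and $\presub{n}\widehat{\vc{\pi}}(\vc{v}/\vc{f})=1/\presub{n}\alpha$.

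Next I would split $\alpha\widehat{\vc{\pi}}-\presub{n}\alpha\,\presub{n}\widehat{\vc{\pi}}=\alpha(\widehat{\vc{\pi}}-\presub{n}\widehat{\vc{\pi}})+(\alpha-\presub{n}\alpha)\presub{n}\widehat{\vc{\pi}}$ and apply the triangle inequality componentwise, obtaining
\[
|\vc{\pi}-\presub{n}\vc{\pi}|\vc{g}=|\alpha\widehat{\vc{\pi}}-\presub{n}\alpha\,\presub{n}\widehat{\vc{\pi}}|\widehat{\vc{g}}\le\alpha\,|\widehat{\vc{\pi}}-\presub{n}\widehat{\vc{\pi}}|\widehat{\vc{g}}+|\alpha-\presub{n}\alpha|\,\presub{n}\widehat{\vc{\pi}}\widehat{\vc{g}}.
\]
The hypothesis (\ref{ineqn-hat{E}(n)}) bounds the first summand by $\alpha\widehat{E}(n)\widehat{\vc{\pi}}\widehat{\vc{g}}=\widehat{E}(n)\vc{\pi}\vc{g}$. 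The crucial device is to control the normalization ratio with the \emph{same} bound: applying (\ref{ineqn-hat{E}(n)}) to the admissible vector $\widehat{\vc{g}}=\vc{v}/\vc{f}$ and using the normalization identities gives $|1/\alpha-1/\presub{n}\alpha|\le\widehat{E}(n)/\alpha$, i.e.\ $|\alpha-\presub{n}\alpha|/\presub{n}\alpha\le\widehat{E}(n)$, so the second summand is at most $\widehat{E}(n)\,\presub{n}\vc{\pi}\vc{g}$. For $\vc{g}=\vc{e}$ both summands are $\le\widehat{E}(n)$, which is precisely (\ref{bound-pi-g-reduction-01}).

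For the relative bound (\ref{bound-pi-g-reduction-02}) I divide by $\vc{\pi}\vc{g}$ to get $R:=|\vc{\pi}-\presub{n}\vc{\pi}|\vc{g}/(\vc{\pi}\vc{g})\le\widehat{E}(n)\big(1+\presub{n}\vc{\pi}\vc{g}/(\vc{\pi}\vc{g})\big)$, and then factor $\presub{n}\vc{\pi}\vc{g}/(\vc{\pi}\vc{g})=(\presub{n}\alpha/\alpha)\cdot(\presub{n}\widehat{\vc{\pi}}\widehat{\vc{g}})/(\widehat{\vc{\pi}}\widehat{\vc{g}})$. The second factor is $\le1+\widehat{E}(n)$ again by (\ref{ineqn-hat{E}(n)}). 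For the first factor I would produce two bounds: from $|\alpha-\presub{n}\alpha|/\presub{n}\alpha\le\widehat{E}(n)$ one gets $\presub{n}\alpha/\alpha\le1/(1-\widehat{E}(n))$ whenever $\widehat{E}(n)<1$; and unconditionally, (\ref{ineqn-pi_(f/v)<b-02}) gives $\presub{n}\alpha\le\overline{C}{}_{\vc{f}/\vc{v}}$, while $\alpha=1/\widehat{\vc{\pi}}(\vc{v}/\vc{f})\ge1/(\widehat{\vc{\pi}}\vc{v})\ge1/\widehat{b}$ by $\vc{f}\ge\vc{e}$ and (\ref{ineqn-wh{pi}*v}), so $\presub{n}\alpha/\alpha\le\widehat{b}\,\overline{C}{}_{\vc{f}/\vc{v}}$. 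Taking the minimum of the two reproduces the denominator $(1-\widehat{E}(n)\vmin1)\vmax(\widehat{b}\,\overline{C}{}_{\vc{f}/\vc{v}})^{-1}$ and hence (\ref{bound-pi-g-reduction-02}).

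Finally, for (\ref{bound-pi-g-reduction-03}) I would supply the alternative estimate $\presub{n}\vc{\pi}\vc{g}/(\vc{\pi}\vc{g})\le b$ and take the smaller of the two. When $\vc{v}_{\overline{\bbF}_0}$ is level-wise nondecreasing, Lemma~\ref{rem-pi-f<b} gives $\presub{n}\vc{\pi}\vc{f}\le b$, whence $\presub{n}\vc{\pi}\vc{g}\le b$ for $\vc{g}\le\vc{f}$; if moreover $\vc{\pi}\vc{g}\ge1$ the ratio is $\le b$. To guarantee $\vc{\pi}\vc{g}\ge1$ I would exploit the scale-invariance of $R$ in $\vc{g}$ to reduce the supremum over $\vc{0}<\vc{g}\le\vc{f}$ to $\vc{e}\le\vc{g}\le\vc{f}$, as in (\ref{add-eqn-160214-01}), and then intersect with the bound of (\ref{bound-pi-g-reduction-02}) to obtain the $\vmin b$ factor. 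I expect this last reduction to be the main obstacle: $R$ is a ratio of two nonnegative linear forms in $\vc{g}$, and its supremum over the box $\{\vc{0}<\vc{g}\le\vc{f}\}$ is in general approached at a vertex with vanishing coordinates, so the passage from $\vc{0}<\vc{g}$ to $\vc{g}\ge\vc{e}$ is genuinely delicate and must be justified with care (this normalization step is exactly the point that the corrigendum amends), whereas everything else is routine bookkeeping resting on the change-of-measure identities of the first paragraph.
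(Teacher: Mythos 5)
Your proof is correct and takes essentially the same route as the paper's: your scalars $\alpha=\vc{\pi}(\vc{f}/\vc{v})$ and $\presub{(n)}\alpha=\presub{(n)}\vc{\pi}(\vc{f}/\vc{v})$ are the reciprocals of $\widehat{\vc{\pi}}(\vc{v}/\vc{f})$ and $\presub{(n)}\widehat{\vc{\pi}}(\vc{v}/\vc{f})$, so your splitting of $\alpha\widehat{\vc{\pi}}-\presub{(n)}\alpha\,\presub{(n)}\widehat{\vc{\pi}}$, the application of the hypothesis to the admissible test vector $\vc{v}/\vc{f}$, the two bounds on the normalization ratio (one valid when $\widehat{E}(n)<1$, one unconditional via $\presub{(n)}\alpha\le\overline{C}_{\vc{f}/\vc{v}}$ and $1/\alpha\le\widehat{b}$), and the use of Lemma~\ref{rem-pi-f<b} together with the scaling reduction (\ref{add-eqn-160214-01}) for the $\vmin\, b$ term all coincide with the paper's steps, up to cosmetic rearrangement. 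One factual correction to your closing remark: the corrigendum does not amend the normalization step (\ref{add-eqn-160214-01}) — it repairs the proof of the monotone convergence (\ref{ineqn-phi}) of $\overline{\phi}_{K,N}^{(\beta)}$ to $\overline{\phi}_{K}^{(\beta)}$ by justifying an interchange of two suprema; the paper treats (\ref{add-eqn-160214-01}) as routine and uses it exactly as you do, so on that point your argument is neither weaker nor stronger than the published one.
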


\begin{rem}
Suppose that $\lim_{x\to\infty}\widehat{E}(x) = 0$. It then follows
from (\ref{bound-pi-g-reduction-02}) that, for all sufficiently large
$n \in \bbN$,
\begin{equation*}
\red{\sup_{\vc{e} \le \vc{g} \le \vc{f}}}
{ \left| \vc{\pi} - \presub{n}\vc{\pi} \right| \vc{g}
\over
\vc{\pi}\vc{g}
}
\le \widehat{E}(n)
\left(
1 + 
{1 + \widehat{E}(n) \over 1 - \widehat{E}(n)}
\right).
\end{equation*}
Furthermore, if $\widehat{E}(x)>0$ for all $x \ge 0$, then
\[
\limsup_{n\to\infty}
{1 \over \widehat{E}(n) }
\red{\sup_{\vc{e} \le \vc{g} \le \vc{f}}}
{ \left| \vc{\pi} - \presub{n}\vc{\pi} \right| \vc{g}
\over
\vc{\pi}\vc{g}
}
\le 2.
\]
\end{rem}

\medskip
\noindent
{\it Proof of Theorem~\ref{thm-reduction-EXP}.~}
It follows from (\ref{eqn-hat{pi}}) and (\ref{eqn-{n}hat{pi}}) that 
\begin{eqnarray}
\vc{\pi} 
&=& {\widehat{\vc{\pi}}\vc{\Delta}_{\vc{v}/\vc{f}} 
\over \widehat{\vc{\pi}} \left(\vc{v}/\vc{f}\right) },
\label{eqn-pi-hat{pi}}
\\
\presub{n}\vc{\pi} 
&=& {\presub{n}\widehat{\vc{\pi}}\vc{\Delta}_{\vc{v}/\vc{f}} 
\over \presub{n}\widehat{\vc{\pi}} \left(\vc{v}/\vc{f}\right) },
\qquad n \in \bbN,
\nonumber
\end{eqnarray}
which yield
\begin{eqnarray}
\vc{\pi} - \presub{n}\vc{\pi}
&=& \left[
{1 \over \widehat{\vc{\pi}} \left(\vc{v}/\vc{f}\right) } 
(\widehat{\vc{\pi}} 
- \presub{n}\widehat{\vc{\pi}} )
+ 
\left( 
{1 \over \widehat{\vc{\pi}} \left(\vc{v}/\vc{f}\right) }
-
{1 \over \presub{n}\widehat{\vc{\pi}} \left(\vc{v}/\vc{f}\right) }
\right)
\presub{n}\widehat{\vc{\pi}} 
\right]
\vc{\Delta}_{\vc{v}/\vc{f}}
\nonumber
\\
&=&
{1 \over \widehat{\vc{\pi}} \left(\vc{v}/\vc{f}\right)}
\left[
( \widehat{\vc{\pi}} - \presub{n}\widehat{\vc{\pi}} )
+
\left( 
1
-
{\widehat{\vc{\pi}} \left(\vc{v}/\vc{f}\right)  
\over \presub{n}\widehat{\vc{\pi}} \left(\vc{v}/\vc{f}\right) }
\right) \presub{n}\widehat{\vc{\pi}} 
\right]\vc{\Delta}_{\vc{v}/\vc{f}}. \qquad
\nonumber
\\
&=&
{1 \over \widehat{\vc{\pi}} \left(\vc{v}/\vc{f}\right)}
\left[
( \widehat{\vc{\pi}} - \presub{n}\widehat{\vc{\pi}} )
+
( \presub{n}\widehat{\vc{\pi}} - \widehat{\vc{\pi}} )
\left(\vc{v}/\vc{f}\right) 
{ 
\presub{n}\widehat{\vc{\pi}}
\over 
\presub{n}\widehat{\vc{\pi}} \left(\vc{v}/\vc{f}\right) 
}
\right]\vc{\Delta}_{\vc{v}/\vc{f}}, \qquad n \in \bbN.\qquad
\label{eqn-pi-hat{pi}-02}
\end{eqnarray}
We now fix $\red{\vc{e} \le\,\,} \widehat{\vc{g}} \le \vc{v}$ arbitrarily and
$\vc{g} = \vc{\Delta}_{\vc{f}/\vc{v}}\,\widehat{\vc{g}}$ (i.e.,
$\widehat{\vc{g}} = \vc{\Delta}_{\vc{v}/\vc{f}}\, \vc{g}$). It then
follows from (\ref{eqn-pi-hat{pi}}) that
\begin{equation}
\widehat{\vc{\pi}} \widehat{\vc{g}} 
= 
\vc{\pi}\vc{g} \cdot \widehat{\vc{\pi}} \left(\vc{v}/\vc{f}\right).
\label{eqn-hat{pi}hat{g}}
\end{equation}
Using (\ref{eqn-pi-hat{pi}-02}) and (\ref{eqn-hat{pi}hat{g}}), we
obtain, for $n \in \bbN$,
\begin{eqnarray}
{ \left| \vc{\pi} - \presub{n}\vc{\pi} \right| \vc{g}
\over \vc{\pi}\vc{g} }
&\le&
{1 \over \vc{\pi}\vc{g} \cdot \widehat{\vc{\pi}} \left(\vc{v}/\vc{f}\right)}
\left[
\left| \widehat{\vc{\pi}} - \presub{n}\widehat{\vc{\pi}} \right|
+
\left| \widehat{\vc{\pi}} - \presub{n}\widehat{\vc{\pi}} \right|
\left(\vc{v}/\vc{f}\right) 
{ 
\presub{n}\widehat{\vc{\pi}}
\over 
\presub{n}\widehat{\vc{\pi}} \left(\vc{v}/\vc{f}\right) 
}
\right]\vc{\Delta}_{\vc{v}/\vc{f}}\,\vc{g}
\nonumber
\\
&=&
{1 \over \widehat{\vc{\pi}}  \widehat{\vc{g}} }
\left[
\left| \widehat{\vc{\pi}} - \presub{n}\widehat{\vc{\pi}} \right|
+
\left| \widehat{\vc{\pi}} - \presub{n}\widehat{\vc{\pi}} \right|
\left(\vc{v}/\vc{f}\right) 
{ 
\presub{n}\widehat{\vc{\pi}}
\over 
\presub{n}\widehat{\vc{\pi}} \left(\vc{v}/\vc{f}\right) 
}
\right] \widehat{\vc{g}}
\nonumber
\\
&=&
{ \left| \widehat{\vc{\pi}} - \presub{n}\widehat{\vc{\pi}} \right|
\widehat{\vc{g}}
\over \widehat{\vc{\pi}}  \widehat{\vc{g}} } 
+ 
{ \left| \widehat{\vc{\pi}} - \presub{n}\widehat{\vc{\pi}} \right|
\left(\vc{v}/\vc{f}\right) 
\over \presub{n}\widehat{\vc{\pi}} \left(\vc{v}/\vc{f}\right) } 
{ \presub{n}\widehat{\vc{\pi}}\widehat{\vc{g}}
\over \widehat{\vc{\pi}}  \widehat{\vc{g}} }
\nonumber
\\
&=&
{ \left| \widehat{\vc{\pi}} - \presub{n}\widehat{\vc{\pi}} \right|
\widehat{\vc{g}}
\over \widehat{\vc{\pi}}  \widehat{\vc{g}} } 
+ 
{ \left| \widehat{\vc{\pi}} - \presub{n}\widehat{\vc{\pi}} \right|
\left(\vc{v}/\vc{f}\right) 
\over \widehat{\vc{\pi}} \left(\vc{v}/\vc{f}\right) } 
\left(
{ \widehat{\vc{\pi}} \left(\vc{v}/\vc{f}\right)
\over \presub{n}\widehat{\vc{\pi}} \left(\vc{v}/\vc{f}\right) }
{ \presub{n}\widehat{\vc{\pi}}\widehat{\vc{g}}
\over \widehat{\vc{\pi}}  \widehat{\vc{g}} }
\right). \qquad
\label{bound-pi-g-04}
\end{eqnarray}
Note here that $\red{\vc{e} \le\,\,} \widehat{\vc{g}} \le \vc{v}$ and $\vc{0} <
\vc{v}/\vc{f} \le \vc{v}$ (due to $\vc{f} \ge \vc{e}$). Thus,
(\ref{ineqn-hat{E}(n)}) yields
\begin{eqnarray}
{ \left| \widehat{\vc{\pi}} - \presub{n}\widehat{\vc{\pi}} \right|
\widehat{\vc{g}}
\over
\widehat{\vc{\pi}}\widehat{\vc{g}}
} \le \widehat{E}(n),
\qquad
{ \left| \widehat{\vc{\pi}} - \presub{n}\widehat{\vc{\pi}} \right|
\left(\vc{v}/\vc{f}\right)
\over
\widehat{\vc{\pi}}\left(\vc{v}/\vc{f}\right)
}
\le \widehat{E}(n),
\qquad n \in \bbN.
\label{add-eqn-150630-04}
\end{eqnarray}
Applying (\ref{add-eqn-150630-04}) to (\ref{bound-pi-g-04}), we
obtain, for all $n \in \bbN$ and $\red{\vc{e} \le\,\,} \vc{g} \le \vc{f}$,
\begin{eqnarray}
{ \left| \vc{\pi} - \presub{n}\vc{\pi} \right| \vc{g}
\over \vc{\pi}\vc{g} }
&\le&
\widehat{E}(n)
\left(
1 + 
{ \widehat{\vc{\pi}} \left(\vc{v}/\vc{f}\right)
\over \presub{n}\widehat{\vc{\pi}} \left(\vc{v}/\vc{f}\right) }
{ \presub{n}\widehat{\vc{\pi}}\widehat{\vc{g}}
\over \widehat{\vc{\pi}}  \widehat{\vc{g}} }
\right).
\label{add-eqn-150708-01}
\end{eqnarray}
Therefore, if $\vc{g} = \vc{e}$, i.e., $\widehat{\vc{g}} =
\vc{v}/\vc{f}$, then (\ref{add-eqn-150708-01}) is reduced to
(\ref{bound-pi-g-reduction-01}).

Next, we prove (\ref{bound-pi-g-reduction-02}). To this
end, we estimate the term
\[
{ \widehat{\vc{\pi}} \left(\vc{v}/\vc{f}\right)
\over \presub{n}\widehat{\vc{\pi}} \left(\vc{v}/\vc{f}\right) }
{ \presub{n}\widehat{\vc{\pi}}\widehat{\vc{g}}
\over \widehat{\vc{\pi}}  \widehat{\vc{g}} },\qquad n \in \bbN.
\]
From (\ref{add-eqn-150630-04}), we have
\begin{eqnarray}
{\presub{n}\widehat{\vc{\pi}}\widehat{\vc{g}} 
\over \widehat{\vc{\pi}}\widehat{\vc{g}}} 
&\le& 1 + \widehat{E}(n),
%\label{add-eqn-150708-03}
\qquad
{ \presub{n}\widehat{\vc{\pi}} \left(\vc{v}/\vc{f}\right)
\over \widehat{\vc{\pi}}\left(\vc{v}/\vc{f}\right) }
\ge 1 - \widehat{E}(n) \vmin 1,\qquad n \in \bbN.
\label{add-eqn-150708-04}
\end{eqnarray}
Furthermore, from (\ref{defn-C_{f/v}}) and $\vc{f} \ge \vc{e}$, we
have
\begin{equation}
\vc{v} 
\ge \vc{v}/\vc{f} 
\ge  {1 \over  \overline{C}_{\vc{f}/\vc{v}} } \vc{e}.
\label{ineqn-v-v/f}
\end{equation}
Using (\ref{ineqn-wh{pi}*v}) and (\ref{ineqn-v-v/f}), we obtain
\begin{equation}
{ \presub{n}\widehat{\vc{\pi}} \left(\vc{v}/\vc{f}\right)
\over \widehat{\vc{\pi}}\left(\vc{v}/\vc{f}\right) }
\ge {1 \over \overline{C}_{\vc{f}/\vc{v}}}
 { \presub{n}\widehat{\vc{\pi}} \vc{e}
\over \widehat{\vc{\pi}} \vc{v} }
\ge { 1 \over \widehat{b} \overline{C}_{\vc{f}/\vc{v}} },
\qquad n \in \bbN.
\label{add-ineqn-160207-01}
\end{equation}
Combining (\ref{add-eqn-150708-04}) and (\ref{add-ineqn-160207-01}) yields
\begin{eqnarray}
{ \widehat{\vc{\pi}} \left(\vc{v}/\vc{f}\right)
\over \presub{n}\widehat{\vc{\pi}} \left(\vc{v}/\vc{f}\right) }
{ \presub{n}\widehat{\vc{\pi}}\widehat{\vc{g}}
\over \widehat{\vc{\pi}}  \widehat{\vc{g}} }
\le 
{ 1 + \widehat{E}(n)
\over \big( 1 - \widehat{E}(n) \vmin 1 \big)
\vmax \big( \,\widehat{b} \overline{C}_{\vc{f}/\vc{v}} \big)^{-1} 
},\qquad n \in \bbN.
\label{add-eqn-150708-05}
\end{eqnarray}
Substituting (\ref{add-eqn-150708-05}) into (\ref{add-eqn-150708-01}),
we obtain (\ref{bound-pi-g-reduction-02}).

Finally, we prove (\ref{bound-pi-g-reduction-03}) under the additional
condition that $\vc{v}_{\overline{\bbF}_0}$ is level-wise
nondecreasing.  We fix $\widehat{\vc{g}} =
\vc{\Delta}_{\vc{v}/\vc{f}}\, \vc{g}$ and $\vc{e} \le \vc{g} \le
\vc{f}$. We then have $\vc{v}/\vc{f} \le \widehat{\vc{g}} \le \vc{v}$
and thus
\begin{equation}
{ 
\widehat{\vc{\pi}} \left(\vc{v}/\vc{f}\right)
\over 
\presub{n}\widehat{\vc{\pi}} \left(\vc{v}/\vc{f}\right) 
}
{ 
\presub{n}\widehat{\vc{\pi}}\widehat{\vc{g}}
\over 
\widehat{\vc{\pi}}  \widehat{\vc{g}} 
}
\le 
{ 
\widehat{\vc{\pi}} \wh{\vc{g}}
\over 
\presub{n}\widehat{\vc{\pi}} \left(\vc{v}/\vc{f}\right) 
}
{ 
\presub{n}\widehat{\vc{\pi}}\vc{v}
\over 
\widehat{\vc{\pi}}  \widehat{\vc{g}} 
}
=
{
\presub{n}\widehat{\vc{\pi}}\vc{v}
\over \presub{n}\widehat{\vc{\pi}} \left(\vc{v}/\vc{f}\right)
},\qquad n \in \bbN.
\label{ineqn-hat{pi}_v_f/hat{pi}_hat{g}}
\end{equation}
From (\ref{eqn-{n}hat{pi}}), we also have
\begin{eqnarray*}
\presub{n}\widehat{\vc{\pi}}\vc{v}
&=& 
{\presub{n}\vc{\pi}\vc{f} \over \presub{n}\vc{\pi} \left(\vc{f}/\vc{v}\right)},
\qquad
\presub{n}\widehat{\vc{\pi}} \left(\vc{v}/\vc{f}\right)
= {1 \over \presub{n}\vc{\pi} \left(\vc{f}/\vc{v}\right)},
\qquad n \in \bbN.
\end{eqnarray*}
Substituting these equations into
(\ref{ineqn-hat{pi}_v_f/hat{pi}_hat{g}}) and using
(\ref{ineqn-pi_f<b}) yields
\begin{equation}
{ 
\widehat{\vc{\pi}} \left(\vc{v}/\vc{f}\right)
\over 
\presub{n}\widehat{\vc{\pi}} \left(\vc{v}/\vc{f}\right) 
}
{ 
\presub{n}\widehat{\vc{\pi}}\widehat{\vc{g}}
\over 
\widehat{\vc{\pi}}  \widehat{\vc{g}} 
}
\le \presub{n}\vc{\pi}\vc{f} \le b,
\qquad n \in \bbN.
\label{ineqn-{n}hat{pi}_(v/f)}
\end{equation}
Combining (\ref{add-eqn-150708-01}) with (\ref{add-eqn-150708-05}) and
(\ref{ineqn-{n}hat{pi}_(v/f)}) leads to \red{(\ref{bound-pi-g-reduction-03}).}  \QED

\section{Application to Level-Dependent Quasi-Birth-and-Death Processes}\label{sec-application}

In this section, we first establish a computable error bound for
LD-QBDs with exponential ergodicity by using the results in
Section~\ref{subsec-exp}. We then consider the queue length process in an
M/M/$s$ retrial queue, which is a special case of LD-QBDs. For this
special case, we derive two bounds: one includes $\presub{n}\vc{\pi}$
and the other does not. Using the two bounds, we present some
numerical examples.

\subsection{Numerical procedure for the error bound}

We assume that the infinitesimal generator $\vc{Q}$ of the
Markov chain $\{(X(t),J(t))\}$  has the following block-tridiagonal form:
\begin{equation}
\vc{Q} =
\bordermatrix{
               & \bbL_0 &  \bbL_1  &  \bbL_2       &  \bbL_3       & \cdots       
\cr
\bbL_0 & 
\vc{A}_0(0) & 
\vc{A}_0(1) &
\vc{O} &
\vc{O} &
\cdots
\cr
\bbL_1 & 
\vc{A}_1(-1) & 
\vc{A}_1(0) &
\vc{A}_1(1) &
\vc{O} &
\cdots
\cr
\bbL_2 & 
\vc{O}&
\vc{A}_2(-1) & 
\vc{A}_2(0) &
\vc{A}_2(1) &
\cdots
\cr
\bbL_3
& \vc{O}					
&
\vc{O}					
&
\vc{A}_3(-1)& 
\vc{A}_3(0) &
\ddots
\cr
~~\vdots    
& \vdots					
&
\vdots					&
\vdots					&
\ddots					&
\ddots
}.
\label{eqn-Q-LD-QBD}
\end{equation}
In this setting, $\{(X(t),J(t))\}$ is called the {\it level-dependent
  quasi-birth-and-death process (LD-QBD)} and $\vc{Q}$ is called the
{\it LD-QBD generator}.  Applying
Corollary~\ref{coro-exp-ergodic-comp} to $\vc{Q}$ in
(\ref{eqn-Q-LD-QBD}), we readily obtain the following result.
\begin{coro}\label{coro-LD-QBD}
Suppose that (i) $\vc{Q}$ in (\ref{eqn-Q-LD-QBD}) is irreducible and
its LC-block-augmented truncation $\presub{n}\vc{Q}$ has a single
communicating class in $\bbF_n$ for each $n \in \bbN$; and (ii) there
exist some $b>0$, $c > 0$, $K \in \bbZ_+$ and column vector $\vc{v}
\ge \vc{e}/c$ such that (\ref{geo-drift-cond}) holds. Furthermore, fix
$N \in \{K,K+1,\dots\}$ arbitrarily such that (\ref{ineqn-K})
holds. Under these conditions,
\begin{eqnarray}
\red{\sup_{\vc{e} \le \vc{g} \le c\vc{v}}}
{ \left| \vc{\pi} - \presub{n}\vc{\pi} \right| \vc{g} \over \vc{\pi}\vc{g} }
&\le& 2 \presub{n}\vc{\pi}(n)\vc{A}_n(1)
\nonumber
\\
&& {} \times 
\left[
\vc{v}(n) + \vc{v}(n+1)
+2b \left( {1 \over c}
+ {2 \over \beta \overline{\phi}_{K,N}^{(\beta)}}
\right)\vc{e} 
\right], \qquad n \in \bbN, \qquad
\label{bound-exp-LD-QBD}
\end{eqnarray}
where $\overline{\phi}_{K,N}^{(\beta)}$ is defined in (\ref{defn-phi_{K,N}}).
\end{coro}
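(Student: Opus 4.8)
The plan is to obtain Corollary~\ref{coro-LD-QBD} as a direct specialization of Corollary~\ref{coro-exp-ergodic-comp} to the block-tridiagonal structure (\ref{eqn-Q-LD-QBD}). First I would verify that the hypotheses of Corollary~\ref{coro-exp-ergodic-comp} are in force. Condition~(ii) of the present corollary is exactly the geometric drift condition (\ref{geo-drift-cond}) with $\vc{f} = c\vc{v} \ge \vc{e}$, so together with the irreducibility asserted in condition~(i) it guarantees that $\{(X(t),J(t))\}$ is exponentially ergodic and hence positive recurrent; combined with the assumption that $\presub{n}\vc{Q}$ has a single communicating class in $\bbF_n$ for each $n$, this establishes Assumption~\ref{basic-assumpt}. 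The remaining hypotheses ($\vc{v} \ge \vc{e}/c$ and the choice of $N \in \{K,K+1,\dots\}$ with (\ref{ineqn-K})) are assumed verbatim, so Corollary~\ref{coro-exp-ergodic-comp} is applicable.

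With these in place, Corollary~\ref{coro-exp-ergodic-comp} yields the bound (\ref{bound-sup-pi-g-exp}) with the error decay function $\widetilde{E}_N(n)$ given by (\ref{defn-widetilde{E}_N(n)}). The crucial and essentially only substantive step is then to collapse the double sum defining $\widetilde{E}_N(n)$ using tridiagonality. For the LD-QBD generator (\ref{eqn-Q-LD-QBD}) one has $\vc{Q}(k;m) = \vc{O}$ whenever $m \ge k+2$, together with $\vc{Q}(k;k+1) = \vc{A}_k(1)$. Hence in the inner sum $\sum_{m=n+1}^{\infty}\vc{Q}(k;m)$, taken over $0 \le k \le n$, the only potentially nonzero term is $m = k+1$, and this term enters only when $k+1 \ge n+1$, i.e.\ $k \ge n$. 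Since simultaneously $k \le n$, only the single index pair $k = n$, $m = n+1$ survives, with $\vc{Q}(n;n+1) = \vc{A}_n(1)$.

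Substituting this into (\ref{defn-widetilde{E}_N(n)}) and evaluating the bracketed factor at $m = n+1$ would give
\begin{equation*}
\widetilde{E}_N(n)
= 2\,\presub{n}\vc{\pi}(n)\,\vc{A}_n(1)
\left\{
\vc{v}(n) + \vc{v}(n+1) + 2b\left(
{1 \over c}
+ { 2 \over \beta \overline{\phi}_{K,N}^{(\beta)} }
\right)\vc{e}
\right\},\qquad n \in \bbN,
\end{equation*}
which is precisely the right-hand side of (\ref{bound-exp-LD-QBD}). Because the argument is a direct substitution into an already-proved bound, I do not expect any genuine analytic obstacle here; the only point requiring care is the index bookkeeping, namely confirming that the band structure $\vc{Q}(k;m) = \vc{O}$ for $m \ge k+2$, once intersected with the summation ranges $0 \le k \le n$ and $m \ge n+1$, forces exactly $k = n$ and $m = n+1$, so that the entire double sum reduces to the single boundary term involving $\vc{A}_n(1)$.
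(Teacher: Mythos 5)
Your proposal is correct and follows exactly the route the paper itself takes: the paper proves Corollary~\ref{coro-LD-QBD} by ``applying Corollary~\ref{coro-exp-ergodic-comp} to $\vc{Q}$ in (\ref{eqn-Q-LD-QBD})'', and your index bookkeeping (block-tridiagonality forces $k=n$, $m=n+1$ in the double sum of (\ref{defn-widetilde{E}_N(n)}), leaving only $\presub{n}\vc{\pi}(n)\vc{A}_n(1)$ times the bracket evaluated at $m=n+1$) is precisely the reduction that yields (\ref{bound-exp-LD-QBD}).
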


Recall here that $\overline{\phi}_{K,N}^{(\beta)}$ is expressed in
terms of the fundamental matrix $\vc{\Phi}_{\bbF_N}^{(\beta)} =
(\vc{I} - \vc{Q}_{\bbF_N}/\beta)^{-1}$ of $\vc{I} -
\vc{Q}_{\bbF_N}/\beta$ (see (\ref{defn-S_N^(beta)}) and
(\ref{defn-phi_{K,N}})). Since $\vc{Q}_{\bbF_N}$ is block-tridiagonal,
we can efficiently compute $\vc{\Phi}_{\bbF_N}^{(\beta)} = (\vc{I} -
\vc{Q}_{\bbF_N}/\beta)^{-1}$ by \citet{Shin09}'s algorithm (see
Remark~\ref{rem-Le-Boud}). In addition, since $\presub{n}\vc{Q}$ is
block-tridiagonal in its unique communicating class $\bbF_n$, we can
compute its stationary distribution vector $\presub{n}\vc{\pi}$ in an
efficient way, which is described as follows.
\begin{prop}[\citet{Gave84}, Lemma 3]\label{prop-product-form}
For each $n \in \bbN$, let $\{\presub{n}\vc{R}_{\ell};\ell = 0,1,\dots, \break n-1\}$ denote a
sequence of $(S_{\ell \vmin 1}+1) \times (S_1+1)$ nonnegative matrices
defined recursively by
\begin{eqnarray*}
\presub{n}\vc{R}_{n-1}
&=& \vc{A}_{n-1}(1) \left(-\vc{A}_n(0) - \vc{A}_n(1)\right)^{-1},
\\
\presub{n}\vc{R}_{\ell}
&=& \vc{A}_{\ell}(1) 
\left( 
-\vc{A}_{\ell + 1}(0) - \presub{n}\vc{R}_{\ell + 1}\vc{A}_{\ell + 2}(-1)
\right)^{-1}, 
\qquad  \ell = n-2, n-3,\dots,0.
\label{recursion-LHA{R}_n}
\end{eqnarray*}
It then holds that, for $n \in \bbN$,
\begin{eqnarray*}
\presub{n}\vc{\pi}(0) 
\left(\vc{A}_0(0) + \presub{n}\vc{R}_0 \vc{A}_1(-1) \right)
&=& \vc{0},
\\
\presub{n}\vc{\pi}(0) \vc{e}
+ 
\presub{n}\vc{\pi}(0) 
\sum_{k=1}^n \prod_{\ell = 0}^{k-1}\presub{n}\vc{R}_{\ell}
\vc{e}
&=& 1,
\\
\presub{n}\vc{\pi}(k)
= \presub{n}\vc{\pi}(0) \prod_{\ell = 0}^{k-1}\presub{n}\vc{R}_{\ell},
\quad~~~  k &=& 1,2,\dots,n,
\end{eqnarray*}
where $\prod_{\ell = 0}^{k-1}\presub{n}\vc{R}_{\ell} =
\presub{n}\vc{R}_0 \cdot \presub{n}\vc{R}_1 \cdot \cdots \cdot
\presub{n}\vc{R}_{k-1}$ for $k=1,2,\dots,n$.
\end{prop}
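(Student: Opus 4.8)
The plan is to reduce $\presub{n}\vc{\pi}\presub{n}\vc{Q}=\vc{0}$ to the finite block-tridiagonal balance equations on $\bbF_n$ and then solve them by backward block-elimination, which is exactly the level-reduction scheme encoded by the matrices $\presub{n}\vc{R}_\ell$. First I would note that, by (\ref{eqn-{n}pi(k)=0}), $\presub{n}\vc{\pi}(k)=\vc{0}$ for $k\ge n+1$, so the global balance equation is equivalent to the balance equations of the finite generator obtained by restricting $\presub{n}\vc{Q}$ to $\bbF_n$. Reading off (\ref{defn-(n)_Q}) under the block-tridiagonal form (\ref{eqn-Q-LD-QBD}), the only block of $\vc{Q}$ above level $n$ that survives the last-column-block augmentation is $\vc{Q}(n;n+1)=\vc{A}_n(1)$; hence every block of $\presub{n}\vc{Q}$ inside $\bbF_n$ coincides with that of $\vc{Q}$ except the bottom diagonal block, which becomes $\vc{A}_n(0)+\vc{A}_n(1)$. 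Writing the balance equations block-wise thus gives, at the top level,
\[
\presub{n}\vc{\pi}(n-1)\vc{A}_{n-1}(1)
+ \presub{n}\vc{\pi}(n)\bigl(\vc{A}_n(0)+\vc{A}_n(1)\bigr)=\vc{0},
\]
at each intermediate level $1\le k\le n-1$,
\[
\presub{n}\vc{\pi}(k-1)\vc{A}_{k-1}(1)
+\presub{n}\vc{\pi}(k)\vc{A}_k(0)
+\presub{n}\vc{\pi}(k+1)\vc{A}_{k+1}(-1)=\vc{0},
\]
and at level $0$ the equation obtained by dropping the first term.

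Next I would prove, by backward induction on $\ell=n-1,n-2,\dots,0$, the one-step relation $\presub{n}\vc{\pi}(\ell+1)=\presub{n}\vc{\pi}(\ell)\presub{n}\vc{R}_\ell$. The base case $\ell=n-1$ is immediate from the top-level equation and the definition of $\presub{n}\vc{R}_{n-1}$. For the inductive step, substituting the hypothesis $\presub{n}\vc{\pi}(\ell+2)=\presub{n}\vc{\pi}(\ell+1)\presub{n}\vc{R}_{\ell+1}$ into the level-$(\ell+1)$ equation and collecting the coefficient of $\presub{n}\vc{\pi}(\ell+1)$ yields $\presub{n}\vc{\pi}(\ell+1)\bigl(\vc{A}_{\ell+1}(0)+\presub{n}\vc{R}_{\ell+1}\vc{A}_{\ell+2}(-1)\bigr)=-\presub{n}\vc{\pi}(\ell)\vc{A}_\ell(1)$, which is precisely $\presub{n}\vc{\pi}(\ell+1)=\presub{n}\vc{\pi}(\ell)\presub{n}\vc{R}_\ell$. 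Telescoping these relations gives the product form $\presub{n}\vc{\pi}(k)=\presub{n}\vc{\pi}(0)\prod_{\ell=0}^{k-1}\presub{n}\vc{R}_\ell$; inserting $\presub{n}\vc{\pi}(1)=\presub{n}\vc{\pi}(0)\presub{n}\vc{R}_0$ into the level-$0$ equation produces the stated boundary equation $\presub{n}\vc{\pi}(0)(\vc{A}_0(0)+\presub{n}\vc{R}_0\vc{A}_1(-1))=\vc{0}$, and substituting the product form into $\presub{n}\vc{\pi}\vc{e}=\sum_{k=0}^n\presub{n}\vc{\pi}(k)\vc{e}=1$ produces the stated normalization.

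The main obstacle is the well-posedness of the recursion, i.e., invertibility of $-\vc{A}_n(0)-\vc{A}_n(1)$ and of each $-\vc{A}_{\ell+1}(0)-\presub{n}\vc{R}_{\ell+1}\vc{A}_{\ell+2}(-1)$. I would handle this by interpreting these as the negated diagonal blocks $-\vc{U}_{\ell+1}$ of the block-elimination (Schur-complement) factorization of $\presub{n}\vc{Q}$ on $\bbF_n$, where $\vc{U}_n=\vc{A}_n(0)+\vc{A}_n(1)$, $\vc{U}_\ell=\vc{A}_\ell(0)+\vc{A}_\ell(1)(-\vc{U}_{\ell+1})^{-1}\vc{A}_{\ell+1}(-1)$, and $\presub{n}\vc{R}_\ell=\vc{A}_\ell(1)(-\vc{U}_{\ell+1})^{-1}$. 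Probabilistically, $-\vc{U}_{\ell+1}$ is the generator of the finite-state process confined to levels $\{\ell+1,\dots,n\}$ and killed upon its (necessarily one-step, by tridiagonality) descent to level $\ell$. Since $\presub{n}\vc{Q}$ has a single closed communicating class filling $\bbF_n$ (Assumption~\ref{basic-assumpt} and Proposition~\ref{prop-{n}Q-communication}), this killed process is irreducible on a finite state space with a reachable absorbing boundary, hence transient and absorbed in finite expected time; equivalently $-\vc{U}_{\ell+1}$ is a nonsingular $M$-matrix for every $\ell+1\ge1$, so all inverses in the recursion exist and are nonnegative. (The excluded case $\ell+1=0$ would give the singular $-\vc{U}_0$, consistent with the level-$0$ equation being homogeneous.) Once invertibility is secured, the backward induction and telescoping above complete the proof.
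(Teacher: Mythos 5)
Your proof is correct, and there is nothing in the paper to compare it against step by step: the paper offers no proof of this proposition at all, importing it verbatim as Lemma 3 of \citet{Gave84}, and the standard proof of that lemma is exactly the one you give — backward level reduction (block Gaussian elimination) on the finite block-tridiagonal generator. Your reduction of $\presub{n}\vc{\pi}\,\presub{n}\vc{Q}=\vc{0}$ to the finite system on $\bbF_n$ via (\ref{eqn-{n}pi(k)=0}), the observation that the LC-block augmentation changes only the bottom diagonal block to $\vc{A}_n(0)+\vc{A}_n(1)$, the backward induction establishing $\presub{n}\vc{\pi}(\ell+1)=\presub{n}\vc{\pi}(\ell)\,\presub{n}\vc{R}_{\ell}$, and the telescoping into the product form, the boundary equation at level $0$, and the normalization are all accurate.

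Two small points of care on your well-posedness step. First, a wording slip: $-\vc{U}_{\ell+1}$ is not itself the generator of the process confined to levels $\{\ell+1,\dots,n\}$ — that generator has order $(n-\ell)(S_1+1)$ — but rather minus the Schur complement of that confined (transient) generator after eliminating levels $\ell+2,\dots,n$, equivalently the generator of the confined process censored on level $\ell+1$; your parenthetical Schur-complement description is the correct one, and the nonsingular-M-matrix conclusion (hence existence and nonnegativity of all inverses, hence $\presub{n}\vc{R}_{\ell}\ge\vc{O}$) follows from it. Second, you assert that the unique closed communicating class \emph{fills} $\bbF_n$, whereas Assumption~\ref{basic-assumpt} literally guarantees only a \emph{unique} closed class in $\bbF_n$; by Proposition~\ref{prop-{n}Q-communication} this class must intersect $\bbL_n$, but if it were a proper subset contained in levels $\ge m$ for some $m\ge1$, the corresponding $-\vc{U}_m$ would be singular. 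What your absorption argument actually needs is that level $\ell$ is reachable within $\bbF_n$ from every state of levels $\ell+1,\dots,n$, which holds precisely when the truncated chain restricted to $\bbF_n$ is irreducible — the hypothesis of the cited lemma and the reading intended by condition (i) of Corollary~\ref{coro-LD-QBD}. With that reading made explicit, your proof is complete.
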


We summarize the procedure for computing the bound
(\ref{bound-exp-LD-QBD}).
\begin{enumerate}
\item Find $b> 0$, $c> 0$, $K \in \bbZ_+$ and $\vc{v} \ge \vc{e}/c$
  such that (\ref{geo-drift-cond}) holds.
\item Fix $\beta > 0$ arbitrarily and find $N \in \{K,K+1,\dots\}$
  such that (\ref{ineqn-K}) holds; and compute
  $\vc{\Phi}_{\bbF_N}^{(\beta)}$ by \citet{Shin09}'s algorithm.
\item Compute $\overline{\phi}_{K,N}^{(\beta)}$ by (\ref{defn-phi_{K,N}}).
\item Compute $\presub{n}\vc{\pi}(n)$ according to
  Proposition~\ref{prop-product-form}.
\item Compute the bound (\ref{bound-exp-LD-QBD}).
\end{enumerate}

\subsection{Numerical example: M/M/$s$ retrial queue}

\subsubsection{Model description}

In this subsection, we consider an M/M/$s$ retrial queue, where $s$ is
a positive integer.  The system has $s$ identical servers but no
waiting room.  Customers arrive at the system according to a Poisson
process with rate $\lambda > 0$. Such customers are called {\it
  primary customers}. If a primary customer finds at least one server
idle, then the customer occupies one of them; otherwise joins the
orbit (virtual waiting room). The customers in the orbit are called
{\it retrial customers}.  Each retrial customer tries to occupy one of
idle servers after a random sojourn time in the orbit, which is
independent of the sojourn times of other retrial customers and is
distributed with an exponential distribution having mean
$1/\eta>0$. If a retrial customer is not accepted by any server (i.e.,
finds all the server busy), it goes back to the orbit and becomes a
retrial customer again.  Primary and retrial customers in service
leave the system after exponential service times with mean $1/\mu >
0$, which are independent one another.

Let $L(t)$, $t \ge 0$, denote the number of customers in the orbit,
called the queue length, at time $t$. Let $B(t)$, $t \ge 0$, denote
the number of busy servers at time $t$.  It is known (see, e.g.,
\citet{LiuBin10}) that $\{(L(t),B(t));t \ge 0\}$ is an LD-QBD whose
infinitesimal generator is given by $\vc{Q}$ in (\ref{eqn-Q-LD-QBD}),
where $\bbS_0=\bbS_1=\{0,1,\dots,s\}$, 
\begin{eqnarray}
\vc{A}_k(1)  & = & 
\left (
\begin{array}{cccccc}
0      & 0      & \cdots & 0      & 0 
\\
0      & 0      & \cdots & 0      & 0 
\\
\vdots & \vdots & \ddots & \vdots & \vdots 
\\
0      & 0      & \cdots & 0      & 0 
\\
0      & 0      & \cdots & 0      & \lambda  
\\
\end{array}
\right ),  
\quad 
\vc{A}_k(-1)   =  
\left (
\begin{array}{llllll}
0 \  & k \eta  \  & 0 & \cdots &  0 
\\
0 & 0 & k \eta \  &  \ddots &  \vdots 
\\
\vdots &  & \ddots & \ddots &   0 
\\
\vdots &   &  & 0 \  & k \eta  
\\
0 & \cdots & \cdots & 0 & 0 
\\
\end{array}
\right ), \qquad 
\label{eqn-A_k(1)}
\end{eqnarray}
and
\begin{eqnarray}
\vc{A}_k(0)  
&=& 
\left (
\begin{array}{cccccc}
-\psi_{k,0} \ & \lambda\ & 0 & \cdots   &  \cdots & 0 
\\
\mu   & -\psi_{k,1}  & \lambda  & \ddots   &   & \vdots 
\\
0  & 2 \mu \ & -\psi_{k,2} \ & \ddots &  \ddots & \vdots 
\\
\vdots & \ddots & \ddots & \ddots & \ddots & 0 
\\
\vdots & & \ddots & \ddots \quad & -\psi_{k,s-1} \quad & \lambda 
\\
0 & \cdots & \cdots & 0 &  s \mu \ & -\psi_{k,s}
\\
\end{array}
\right ),
\label{eqn-A_k(0)}
\end{eqnarray}
with
\begin{align*}
\psi_{k,i}
&= \lambda + i \mu + k\eta, & k &\in \bbZ_+,\ i = 0,1,\dots,s-1,
\\
\psi_{k,s}
&= \lambda + s \mu, & k &\in \bbZ_+.
\end{align*}

In the rest of this section, we assume that $\vc{Q}$ is the
infinitesimal generator of the LD-QBD $\{(L(t),B(t));t \ge 0\}$, i.e.,
the LD-QBD generator given by (\ref{eqn-Q-LD-QBD}) together with
(\ref{eqn-A_k(1)}) and (\ref{eqn-A_k(0)}). Thus, $\vc{Q}$ is not
uniformizable because its diagonal elements are unbounded. Therefore,
the existing results on discrete-time Markov chains (see
\citet{Herv14,LiuYuan10,Masu15-ADV,Masu16-SIAM,Twee98}) are not
applicable to the LD-QBD generator $\vc{Q}$ considered here.

We first that condition~(i) of Corollary~\ref{coro-LD-QBD} is
satisfied.  We then define $\rho = \lambda / (s\mu)$ and assume $\rho
< 1$. It thus follows that the LD-QBD generator $\vc{Q}$
(equivalently, the LD-QBD $\{(L(t),B(t))\}$) is ergodic (see, e.g.,
\citet[Section~2.2]{Fali97}) and therefore has the unique stationary
distribution vector, denoted by
$\vc{\pi}=(\vc{\pi}(0),\vc{\pi}(1),\dots)$. By definition,
\[
\pi(k,i)
= \lim_{t \to \infty} \PP(L(t)=k, B(t)=i),
\qquad k \in \bbZ_+,\ i = 0,1,\dots,s.
\]

We now define $L$ and $B$ as random variables such that 
\begin{eqnarray*}
\PP(L = k, B=i) 
&=& \lim_{t \to \infty} \PP(L(t)=k, B(t)=i)
= \pi(k,i),
\qquad k \in \bbZ_+,\ i=0,1,\dots,s,
\end{eqnarray*}
where $L$ and $B$ can be interpreted as the queue length and the
number of busy servers, respectively, in steady state. We also define
$\presub{n}L$ and $\presub{n}B$, $n\in\bbN$, as random variables such
that
\begin{eqnarray*}
\PP(\presub{n}L = k, \presub{n}B = i) = \presub{n}\pi(k,i),
\qquad k \in \bbZ_+,\ i=0,1,\dots,s.
\end{eqnarray*}
We then consider $\EE[g(\presub{n}L,\presub{n}B)]$ as an approximation to
$\EE[g(L,B)]$, where $\EE[g(L,B)]$ is the time-averaged functional of the LD-QBD $\{(L(t),B(t));t \ge 0\}$.

\subsubsection{Error bounds for time-averaged functionals}

In what follows, we estimate the relative error of the approximation  
$\EE[g(\presub{n}L,\presub{n}B)]$ to the time-averaged functional
$\EE[g(L,B)]$, i.e.,
\[
{ \left| \EE[g(L,B)] - \EE[g(\presub{n}L,\presub{n}B)] \right| 
\over \EE[g(L,B)] }.
\]
Note here that if $\vc{g}=\vc{e}$ then $\EE[g(L,B)] = \EE[L]$, which
is equal to the mean queue length in steady state. Note also that
\begin{eqnarray}
\red{\sup_{\vc{e} \le \vc{g} \le c\vc{v}}}
{ \left| \EE[g(L,B)] - \EE[g(\presub{n}L,\presub{n}B)] \right| 
\over \EE[g(L,B)] }
\le \red{\sup_{\vc{e} \le \vc{g} \le c\vc{v}}}
{
\left| \vc{\pi} - \presub{n}\vc{\pi} \right| \vc{g}
\over
\vc{\pi}\vc{g}
}.
\label{ineqn-relative-error}
\end{eqnarray}
Therefore, once we can establish the exponentially drift condition
(\ref{geo-drift-cond}), we can use Corollary~\ref{coro-LD-QBD} to
estimate the relative error of $\EE[g(\presub{n}L,\presub{n}B)]$.

The following lemma leads to the exponentially drift condition
(\ref{geo-drift-cond}).
\begin{lem}\label{lem-MMC}
Let $\vc{Q}$ be given by (\ref{eqn-Q-LD-QBD}) together with
(\ref{eqn-A_k(1)}) and (\ref{eqn-A_k(0)}).  Suppose $\rho = \lambda /
(s\mu) < 1$ and let $\acute{\vc{v}}:=(\acute{v}(k,i))_{(k,i)\in\bbF}$
be given by
\begin{equation}
\acute{v}(k,i)
= \left\{
\begin{array}{ll}
\alpha^k, & k \in \bbZ_+,\ i = 0,1,\dots,s-1,
\\
\gamma^{-1}\alpha^k,& k \in \bbZ_+,\ i = s,
\end{array}
\right.
\label{defn-widetilde{v}(k)-queue}
\end{equation}
where $\alpha$ and $\gamma$ are positive constants such that
\begin{eqnarray}
1 &<& \alpha < \rho^{-1},
\label{defn-theta}
\\
\alpha^{-1}
&<& \gamma
< 1 - \rho(\alpha-1).
\label{defn-gamma}
\end{eqnarray}
Furthermore, let 
\begin{eqnarray}
c &=& s\mu \left[ 1 - \rho(\alpha-1) - \gamma \right],
\label{eqn-c}
\\
\acute{b} &=& \max_{0 \le k \le K}\alpha^k
\left[c - 
\left\{ k\eta ( 1 - \gamma^{-1}\alpha^{-1}) + \lambda ( 1 - \gamma^{-1} )
\right\}
\right] \vmax 0,
\label{eqn-widetilde{b}}
\\
K &=& 
\left\lceil 
{ c + \lambda ( \gamma^{-1} - 1) \over \eta ( 1 - \gamma^{-1}\alpha^{-1}) } 
\right\rceil \vmax 1 - 1.
\label{eqn-eta}
\end{eqnarray}
Under these conditions,
\begin{equation}
\vc{Q}\acute{\vc{v}} 
\le - c\acute{\vc{v}} + \acute{b} \vc{1}_{\bbF_K}.
\label{ineqn-Q_widetilde{v}}
\end{equation}

\end{lem}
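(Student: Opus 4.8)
The plan is to verify the vector inequality (\ref{ineqn-Q_widetilde{v}}) coordinate by coordinate, exploiting the block-tridiagonal structure of $\vc{Q}$ in (\ref{eqn-Q-LD-QBD}). Writing $\acute{\vc{v}}(k)$ for the level-$k$ subvector of $\acute{\vc{v}}$ in (\ref{defn-widetilde{v}(k)-queue}), the level-$k$ block of $\vc{Q}\acute{\vc{v}}$ equals $\vc{A}_k(-1)\acute{\vc{v}}(k-1) + \vc{A}_k(0)\acute{\vc{v}}(k) + \vc{A}_k(1)\acute{\vc{v}}(k+1)$, with the first term absent for $k=0$; since that term carries the factor $k\eta$ (see (\ref{eqn-A_k(1)})), which vanishes at $k=0$, no separate treatment of level $0$ is needed. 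First I would substitute (\ref{eqn-A_k(1)}) and (\ref{eqn-A_k(0)}) and compute the $(k,i)$th entry $(\vc{Q}\acute{\vc{v}})(k,i)$ in three groups of coordinates: the full state $i=s$; the boundary non-full state $i=s-1$ (which also covers the case $s=1$); and the interior non-full states $0 \le i \le s-2$.

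For the full state the point is that the choice of $c$ makes the inequality tight. A direct calculation gives $(\vc{Q}\acute{\vc{v}})(k,s) = [\lambda\gamma^{-1}\alpha + s\mu - (\lambda+s\mu)\gamma^{-1}]\alpha^k$, and using $\lambda = s\mu\rho$ together with (\ref{eqn-c}) this equals exactly $-c\gamma^{-1}\alpha^k = -c\acute{v}(k,s)$. Hence the full-state coordinate satisfies (\ref{ineqn-Q_widetilde{v}}) with equality for every $k$ and contributes nothing to $\acute{b}$. For the non-full coordinates I would show that after adding $c\acute{v}(k,i)$ each entry takes the form of $\alpha^k$ times an affine function of $k$: for $0 \le i \le s-2$ one gets $\alpha^k[c - k\eta(1-\alpha^{-1})]$, while for $i=s-1$, where the superdiagonal of $\vc{A}_k(0)$ meets the heavier coordinate $\acute{v}(k,s)=\gamma^{-1}\alpha^k$, one gets $\alpha^k[c + \lambda(\gamma^{-1}-1) - k\eta(1-\gamma^{-1}\alpha^{-1})]$.

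The key structural observation is that the $i=s-1$ bracket dominates the $i \le s-2$ bracket for all $k \ge 0$: their difference is $(\gamma^{-1}-1)(\lambda + k\eta\alpha^{-1}) \ge 0$, which is nonnegative because $\gamma < 1 - \rho(\alpha-1) < 1$ forces $\gamma^{-1} > 1$. Consequently it suffices to bound the single expression $\alpha^k\, b_\star(k)$ with $b_\star(k) := c + \lambda(\gamma^{-1}-1) - k\eta(1-\gamma^{-1}\alpha^{-1})$. Here the lower bound $\gamma > \alpha^{-1}$ in (\ref{defn-gamma}) guarantees $1 - \gamma^{-1}\alpha^{-1} > 0$, so $b_\star$ is strictly decreasing in $k$ and vanishes at $\theta := [c+\lambda(\gamma^{-1}-1)]/[\eta(1-\gamma^{-1}\alpha^{-1})]$. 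Since (\ref{eqn-eta}) gives $K+1 = \lceil\theta\rceil \vmax 1 \ge \theta$, every $k \ge K+1$ yields $b_\star(k) \le 0$, i.e. $(\vc{Q}\acute{\vc{v}})(k,i) \le -c\acute{v}(k,i)$ for all non-full $i$; and for $0 \le k \le K$ the definition (\ref{eqn-widetilde{b}}) of $\acute{b}$ as $\max_{0\le k \le K}\alpha^k b_\star(k) \vmax 0$ immediately gives $\alpha^k b_\star(k) \le \acute{b}$.

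Collecting the full-state equality and these non-full bounds establishes (\ref{ineqn-Q_widetilde{v}}). The computation is entirely elementary; the only thing requiring care is the bookkeeping of the $i=s-1$ boundary coordinate, since that is where the $\lambda(\gamma^{-1}-1)$ correction enters, and it is precisely the reason why $\acute{b}$ and $K$ are defined through $b_\star$ rather than through the interior bracket $c - k\eta(1-\alpha^{-1})$.
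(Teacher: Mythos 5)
Your proof is correct and follows essentially the same route as the paper's own: the exact computation of the full-state coordinate giving $-c\,\acute{v}(k,s)$, the reduction of the interior coordinates $0 \le i \le s-2$ to the $i=s-1$ bracket via $\gamma^{-1} > 1$, and the use of the definitions of $K$ and $\acute{b}$ to settle the cases $k \ge K+1$ and $0 \le k \le K$, respectively. The only cosmetic differences are that you package the boundary bracket as a monotone function $b_\star$ with root $\theta$, where the paper instead states the equivalent inequality $k\eta(1-\gamma^{-1}\alpha^{-1}) + \lambda(1-\gamma^{-1}) \ge c$ for $k \ge K+1$ directly from (\ref{eqn-eta}), and that you omit the paper's preliminary verification that admissible $(\alpha,\gamma)$ exist, which is indeed not needed for the implication as stated.
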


\medskip
\noindent
{\it Proof.~}
We first confirm that there exist constants $\alpha$ and $\gamma$ such
that (\ref{defn-theta}) and (\ref{defn-gamma}) hold. A positive
constant $\gamma$ satisfying (\ref{defn-gamma}) exists if
\[
\alpha^{-1}
< 1 - \rho(\alpha-1),\qquad \alpha > 1,
\]
or equivalently,
\begin{equation}
\rho \alpha^2 
- \left(\rho + 1 \right)\alpha
+ 1 = (\alpha - 1)(\rho\alpha - 1) < 0, \qquad \alpha > 1.
\label{ineqn-theta}
\end{equation}
Clearly, (\ref{ineqn-theta}) is equivalent to (\ref{defn-theta}).
Therefore, there exist positive constants $\alpha$ and $\gamma$
satisfying (\ref{defn-theta}) and (\ref{defn-gamma}).

Next we prove that (\ref{ineqn-Q_widetilde{v}}) holds.
For $k \in \bbZ_+$,
let $\vc{u}(k) := (u(k,i))_{i\in\{0,1,\dots,s\}}$ denote
\begin{eqnarray}
\vc{u}(k)
&=& \sum_{\ell=0}^{\infty}\vc{Q}(k;\ell)\acute{\vc{v}}(\ell)
\nonumber
\\
&=& 
\vc{A}_k(-1)\acute{\vc{v}}(k-1) 
+ \vc{A}_k(0)\acute{\vc{v}}(k)
+ \vc{A}_k(1)\acute{\vc{v}}(k+1),\qquad k\in\bbZ_+,
\label{defn-u(k)}
\end{eqnarray}
where $\acute{\vc{v}}(k) = (\acute{v}(k,i))_{i\in\{0,1,\dots,s\}}$ for
$k \in \bbZ_+$. Thus, it suffices to show that
\begin{equation}
\vc{u}(k)
\le
\left\{
\begin{array}{ll}
-c \acute{\vc{v}}(k) + \acute{b}\vc{e}, & k = 0,1,\dots,K,
\\
-c \acute{\vc{v}}(k), & k = K + 1, K+2,\dots.
\end{array}
\right.
\label{ineqn-vc{u}(k)}
\end{equation}

It follows from (\ref{eqn-A_k(1)}), (\ref{eqn-A_k(0)}),
(\ref{defn-widetilde{v}(k)-queue}) and (\ref{eqn-c}) that, for $k \in
\bbZ_+$,
\begin{eqnarray}
u(k,s)
&=& 
s\mu \alpha^k - \psi_{k,s} \gamma^{-1}\alpha^k
+ \lambda \gamma^{-1}\alpha^{k+1}
\nonumber
\\
&=& \left\{ s\mu (\gamma - 1) + \lambda (\alpha - 1) \right\}
 \gamma^{-1}\alpha^k
\nonumber
\\
&=& -s\mu
\left\{ 
1  - \gamma - { \lambda \over s\mu }(\alpha-1)
\right\} \cdot \gamma^{-1} \alpha^k
\nonumber
\\
&=& -s\mu
\left\{ 1 - \rho(\alpha-1) - \gamma \right\} \cdot \gamma^{-1} \alpha^k
\nonumber
\\
&=& - c \cdot \gamma^{-1} \alpha^k,
\label{eqn-x(n)-03}
\end{eqnarray}
and 
\begin{eqnarray}
u(k,s-1)
&=&
k\eta \gamma^{-1} \alpha^{k-1}
+ \{ (s-1) \mu - \psi_{k,{s-1}} \} \alpha^k
+ \lambda \gamma^{-1} \alpha^k
\nonumber
\\
&=&
\left\{ 
k\eta ( \gamma^{-1}\alpha^{-1} - 1 ) + \lambda ( - 1 + \gamma^{-1} ) 
\right\} \cdot \alpha^k
\nonumber
\\
&=&
-
\left\{ 
k\eta ( 1 - \gamma^{-1}\alpha^{-1}) + \lambda ( 1 - \gamma^{-1} ) 
\right\} \cdot \alpha^k,
\label{eqn-x(n)-02}
\\
u(k,i)
&=&
k\eta \alpha^{k-1}
+ (i\mu - \psi_{k,i} + \lambda )\alpha^k
\nonumber
\\
&=& - k\eta(1 - \alpha^{-1}) \cdot \alpha^k,
\qquad\qquad\qquad~~ i=0,1,\dots,s-2.
\label{eqn-x(n)-01}
\end{eqnarray}
Since $0 < \gamma < 1$ (see (\ref{defn-theta}) and
(\ref{defn-gamma})),
\[
k\eta ( 1 - \gamma^{-1}\alpha^{-1}) + \lambda ( 1 - \gamma^{-1} ) 
\le k\eta(1 - \alpha^{-1}).
\]
Therefore, from (\ref{eqn-x(n)-02}) and (\ref{eqn-x(n)-01}), we have
\begin{equation}
u(k,i) \le -
\left\{ 
k\eta ( 1 - \gamma^{-1}\alpha^{-1}) + \lambda ( 1 - \gamma^{-1} ) 
\right\} \cdot \alpha^k,
\qquad k \in \bbZ_+,\ i = 0,1,\dots,s-1.
\label{ineqn-u(n)}
\end{equation}
Note here that (\ref{eqn-eta}) implies
\begin{equation}
k\eta ( 1 - \gamma^{-1}\alpha^{-1}) + \lambda ( 1 - \gamma^{-1} ) 
\ge c\quad \mbox{for all $k = K+1,K+2,\dots$}.
\label{add-eqn-150710-01}
\end{equation}
Combining (\ref{ineqn-u(n)}) with (\ref{add-eqn-150710-01}) and using
(\ref{defn-widetilde{v}(k)-queue}) and (\ref{eqn-widetilde{b}}) yields
\begin{align}
&&&&
u(k,i) 
&\le - c \cdot \acute{v}(k,i), & k &=K+1,K+2,\dots, & i &= 0,1,\dots,s-1.
&&&&
\label{ineqn-x(n)-01}
\\
&&&&
u(k,i) 
&\le -c \cdot \acute{v}(k,i) + \acute{b}, 
& k &=0,1,\dots, K, & i &= 0,1,\dots,s-1.&&&&
\label{ineqn-x(n)-02}
\end{align}
Furthermore, applying (\ref{defn-widetilde{v}(k)-queue}) to
(\ref{eqn-x(n)-03}) leads to
\begin{equation}
u(k,s) \le - c \cdot \acute{v}(k,s), \qquad k \in \bbZ_+.
\label{ineqn-u(k,s)}
\end{equation}
As a result, from (\ref{ineqn-x(n)-01}), (\ref{ineqn-x(n)-02}) and
(\ref{ineqn-u(k,s)}), we obtain (\ref{ineqn-vc{u}(k)}). 
\QED

\medskip

Let $\vc{v}$ be given by
\begin{equation}
v(k,i)
= c^{-1}\acute{v}(k,i)
= \left\{
\begin{array}{ll}
\alpha^k/c, & k \in \bbZ_+,\ i = 0,1,\dots,s-1,
\\
\alpha^k/(c\gamma),& k \in \bbZ_+,\ i = s,
\end{array}
\right.
\label{defn-v(k)-queue}
\end{equation}
where $c$ is defined in (\ref{eqn-c}). Clearly, $\vc{v} \ge
\vc{e}/c$. Furthermore, from (\ref{eqn-widetilde{b}}) and
(\ref{ineqn-Q_widetilde{v}}), we have
\[
\vc{Q}\vc{v} 
\le - c\vc{v} + b \vc{1}_{\bbF_K},
\]
where
\begin{equation}
b 
= \acute{b}/c
=  \max_{0 \le k \le K} \alpha^k
\left[1 - c^{-1}
\left\{ k\eta ( 1 - \gamma^{-1}\alpha^{-1}) + \lambda ( 1 - \gamma^{-1} )
\right\}
\right] \vmax 0.
\label{eqn-b}
\end{equation}
Therefore, condition (ii) of Corollary~\ref{coro-LD-QBD} holds.

We now fix $N \in \{K,K+1,\dots\}$ arbitrarily such that
(\ref{ineqn-K}) holds. Thus, all the conditions of
Corollary~\ref{coro-LD-QBD} are satisfied.  It follows from
Corollary~\ref{coro-LD-QBD} and (\ref{ineqn-relative-error}) that
\begin{eqnarray}
\lefteqn{
\red{\sup_{\vc{e} \le \vc{g} \le c\vc{v}}}
{ \left| \EE[g(L,B)] - \EE[g(\presub{n}L,\presub{n}B)] \right| 
\over \EE[g(L,B)] }
}
\quad &&
\nonumber
\\
&\le& 2 \presub{n}\vc{\pi}(n)\vc{A}_n(1)
\left[
\vc{v}(n) + \vc{v}(n+1)
+ 2b \left( {1 \over c}
+ {2 \over \beta \overline{\phi}_{K,N}^{(\beta)}}
\right)\vc{e} 
\right], \qquad n \in \bbN. \qquad
\label{bound-retrial-0}
\end{eqnarray}
Note here that
\begin{align}
&&&&&&&&
\vc{A}_k(1)
&= \vc{e}_s \vc{\lambda},
& k &\in \bbZ_+, &&&&&&&&
\label{eqn-A_k(1)-01}
\\
&&&&&&&&
\vc{v}(k) 
&= \alpha^k \vc{a},
&  k &\in \bbZ_+,&&&&&&&&
\label{eqn-A_k(1)-02}
\end{align}
where 
\begin{eqnarray}
\vc{e}_s^{\top} = (0,0,\dots,0,1),
\quad
\vc{\lambda} = (0,0,\dots,0,\lambda),
\quad
\vc{a}^{\top} = c^{-1}(1,1,\dots,1,\gamma^{-1}). \qquad
\label{defn-e_s-lmabda-a}
\end{eqnarray}
 Substituting (\ref{eqn-A_k(1)-01}) and (\ref{eqn-A_k(1)-02})
into (\ref{bound-retrial-0}), we obtain the following bound:
\begin{eqnarray}
\lefteqn{
\red{\sup_{\vc{e} \le \vc{g} \le c\vc{v}}}
{ \left| \EE[g(L,B)] - \EE[g(\presub{n}L,\presub{n}B)] \right| 
\over \EE[g(L,B)] }
}
\quad &&
\nonumber
\\
&\le& 2 \presub{n}\vc{\pi}(n)\vc{e}_s \cdot \vc{\lambda}
\left[
(\alpha + 1) \alpha^n\vc{a}
+2b \left( {1 \over c}
+ {2 \over \beta \overline{\phi}_{K,N}^{(\beta)}}
\right)\vc{e} 
\right]
\nonumber
\\
&=&
{4 \lambda \over \gamma}
\left[
{ \alpha + 1 \over  2c}  
+ {\gamma b \over \alpha^n}\left( {1 \over c}
+ {2 \over \beta \overline{\phi}_{K,N}^{(\beta)}}
\right)
\right]\presub{n}\pi(n,s)\alpha^n,
\qquad n \in \bbN,\qquad
\label{bound-retrial-1}
\end{eqnarray}
where $c$, $b$ and $K$ are given in (\ref{eqn-c}), (\ref{eqn-b}) and
(\ref{eqn-eta}), respectively, and where $\alpha$ and $\gamma$ are
positive constants that satisfy (\ref{defn-theta}) and
(\ref{defn-gamma}).  Recall here that $\presub{n}\vc{\pi}(n)$ can be
computed through $\{\presub{n}\vc{R}_{\ell};\ell = 0,1,\dots,n-1\}$ (see
Proposition~\ref{prop-product-form}). Owing to (\ref{eqn-A_k(1)-01}),
the recursion of $\{\presub{n}\vc{R}_{\ell}\}$ is rewritten as follows: For $n \in \bbN$,
\begin{align*}
\presub{n}\vc{R}_{\ell}
&= \vc{e}_s \cdot \presub{n}\vc{\xi}_{\ell},
& \ell &=  0,1,\dots,n-1,
\\
\presub{n}\vc{\xi}_{n-1} 
&= \vc{\lambda} \left( -\vc{A}_n(0) - \vc{e}_s\vc{\lambda} \right)^{-1},
\\
\presub{n}\vc{\xi}_{\ell} 
&= \vc{\lambda}
\left( -\vc{A}_{\ell + 1}(0) 
- \vc{e}_s \cdot \presub{n}\vc{\xi}_{\ell + 1}\vc{A}_{\ell + 2}(-1)
\right)^{-1}, 
& \ell &= n-2,n-3,\dots,0.
\end{align*}
Therefore, the cost of computing $\presub{n}\vc{\pi}(n)$ is somewhat
reduced.

In what follows, we derive a computable bound without
$\presub{n}\pi(n,s)$ by using
Corollary~\ref{coro-exp-ergodic-comp-02}. To this end, we fix
\begin{equation}
v^{\sharp}(k,i)
= \left\{
\begin{array}{ll}
(\alpha^{\sharp})^k, & k \in \bbZ_+,\ i = 0,1,\dots,s-1,
\\
(\alpha^{\sharp})^k/\gamma^{\sharp},& k \in \bbZ_+,\ i = s,
\end{array}
\right.
\label{defn-v^{sharp}(k)-queue}
\end{equation}
where $\alpha^{\sharp}$ and $\gamma^{\sharp}$ are positive constants
such that
\begin{eqnarray}
1 &<& \alpha < \alpha^{\sharp} < \rho^{-1},
\label{defn-alpha^{sharp}}
\\
1/\alpha^{\sharp}
&<& \gamma^{\sharp}
< 1 - \rho(\alpha^{\sharp}-1).
\label{defn-gamma^{sharp}}
\end{eqnarray}
We also fix
\begin{eqnarray}
f^{\sharp}(k,i)
&=& c^{\sharp} v^{\sharp}(k,i),
\qquad  (k,i) \in \bbF,
\label{eqn-f^{sharp}}
\\
c^{\sharp} 
&=& s\mu \left[ 1 - \rho(\alpha^{\sharp}-1) - \gamma^{\sharp} \right],
\label{eqn-c^{sharp}}
\\
b^{\sharp} &=& \max_{0 \le k \le K^{\sharp}} (\alpha^{\sharp})^k
\left[c^{\sharp} - 
\left\{ k\eta \left( 1 - { 1 \over \gamma^{\sharp}\alpha^{\sharp} } \right) 
+ \lambda ( 1 - 1/\gamma^{\sharp} )
\right\}
\right] \vmax 0, \qquad
\label{eqn-b^{sharp}}
\\
K^{\sharp} &=& 
\left\lceil 
{ c^{\sharp} + \lambda ( 1/\gamma^{\sharp} - 1) 
\over \eta \{ 1 -  1/(\gamma^{\sharp}\alpha^{\sharp}) \} } 
\right\rceil \vmax 1
- 1.
\label{eqn-eta^{sharp}}
\end{eqnarray}
It then follows from Lemma~\ref{lem-MMC} that
\begin{equation*}
\vc{Q}\vc{v}^{\sharp} 
\le -c^{\sharp}\vc{v}^{\sharp} + b^{\sharp} \vc{1}_{\bbF_{K^{\sharp}}}
= -\vc{f}^{\sharp} + b^{\sharp} \vc{1}_{\bbF_{K^{\sharp}}}.
\label{ineqn-Qv^{sharp}-retrial}
\end{equation*}
Note here that the subvectors $\vc{v}_{\overline{\bbF}_0}$ and
$\vc{v}_{\overline{\bbF}_0}^{\sharp}$ of $\vc{v}$ and
$\vc{v}^{\sharp}$ in (\ref{defn-v(k)-queue}) and
(\ref{defn-v^{sharp}(k)-queue}), respectively, are level-wise
nondecreasing. As a result, Condition~\ref{cond-two-solutions-01} is
satisfied.

Next we confirm that Condition~\ref{cond-two-solutions-02} is
satisfied, in order to use Corollary~\ref{coro-exp-ergodic-comp-02}.  Let
$V$ and $T$ be positive functions on $[0,\infty)$ such that
\begin{eqnarray}
V(x) = \alpha^x,
\qquad 
T(x) = \left( {\alpha^{\sharp} \over \alpha} \right)^x,
\qquad x \ge 0.
\label{defn-V-T}
\end{eqnarray}
Thus, (\ref{defn-v(k)-queue}) and (\ref{defn-e_s-lmabda-a}) yield
(\ref{explicit-cond-01}). Furthermore, $V$ and $T$ are log-subadditive
and $\lim_{x\to\infty}V(x) =\lim_{x\to\infty}T(x) = \infty$
(therefore, (\ref{explicit-cond-03}) holds). From
(\ref{defn-v^{sharp}(k)-queue}), (\ref{eqn-f^{sharp}}) and
(\ref{defn-V-T}), we have

\begin{eqnarray}
\sup_{(k,i)\in\bbF}{T(k)V(k) \over f^{\sharp}(k,i)}
= \sup_{(k,i)\in\bbF}{T(k)V(k) \over c^{\sharp}v^{\sharp}(k,i)}
= {1 \over c^{\sharp}} =: r_0^{\sharp}.
\label{retrial-r_0^{sharp}}
\end{eqnarray}
From (\ref{eqn-Q-LD-QBD}), (\ref{eqn-A_k(1)-01}),
(\ref{defn-e_s-lmabda-a}) and (\ref{defn-V-T}), we also have
\begin{eqnarray}
\lefteqn{
\sup_{k,\ell\in\bbZ_+} T(\ell)
\left\| \sum_{m = \ell + 1}^{\infty}
\vc{Q}(k;k+m)V(m)\vc{a} \right\|_{\infty}
}
\quad &&
\nonumber
\\
&=&  T(0)V(1)\sup_{k\in\bbZ_+}
\left\| \vc{A}_k(1)\vc{a} \right\|_{\infty}
= \alpha \left\| \vc{e}_s \vc{\lambda}\vc{a} \right\|_{\infty}
= { \alpha\lambda \over c\gamma} =: r_1^{\sharp}.
\label{retrial-r_1^{sharp}}
\end{eqnarray}
As a result, Condition~\ref{cond-two-solutions-02} holds.

We are ready to use Corollary~\ref{coro-exp-ergodic-comp-02}. We set
$\underline{a} = c^{-1}$ according to (\ref{defn-underline{a}}) and
(\ref{defn-e_s-lmabda-a}).  Combining
Corollary~\ref{coro-exp-ergodic-comp-02} with
(\ref{ineqn-relative-error}), $\underline{a} = c^{-1}$ and
(\ref{defn-V-T})--(\ref{retrial-r_1^{sharp}}), we obtain
\begin{eqnarray}
\lefteqn{
\red{\sup_{\vc{e} \le \vc{g} \le c\vc{v}}}
{ \left| \EE[g(L,B)] - \EE[g(\presub{n}L,\presub{n}B)] \right| 
\over \EE[g(L,B)] }
}
\quad &&
\nonumber
\\
&\le&  
{4\alpha \lambda \over c\gamma}{ b^{\sharp} \over c^{\sharp}} 
\left( {\alpha \over \alpha^{\sharp}} \right)^n
\left[
1 + {cb \over \alpha^{n+1}}
\left(
{1 \over c}
+ { 2 \over \beta \overline{\phi}_{K,N}^{(\beta)} } \right)
\right] 
\nonumber
\\
&=& {4 \lambda \over \gamma}
\left[
{\alpha \over c} + {b \over \alpha^n}
\left(
{1 \over c}
+ { 2 \over \beta \overline{\phi}_{K,N}^{(\beta)} } \right)
\right] {b^{\sharp} \over c^{\sharp}} 
\left( {\alpha \over \alpha^{\sharp}} \right)^n,
\qquad n \in \bbN. \qquad
\label{bound-retrial-2}
\end{eqnarray}

Finally, we compare the two bounds (\ref{bound-retrial-1}) and (\ref{bound-retrial-2}), where the former includes $\presub{n}\pi(n,s)$ whereas the latter does not.  For simplicity, let
\begin{align}
&&
\wwtilde{E}_N(n)
&= {4 \lambda \over \gamma}
\left[
{ \alpha + 1 \over  2c}  
+ {\gamma b \over \alpha^n}\left( {1 \over c}
+ {2 \over \beta \overline{\phi}_{K,N}^{(\beta)}}
\right)
\right]\presub{n}\pi(n,s)\alpha^n,
& n &\in \bbN, &&
\label{defn-wwtilde{E}_N(n)}
\\
&&
\wwtilde{E}_N^{\sharp}(n)
&=
{4 \lambda \over \gamma}
\left[
{\alpha \over c} + {b \over \alpha^n}
\left(
{1 \over c}
+ { 2 \over \beta \overline{\phi}_{K,N}^{(\beta)} } \right)
\right] {b^{\sharp} \over c^{\sharp}} 
\left( {\alpha \over \alpha^{\sharp}} \right)^n,
&  n &\in \bbN, &&
\label{defn-wwtilde{E}_N^{ddag}(n)}
\end{align}
which are the error decay functions of the bounds
(\ref{bound-retrial-1}) and (\ref{bound-retrial-2}),
respectively. Note here that (\ref{ineqn-pi-f^{ddagger}}) holds in the present setting.
Using (\ref{ineqn-pi-f^{ddagger}}) and (\ref{eqn-f^{sharp}}), we have
\begin{eqnarray}
\presub{n}\pi(n,s)v^{\sharp}(n,s)
&=&
\presub{n}\pi(n,s)f^{\sharp}(n,s) / c^{\sharp}
\nonumber
\\
&\le& \sum_{k=0}^n \presub{n}\vc{\pi}(k)\vc{f}^{\sharp}(k) / c^{\sharp}
\le b^{\sharp}/c^{\sharp},\qquad n \in \bbN.
\label{ineqn-{n}pi(n,s)v^{sharp}(n,s)}
\end{eqnarray}
Combining (\ref{ineqn-{n}pi(n,s)v^{sharp}(n,s)}) with
(\ref{defn-v^{sharp}(k)-queue}) and $\gamma^{\sharp} < 1$ yields
\begin{eqnarray}
\presub{n}\pi(n,s) \alpha^n
&=& \presub{n}\pi(n,s) {(\alpha^{\sharp})^n \over \gamma^{\sharp}} 
\cdot \gamma^{\sharp} \left( {\alpha \over \alpha^{\sharp}} \right)^n
\nonumber
\\
&=& \presub{n}\pi(n,s) v^{\sharp}(n,s)
\cdot \gamma^{\sharp} \left( {\alpha \over \alpha^{\sharp}} \right)^n
< {b^{\sharp} \over c^{\sharp}}
\left( {\alpha \over \alpha^{\sharp}} \right)^n,\qquad n \in \bbN.
\label{ineqn-{n}pi(n,s)-alpha^n}
\end{eqnarray}
Substituting (\ref{ineqn-{n}pi(n,s)-alpha^n}), $\gamma < 1$ and
$\alpha > 1$ into (\ref{defn-wwtilde{E}_N(n)}) and using
(\ref{defn-wwtilde{E}_N^{ddag}(n)}) leads to
\begin{equation}
\wwtilde{E}_N(n) \le \wwtilde{E}_N^{\sharp}(n),
\qquad n \in \bbN.
\label{ineqn-wwtilde{E}_N(n)<=wwtilde{E}_N^{ddag}(n)}
\end{equation}
Consequently,
\[
\red{\sup_{\vc{e} \le \vc{g} \le c\vc{v}}}
{ \left| \EE[g(L,B)] - \EE[g(\presub{n}L,\presub{n}B)] \right| 
\over \EE[g(L,B)] }
\le \wwtilde{E}_N(n) 
\le \wwtilde{E}_N^{\sharp}(n),
\qquad n \in \bbN.
\]

\subsubsection{Numerical results and discussion}

First of all, we discuss the impact of $\alpha$ and $\alpha^{\sharp}$
on the error decay functions $\wwtilde{E}_N$ and
$\wwtilde{E}_N^{\sharp}$. According to
(\ref{defn-wwtilde{E}_N^{ddag}(n)}), the decay rate of
$\wwtilde{E}_N^{\sharp}$ is equal to $\alpha^{\sharp}/\alpha >
1$. Recall here that $\alpha$ and $\alpha^{\sharp}$ must satisfy the
constraint (\ref{defn-alpha^{sharp}}), i.e., $1 < \alpha <
\alpha^{\sharp} < \rho^{-1}$, which leads to
\begin{equation}
1 < {\alpha^{\sharp} \over \alpha } < \rho^{-1}.
\label{bounds-decay-rate}
\end{equation}
Clearly, the decay rate $\alpha^{\sharp}/\alpha$ of
$\wwtilde{E}_N^{\sharp}$ is larger (i.e.,
$\wwtilde{E}_N^{\sharp}$ decays more rapidly) as $\alpha$ is
smaller and/or $\alpha^{\sharp}$ is larger. However, it follows from
(\ref{defn-gamma}) and (\ref{eqn-c}) that if $\alpha \downarrow 1$
then $\gamma \uparrow 1$ and thus
\[
1/c \to \infty\quad \mbox{as $\alpha \downarrow 1$}.
%\label{lim-1/c}
\]
This result, in combination with (\ref{defn-wwtilde{E}_N(n)}) and
(\ref{ineqn-wwtilde{E}_N(n)<=wwtilde{E}_N^{ddag}(n)}), implies that
\begin{equation}
\wwtilde{E}_N(1) \to \infty~~
\mbox{and} ~~ \wwtilde{E}_N^{\sharp}(1) \to
\infty \quad  \mbox{as $\alpha \downarrow 1$}.
\label{lim-wwtilde{E}_N(1)}
\end{equation}
Similarly, it follows from
(\ref{defn-gamma^{sharp}}), (\ref{eqn-c^{sharp}}) and
(\ref{eqn-eta^{sharp}}) that if $\alpha^{\sharp} \uparrow \rho^{-1}$
then $\gamma^{\sharp} \downarrow \rho$, which causes
\[
1/c^{\sharp} \to \infty ~~
\mbox{and} ~~
K^{\sharp} \to \infty
\quad \mbox{as $\alpha^{\sharp} \uparrow \rho^{-1}$}.
\]
It is likely, from these facts and (\ref{eqn-b^{sharp}}), that the
factor $b^{\sharp}/c^{\sharp}$ of (\ref{defn-wwtilde{E}_N^{ddag}(n)})
diverges and thus $\wwtilde{E}_N^{\sharp}(1)$ does. In summary, the
decay rare and the initial value of the error decay function are in a
trade-off relation.

To support the above argument, we present Figures~\ref{fig-01} and \ref{fig-02} below. In the examples therein and all the
subsequent ones, we fix $s=\eta=50$, $\mu=1$ and
\begin{eqnarray*}
\gamma 
&=& {1 \over 2}\left[{1 \over \alpha} + \{1 - \rho(\alpha-1)\} \right],
\label{example-gamma}
\\
\gamma^{\sharp} 
&=& {1 \over 2}\left[{1 \over \alpha^{\sharp}} + \{1 - \rho(\alpha^{\sharp}-1)\} \right].
\label{example-gamma^{sharp}}
\end{eqnarray*}
Figure~\ref{fig-01} plots $\wwtilde{E}_N(1)$ with $\rho = 0.1, 0.5, 0.9,
0.95, 0.99$, as a function of $x \in (0,1)$,
where
\begin{eqnarray*}
\alpha &=& 1 + x (\rho^{-1} - 1),\qquad 0 < x < 1,
\\
\beta &=& 1, \qquad N=K+100.
\end{eqnarray*}
Figure~\ref{fig-02} plots $\wwtilde{E}_N^{\sharp}(1)$ with $\rho = 0.1,
0.5, 0.9, 0.95, 0.99$, as a function of $y \in (0,1)$, where
\begin{eqnarray*}
\alpha^{\sharp} &=& \alpha + y (\rho^{-1} - \alpha), \qquad 0 < y < 1,
\\
\alpha &=& 1 + 10^{-3},
\\
\beta &=& 1, \qquad N=K+100.
\end{eqnarray*}

%%%%%%%%%%%%%%%%%%%%%%%%%%%%%%%%%%%%%%%%%%%%%%%%%%%%%%%%%%%%
\begin{figure}[htb]
\begin{center}
\includegraphics[bb=50 50 410 302, scale=0.7]{fig1.eps}
%\resizebox{8.5cm}{!}{\input{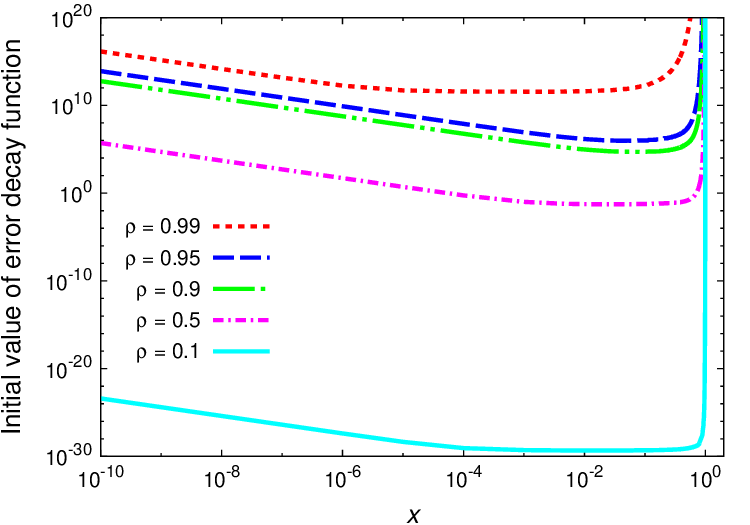}}
\end{center}
%
%\vspace{5mm} 
\caption{Impact of $\alpha$ ($=1 + x (\rho^{-1} - 1)$) on initial value $\wwtilde{E}_N(1)$}
\label{fig-01}
\end{figure}
%%%%%%%%%%%%%%%%%%%%%%%%%%%%%%%%%%%%%%%%%%%%%%%%%%%%%%%%%%%%

%%%%%%%%%%%%%%%%%%%%%%%%%%%%%%%%%%%%%%%%%%%%%%%%%%%%%%%%%%%%
\begin{figure}[htb]
\begin{center}
\includegraphics[bb=50 50 410 302, scale=0.7]{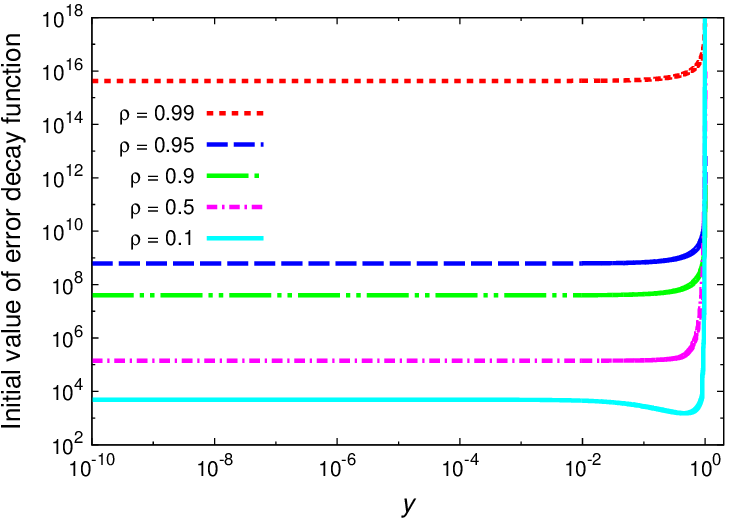}
\end{center}
%
%\vspace{5mm} 
\caption{Impact of $\alpha^{\sharp}$ ($=\alpha + y (\rho^{-1} - \alpha)$) on  initial value $\wwtilde{E}_N^{\sharp}(1)$ }
\label{fig-02}
\end{figure}
%%%%%%%%%%%%%%%%%%%%%%%%%%%%%%%%%%%%%%%%%%%%%%%%%%%%%%%%%%%%
%
As expected, Figure~\ref{fig-01} shows that $\wwtilde{E}_N(1)$ increases
as $\alpha$ decreases toward one (i.e., $x$ decreases toward zero), and
Figure~\ref{fig-02} shows that $\wwtilde{E}_N^{\sharp}(1)$ increases as
$\alpha^{\sharp}$ increases toward $\rho^{-1}$ (i.e., $y$ increases
toward one).  Furthermore, we can see from Figure~\ref{fig-01} that
$\wwtilde{E}_N(1)$ rapidly increases as $\alpha$ increases toward
$\rho^{-1}$. This observation is justified as follows: It follows from
(\ref{defn-gamma}) and (\ref{eqn-c}) that if $\alpha \uparrow
\rho^{-1}$ then $\gamma \downarrow \rho$ and thus $1/c \to
\infty$. This result and (\ref{defn-wwtilde{E}_N(n)}) imply that
$\wwtilde{E}_N(1) \to \infty$ as $\alpha \uparrow \rho^{-1}$.

It should be noted that $\alpha = 1 + 10^{-3}$ in Figure~\ref{fig-02}, which
corresponds to $x= 10^{-3}/(\rho^{-1} - 1)$ in
Figure~\ref{fig-01}. Table~\ref{table-00} provides the values of $x$ for
which $\alpha = 1 + 10^{-3}$ in Figure~\ref{fig-01}.
%
%%%%%%%%%%%%%%%%%%%%%%%%%%%%%%%%%%%%%%%%%%%%%%%%%%%%%%%%%%%%
\begin{table}[htb]
\begin{center}
\caption{Values of $x$ for which $\alpha = 1 + 10^{-3}$ in Figure~\ref{fig-01}}\label{table-00}
\begin{tabular}{|l||l|}
\hline
\rule[-5mm]{0mm}{12mm} $\rho$ & $x= \dm{ 10^{-3} \over \rho^{-1} - 1}$
\\
\hline
0.1  	& \rule{0mm}{4.5mm}$1.11\ol{1} \times 10^{-4}$ 
\\
0.5  	& 0.001   
\\
0.9  	& 0.009    
\\
0.95  	& 0.019      
\\
0.99  	& 0.099  
\\
\hline 
\end{tabular}
\end{center}
\end{table}
%%%%%%%%%%%%%%%%%%%%%%%%%%%%%%%%%%%%%%%%%%%%%%%%%%%%%%%%%%%%
We can see from Figure~\ref{fig-01} and Table~\ref{table-00} that
$\wwtilde{E}_N(1)$ with $\alpha = 1 + 10^{-3}$ takes a value not much
different from the minimum for each $\rho = 0.1,0.5,0.9,095,0.99$. In
addition, $1 + 10^{-3}$ is close to one, i.e., the lower limit of
$\alpha$. Recall here that the decay rate $\alpha^{\sharp}/\alpha$ of
$\wwtilde{E}_N^{\sharp}$ is larger as $\alpha$ is smaller. Based on
these facts, we set $\alpha = 1 + 10^{-3}$ in the subsequent numerical
examples.

According to (\ref{defn-wwtilde{E}_N^{ddag}(n)}), we can expect that
the behavior of $\wwtilde{E}_N^{\sharp}$ is sensitive to the choice of
$\alpha^{\sharp}$, provided that $\alpha$ is fixed.  Thus, we observe
the impact of $\alpha^{\sharp}$ on the error decay function
$\wwtilde{E}_N^{\sharp}$. To this end, we define
\begin{equation*}
\alpha_i
= \alpha + {i \over 100}( \rho^{-1} - \alpha ),
\qquad i = 0,1,10,50,90, 99,
\end{equation*}
with $\alpha = 1 + 10^{-3}$.  We then denote by ``line $i$" the
$\wwtilde{E}_N(n)$'s with $\alpha=\alpha_i$ and denote by ``line
$(i,j)$" the $\wwtilde{E}_N^{\sharp}(n)$'s with
$(\alpha,\alpha^{\sharp})=(\alpha_i,\alpha_j)$. Furthermore, we fix
$\lambda=0.5s$ (thus $\rho=0.5$), $\beta=1$ and $N=K+100$.  In this
setting, Figure~\ref{fig-04} plots
\[
\mbox{lines $0, (0,1), (0,10), (0, 50), (0,90), (0,99)$}, 
\]
where line 0, i.e., the $\wwtilde{E}_N(n)$'s with $\alpha =
1+10^{-3}$, serves as the ``reference line" because the other lines
must be over line 0 due to
(\ref{ineqn-wwtilde{E}_N(n)<=wwtilde{E}_N^{ddag}(n)}).
%
%%%%%%%%%%%%%%%%%%%%%%%%%%%%%%%%%%%%%%%%%%%%%%%%%%%%%%%%%%%%
%
\begin{figure}[htb]
\begin{center}
\includegraphics[bb=50 50 410 302, scale=0.7]{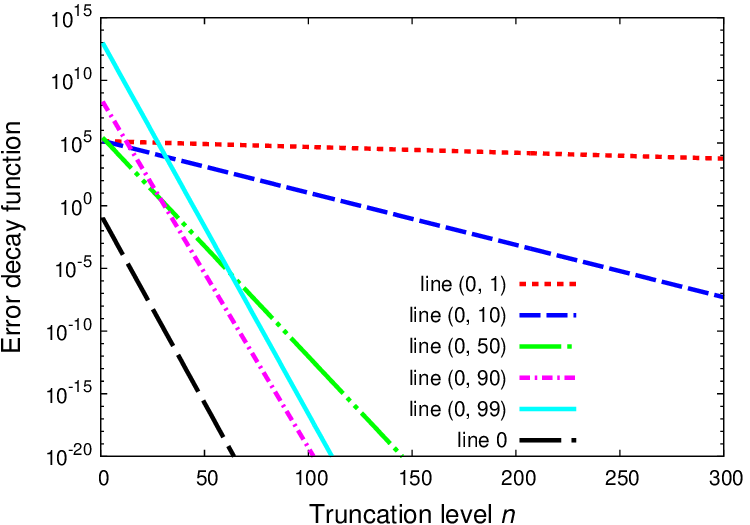}
\end{center}
%
%\vspace{5mm} 
\caption{Impact of $\alpha^{\sharp}$ on $\wwtilde{E}_N^{\sharp}(n)$}\label{fig-04}
\end{figure}
%%%%%%%%%%%%%%%%%%%%%%%%%%%%%%%%%%%%%%%%%%%%%%%%%%%%%%%%%%%%
%
As shown in Figure~\ref{fig-04}, the choice of {\it large}
$\alpha^{\sharp}$ is basically better. Although the initial value of
line $(0,99)$ is larger than that of line $(0,90)$, the decay rate of
the former is larger than that of the latter and thus the two lines
cross over eventually. Anyway, for later discussion, we fix
$\alpha^{\sharp} = \alpha_{99}$.

Next, we discuss the impact of the traffic intensity $\rho$ on the
decay rates of the error decay functions $\wwtilde{E}_N$ and
$\wwtilde{E}_N^{\sharp}$. Inequality (\ref{bounds-decay-rate}) shows
that, as $\rho \uparrow 1$, the decay rate $\alpha^{\sharp}/\alpha$ of
$\wwtilde{E}_N^{\sharp}$ becomes smaller and thus that of
$\wwtilde{E}_N$ can be also smaller. In addition,
(\ref{defn-alpha^{sharp}}) shows that if $\rho \uparrow 1$ then
$\alpha \downarrow 1$, which leads to $\wwtilde{E}_N(1) \to \infty$
and $\wwtilde{E}_N^{\sharp}(1) \to \infty$ (see
(\ref{lim-wwtilde{E}_N(1)})). Consequently, as $\rho \uparrow 1$, the
decay rates of $\wwtilde{E}_N$ and $\wwtilde{E}_N^{\sharp}$ decrease
and their initial values $\wwtilde{E}_N(1)$ and
$\wwtilde{E}_N^{\sharp}(1)$ increase, which is a ``double whammy" for
the bounds (\ref{bound-retrial-1}) and (\ref{bound-retrial-2}).

To visualize the impact of the traffic intensity $\rho$ on the error
decay functions $\wwtilde{E}_N$ and $\wwtilde{E}_N^{\sharp}$, we
provide Figures~\ref{fig-05} and \ref{fig-06}, where $s=\eta=50$,
$\mu=1$, $\lambda=\rho s$, $\beta=1$ and $N=K+100$.
Figures~\ref{fig-05} and \ref{fig-06} plot lines 0 and (0,99),
respectively, for $\rho=0.1,0.5, 0.9, 0.95, 0.99$.%
\begin{figure}[htb]
\begin{center}
\includegraphics[bb=50 50 410 302, scale=0.7]{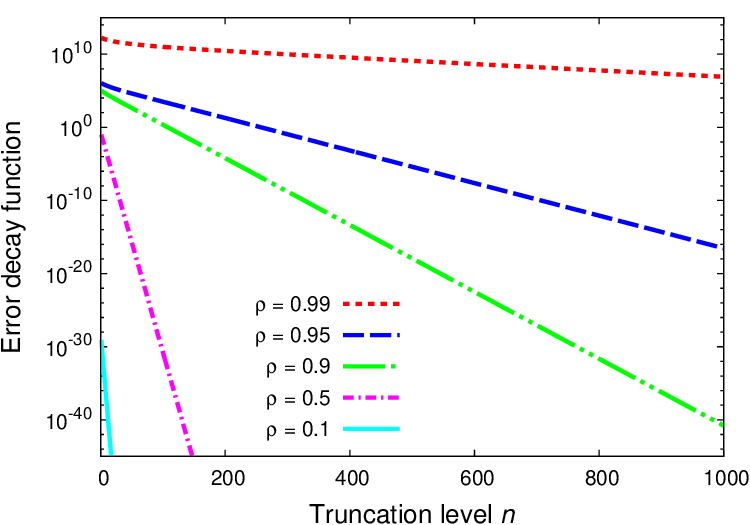}
\end{center}
%
%\vspace{5mm} 
\caption{Impact of traffic intensity $\rho$ on $\wwtilde{E}_N(n)$ with
  $\alpha=\alpha_0$}\label{fig-05}
\end{figure}
\begin{figure}[htb]
\begin{center}
\includegraphics[bb=50 50 410 302, scale=0.7]{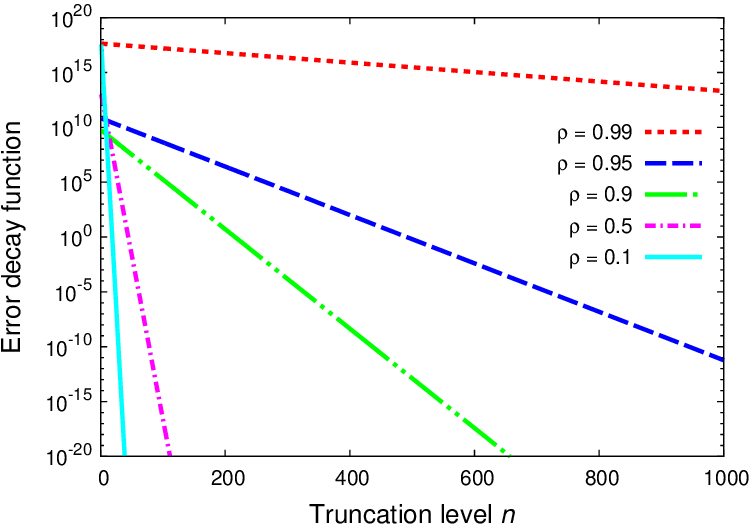}
\end{center}
%
%\vspace{5mm} 
\caption{Impact of traffic intensity $\rho$ on
  $\wwtilde{E}_N^{\sharp}(n)$ with
  $(\alpha,\alpha^{\sharp})=(\alpha_0,\alpha_{99})$}\label{fig-06}
\end{figure}
These two figures show that, in the case where $\rho
= 0.99$, the error decay functions $\wwtilde{E}_N$ and
$\wwtilde{E}_N^{\sharp}$ take extremely large values and yield useless
bounds in the region of the truncation level $n$ shown
therein. This is mainly because the common factor $\overline{\phi}_{K,N}^{(\beta)}$ of $\wwtilde{E}_N$ and
$\wwtilde{E}_N^{\sharp}$ (with
$\beta = 1$ in Figures~\ref{fig-05} and \ref{fig-06}) takes exceedingly
small values, as shown in Table~\ref{table-03}. Note here that
Table~\ref{table-03} presents the values of
$\overline{\phi}_{K,N}^{(1)}$ with $N=K+10, K+50, K+100, K+100, K+500$, which show the validity of our choice $N = K+100$ for computing
$\overline{\phi}_{K,N}^{(\beta)}$.

\begin{table}[htb]
\caption{Values of $K$ and $\overline{\phi}_{K,N}^{(1)}$ in the same
  setting as Figures~\ref{fig-05} and \ref{fig-06}}\label{table-03}
\begin{center}
\begin{tabular}{|l||l||l|l|l|l|}
\hline
\rule[-2mm]{0mm}{7.5mm}  &  & \multicolumn{4}{|c|}{$\overline{\phi}_{K,N}^{(1)}$} 
\\
\cline{1-6}
\rule[0mm]{0mm}{4mm} $\rho$ & $K$ & $N=K+10$ & $N=K+50$ & $N=K+100$ & $N=K+500$
\\
\hline
\rule[0mm]{0mm}{4mm}0.1 & 1    & $1.84\times10^{-2}$ & $1.84\times10^{-2}$ & $1.84\times10^{-2}$ & $1.84\times10^{-2}$
\\
0.5 & 2    & $1.79\times10^{-2}$ & $1.79\times10^{-2}$ & $1.79\times10^{-2}$ & $1.79\times10^{-2}$
\\
0.9 & 18   & $8.66\times10^{-3}$ & $8.66\times10^{-3}$ & $8.66\times10^{-3}$ & $8.66\times10^{-3}$
\\
0.95 & 38  & $1.48\times10^{-3}$ & $1.52\times10^{-3}$ & $1.52\times10^{-3}$ & $1.52\times10^{-3}$
\\
0.99 & 219 & $4.32\times10^{-9}$ & $4.52\times10^{-9}$ & $4.52\times10^{-9}$ & $4.52\times10^{-9}$
\\
\hline
\end{tabular}
\end{center}
\end{table}

We now discuss the impact of $\beta$ on the error decay functions
$\wwtilde{E}_N$ and $\wwtilde{E}_N^{\sharp}$. It follows from
(\ref{defn-S_N^(beta)}) and (\ref{defn-phi_{K,N}}) that if the minimum
element of each column of $\vc{\Phi}_{\bbF_N}^{(\beta)}$ in (\ref{defn-S_N^(beta)}) is
small then so is $\overline{\phi}_{K,N}^{(\beta)}$.  Since
$\vc{Q}_{\bbF_N}$ considered here is block-tridiagonal, there can be a
large variation in the elements of $\exp\{\vc{Q}_{\bbF_N} t\}$ for
small values of $t$. However, such a variation would become smaller as
$t$ increases, because $\vc{Q}_{\bbF_N}$ is irreducible.  Furthermore,
as $\beta$ is smaller, the integrand factor $\exp\{\vc{Q}_{\bbF_N}
t\}$ for large values of $t$ (that is, the right tail of this factor)
has a greater contribution to
$\vc{\Phi}_{\bbF_N}^{(\beta)}$. Therefore, we can expect that
$\overline{\phi}_{K,N}^{(\beta)}$ takes a large value if $\beta$ is
small. In addition, it is known that the queue length process reaches
the limiting state more slowly as $1 - \rho$ approaches to zero (see,
e.g., \citet{Door11,Kiji89,Kiji90}). As a result, it would be better
to decrease $\beta$ with $1 - \rho$ in order to keep the value of
$\overline{\phi}_{K,N}^{(\beta)}$ ``moderate". Indeed,
Table~\ref{table-04} shows that such choices of $\beta$ improve the
values of $\overline{\phi}_{K,N}^{(\beta)}$ for $\rho=0.99$, compared
to those of $\overline{\phi}_{K,N}^{(1)}$ in
Table~\ref{table-03}. Note here that Table~\ref{table-04} is provided
in the same setting as Figures~\ref{fig-05} and \ref{fig-06} except the
value of $\beta$.

\begin{table}[htb]
\caption{Impact of $\beta$ on $\overline{\phi}_{K,N}^{(\beta)}$}\label{table-04}
\begin{center}
\begin{tabular}{|l||l|l|l|l|}
\hline
\rule[-2mm]{0mm}{7.5mm}  &  \multicolumn{4}{|c|}{$\overline{\phi}_{K,N}^{(\beta)}$} 
\\
\hline
\rule[-1mm]{0mm}{5mm} $\rho$ & $\beta=(1-\rho)^{1/2}$ & $\beta=1-\rho$ & $\beta=(1-\rho)^2$ & $\beta=(1-\rho)^3$
\\
\hline
\rule[0mm]{0mm}{4mm}0.1 & $2.03\times10^{-2}$     & $2.23\times10^{-2}$ & $2.65\times10^{-2}$ & $3.09\times10^{-2}$
\\
0.5 & $2.70\times10^{-2}$     & $3.65\times10^{-2}$ & $5.34\times10^{-2}$ & $6.50\times10^{-2}$
\\
0.9 & $2.37\times10^{-2}$     & $3.70\times10^{-2}$ & $4.77\times10^{-2}$ & $4.92\times10^{-2}$
\\
0.95 & $8.87\times10^{-3}$    & $2.10\times10^{-2}$ & $3.11\times10^{-2}$ & $2.13\times10^{-2}$
\\
0.99 & $1.81\times10^{-4}$    & $2.11\times10^{-3}$ & $1.86\times10^{-3}$ & $2.67\times10^{-5}$
\\
\hline
 \end{tabular}
\end{center}

\medskip

\caption{Impact of $\beta$ on $1/(\beta \overline{\phi}_{K,N}^{(\beta)})$}\label{table-05}
\begin{center}
\begin{tabular}{|l||l|l|l|l|}
\hline
\rule[-2mm]{0mm}{7.5mm}  &  \multicolumn{4}{|c|}{$1/(\beta \overline{\phi}_{K,N}^{(\beta)})$} 
\\
\hline
\rule[-1mm]{0mm}{5mm} $\rho$ & $\beta=(1-\rho)^{1/2}$ 
& $\beta=1-\rho$ & $\beta=(1-\rho)^2$ & $\beta=(1-\rho)^3$
\\
\hline
\rule[0mm]{0mm}{4mm}0.1 & $5.20\times10^{1}$   & $4.99\times10^{1}$ & $4.66\times10^{1}$  & $4.44\times10^{1}$
\\
0.5 & $5.24\times10^{1}$   & $5.48\times10^{1}$ & $7.49\times10^{1}$  & $1.23\times10^2$
\\
0.9 & $1.34\times10^2$     & $2.70\times10^2$  & $2.10\times10^3$  & $2.03\times10^4$
\\
0.95 & $5.04\times10^2$    & $9.53\times10^2$  & $1.29\times10^4$  & $3.76\times10^5$
\\
0.99 & $5.52\times10^4$    & $4.74\times10^4$  & $5.39\times10^6$  & $3.74\times10^{10}$
\\
\hline
 \end{tabular}
\end{center}
\end{table}

We have to remark that the error decay functions $\wwtilde{E}_N$ and
$\wwtilde{E}_N^{\sharp}$ include a factor $1/(\beta
\overline{\phi}_{K,N}^{(\beta)})$ and thus the small value of $\beta$
does not necessarily yield tight bounds, as shown in
Table~\ref{table-05} provided in the same setting as
Table~\ref{table-04}.  It would not be easy to systematically find an
optimal value of $\beta$ such that $\wwtilde{E}_N$ and
$\wwtilde{E}_N^{\sharp}$ are minimized.  Anyway, we fix $\beta = 1 -
\rho$ and present Figure~\ref{fig-07}, which plots the
$\wwtilde{E}_N(n)$'s and the $\wwtilde{E}_N^{\sharp}(n)$'s in the same
setting as Figures~\ref{fig-05} and \ref{fig-06} except the value of
$\beta$. Obviously, for sufficiently large $n$'s, $\wwtilde{E}_N(n)$
and $\wwtilde{E}_N^{\sharp}(n)$ are so small that the obtained bounds
are practically useful even in the ``worst" case, where $\rho = 0.99$.
\begin{figure}[htb]
\begin{center}
\includegraphics[bb=50 50 410 302, scale=0.7]{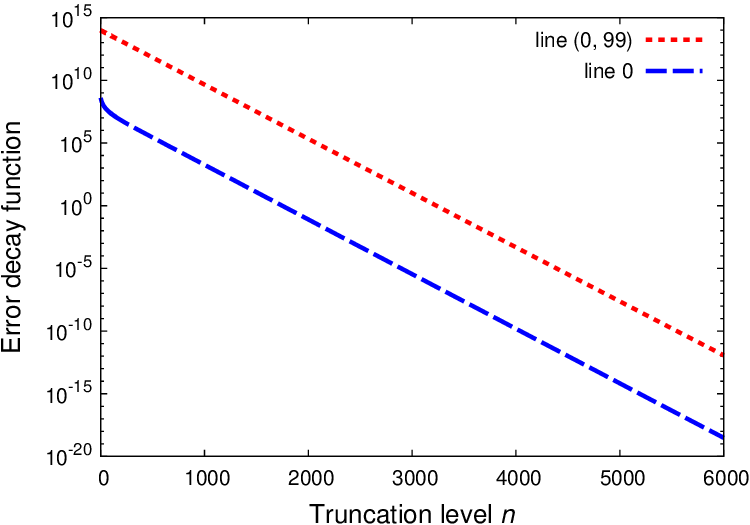}
\end{center}
%
%\vspace{5mm} 
\caption{Values of $\wwtilde{E}_N(n)$ and
  $\wwtilde{E}_N^{\sharp}(n)$ with $\rho=0.99$ and $\beta = 1 - \rho$}\label{fig-07}
\end{figure}

\section{Perturbation Bounds}\label{sec-remarks}

In this section, we consider the perturbation bound for the stationary
distribution vector $\vc{\pi}$ of $\vc{Q}$.  Let
$\vc{Q}^{\ast}=(q^{\ast}(k,i;\ell,j))_{(k,i;\ell,j)\in\bbF^2}$ denote
the infinitesimal generator of an ergodic Markov chain with state
space $\bbF$, and $\vc{\pi}^{\ast}=(\pi^{\ast}(k,i))_{(k,i)\in\bbF}$
denote the stationary distribution vector of
$\vc{Q}^{\ast}$. Furthermore, we introduce the $\vc{v}$-norm $\|\,
\cdot \,\|_{\vc{v}}$ for row vectors and matrices, where
$\vc{v}=(v(k,i))_{(k,i)\in\bbF}$ is a nonnegative $|\bbF| \times 1$
vector, as in the previous sections. For any row vector
$\vc{x}:=(x(k,i))_{(k,i) \in \bbF}$ and matrix
$\vc{Z}:=(z(k,i;\ell,j))_{(k,i;\ell,j)\in\bbF^2}$, let $\|
\vc{x}\|_{\vc{v}}$ and $\| \vc{Z}\|_{\vc{v}}$ denote
\[
\| \vc{x}\|_{\vc{v}}
= \sum_{(k,i)\in\bbF} |x(k,i)|\, v(k,i),
\qquad
\| \vc{Z}\|_{\vc{v}}
= \sup_{(k,i) \in \bbF} 
{ \sum_{(\ell,j)\in\bbF} |z(k,i;\ell,j)|\, v(\ell,j) \over v(k,i)},
\]
respectively. By definition, $|\vc{x}|\,\vc{v} = \|
\vc{x}\|_{\vc{v}}$.

We first present a perturbation bound under the exponential drift
condition.
\begin{thm}\label{thm-perturbation-exp}
Suppose that Assumption~\ref{basic-assumpt} is satisfied; and there
exist some $b>0$, $c > 0$, $K \in \bbZ_+$ and column vector $\vc{v}
\ge \vc{e}/c$ such that (\ref{geo-drift-cond}) holds. Furthermore, fix
$N \in \{K,K+1,\dots\}$ arbitrarily such that (\ref{ineqn-K}) holds;
and suppose that
\begin{equation}
\| \vc{Q}^{\ast} - \vc{Q}\|_{\vc{v}} < {1 \over C_{K,N}^{(\beta)}},
\label{comp-bound-Q^*-Q}
\end{equation}
where
\begin{equation}
C_{K,N}^{(\beta)}
= { b + 1 \over c}
\left(
1  + b + { 2bc \over \beta\overline{\phi}_{K,N}^{(\beta)} } 
\right).
\label{defn-C_{K,N}^{(beta)}}
\end{equation}
We then have
\begin{eqnarray}
\| \vc{\pi}^{\ast} - \vc{\pi} \|_{\vc{v}}
&\le& {b \over c} \cdot
{ C_{K,N}^{(\beta)} \| \vc{Q}^{\ast} - \vc{Q} \|_{\vc{v}}  
\over 
1 - C_{K,N}^{(\beta)} \| \vc{Q}^{\ast} - \vc{Q} \|_{\vc{v}} 
}.
\label{comp-perturbation-bound-01}
\end{eqnarray}
\end{thm}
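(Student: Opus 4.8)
The plan is to transcribe the argument used for the truncation error bounds in Section~\ref{sec-error-bounds}, replacing the truncation identity (\ref{eqn-diff-pi}) by its perturbation analogue
\[
\vc{\pi}^{\ast} - \vc{\pi} = \vc{\pi}^{\ast}\left( \vc{Q}^{\ast} - \vc{Q} \right)\vc{D}.
\]
This identity follows by pre-multiplying the Poisson equation (\ref{Poisson-EQ-D}) by $\vc{\pi}^{\ast}$ and using $\vc{\pi}^{\ast}\vc{e} = 1$ together with $\vc{\pi}^{\ast}\vc{Q} = -\vc{\pi}^{\ast}(\vc{Q}^{\ast} - \vc{Q})$, the latter being a consequence of $\vc{\pi}^{\ast}\vc{Q}^{\ast} = \vc{0}$. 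Taking elementwise absolute values and right-multiplying by $\vc{v}$, I would then obtain
\[
\| \vc{\pi}^{\ast} - \vc{\pi} \|_{\vc{v}}
= | \vc{\pi}^{\ast} - \vc{\pi} |\,\vc{v}
\le \vc{\pi}^{\ast}\,| \vc{Q}^{\ast} - \vc{Q} |\,| \vc{D} |\,\vc{v}.
\]

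As a preliminary, I would verify that $\vc{\pi}^{\ast}\vc{v} < \infty$, so that the rearrangements above are justified. Adding and subtracting $\vc{Q}$ in $\vc{Q}^{\ast}\vc{v}$ and using the elementwise bound $| \vc{Q}^{\ast} - \vc{Q} |\,\vc{v} \le \| \vc{Q}^{\ast} - \vc{Q} \|_{\vc{v}}\,\vc{v}$ together with (\ref{geo-drift-cond}) gives $\vc{Q}^{\ast}\vc{v} \le -(c - \| \vc{Q}^{\ast} - \vc{Q} \|_{\vc{v}})\vc{v} + b\vc{1}_{\bbF_K}$. Since $b > 0$ forces $C_{K,N}^{(\beta)} > 1/c$, the hypothesis (\ref{comp-bound-Q^*-Q}) implies $\| \vc{Q}^{\ast} - \vc{Q} \|_{\vc{v}} < 1/C_{K,N}^{(\beta)} < c$; hence $\vc{Q}^{\ast}$ again satisfies an exponential drift condition with positive drift rate, and pre-multiplying by $\vc{\pi}^{\ast}$ as in Lemma~\ref{rem-pi-f<b} yields $\vc{\pi}^{\ast}\vc{v} \le b/(c - \| \vc{Q}^{\ast} - \vc{Q} \|_{\vc{v}}) < \infty$.

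The central estimate is $| \vc{D} |\,\vc{v} \le C_{K,N}^{(\beta)}\vc{v}$. I would derive it from Lemma~\ref{lem-bound-|D|g} by choosing $\vc{g} = c\vc{v} = \vc{f}$ and dividing by $c$, then bounding $\vc{\pi}(c\vc{v}) \le b$ and $\vc{\pi}\vc{v} \le b/c$ (both from (\ref{geo-drift-cond})), replacing $\overline{\phi}_K^{(\beta)}$ by its lower bound $\overline{\phi}_{K,N}^{(\beta)}$ via (\ref{ineqn-phi}), and finally using $\vc{e} \le c\vc{v}$ (from $\vc{v} \ge \vc{e}/c$) to absorb the constant-vector term into a multiple of $\vc{v}$; the emerging constant is exactly $C_{K,N}^{(\beta)}$ in (\ref{defn-C_{K,N}^{(beta)}}). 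Substituting this and $| \vc{Q}^{\ast} - \vc{Q} |\,\vc{v} \le \| \vc{Q}^{\ast} - \vc{Q} \|_{\vc{v}}\,\vc{v}$ into the displayed norm inequality and using $\vc{\pi}^{\ast}\vc{v} = \| \vc{\pi}^{\ast} \|_{\vc{v}}$ gives the self-referential inequality
\[
\| \vc{\pi}^{\ast} - \vc{\pi} \|_{\vc{v}}
\le C_{K,N}^{(\beta)}\,\| \vc{Q}^{\ast} - \vc{Q} \|_{\vc{v}}\,\| \vc{\pi}^{\ast} \|_{\vc{v}}.
\]

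To close, I would apply the triangle inequality $\| \vc{\pi}^{\ast} \|_{\vc{v}} \le \| \vc{\pi} \|_{\vc{v}} + \| \vc{\pi}^{\ast} - \vc{\pi} \|_{\vc{v}}$ with $\| \vc{\pi} \|_{\vc{v}} = \vc{\pi}\vc{v} \le b/c$. Writing $C = C_{K,N}^{(\beta)}$ and $\delta = \| \vc{Q}^{\ast} - \vc{Q} \|_{\vc{v}}$, the inequality becomes $\| \vc{\pi}^{\ast} - \vc{\pi} \|_{\vc{v}}(1 - C\delta) \le (b/c)C\delta$, and since $C\delta < 1$ by (\ref{comp-bound-Q^*-Q}), dividing gives precisely (\ref{comp-perturbation-bound-01}). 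The step I expect to be the main obstacle is the preliminary justification: securing $\vc{\pi}^{\ast}\vc{v} < \infty$ and the absolute convergence that legitimizes rewriting $\vc{\pi}^{\ast}\vc{Q}\vc{D}$ as $\vc{\pi}^{\ast}(\vc{Q}^{\ast} - \vc{Q})\vc{D}$; once these are in place, the remainder is the algebraic self-bounding manipulation described above.
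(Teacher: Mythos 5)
Your proposal is correct and arrives at exactly the bound (\ref{comp-perturbation-bound-01}), but it closes the argument differently from the paper. The paper anchors at the unperturbed $\vc{\pi}$: it derives $\|\vc{D}\|_{\vc{v}} \le C_{K,N}^{(\beta)}$ from Lemma~\ref{lem-bound-|D|g} exactly as you do (with $\vc{g}=\vc{f}=c\vc{v}$, $\vc{\pi}\vc{v}\le b/c$, $\vc{e} \le c\vc{v}$, and (\ref{ineqn-phi})), then invokes the series expansion $\vc{\pi}^{\ast} - \vc{\pi} = \vc{\pi}\sum_{m=1}^{\infty}\{(\vc{Q}^{\ast}-\vc{Q})\vc{D}\}^m$ from \citet[Section~4.1]{Heid10}, valid because $\|(\vc{Q}^{\ast}-\vc{Q})\vc{D}\|_{\vc{v}} \le C_{K,N}^{(\beta)}\|\vc{Q}^{\ast}-\vc{Q}\|_{\vc{v}} < 1$, and sums the geometric series with $\|\vc{\pi}\|_{\vc{v}} \le b/c$. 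You instead anchor at $\vc{\pi}^{\ast}$ via the first-order identity $\vc{\pi}^{\ast} - \vc{\pi} = \vc{\pi}^{\ast}(\vc{Q}^{\ast}-\vc{Q})\vc{D}$ (the perturbation analogue of (\ref{eqn-diff-pi}), which the paper itself cites from the same source for the truncation case) and close by a self-bounding triangle-inequality argument. The trade-off: the paper's route needs no information about $\vc{\pi}^{\ast}$ whatsoever — the norm convergence of the series does all the work — whereas your route requires the auxiliary step of securing $\vc{\pi}^{\ast}\vc{v} < \infty$, which you handle correctly by deriving the perturbed drift $\vc{Q}^{\ast}\vc{v} \le -(c-\delta)\vc{v} + b\vc{1}_{\bbF_K}$ and observing that $b>0$ forces $C_{K,N}^{(\beta)} > 1/c$, hence $\delta < c$; in return you obtain the extra quantitative estimate $\vc{\pi}^{\ast}\vc{v} \le b/(c-\delta)$ and avoid infinite series. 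The only point needing care in your version, which you flag yourself, is the Fubini/associativity justification behind $\vc{\pi}^{\ast}(\vc{Q}\vc{D}) = (\vc{\pi}^{\ast}\vc{Q})\vc{D}$ when deriving the identity from (\ref{Poisson-EQ-D}); this can be short-circuited simply by citing the update formula of \citet{Heid10} in the form the paper already uses for (\ref{eqn-diff-pi}), or verified directly using $|\vc{D}|\vc{v} \le C_{K,N}^{(\beta)}\vc{v}$, $|\vc{Q}^{\ast}-\vc{Q}|\,\vc{v} \le \delta\vc{v}$, $\vc{\pi}^{\ast}\vc{v} < \infty$ and $\vc{v} \ge \vc{e}/c$, so your proof is on the same footing of rigor as the paper's.
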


\begin{rem}
As mentioned in Section~\ref{subsec-exp}, we can compute
$\overline{\phi}_{K,N}^{(\beta)}$ and thus
$C_{K,N}^{(\beta)}$. Therefore, the perturbation bound
(\ref{comp-perturbation-bound-01}) is computable, provided that
$\|\vc{Q}^{\ast} - \vc{Q} \|_{\vc{v}}$ is obtained.
\end{rem}

\begin{rem}\label{rem-C_K^{(beta)}}
It follows from (\ref{ineqn-phi}) and (\ref{defn-C_{K,N}^{(beta)}})
that $\{C_{K,N}^{(\beta)};N=K,K+1,\dots\}$ is decreasing and
\[
\lim_{N\to\infty} C_{K,N}^{(\beta)}
= { b + 1 \over c}
\left(
1  + b + { 2bc \over \beta\overline{\phi}_K^{(\beta)} } 
\right) =: C_K^{(\beta)}.
\]
Thus, as $N$ increases, the bound (\ref{comp-perturbation-bound-01})
becomes tighter. In addition, if the conditions of
Theorem~\ref{thm-perturbation-exp} are satisfied and $\| \vc{Q}^{\ast}
- \vc{Q}\|_{\vc{v}} < 1 / C_K^{(\beta)}$, then
\begin{eqnarray}
\| \vc{\pi}^{\ast} - \vc{\pi} \|_{\vc{v}}
&\le& {b \over c} \cdot
{ C_K^{(\beta)} \| \vc{Q}^{\ast} - \vc{Q} \|_{\vc{v}}  
\over 
1 - C_K^{(\beta)} \| \vc{Q}^{\ast} - \vc{Q} \|_{\vc{v}} 
}.
\label{perturbation-bound-01}
\end{eqnarray}
\end{rem}

\medskip
\noindent
{\it Proof of Theorem~\ref{thm-perturbation-exp}.~} Combining
Lemma~\ref{lem-bound-|D|g} with $\vc{f}=c\vc{v} \ge \vc{e}$ and
$\vc{\pi}\vc{v} < b/c$ yields
\begin{eqnarray*}
|\vc{D}|\, \vc{v}
&\le& { c \vc{\pi} \vc{v} +1 \over c} 
\left[
\vc{v} 
+ \left(
\vc{\pi}\vc{v} 
+ { 2b \over \beta\overline{\phi}_K^{(\beta)} } 
\right)(c\vc{v})
\right]
\le { b + 1 \over c}
\left(
1  + b + { 2bc \over \beta\overline{\phi}_K^{(\beta)} } 
\right) \vc{v}.
%\label{bound-|D|v-01}
\end{eqnarray*}
Furthermore, applying (\ref{ineqn-phi}) to the above inequality leads to
\begin{eqnarray*}
|\vc{D}|\, \vc{v}
&\le&  { b + 1 \over c}
\left(
1  + b + { 2bc \over \beta\overline{\phi}_{K,N}^{(\beta)} } 
\right) \vc{v}
= C_{K,N}^{(\beta)} \vc{v},
\end{eqnarray*}
which implies that
\begin{equation}
\| \vc{D} \|_{\vc{v}} \le C_{K,N}^{(\beta)}.
\label{bound-|D|v-02}
\end{equation}
From (\ref{comp-bound-Q^*-Q}) and (\ref{bound-|D|v-02}), we have
\begin{equation*}
\| (\vc{Q}^{\ast} - \vc{Q}) \vc{D} \|_{\vc{v}}
\le \| (\vc{Q}^{\ast} - \vc{Q})  \|_{\vc{v}}
\cdot  \| \vc{D} \|_{\vc{v}}
\le C_{K,N}^{(\beta)} \| (\vc{Q}^{\ast} - \vc{Q})  \|_{\vc{v}}
< 1.
%\label{bound-|(Q^*-Q)D|v}
\end{equation*}
Thus, it holds (see, e.g., \citet[Section 4.1]{Heid10}) that
\begin{eqnarray}
\vc{\pi}^{\ast} - \vc{\pi}
&=& \vc{\pi} \sum_{m=1}^{\infty} \{ (\vc{Q}^{\ast} - \vc{Q}) \vc{D} \}^m.
\label{Update-formula}
\end{eqnarray}
It follows from (\ref{bound-|D|v-02}) and
(\ref{Update-formula}) that
\begin{eqnarray*}
\| \vc{\pi}^{\ast} - \vc{\pi} \|_{\vc{v}}
&\le& \| \vc{\pi} \|_{\vc{v}} 
\sum_{m=1}^{\infty} 
\left\{ \|\vc{Q}^{\ast} - \vc{Q} \|_{\vc{v}} \cdot \| \vc{D} \|_{\vc{v}} \right\}^m
\nonumber
\\
&\le& \| \vc{\pi} \|_{\vc{v}} 
\sum_{m=1}^{\infty} 
\left\{ C_{K,N}^{(\beta)} \|\vc{Q}^{\ast} - \vc{Q} \|_{\vc{v}}   \right\}^m
\nonumber
\\
&\le& {b \over c}
\cdot { C_{K,N}^{(\beta)} \|\vc{Q}^{\ast} - \vc{Q} \|_{\vc{v}} 
\over 1 -  C_{K,N}^{(\beta)} \|\vc{Q}^{\ast} - \vc{Q} \|_{\vc{v}}
},
\end{eqnarray*}
where the last inequality holds because $\| \vc{\pi} \|_{\vc{v}} =
\vc{\pi} \vc{v} \le b/c$.  \QED

\begin{rem}\label{rem-review-perturbation}
\citet{Kart86a,Kart86b,Kart86c} considered discrete-time
infinite-state Markov chains with uniform ergodicity (or equivalently,
strong stability; see \citet[Theorem~B]{Kart86a}), and then derived
perturbation bounds of a type similar to the bound
(\ref{comp-perturbation-bound-01}):
\begin{equation}
\| \vc{\varpi}^{\ast} - \vc{\varpi} \|
\le C_1 \cdot
{ C_2 \| \vc{P}^{\ast} - \vc{P} \|
\over 
1 - C_2 \| \vc{P}^{\ast} - \vc{P} \|
},
\label{Kart-perturbated-bound}
\end{equation}
where $\|\cdot\|$ denotes an appropriate norm, and where $\vc{\varpi}$
and $\vc{\varpi}^{\ast}$ are the stationary distributions of the
original transition kernel $\vc{P}$ and a perturbated transition
kernel $\vc{P}^{\ast}$, respectively. \citet{Mouh10} established a
bound of the type (\ref{Kart-perturbated-bound}) by using the norm of
a residual matrix of the original transition probability matrix (see
Theorem 5 therein). However, the perturbation bounds in these previous
studies are not easy to compute because the parameters $C_1$ and $C_2$
depend on $\|\vc{\varpi}\|$.  As for continuous-time infinite-state
Markov chains, \citet{LiuYuan12} presented a perturbation bound that
is similar to the bound (\ref{comp-perturbation-bound-01}) and
independent of $\|\vc{\pi}\|_{\vc{v}}$, under such an exponential
drift condition as corresponds to the condition (\ref{geo-drift-cond})
with $\vc{1}_{\bbF_K}$ being replaced by $\vc{1}_{\{(k,i)\}}$,
together with the condition that the infinitesimal generator is
bounded. The boundedness of the infinitesimal generator is removed by
\citet{LiuYuan15}.
\end{rem}

Next we derive a perturbation bound under the general
$\vc{f}$-modulated drift condition. To this end, we use the reduction
to exponential ergodicity, as in Theorem~\ref{thm-reduction-EXP}.
Recall here that if Condition~\ref{assumpt-f-ergodic} holds then
$\widehat{\vc{Q}}=\vc{\Delta}_{\vc{v}/\vc{f}}\vc{Q}$ satisfies the
exponential drift condition (\ref{ineqn-hat{Q}v}), which leads to
(\ref{ineqn-wh{pi}*v}).  Note also that, for all sufficiently
large $N \in \{K,K+1,\dots\}$,
\begin{equation}
\left[\widehat{\vc{\Phi}}_{\bbF_N}^{(\beta)} \right]_{\bbF_K} > \vc{O},
\label{add-eqn-160403-02}
\end{equation} 
which is confirmed as in the argument leading to (\ref{ineqn-K}).  We
now fix $N \in \{K,K+1,\dots\}$ such that (\ref{add-eqn-160403-02})
holds.  We then define
$\widehat{\vc{\Phi}}_{\bbF_N}^{(\beta)}:=(\widehat{\phi}_{\bbF_N}^{(\beta)}(k,i;\ell,j))_{(k,i;\ell,j)
  \in \bbF^2}$ as
\[
\widehat{\vc{\Phi}}_{\bbF_N}^{(\beta)}
= (\vc{I} - \widehat{\vc{Q}}_{\bbF_N}/\beta)^{-1},
\]
where $\widehat{\vc{Q}}_{\bbF_N} =
\vc{\Delta}_{\vc{v}/\vc{f}}\vc{Q}_{\bbF_N}$. We also define $\wh{C}_{K,N}^{(\beta)}$ as
\begin{equation}
\wh{C}_{K,N}^{(\beta)}
= ( \wh{b} + 1 )
\left(
1  + \wh{b} 
+ { 2\wh{b} \over \beta\overline{\widehat{\phi}\,}{}_{K,N}^{(\beta)} } 
\right),
\label{defn-wh{C}_K^{(beta)}}
\end{equation}
where
\[
\overline{\widehat{\phi}\,}{}_{K,N}^{(\beta)}
= \sup_{(\ell,j) \in \bbF_N}\min_{(k,i)\in\bbF_K} 
\widehat{\phi}_{\bbF_N}^{(\beta)}(k,i;\ell,j) > 0.
\]
Since $\overline{\widehat{\phi}\,}{}_{K,N}^{(\beta)}$ corresponds to
$\overline{\phi}_{K,N}^{(\beta)}$ in (\ref{defn-phi_{K,N}}), the
former can be computed in a similar way to the computation of the
latter (see Remark~\ref{rem-Le-Boud}).

The following theorem presents a computable perturbation bound under
the general $\vc{f}$-modulated drift condition.
\begin{thm}\label{thm-perturbation-general}
Suppose that Assumption~\ref{basic-assumpt} and
Condition~\ref{assumpt-f-ergodic-02} are satisfied.  Furthermore, fix
$N \in \{K,K+1,\dots\}$ arbitrarily such that
(\ref{add-eqn-160403-02}) holds. If $\vc{\pi}\vc{v} < \infty$ and
\begin{eqnarray}
\| \vc{\Delta}_{\vc{v}/\vc{f}} (\vc{Q}^{\ast} - \vc{Q}) \|_{\vc{v}} 
&<& { 1 \over \wh{C}_{K,N}^{(\beta)} },
\label{comp-bound-wh{Q}^*-wh{Q}}
\end{eqnarray}
then
\begin{eqnarray}
\| \vc{\pi}^{\ast} - \vc{\pi} \|_{\vc{f}}
&\le& \overline{C}{}_{\vc{f}/\vc{v}}\left( 1 + \wh{b} \,\overline{C}{}_{\vc{f}/\vc{v}} \right)
\cdot
{ \wh{b} \wh{C}_{K,N}^{(\beta)} \| \vc{\Delta}_{\vc{v}/\vc{f}} (\vc{Q}^{\ast} - \vc{Q}) \|_{\vc{v}} 
\over 
1 - \wh{C}_{K,N}^{(\beta)} \| \vc{\Delta}_{\vc{v}/\vc{f}} (\vc{Q}^{\ast} - \vc{Q}) \|_{\vc{v}} 
}.
\label{comp-bound-|pi^*-pi|f}
\end{eqnarray}
\end{thm}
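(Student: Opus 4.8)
The plan is to reduce to the exponentially ergodic case of Theorem~\ref{thm-perturbation-exp} by applying that result to the diagonally scaled generators $\wh{\vc{Q}}=\vc{\Delta}_{\vc{v}/\vc{f}}\vc{Q}$ and $\wh{\vc{Q}}^{\ast}:=\vc{\Delta}_{\vc{v}/\vc{f}}\vc{Q}^{\ast}$, and then to carry the resulting $\vc{v}$-norm bound for $\wh{\vc{\pi}}^{\ast}-\wh{\vc{\pi}}$ back to the desired $\vc{f}$-norm bound for $\vc{\pi}^{\ast}-\vc{\pi}$. Here $\wh{\vc{\pi}}$ is given in (\ref{eqn-hat{pi}}), and $\wh{\vc{\pi}}^{\ast}:=\vc{\pi}^{\ast}\vc{\Delta}_{\vc{f}/\vc{v}}/[\vc{\pi}^{\ast}(\vc{f}/\vc{v})]$ is well defined (the normalization is finite and positive since $\vc{0}<\vc{f}/\vc{v}\le\overline{C}{}_{\vc{f}/\vc{v}}\vc{e}$) and, by the same computation as for $\wh{\vc{Q}}$, is the stationary distribution vector of $\wh{\vc{Q}}^{\ast}$, because $\wh{\vc{\pi}}^{\ast}\wh{\vc{Q}}^{\ast}=\vc{\pi}^{\ast}\vc{Q}^{\ast}/[\vc{\pi}^{\ast}(\vc{f}/\vc{v})]=\vc{0}$.

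First I would check that Theorem~\ref{thm-perturbation-exp} applies to the pair $(\wh{\vc{Q}},\wh{\vc{Q}}^{\ast})$. Since $\vc{\Delta}_{\vc{v}/\vc{f}}$ is a strictly positive diagonal matrix, it preserves the off-diagonal sign pattern and conservativeness, so $\wh{\vc{Q}}$ inherits from $\vc{Q}$ all the structural hypotheses of Assumption~\ref{basic-assumpt} (its truncations satisfy $\presub{n}\wh{\vc{Q}}=\vc{\Delta}_{\vc{v}/\vc{f}}\presub{n}\vc{Q}$ by (\ref{defn-{n}hat{Q}})), $\wh{\vc{Q}}^{\ast}$ remains an ergodic generator, and the positivity requirement at the fixed $N$ is exactly (\ref{add-eqn-160403-02}). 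Moreover $\wh{\vc{Q}}$ obeys the exponential drift condition (\ref{ineqn-hat{Q}v}), that is (\ref{geo-drift-cond}) with $c=1$ and $b=\wh{b}$, so the relevant constant is $C_{K,N}^{(\beta)}$ of (\ref{defn-C_{K,N}^{(beta)}}) evaluated at $c=1$, $b=\wh{b}$ and with $\overline{\phi}_{K,N}^{(\beta)}$ replaced by $\overline{\wh{\phi}\,}{}_{K,N}^{(\beta)}$, which is precisely $\wh{C}_{K,N}^{(\beta)}$ in (\ref{defn-wh{C}_K^{(beta)}}). As $\wh{\vc{Q}}^{\ast}-\wh{\vc{Q}}=\vc{\Delta}_{\vc{v}/\vc{f}}(\vc{Q}^{\ast}-\vc{Q})$, the smallness assumption (\ref{comp-bound-wh{Q}^*-wh{Q}}) is just (\ref{comp-bound-Q^*-Q}) for the hatted data, and Theorem~\ref{thm-perturbation-exp} gives, with $\varepsilon:=\|\vc{\Delta}_{\vc{v}/\vc{f}}(\vc{Q}^{\ast}-\vc{Q})\|_{\vc{v}}$,
\[
\| \wh{\vc{\pi}}^{\ast} - \wh{\vc{\pi}} \|_{\vc{v}}
\le
\wh{b}\,
\frac{ \wh{C}_{K,N}^{(\beta)} \varepsilon }{ 1 - \wh{C}_{K,N}^{(\beta)} \varepsilon } .
\]

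Next I would transfer this to the $\vc{f}$-norm. Exactly as in the derivation of (\ref{eqn-pi-hat{pi}-02}), the defining relations of $\wh{\vc{\pi}},\wh{\vc{\pi}}^{\ast}$ yield
\[
\vc{\pi}^{\ast} - \vc{\pi}
=
\frac{1}{\wh{\vc{\pi}}^{\ast}(\vc{v}/\vc{f})}
\left[
( \wh{\vc{\pi}}^{\ast} - \wh{\vc{\pi}} )
+
( \wh{\vc{\pi}} - \wh{\vc{\pi}}^{\ast} )(\vc{v}/\vc{f})\,
\frac{ \wh{\vc{\pi}} }{ \wh{\vc{\pi}}(\vc{v}/\vc{f}) }
\right]
\vc{\Delta}_{\vc{v}/\vc{f}} ,
\]
the crucial point being to orient the decomposition so that the correction term is normalized by the \emph{unperturbed} $\wh{\vc{\pi}}$ rather than by $\wh{\vc{\pi}}^{\ast}$. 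Taking $|\cdot|\,\vc{f}$, using $|\vc{x}\vc{\Delta}_{\vc{v}/\vc{f}}|\,\vc{f}=\|\vc{x}\|_{\vc{v}}$ and $|(\wh{\vc{\pi}}-\wh{\vc{\pi}}^{\ast})(\vc{v}/\vc{f})|\le\|\wh{\vc{\pi}}^{\ast}-\wh{\vc{\pi}}\|_{\vc{v}}$ (valid since $\vc{v}/\vc{f}\le\vc{v}$ by $\vc{f}\ge\vc{e}$), together with $\wh{\vc{\pi}}\vc{v}\le\wh{b}$ from (\ref{ineqn-wh{pi}*v}) and $\wh{\vc{\pi}}(\vc{v}/\vc{f}),\,\wh{\vc{\pi}}^{\ast}(\vc{v}/\vc{f})\ge 1/\overline{C}{}_{\vc{f}/\vc{v}}$ from (\ref{ineqn-v-v/f}), I obtain $\|\vc{\pi}^{\ast}-\vc{\pi}\|_{\vc{f}}\le\overline{C}{}_{\vc{f}/\vc{v}}\bigl(1+\wh{b}\,\overline{C}{}_{\vc{f}/\vc{v}}\bigr)\|\wh{\vc{\pi}}^{\ast}-\wh{\vc{\pi}}\|_{\vc{v}}$. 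Substituting the first-step bound then produces (\ref{comp-bound-|pi^*-pi|f}).

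I expect the main obstacle to be precisely this orientation in the conversion step. If one instead normalizes the correction term by $\wh{\vc{\pi}}^{\ast}$, the prefactor involves $\wh{\vc{\pi}}^{\ast}\vc{v}$, the $\vc{v}$-moment of the \emph{perturbed} chain, which the exponential drift of $\wh{\vc{Q}}$ alone does not control by $\wh{b}$: the best one gets is $\wh{\vc{\pi}}^{\ast}\vc{v}\le\wh{b}/(1-\varepsilon)$, obtained by pre-multiplying the perturbed drift $\wh{\vc{Q}}^{\ast}\vc{v}\le-(1-\varepsilon)\vc{v}+\wh{b}\vc{1}_{\bbF_K}$ by $\wh{\vc{\pi}}^{\ast}$, so the clean factor $1+\wh{b}\,\overline{C}{}_{\vc{f}/\vc{v}}$ would degrade. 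A secondary point needing care is the transfer of Assumption~\ref{basic-assumpt} and the exponential-ergodicity hypotheses to the scaled generators, which rests entirely on $\vc{\Delta}_{\vc{v}/\vc{f}}$ being a strictly positive diagonal similarity.
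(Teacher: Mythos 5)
Your proposal is correct and follows essentially the same route as the paper's own proof: you apply Theorem~\ref{thm-perturbation-exp} to the scaled pair $\wh{\vc{Q}}=\vc{\Delta}_{\vc{v}/\vc{f}}\vc{Q}$, $\wh{\vc{Q}}^{\ast}=\vc{\Delta}_{\vc{v}/\vc{f}}\vc{Q}^{\ast}$ via the exponential drift (\ref{ineqn-hat{Q}v}) (giving exactly $\wh{C}_{K,N}^{(\beta)}$ and the prefactor $\wh{b}$), and you convert between $\|\cdot\|_{\vc{f}}$ and $\|\cdot\|_{\vc{v}}$ using precisely the decomposition of $\vc{\pi}^{\ast}-\vc{\pi}$ that the paper obtains as in (\ref{eqn-pi-hat{pi}-02}), together with (\ref{ineqn-wh{pi}*v}) and (\ref{ineqn-v-v/f}), reproducing the paper's intermediate bound $\|\vc{\pi}^{\ast}-\vc{\pi}\|_{\vc{f}}\le\overline{C}{}_{\vc{f}/\vc{v}}\bigl(1+\wh{b}\,\overline{C}{}_{\vc{f}/\vc{v}}\bigr)\|\wh{\vc{\pi}}^{\ast}-\wh{\vc{\pi}}\|_{\vc{v}}$. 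Your observation about orienting the correction term so that it is normalized by the unperturbed $\wh{\vc{\pi}}$ (controlled by $\wh{b}$) rather than by $\wh{\vc{\pi}}^{\ast}$ is exactly the choice made in the paper, so the two arguments agree in substance, differing only in that you invoke Theorem~\ref{thm-perturbation-exp} before the norm conversion rather than after.
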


\medskip
\noindent
{\it Proof.~}
Let $\widehat{\vc{\pi}}^{\ast}$ and $\widehat{\vc{Q}}^{\ast}$ denote
\[
\widehat{\vc{\pi}}^{\ast}
= {\vc{\pi}^{\ast}\vc{\Delta}_{\vc{f}/\vc{v}} \over \vc{\pi}^{\ast}(\vc{f}/\vc{v}) },
\qquad
\widehat{\vc{Q}}^{\ast}=\vc{\Delta}_{\vc{v}/\vc{f}}\vc{Q}^{\ast},
\]
respectively, where $\widehat{\vc{\pi}}^{\ast}$ is the probability
vector such that $\widehat{\vc{\pi}}^{\ast}\widehat{\vc{Q}}^{\ast} =
\vc{0}$.  Proceeding as in the derivation of
(\ref{eqn-pi-hat{pi}-02}), we have
\[
\vc{\pi}^{\ast} - \vc{\pi}
=
{1 \over \widehat{\vc{\pi}}^{\ast} \left(\vc{v}/\vc{f}\right)}
\left[
( \widehat{\vc{\pi}}^{\ast} - \widehat{\vc{\pi}} )
+
( \widehat{\vc{\pi}} - \widehat{\vc{\pi}}^{\ast} )
\left(\vc{v}/\vc{f}\right) 
{ 
\widehat{\vc{\pi}}
\over 
\widehat{\vc{\pi}} \left(\vc{v}/\vc{f}\right) 
}
\right]\vc{\Delta}_{\vc{v}/\vc{f}}.
\]
Using this equation and (\ref{ineqn-v-v/f}), we obtain
\begin{eqnarray}
\|\vc{\pi}^{\ast} - \vc{\pi}\|_{\vc{f}}
&\le&
{1 \over \widehat{\vc{\pi}}^{\ast} \left(\vc{v}/\vc{f}\right)}
\left[
| \widehat{\vc{\pi}}^{\ast} - \widehat{\vc{\pi}} |
+
| \widehat{\vc{\pi}} - \widehat{\vc{\pi}}^{\ast} |
\left(\vc{v}/\vc{f}\right) 
{ 
\widehat{\vc{\pi}}
\over 
\widehat{\vc{\pi}} \left(\vc{v}/\vc{f}\right) 
}
\right]\vc{v}
\nonumber
\\
&\le&
{1 \over \widehat{\vc{\pi}}^{\ast} \left(\vc{v}/\vc{f}\right)}
\left[
| \widehat{\vc{\pi}}^{\ast} - \widehat{\vc{\pi}} |\vc{v}
+
| \widehat{\vc{\pi}} - \widehat{\vc{\pi}}^{\ast} |
\vc{v} \cdot
{ 
\widehat{\vc{\pi}}\vc{v}
\over 
\widehat{\vc{\pi}} \left(\vc{v}/\vc{f}\right) 
}
\right]
\nonumber
\\
&=& 
{ 1  \over \widehat{\vc{\pi}}^{\ast} \left(\vc{v}/\vc{f}\right)}
\left(
1 +  { \widehat{\vc{\pi}} \vc{v} \over \widehat{\vc{\pi}} \left(\vc{v}/\vc{f}\right) } 
\right)
\| \widehat{\vc{\pi}}^{\ast} - \widehat{\vc{\pi}} \|_{\vc{v}}
\nonumber
\\
&\le& 
\overline{C}_{\vc{f}/\vc{v}}
\left(
1 +   \widehat{\vc{\pi}} \vc{v} \cdot \overline{C}_{\vc{f}/\vc{v}}
\right)
\| \widehat{\vc{\pi}}^{\ast} - \widehat{\vc{\pi}} \|_{\vc{v}}
\nonumber
\\
&\le&
\overline{C}_{\vc{f}/\vc{v}} 
\left(
1 + \widehat{b}\,\overline{C}_{\vc{f}/\vc{v}} 
\right) \| \widehat{\vc{\pi}}^{\ast} - \widehat{\vc{\pi}} \|_{\vc{v}},
\label{ineqn-pi-pi^*-02}
\end{eqnarray}
where the last inequality follows from (\ref{ineqn-wh{pi}*v}).

It remains to estimate $\| \widehat{\vc{\pi}}^{\ast} -
\widehat{\vc{\pi}} \|_{\vc{v}}$.  From (\ref{comp-bound-wh{Q}^*-wh{Q}}),
$\widehat{\vc{Q}}=\vc{\Delta}_{\vc{v}/\vc{f}}\vc{Q}$ and
$\widehat{\vc{Q}}^{\ast}=\vc{\Delta}_{\vc{v}/\vc{f}}\vc{Q}^{\ast}$, we
have
\[
\| \wh{\vc{Q}}^{\ast} - \wh{\vc{Q}} \|_{\vc{v}} 
= \| \vc{\Delta}_{\vc{v}/\vc{f}} (\vc{Q}^{\ast} - \vc{Q}) \|_{\vc{v}} 
< { 1 \over \wh{C}_{K,N}^{(\beta)} }.
\]
Thus, applying Theorem~\ref{thm-perturbation-exp} to $\wh{\vc{Q}}$
satisfying (\ref{ineqn-hat{Q}v}), we obtain
\begin{equation}
\left\| \wh{\vc{\pi}}^{\ast} - \wh{\vc{\pi}} \right\|_{\vc{v}}
\le \wh{b}
{ \wh{C}_{K,N}^{(\beta)} \| \wh{\vc{Q}}^{\ast} - \wh{\vc{Q}} \|_{\vc{v}} 
\over 
1 - \wh{C}_{K,N}^{(\beta)} \|\wh{\vc{Q}}^{\ast} - \wh{\vc{Q}} \|_{\vc{v}} 
}
= { \wh{b}\, \wh{C}_{K,N}^{(\beta)} 
\| \vc{\Delta}_{\vc{v}/\vc{f}} (\vc{Q}^{\ast} - \vc{Q}) \|_{\vc{v}} 
\over 
1 - \wh{C}_{K,N}^{(\beta)} \| \vc{\Delta}_{\vc{v}/\vc{f}} 
(\vc{Q}^{\ast} - \vc{Q}) \|_{\vc{v}} 
}.
\label{ineqn-wh{pi}^*-wh{pi}}
\end{equation}
Substituting (\ref{ineqn-wh{pi}^*-wh{pi}}) into (\ref{ineqn-pi-pi^*-02})
results in (\ref{comp-bound-|pi^*-pi|f}).  \QED

\medskip

\begin{rem}
A similar remark to Remark~\ref{rem-C_K^{(beta)}} applies to the bound
(\ref{comp-bound-|pi^*-pi|f}).  To save space, we omit the details.
\end{rem}

\appendix

\section{Proof of Proposition~\ref{prop-{n}Q-communication}}\label{appen-proof-prop-{n}Q-communication}

We first prove statement (i).  From (\ref{defn-(n)_Q}), we have
\[
\presub{n}q(k,i;\ell,j) = 0,\qquad (k,i) \in \bbF_n,\ (\ell,j) \in \overline{\bbF}_n,
\]
which shows that the Markov chain
$\{(\presub{n}X(t),\presub{n}J(t))\}$ cannot move from $\bbF_n$ to
$\overline{\bbF}_n$. Thus, $\bbF_n$ is closed and therefore includes
at least one closed communicating class.

We now denote by $\bbC$ a closed communicating class in $\bbF_n$. We
then assume that $\bbC \cap \bbL_n = \emptyset$, i.e., $\bbC \subseteq
\bbF_{n-1}$. In this setting, the submatrix
$\presub{n}\vc{Q}_{\bbC}:=(\presub{n}q(k,i;\ell,j))_{(k,i;\ell,j)\in\bbC^2}$
of $\presub{n}\vc{Q}$ is a conservative $q$-matrix. Furthermore, it
follows from (\ref{defn-(n)_Q}) and $\bbC \subseteq \bbF_{n-1}$ that
$\presub{n}\vc{Q}_{\bbC}$ is equal to the submatrix
$\vc{Q}_{\bbC}:=(q(k,i;\ell,j))_{(k,i;\ell,j)\in\bbC^2}$ of the
original generator $\vc{Q}$, i.e., $\presub{n}\vc{Q}_{\bbC}=
\vc{Q}_{\bbC}$. Therefore, $\vc{Q}_{\bbC}$ is a conservative
$q$-matrix, and $\bbC$ is a closed communicating class in the original
Markov chain $\{(X(t),J(t))\}$ with infinitesimal generator
$\vc{Q}$. This is, however, inconsistent with the irreducibility of
the Markov chain $\{(X(t),J(t))\}$. As a result, $\bbC \cap \bbL_n
\neq \emptyset$.

According to the above discussion, any closed communicating class in
$\bbF_n$ shares at least one element with $\bbL_n$. This implies that the
number of closed communicating classes in $\bbF_n$ is not greater than
the cardinality of $\bbL_n$, i.e., $S_1+1$. Consequently, statement (i) has  been proved.

Next we prove statement (ii). To this end, we assume that there exists
a closed communicating class $\bbC$ in $\overline{\bbF}_n$. Recall here
that the $|\overline{\bbF}_n| \times |\overline{\bbF}_n|$ southeast corner of $\presub{n}\vc{Q}$ is block-diagonal due to (\ref{defn-(n)_Q}). Thus, the closed communicating class $\bbC$ is
within a single level, i.e., $\bbC \subseteq \bbL_k$ for some $k \ge n+1$,
which implies that the $|\bbC| \times |\bbC|$ submatrix of
$\presub{n}\vc{Q}(k;k)=\vc{Q}(k;k)$ is a conservative
$q$-matrix. Therefore, the original Markov chain $\{(X(t),J(t))\}$
with infinitesimal generator $\vc{Q}$ cannot move out of $\bbC \subseteq
\bbL_k$. This contradicts the irreducibility of the Markov chain
$\{(X(t),J(t))\}$. Therefore, there are no closed communicating
classes in $\overline{\bbF}_n$.

\section{Applications of Dynkin's Formula}

In this appendix, we present two applications of Dynkin's formula
(see, e.g., \citet{Meyn93-III}).  For convenience, we redefine some of the symbols used in the body of the paper, in a different way.

We define $\{Y(t);t\ge 0\}$ as an irreducible regular-jump Markov chain
with state space $\bbZ_+$ and infinitesimal generator
$\vc{Q}:=(q(i,j))_{i,j\in\bbZ_+}$. For any $m \in \bbN$, we also define
$\{Y_m(t);t\ge 0\}$ as a stochastic process such that
\begin{equation}
Y_m(t)
=
\left\{
\begin{array}{ll}
Y(t), &  t < \tau_m,
\\
Y(\tau_m), &  t \ge \tau_m,
\end{array}
\right.
\label{defn-Phi_m(t)}
\end{equation}
where $\tau_m = \inf\{t \ge 0: Y(t) \ge m\}$. Since $\tau_m$ is a
stopping time for the Markov chain $\{Y(t)\}$, the stochastic process
$\{Y_m(t)\}$ is also a Markov chain (see, e.g., \citet[Chapter~8,
  Theorem 4.1]{Brem99}).

For any $m \in \bbN$, let $\vc{Q}_m:=(q_m(i,j))_{i,j\in\bbZ_+}$ denote
the infinitesimal generator of $\{Y_m(t)\}$. It then follows from
(\ref {defn-Phi_m(t)}) that
\begin{equation}
q_m(i,j) = 
\left\{
\begin{array}{lll}
q(i,j), & i = 0,1,\dots,m-1, & j \in \bbZ_+,
\\ 
0,      & i = m,m+1,\dots, & j \in \bbZ_+.
\end{array}
\right.
\label{eqn-q_m(i,j)}
\end{equation}
Furthermore, since $\{Y(t)\}$ is non-explosive, so is $\{Y_m(t)\}$ and
thus
\begin{equation}
\PP_i \! \left( \lim_{m\to\infty}\tau_m = \infty \right) = 1
\quad \mbox{for all~} i \in \bbZ_+,
\label{lim-tau_m}
\end{equation}
where $\PP_i(\,\, \cdot \,\,)$ represents $\PP(~ \cdot \mid Y(0) =
i)$ or $\PP(~ \cdot \mid Y_m(0) = i)$. For later use, let
$\EE_i[\,\, \cdot \,\,]$ denote $\EE[~ \cdot \mid Y(0) = i]$ or
$\EE[~ \cdot \mid Y_m(0) = i]$.

Let $\widehat{\tau}_m = \min(m,\tau_m,\tau)$ for $m \in \bbN$, where
$\tau$ denotes an arbitrary stopping time for the Markov chain
$\{Y(t)\}$. It then follows from (\ref{defn-Phi_m(t)}) and Dynkin's
formula (see, e.g., \citet[Equation~(8)]{Meyn93-III}) that, for any
real-valued column vector $\vc{x}:=(x(i))_{i\in\bbZ_+}$,
\begin{eqnarray}
\EE_i \! \left[ x(Y(\widehat{\tau}_m)) \right]
&=& \EE_i \! \left[ x(Y_m(\widehat{\tau}_m)) \right]
\nonumber
\\
&=& x(i) + 
\EE_i \!\! \left[ \int_0^{\widehat{\tau}_m} (\vc{Q}_m\vc{x})(Y(u)) \rmd u\right],
\qquad i=0,1,\dots,m-1,
\label{Dynkin-formula}
\end{eqnarray}
where $(\vc{Q}_m\vc{x})(i)$ is the $i$th element of the vector
$\vc{Q}_m\vc{x}$. Using (\ref{Dynkin-formula}), we obtain
Lemma~\ref{lem-compa} below, which is a continuous analogue of the
comparison Theorem for discrete-time Markov chains
(see \citet[Theorem~2.1]{Glyn96}).
\begin{lem}\label{lem-compa}
Suppose that $\{Y(t);t\ge0\}$ is an irreducible regular-jump Markov
chain. If there exist nonnegative column vectors
$\vc{v}:=(v(i))_{i\in\bbZ_+}$, $\vc{f}:=(f(i))_{i\in\bbZ_+}$ and
$\vc{w}:=(w(i))_{i\in\bbZ_+}$ such that
\begin{equation}
\vc{Q}\vc{v} \le  - \vc{f} + \vc{w},
\label{ineqn-Qv-02}
\end{equation}
then, for any $t \ge 0$ and stopping time $\tau$,
\begin{align}
&&&&
\EE_i \!\! \left[ \int_0^t f(Y(u)) \rmd u\right]
&\le v(i) + \EE_i \!\! \left[ \int_0^t w(Y(u)) \rmd u\right],
& i \in \bbZ_+, &&&&
\label{ineqn-compa-01}
\\
&&&&
\EE_i \!\! \left[ \int_0^{\tau} f(Y(u)) \rmd u\right]
&\le v(i) + \EE_i \!\! \left[ \int_0^{\tau} w(Y(u)) \rmd u\right],
& i \in \bbZ_+. &&&&
\label{ineqn-compa-02}
\end{align}
\end{lem}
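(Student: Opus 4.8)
The plan is to apply Dynkin's formula (\ref{Dynkin-formula}) with $\vc{x}=\vc{v}$ and then pass to the limit $m\to\infty$ using the non-explosivity (\ref{lim-tau_m}) of $\{Y(t)\}$ together with monotone convergence. Since a deterministic time $t$ is itself a stopping time, it suffices to prove (\ref{ineqn-compa-02}); the bound (\ref{ineqn-compa-01}) is then the special case $\tau=t$. First I would fix the stopping time $\tau$ appearing in the statement, set $\widehat{\tau}_m=\min(m,\tau_m,\tau)$ as in the construction above, and fix $i\in\bbZ_+$ together with $m>i$ so that (\ref{Dynkin-formula}) is applicable.

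The key observation is that $\vc{Q}_m\vc{v}$ agrees with $\vc{Q}\vc{v}$ on the states visited strictly before time $\widehat{\tau}_m$. Indeed, because $\widehat{\tau}_m\le\tau_m$, every $u<\widehat{\tau}_m$ satisfies $u<\tau_m$ and hence $Y(u)<m$; for such states (\ref{eqn-q_m(i,j)}) gives $q_m(Y(u),j)=q(Y(u),j)$, so that $(\vc{Q}_m\vc{v})(Y(u))=(\vc{Q}\vc{v})(Y(u))\le -f(Y(u))+w(Y(u))$ by the drift hypothesis (\ref{ineqn-Qv-02}). Substituting this into (\ref{Dynkin-formula}) and using $\vc{v}\ge\vc{0}$ to discard the boundary term $\EE_i[v(Y(\widehat{\tau}_m))]\ge0$, I obtain, for all $i=0,1,\dots,m-1$,
\[
\EE_i\!\!\left[\int_0^{\widehat{\tau}_m} f(Y(u))\,\rmd u\right]
\le v(i)+\EE_i\!\!\left[\int_0^{\widehat{\tau}_m} w(Y(u))\,\rmd u\right].
\]

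Finally I would let $m\to\infty$. The hitting times $\tau_m$ are nondecreasing in $m$, so $\min(m,\tau_m)$ is nondecreasing and, by (\ref{lim-tau_m}), tends to $\infty$ almost surely; consequently $\widehat{\tau}_m\nearrow\tau$ almost surely. Since $\vc{f}\ge\vc{0}$ and $\vc{w}\ge\vc{0}$, both integrands are nonnegative and the truncated integrals increase monotonically to their counterparts over $[0,\tau)$, so the monotone convergence theorem permits passing to the limit on both sides and recovers (\ref{ineqn-compa-02}); choosing $\tau=t$ then yields (\ref{ineqn-compa-01}).

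The part requiring the most care is the identification $(\vc{Q}_m\vc{v})(Y(u))=(\vc{Q}\vc{v})(Y(u))$ on the integration range, which rests on the inequality $\widehat{\tau}_m\le\tau_m$ guaranteeing $Y(u)<m$; one must also note that $\vc{Q}\vc{v}$ is well defined and finite, which is ensured by (\ref{ineqn-Qv-02}) with $\vc{f},\vc{w}$ finite. The remaining subtlety is the limit passage, where the monotonicity of $m\mapsto\widehat{\tau}_m$ and the almost-sure convergence $\widehat{\tau}_m\nearrow\tau$ are exactly what make monotone convergence applicable; the sign conditions $\vc{v}\ge\vc{0}$, $\vc{f}\ge\vc{0}$ and $\vc{w}\ge\vc{0}$ are used crucially both to drop the boundary term with the correct sign and to invoke the monotone convergence theorem.
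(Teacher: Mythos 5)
Your proof is correct and follows essentially the same route as the paper's: Dynkin's formula (\ref{Dynkin-formula}) applied to the truncated generator $\vc{Q}_m$ with $\vc{x}=\vc{v}$, dropping the nonnegative boundary term $\EE_i[v(Y(\widehat{\tau}_m))]$, and letting $m\to\infty$ via non-explosivity (\ref{lim-tau_m}) and monotone convergence. Your two streamlinings are sound and in fact slightly cleaner than the original: since $u<\widehat{\tau}_m\le\tau_m$ forces $Y(u)<m$, you can keep $f$ itself in the integrand where the paper introduces the auxiliary function $f_m$ (equal to $f$ below level $m$ and to $f\vmin w$ above, which coincides with $f$ on the integration range anyway), and obtaining (\ref{ineqn-compa-01}) from (\ref{ineqn-compa-02}) via the constant stopping time $\tau=t$ replaces the paper's separate repetition of the argument for deterministic $t$.
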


\medskip
\noindent
{\it Proof.~}
It follows from (\ref{eqn-q_m(i,j)}) and (\ref{ineqn-Qv-02}) that, for
$m \in \bbN$,
\begin{align}
&&&&
(\vc{Q}_m\vc{v})(i) 
&\le  - f(i) + w(i), & i &= 0,1,\dots,m-1, &&&&
\label{ineqn-Q_mv}
\\
&&&&
(\vc{Q}_m\vc{v})(i) 
&= 0, & i &= m,m+1,\dots. &&&&
\label{eqn-Q_mv}
\end{align}
Substituting (\ref{ineqn-Q_mv}) and (\ref{eqn-Q_mv}) into
(\ref{Dynkin-formula}) with $\vc{x}=\vc{v}$ yields
\begin{eqnarray}
0 &\le&
\EE_i \! \left[ v(Y(\widehat{\tau}_m)) \right]
\nonumber
\\
&\le& v(i) 
+ \EE_i \!\! \left[ \int_0^{\widehat{\tau}_m} w(Y(u)) \rmd u\right]
- \EE_i \!\! \left[ \int_0^{\widehat{\tau}_m} f_m(Y(u)) \rmd u\right],
\qquad i \in \bbZ_+,
\label{ineqn-04}
\end{eqnarray}
where
\[
f_m(i)
=\left\{
\begin{array}{ll}
f(i), & i = 0,1,\dots,m-1,
\\
f(i) \vmin w(i), & i = m,m+1,\dots.
\end{array}
\right.
\]
Adding $\EE_i[ \int_0^{\widehat{\tau}_m} f_m(Y(u)) \rmd u]$
to both sides of (\ref{ineqn-04}), we obtain
\begin{eqnarray}
\EE_i \!\! \left[ \int_0^{\widehat{\tau}_m} f_m(Y(u)) \rmd u\right]
&\le& v(i) 
+ \EE_i \!\! \left[ \int_0^{\widehat{\tau}_m} w(Y(u)) \rmd u\right]
\nonumber
\\
&\le& v(i) 
+ \EE_i \!\! \left[ \int_0^{\tau} w(Y(u)) \rmd u\right],
\qquad i \in \bbZ_+,
\label{ineqn-150728-01}
\end{eqnarray}
where the second inequality follows from $\widehat{\tau}_m =
\min(m,\tau_m,\tau) \le \tau$. Note here that (\ref{lim-tau_m}) yields
$\PP_i(\lim_{m\to\infty}m \vmin\tau_m = \infty) = 1$ and thus
$\PP_i(\lim_{m\to\infty}\widehat{\tau}_m = \tau) = 1$. Therefore,
letting $m \to \infty$ in (\ref{ineqn-150728-01}) and using the
monotone convergence theorem, we have
(\ref{ineqn-compa-02}). Furthermore, replacing $\tau$ by $t$ and
proceeding as in the derivation of (\ref{ineqn-150728-01}), we obtain
\[
\EE_i \!\! \left[ \int_0^{t \vmin (m \vmin\tau_m)} f_m(Y(u)) \rmd u\right]
\le v(i) 
+ \EE_i \!\! \left[ \int_0^t w(Y(u)) \rmd u\right],
\qquad i \in \bbZ_+.
\]
Letting $m \to \infty$ in the above
inequality, we have (\ref{ineqn-compa-01}). 
\QED

\medskip

Next we discuss a Poisson equation associated with $\vc{Q}$. To this
end, we assume that the Markov chain $\{Y(t)\}$ is ergodic and has
the unique stationary distribution vector
$\vc{\pi}:=(\pi(i))_{i\in\bbZ_+}$. We then define
$\vc{g}^{\ddag}:=(g^{\ddag}(i))_{i\in\bbZ_+}$ as $\vc{g}^{\ddag} =
\vc{g} - (\vc{\pi}\vc{g})\vc{e}$, i.e.,
\[
g^{\ddag}(i) = g(i) - \vc{\pi}\vc{g},\qquad i \in \bbZ_+,
\]
where $\vc{g}:=(g(i))_{i\in\bbZ_+}$ is a given real-valued column
vector.  In this setting, we consider a Poisson equation:
\begin{equation}
-\vc{Q}\vc{h} = \vc{g}^{\ddag}.
\label{Poisson-eq}
\end{equation}
Using Lemma~\ref{lem-compa}, we prove the following result on a
solution of (\ref{Poisson-eq}).
\begin{lem}\label{lem-Poisson-eq}
Suppose that $\{Y(t);t\ge0\}$ is an irreducible regular-jump Markov
chain, and there exist some $b > 0$, $K \in \bbZ_+$, column vectors
$\vc{v} \ge \vc{0}$ and $\vc{f} \ge \vc{e}$ such that
\begin{equation*}
\vc{Q}\vc{v} \le  - \vc{f} + b \vc{1}_{\bbF_K}.
\label{ineqn-Qv-03}
\end{equation*}
For any fixed $j_{\ast} \in \bbZ_+$ and $|\vc{g}| \le
\vc{f}$, let
$\vc{h}_{j_{\ast}}:=(h_{j_{\ast}}(i))_{i\in\bbZ_+}$
denote
\begin{equation}
h_{j_{\ast}}(i)
= \EE_i \!\! 
\left[ \int_0^{\tau(j_{\ast})} g^{\ddag}(Y(t))\rmd t \right],
\qquad i \in \bbZ_+,
\label{defn-widetilde{h}-02}
\end{equation}
where $\tau(j_{\ast}) = \inf\{t \ge 0:Y(t)=j_{\ast}\}$. Under these
conditions, the vector $\vc{h}_{j_{\ast}}$ is a solution of the
Poisson equation (\ref{Poisson-eq}). In addition,
$h_{j_{\ast}}(j_{\ast}) = 0$.
\end{lem}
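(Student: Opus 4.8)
The plan is to establish, in order, that $\vc{h}_{j_\ast}$ is finite (hence well-defined), that $h_{j_\ast}(j_\ast)=0$, and that $\vc{h}_{j_\ast}$ satisfies $-\vc{Q}\vc{h}_{j_\ast}=\vc{g}^\ddag$. For finiteness, first premultiply the drift hypothesis $\vc{Q}\vc{v}\le-\vc{f}+b\vc{1}_{\bbF_K}$ by $\vc{\pi}$ and use $\vc{\pi}\vc{Q}=\vc{0}$ together with $\vc{\pi}\vc{1}_{\bbF_K}\le1$ to get $\vc{\pi}\vc{f}\le b$. Since $|\vc{g}|\le\vc{f}$ and $\vc{f}\ge\vc{e}$, this gives $|g^\ddag(i)|\le f(i)+|\vc{\pi}\vc{g}|\le f(i)+b\le(1+b)f(i)$, i.e.\ $|\vc{g}^\ddag|\le(1+b)\vc{f}$. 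Applying Lemma~\ref{lem-compa} with $\vc{w}=b\vc{1}_{\bbF_K}$ and the stopping time $\tau(j_\ast)$ then yields $\EE_i[\int_0^{\tau(j_\ast)}f(Y(u))\rmd u]\le v(i)+b\,\EE_i[\tau(j_\ast)]$, which is finite because the ergodicity of $\{Y(t)\}$ guarantees finite mean first-passage times between all states. Hence $|h_{j_\ast}(i)|\le(1+b)\{v(i)+b\,\EE_i[\tau(j_\ast)]\}<\infty$. The identity $h_{j_\ast}(j_\ast)=0$ is immediate from (\ref{defn-widetilde{h}-02}): starting from $j_\ast$ one has $\tau(j_\ast)=0$, so the defining integral is over an empty interval.

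For the Poisson equation (\ref{Poisson-eq}) at states $i\neq j_\ast$, I would use a first-transition (strong Markov) decomposition at the epoch $T_1$ of the first jump. Writing $\lambda_i=-q(i,i)>0$ for the holding rate and splitting the integral in (\ref{defn-widetilde{h}-02}) at $T_1$, the pre-jump part equals $g^\ddag(i)\EE_i[T_1]=g^\ddag(i)/\lambda_i$, while the strong Markov property identifies the post-jump part as $\sum_{j\neq i}(q(i,j)/\lambda_i)h_{j_\ast}(j)$, the term $j=j_\ast$ contributing $0$ because $h_{j_\ast}(j_\ast)=0$. The finiteness secured above justifies the absolute convergence of these sums, and we obtain
\[
h_{j_\ast}(i) = {g^\ddag(i) \over \lambda_i}
+ \sum_{j \neq i} {q(i,j) \over \lambda_i}\, h_{j_\ast}(j),
\qquad i \neq j_\ast.
\]
Multiplying by $\lambda_i=-q(i,i)$ and rearranging gives exactly $-(\vc{Q}\vc{h}_{j_\ast})(i)=g^\ddag(i)$ for every $i\neq j_\ast$.

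It remains to verify the equation at $i=j_\ast$, and this is the step I expect to be the main obstacle, since it is precisely where the centering $\vc{\pi}\vc{g}^\ddag=\vc{\pi}\vc{g}-(\vc{\pi}\vc{g})(\vc{\pi}\vc{e})=0$ becomes indispensable. I would apply the same first-transition decomposition starting from $j_\ast$, but up to the first \emph{return} time $\tau^+(j_\ast)$, obtaining $\EE_{j_\ast}[\int_0^{\tau^+(j_\ast)}g^\ddag(Y(t))\rmd t]=g^\ddag(j_\ast)/\lambda_{j_\ast}+\sum_{j\neq j_\ast}(q(j_\ast,j)/\lambda_{j_\ast})h_{j_\ast}(j)$. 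By the occupation-measure (Kac) representation of $\vc{\pi}$ for the ergodic chain, $\EE_{j_\ast}[\int_0^{\tau^+(j_\ast)}\vc{1}_{\{j\}}(Y(t))\rmd t]=\pi(j)\,\EE_{j_\ast}[\tau^+(j_\ast)]$, so the left-hand side equals $\EE_{j_\ast}[\tau^+(j_\ast)]\cdot\vc{\pi}\vc{g}^\ddag=0$. Multiplying the displayed relation by $\lambda_{j_\ast}$ and using $h_{j_\ast}(j_\ast)=0$ then gives $-(\vc{Q}\vc{h}_{j_\ast})(j_\ast)=g^\ddag(j_\ast)$, completing the proof. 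The delicate points here are the finiteness of the mean return time $\EE_{j_\ast}[\tau^+(j_\ast)]$ and the interchange of the infinite summation with the expectation in the Kac identity; both are controlled by the absolute-integrability bound $\EE_{j_\ast}[\int_0^{\tau^+(j_\ast)}|g^\ddag|\rmd t]\le(1+b)\{v(j_\ast)+b\,\EE_{j_\ast}[\tau^+(j_\ast)]\}<\infty$ obtained exactly as in the first paragraph (with $\tau^+(j_\ast)$ in place of $\tau(j_\ast)$).
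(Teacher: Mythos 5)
Your proof is correct, and it takes a route that is genuinely different from the paper's at the one point that matters. The paper first converts the continuous-time functional into an embedded-jump-chain functional: conditioning on the holding times via $\EE[\Delta t_{n+1}\mid \widetilde{Y}_n=\nu]=1/|q(\nu,\nu)|$ turns (\ref{defn-widetilde{h}-02}) into $h_{j_\ast}(i)=\EE_i[\sum_{n=0}^{\widetilde{\tau}(j_\ast)-1}\widetilde{g}(\widetilde{Y}_n)]$ with $\widetilde{g}(\nu)=g^{\ddag}(\nu)/|q(\nu,\nu)|$, and then performs a single one-step decomposition of this discrete-time representation, asserted uniformly for all $i\in\bbZ_+$ in (\ref{add-160921-01}). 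Your continuous-time first-jump decomposition yields the same balance equation for $i\neq j_\ast$, so up to that point the two arguments are equivalent. The real divergence is at $i=j_\ast$: since $\tau(j_\ast)=0$ under $\PP_{j_\ast}$, the sum defining $h_{j_\ast}(j_\ast)$ is empty and the one-step decomposition degenerates there, so the paper's display (\ref{add-160921-01}) is actually only derived for $i\neq j_\ast$; its validity at $i=j_\ast$ \emph{is} the Poisson equation at that state, and it is exactly where the centering $\vc{\pi}\vc{g}^{\ddag}=0$ must enter (nothing else in the paper's computation uses it). Your return-cycle argument --- $\EE_{j_\ast}\bigl[\int_0^{\tau^+(j_\ast)}g^{\ddag}(Y(t))\,\rmd t\bigr]=\EE_{j_\ast}[\tau^+(j_\ast)]\cdot\vc{\pi}\vc{g}^{\ddag}=0$ by the Kac occupation formula, combined with the first-jump decomposition over the return cycle and $h_{j_\ast}(j_\ast)=0$ --- supplies precisely this case explicitly. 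So what your route buys is a complete and transparent treatment of the state $j_\ast$, at the modest cost of invoking the Kac representation of $\vc{\pi}$; what the paper's route buys is a single uniform computation through the embedded chain, at the cost of leaving the $i=j_\ast$ case implicit.

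Two small points to tighten. First, ergodicity is not among the lemma's hypotheses, yet you invoke $\vc{\pi}$, $\vc{\pi}\vc{f}\le b$, and finite mean hitting and return times; derive ergodicity from the drift condition first, as the paper does by citing Theorem 7 of Meyn and Tweedie (1993a), after which these facts are standard. Second, the Fubini and absolute-convergence interchanges you flag (in the first-jump decomposition and in the Kac identity) are indeed justified by the integrability bound you establish through Lemma~\ref{lem-compa}; just note that the same lemma applies verbatim with the stopping time $\tau^+(j_\ast)$ in place of $\tau(j_\ast)$, which is what makes the return-cycle estimate legitimate.
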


\medskip
\noindent
{\it Proof.~}
According to Theorem 7 of \citet{Meyn93-Proc}, 
the Markov chain
$\{Y(t)\}$ is ergodic under the conditions of this lemma.  It
follows from Lemma~\ref{lem-compa} with $\tau=\tau(j_{\ast})$ and
$\vc{w}=\vc{1}_{\bbF_K}$ that
\begin{eqnarray*}
\EE_i \!\! \left[ \int_0^{\tau(j_{\ast})} |g(Y(u))| \rmd u\right]
&\le& \EE_i \!\! \left[ \int_0^{\tau(j_{\ast})} f(Y(u)) \rmd u\right]
\nonumber
\\
&\le& v(i) + 
\EE_i \!\! \left[ \int_0^{\tau(j_{\ast})} 1_{\bbF_K}(Y(u)) \rmd u\right]
\nonumber
\\
&\le& v(i) + \EE_i[\tau(j_{\ast})] < \infty, \qquad i,j\in\bbZ_+,
\end{eqnarray*}
where the last inequality is due to the ergodicity of the Markov chain
$\{Y(t)\}$. Therefore, $\vc{h}_{j_{\ast}}$ is
well-defined. Furthermore, given $Y(0) = j_{\ast}$, we have
$\tau(j_{\ast}) = 0$ and thus $h_{j_{\ast}}(j_{\ast}) = 0$.

In what follows, we confirm that $\vc{h}_{j_{\ast}}$ is a solution of
(\ref{Poisson-eq}). For this purpose, we consider the embedded Markov
chain $\{\widetilde{Y}_n:=Y(t_n);n\in\bbZ_+\}$ of the Markov
chain $\{Y(t);t\ge0\}$ (see, e.g., \citet[Chapter~8,
  Section~4.2]{Brem99}), where $\{t_n;n\in\bbZ_+\}$ denotes a sequence
of time points such that $t_0 = 0$ and
\[
t_n = \inf\{t > t_{n-1}: Y(t) \neq Y(t_{n-1}) \},\qquad n \in \bbN.
\]
The transition probability matrix of $\{\widetilde{Y}_n\}$, denoted
by $\widetilde{\vc{P}}:=(\widetilde{p}(i,j))_{i,j\in\bbZ_+}$, is given
by
\begin{equation}
\widetilde{p}(i,j) =
\left\{
\begin{array}{cc}
0, &  j=i,
\\
\dm{q(i,j) \over |q(i,i)|}, & j \neq i.
\end{array}
\right.
\label{defn-tilde{P}}
\end{equation}
We also define $\widetilde{\tau}(j) = \inf\{n\in\bbZ_+:
\widetilde{Y}_n = j\}$ for $j \in \bbZ_+$ and $\Delta t_n = t_n -
t_{n-1}$ for $n \in \bbN$.  It then follows from
(\ref{defn-widetilde{h}-02}) that
\begin{eqnarray}
h_{j_{\ast}}(i)
&=& \EE_i \!\! 
\left[ \sum_{n=0}^{\widetilde{\tau}(j_{\ast})-1} 
\Delta t_{n+1} g^{\ddag}(\widetilde{Y}_n)\right]
\nonumber
\\
&=&  \sum_{n=0}^{\infty} 
\EE_i
[ 
\Delta t_{n+1} g^{\ddag}(\widetilde{Y}_n) I(n < \widetilde{\tau}(j_{\ast}))
]
\nonumber
\\
&=&  \sum_{n=0}^{\infty} \sum_{\nu \in \bbZ_+} g^{\ddag}(\nu) 
\EE_i
[  \Delta t_{n+1} 
I(n < \widetilde{\tau}(j_{\ast}))
I(\widetilde{Y}_n=\nu) ]
\nonumber
\\
&=&  \sum_{n=0}^{\infty} \sum_{\nu \in \bbZ_+} g^{\ddag}(\nu) 
\EE_i
[  \Delta t_{n+1} \mid n < \widetilde{\tau}(j_{\ast}), 
\widetilde{Y}_n=\nu ]
\cdot \EE_i
[ 
I(n < \widetilde{\tau}(j_{\ast}))
I(\widetilde{Y}_n=\nu) ], \qquad
\label{eqn-widetilde{h}-03}
\end{eqnarray}
where $I(\,\cdot\,)$ denotes the indicator function of the event in
the brackets.  Since $\widetilde{\tau}(j_{\ast})$ is a stopping time
for $\{\widetilde{Y}_n\}$, the event $\{n <
\widetilde{\tau}(j_{\ast})\}$ is determined by the set
$\{\widetilde{Y}_m;m=0,1,\dots,m\} = \{Y(t_m);m=0,1,\dots,m\}$. Thus,
given that $\widetilde{Y}_n=Y(t_n)=\nu$, the random variable $\Delta
t_{n+1}=t_{n+1} - t_n$ is independent of the event $\{n <
\widetilde{\tau}(j_{\ast})\}$, which leads to
\begin{equation}
\EE_i
[ \Delta t_{n+1} \mid n < \widetilde{\tau}(j_{\ast}), \widetilde{Y}_n=\nu
]
=
\EE[\Delta t_{n+1} \mid \widetilde{Y}_n = \nu] 
= {1 \over |q(\nu,\nu)|},
\qquad \nu \in \bbZ_+.
\label{eqn-E[s_n]}
\end{equation}
Substituting (\ref{eqn-E[s_n]}) into (\ref{eqn-widetilde{h}-03})
yields
\begin{eqnarray}
h_{j_{\ast}}(i)
&=&  \sum_{n=0}^{\infty} \sum_{\nu \in \bbZ_+} 
{g^{\ddag}(\nu) \over |q(\nu,\nu)|}
\EE_i
[ I(n < \widetilde{\tau}(j_{\ast}))
I(\widetilde{Y}_n=\nu) ]
\nonumber
\\
&=&  \EE_i \!\! \left[ 
\sum_{n=0}^{\widetilde{\tau}(j_{\ast})-1} \sum_{\nu \in \bbZ_+} 
{g^{\ddag}(\nu) \over |q(\nu,\nu)|}
I(\widetilde{Y}_n=\nu)
\right]
=  \EE_i \!\! \left[ 
\sum_{n=0}^{\widetilde{\tau}(j_{\ast})-1} 
\widetilde{g}(\widetilde{Y}_n) 
\right],
\label{eqn-widetilde{h}-04}
\end{eqnarray}
where $\widetilde{g}(\nu) = g^{\ddag}(\nu) / |q(\nu,\nu)|$ for
$\nu\in\bbZ_+$. From (\ref{eqn-widetilde{h}-04}), $\widetilde{p}(i,i) = 0$
and the Markov property of
$\{\widetilde{Y}_n\}$, we have
\begin{eqnarray}
h_{j_{\ast}}(i)
&=& \widetilde{g}(i) 
+ \EE_i \!\! \left[ 
\sum_{n=1}^{\widetilde{\tau}(j_{\ast})-1} 
\widetilde{g}(\widetilde{Y}_n) \cdot I(\widetilde{\tau}(j_{\ast}) \ge 2)
\right]
\nonumber
\\
&=& \widetilde{g}(i) 
+ \sum_{\nu\in\bbZ_+ \setminus\{i,j_{\ast}\}} \widetilde{p}(i,\nu)
\EE \!\! \left[ 
\sum_{n=1}^{\widetilde{\tau}(j_{\ast})-1} 
\widetilde{g}(\widetilde{Y}_n) \cdot I(\widetilde{\tau}(j_{\ast}) \ge 2)
\mid \widetilde{Y}_1 = \nu
\right]
\nonumber
\\
&=& \widetilde{g}(i) 
+ \sum_{\nu\in\bbZ_+ \setminus\{i,j_{\ast}\}} \widetilde{p}(i,\nu)
h_{j_{\ast}}(\nu), \qquad i \in \bbZ_+.
\label{add-160921-01}
\end{eqnarray}
Combining (\ref{add-160921-01}) with $\widetilde{g}(i) = g^{\ddag}(i)
/ |q(i,i)|$, $h_{j_{\ast}}(j_{\ast}) = 0$ and (\ref{defn-tilde{P}})
leads to
\begin{eqnarray*}
h_{j_{\ast}}(i)
&=& {g^{\ddag}(i) \over |q(i,i)|}
+ \sum_{\nu\in\bbZ_+ \setminus\{i\}} {q(i,\nu) \over |q(i,i)|}
h_{j_{\ast}}(\nu),
\qquad i \in \bbZ_+.
\end{eqnarray*}
Multiplying
both sides of the above equation by $|q(i,i)|$ results in
\[
- \sum_{\nu\in\bbZ_+} q(i,\nu) h_{j_{\ast}}(\nu)
= g^{\ddag}(i), \qquad i \in \bbZ_+,
\]
which shows that (\ref{Poisson-eq}) holds. 
\QED

%%%%%%%%%%%%%%%%%%%%%%%%%%%%% End of Body %%%%%%%%%%%%%%%%%%%%%%%%%%%%%%%

\section*{Acknowledgments}
The author thanks Mr.\ Yosuke Katsumata for performing the numerical
calculations in Section~4.2.3 and for pointing out some typos in an
earlier version of this paper. The author also thanks Dr.\ Tetsuya
Takine for sharing his paper \cite{Taki16} prior to its publication. In addition, the author deeply appreciates the anonymous Reviewer B's comments and suggestions that helped the author to correct some errors in the previous versions of the proof of Lemmas~\ref{lem-unique-h} and \ref{lem-bound-h}. This research was supported in part by JSPS KAKENHI Grant Number JP15K00034.

%\bibliographystyle{plain} % plain, alpha, abbrv, unsrt
%\bibliographystyle{elsarticle-harv}
%\bibliography{hm}

%%%%%%%%%%%%%%%%%%%%%%%%%%%%%%%%%%%%%%%%%%%%%%%%%%%%%%%%%%%%%%%%%%%%%%
%%%							Springer
%
% BibTeX users please use one of
%\bibliographystyle{spbasic}      % basic style, author-year citations
%\bibliographystyle{spmpsci}      % mathematics and physical sciences
%\bibliographystyle{spphys}       % APS-like style for physics
%%%%%%%%%%%%%%%%%%%%%%%%%%%%%%%%%%%%%%%%%%%%%%%%%%%%%%%%%%%%%%%%%%%%%%
%%%							Elsevier
%
%\bibliographystyle{elsarticle-harv}
%%%%%%%%%%%%%%%%%%%%%%%%%%%%%%%%%%%%%%%%%%%%%%%%%%%%%%%%%%%%%%%%%%%%%%
%%%							AMS
%
%\bibliographystyle{amsplain}
%%%%%%%%%%%%%%%%%%%%%%%%%%%%%%%%%%%%%%%%%%%%%%%%%%%%%%%%%%%%%%%%%%%%%%
%\bibliography{}   % name your BibTeX data base
%%%%%%%%%%%%%%%%%%%%%%%%%%%%%%%%%%%%%%%%%%%%%%%%%%%%%%%%%%%%%%%%%%%%%%
%%%							Plain
%
% Non-BibTeX users please use
\bibliographystyle{plain} % plain, alpha, abbrv, unsrt
%%%%%%%%%%%%%%%%%%%%%%%%%%%%%%%%%%%%%%%%%%%%%%%%%%%%%%%%%%%%%%%%%%%%%%
%\bibliography{hm-150826}
%%%%%%%%%%%%%%%%%%%%%%%%%%%%%%%%%%%%%%%%%%%%%%%%%%%%%%%%%%%%%%%%%%%%%%

%%%%%%%%%%%%%%%%%%%%%%%%%%%%%%%%%%%%%%%%%%%%%%%%%%%%%%%%%%%%%%%%%%%%%%

\newpage
\setcounter{page}{1}
\if0
%%%%%%%%%%%%%%%%%%%%%%%%%%%%%%%%%%%%%%%%%%%%%%%%%%%%%%%%%%%
\documentclass[12pt, twoside]{article}
\setlength{\textwidth}{165mm}%154mm
\setlength{\textheight}{235mm}%229mm    
\setlength{\oddsidemargin}{2mm}%4mm
\setlength{\evensidemargin}{2mm}
\setlength{\topmargin}{-12mm}%7mm
\setlength{\skip\footins}{6mm plus 2mm}
%%%%%%%%%%%%%%%%%%%%%%%%%%%%%%%%%%%%%%%%%%%%%%%%%%%%%%%%%%%
\renewcommand{\baselinestretch}{1.1}%
%%%%%%%%%%%%%%%%%%%%%%%%%%%%%%%%%%%%%%%%%%%%%%%%%%%%%%%%%%%
\usepackage{times}
\usepackage{bm}
\usepackage{graphics}
\usepackage{theorem}
\usepackage{graphicx}
\usepackage{amsmath}
\usepackage{latexsym}
\usepackage{amssymb,mathrsfs}
\usepackage[numbers]{natbib}
\usepackage[ruled]{algorithm2e}
%%%%%%%%%%%%%%%%%%%%%%%%%%%%%%%%%%%%%%%%%%%%%%%%%%%%%%%%%%%%
\usepackage[flushmargin]{footmisc}
\renewcommand{\thefootnote}{\fnsymbol{footnote}}
%\renewcommand{\thefootnote}{\alph{footnote})}
%%%%%%%%%%%%%%%%%%%%%%%%%%%%%%%%%%%%%%%%%%%%%%%%%%%%%%%%%%%%
\theorembodyfont{\itshape}
\newtheorem{thm}{Theorem}[section]
\newtheorem{lem}{Lemma}[section]
\newtheorem{prop}{Proposition}[section]
\newtheorem{coro}{Corollary}[section]
\newtheorem{statement}{Statement}[section]
\newtheorem{cond}{Condition}

\newtheorem{lemnonum}{Lemma}
\renewcommand{\thelemnonum}{}

\theorembodyfont{\rmfamily}
\newtheorem{defn}{Definition}[section]{\bf}{\rm}
\newtheorem{assumpt}{Assumption}[section]{\bf}{\rm}
\newtheorem{algo}{Algorithm}
\newtheorem{prob}{Problem}

\newtheorem{rem}{Remark}[section]{\itshape}{\rmfamily}
\newenvironment{proof}{\noindent{\it Proof.~~}}{\medskip}
\newenvironment{proofof}[1]{\noindent{\it Proof of #1.~~}}{\medskip}

%%%%%%%%%%%%%%%%%%%%%%%%%%%%%%%%%%%%%%%%%%%%%%%%%%%%%%%%%%%%
% deleting extra spaces in eqnarray environment
%
\makeatletter
\def\eqnarray{\stepcounter{equation}\let\@currentlabel=\theequation
\global\@eqnswtrue
\global\@eqcnt\z@\tabskip\@centering\let\\=\@eqncr
$$\halign to \displaywidth\bgroup\@eqnsel\hskip\@centering
  $\displaystyle\tabskip\z@{##}$&\global\@eqcnt\@ne 
  \hfil$\;{##}\;$\hfil
  &\global\@eqcnt\tw@ $\displaystyle\tabskip\z@{##}$\hfil 
   \tabskip\@centering&\llap{##}\tabskip\z@\cr}
\makeatother
%%%%%%%%%%%%%%%%%%%%%%%%%%%%%%%%%%%%%%%%%%%%%%%%%%%%%%%%%%%
\if0
\makeatletter
    \renewcommand{\theequation}{%
    \thesection.\arabic{equation}}
    \@addtoreset{equation}{section}
  \makeatother
\fi
%%%%%%%%%%%%%%%%%%%%%%%%%%%%%%%%%%%%%%%%%%%%%%%%%%%%%%%%%%%
\def\narrow{\list{}{}\item[]}
\let\endnarrow=\endlist
\newcommand{\vc}{\bm}
\def\svc#1{\mbox{\boldmath $\scriptstyle #1$}}
\def\ssvc#1{\mbox{\boldmath $\scriptscriptstyle #1$}}
%
%%%%%%%%%%%%%%%%%%%%%%%%%%%%%%%%%%%%%%%%%%%%%%%%%%%%%%%%%%%%%%%%%%% 
% Definition of Command ``widecheck"
%
\makeatletter
\DeclareRobustCommand\widecheck[1]{{\mathpalette\@widecheck{#1}}}
\def\@widecheck#1#2{%
    \setbox\z@\hbox{\m@th$#1#2$}%
    \setbox\tw@\hbox{\m@th$#1%
       \widehat{%
          \vrule\@width\z@\@height\ht\z@
          \vrule\@height\z@\@width\wd\z@}$}%
    \dp\tw@-\ht\z@
    \@tempdima\ht\z@ \advance\@tempdima2\ht\tw@ \divide\@tempdima\thr@@
    \setbox\tw@\hbox{%
       \raise\@tempdima\hbox{\scalebox{1}[-1]{\lower\@tempdima\box
\tw@}}}%
    {\ooalign{\box\tw@ \cr \box\z@}}}
\makeatother
%%%%%%%%%%%%%%%%%%%%%%%%%%%%%%%%%%%%%%%%%%%%%%%%%%%%%%%%%%%%%%%%%%% 
\def\wc#1{\widecheck{#1}}
%%%%%%%%%%%%%%%%%%%%%%%%%%%%%%%%%%%%%%%%%%%%%%%%%%%%%%%%%%%%%%%%%%% 
\def\wwtilde#1{\,\widetilde{\!\widetilde{#1}}{}}
\newcommand{\ol}{\overline}
\newcommand{\ul}{\underline}
\newcommand{\wt}{\widetilde}
\newcommand{\wh}{\widehat}
\newcommand{\bv}{\breve}
\newcommand{\ang}[1]{\langle #1 \rangle}
%%%%%%%%%%%%%%%%%%%%%%%%%%%%%%%%%%%%%%%%%%%%%%%%%%%%%%%%%%%%%%%%%%% 
\newcommand{\down}[2]{\smash{\lower#1\hbox{#2}}}
\newcommand{\up}[2]{\smash{\lower-#1\hbox{#2}}}
\def\PFOF#1{\noindent{\it Proof of {#1}.~~}}
%%%%%%%%%%%%%%%%%%%%%%%%%%%%%%%%%%%%%%%%%%%%%%%%%%%%%%%%%%%
%\newcommand{\ddag}{\ddagger}
\newcommand{\dm}{\displaystyle}
\newcommand{\qed}{\hspace*{\fill}$\Box$}
\newcommand{\lleft}{\!\left}
%%%%%%%%%%%%%%%%%%%%%%%%%%%%%%%%%%%%%%%%%%%%%%%%%%%%%%%%%%%
\def\trunc#1{{}_{(n)}#1}
\def\trunctilde#1{{}_{(n)}\tilde{#1}}
\def\presub#1{\hspace{0.05em}{}_{#1}\hspace{-0.05em}}
\newcommand{\sfBI}{\mathsf{BI}}
\newcommand{\sfBM}{\mathsf{BM}}
\newcommand{\vmax}{\vee}
\newcommand{\vmin}{\wedge}
%%%%%%%%%%%%%%%%%%%%%%%%%%%%%%%%%%%%%%%%%%%%%%%%%%%%%%%%%%%
%\newcommand{\EE}{\mathbb{E}}
%\newcommand{\PP}{\mathbb{P}}
\newcommand{\EE}{\mathsf{E}}
\newcommand{\PP}{\mathsf{P}}
\newcommand{\II}{\mathit{I}}
%%%%%%%%%%%%%%%%%%%%%%%%%%%%%%%%%%%%%%%%%%%%%%%%%%%%%%%%%%%
\newcommand{\calA}{\mathcal{A}}
\newcommand{\calB}{\mathcal{B}}
\newcommand{\calC}{\mathcal{C}}
\newcommand{\calD}{\mathcal{D}}
\newcommand{\calE}{\mathcal{E}}
\newcommand{\calF}{\mathcal{F}}
\newcommand{\calG}{\mathcal{G}}
\newcommand{\calH}{\mathcal{H}}
\newcommand{\calI}{\mathcal{I}}
\newcommand{\calL}{\mathcal{L}}
\newcommand{\calO}{\mathcal{O}}
\newcommand{\calR}{\mathcal{R}}
\newcommand{\calS}{\mathcal{S}}
\newcommand{\calOL}{\mathcal{OL}}
\newcommand{\calOS}{\mathcal{OS}}
\newcommand{\SC}{\mathcal{SC}}
%%%%%%%%%%%%%%%%%%%%%%%%%%%%%%%%%%%%%%%%%%%%%%%%%%%%%%%%%%%
\newcommand{\bbA}{\mathbb{A}}
\newcommand{\bbB}{\mathbb{B}}
\newcommand{\bbC}{\mathbb{C}}
\newcommand{\bbD}{\mathbb{D}}
\newcommand{\bbE}{\mathbb{E}}
\newcommand{\bbF}{\mathbb{F}}
\newcommand{\bbG}{\mathbb{G}}
\newcommand{\bbH}{\mathbb{H}}
\newcommand{\bbI}{\mathbb{I}}
\newcommand{\bbK}{\mathbb{K}}
\newcommand{\bbL}{\mathbb{L}}
\newcommand{\bbM}{\mathbb{M}}
\newcommand{\bbN}{\mathbb{N}}
\newcommand{\bbR}{\mathbb{R}}
\newcommand{\bbS}{\mathbb{S}}
\newcommand{\bbT}{\mathbb{T}}
\newcommand{\bbX}{\mathbb{X}}
\newcommand{\bbZ}{\mathbb{Z}}
%%%%%%%%%%%%%%%%%%%%%%%%%%%%%%%%%%%%%%%%%%%%%%%%%%%%%%%%%%%
\newcommand{\adj}{\mathrm{adj}}
\newcommand{\diag}{\mathrm{diag}}
\newcommand{\abs}{\mathrm{abs}}
\newcommand{\card}{\mathrm{card}}
\newcommand{\sgn}{\mathrm{sgn}}
\newcommand{\trace}{\mathrm{trace}}
\newcommand{\Var}{\mathsf{Var}}
\newcommand{\Cov}{\mathsf{Cov}}
\newcommand{\CV}{\mathrm{C_V}}
\newcommand{\Mod}{\mathrm{mod}}
% \newcommand{\mod}{\mathrm{mod}}
%%%%%%%%%%%%%%%%%%%%%%%%%%%%%%%%%%%%%%%%%%%%%%%%%%%%%%%%%%%
\newcommand{\rmt}{{\rm t}}
\newcommand{\rmd}{{\rm d}}
\newcommand{\rme}{{\rm e}}
\newcommand{\rmT}{{\rm T}}
\newcommand{\resp}{{\rm resp}}
%%%%%%%%%%%%%%%%%%%%%%%%%%%%%%%%%%%%%%%%%%%%%%%%%%%%%%%%%%%
\newcommand{\scrE}{\mathscr{E}}
%%%%%%%%%%%%%%%%%%%%%%%%%%%%%%%%%%%%%%%%%%%%%%%%%%%%%%%%%%%
\renewcommand{\labelenumi}{(\roman{enumi})}
%\renewcommand{\labelenumi}{(\alph{enumi})}
%\def\lesimhm#1{\stackrel{#1}{\lesssim}}
%%%%%%%%%%%%%%%%%%%%%%%%%%%%%%%%%%%%%%%%%%%%%%%%%%%%%%%%%
\newcommand{\dd}[1]{\if#11 1\!\!1 
\else {\if#1C I\!\!\!C
\else {\if#1G I\!\!\!G 
\else {\if#1J J\!\!\!J 
\else {\if#1S S\!\!\!S
\else {\if#1Z Z\!\!\!Z
\else {\if#1Q O\!\!\!\!Q
\else I\!\!#1
\fi}
\fi}
\fi}
\fi} 
\fi} 
\fi} 
\fi} 
%%%%%%%%%%%%%%%%%%%%%%%%%%%%%%%%%%%%%%%%%%%%%%%%%%%%%%%%%%%
\fi
%\begin{document}\thispagestyle{empty} 
%%%%%%%%%%%%%%%%%%%%%%%%%%%%%%%%%%%%%%%%%%%%%%%%%%%%%%%%%%%
%\thispagestyle{plain} 
\pagestyle{myheadings} 
\markboth{\small H. Masuyama}
{Corrigendum to JORSJ, {\bf 60} (2017), 271--320}

\makeatother
%%%%%%%%%%%%%%%%%%%%%%%%%%%%%%%%%%%%%%%%%%%%%%%%%%%%%%%%%%%

\hfill
%{\small Last update date: \today}
%Submitted to STOCHASTIC MODELS, \today.

{\large{\bf
\begin{center}
Corrigendum:\\
``ERROR BOUNDS FOR LAST-COLUMN-BLOCK-AUGMENTED
TRUNCATIONS OF BLOCK-STRUCTURED MARKOV CHAINS"\\
Vol.~60, No.~3, 2017, pp.~271--320%
\if0
\footnote[1]{
This research was supported in part by JSPS KAKENHI Grant Numbers JP15K00034.
}
\fi
%}
%
%
%\footnote[1]{This
%is a revised version of the paper published in Stochastic Models
%vol.~26, no.~4, pp.~505--548, 2010. \\ In the revised version, some
%errors are corrected.}
\end{center}
}
}

\smallskip

\begin{center}
{
Hiroyuki Masuyama%
\footnote[2]{E-mail: masuyama@tmu.ac.jp\\
Graduate School of Management, Tokyo Metropolitan University, 
Tokyo 192--0397, Japan}
}

\smallskip

{\small
{\it Tokyo Metropolitan University}
}% \samllsize ends
\end{center}

\bigskip
%%%%%%%%%%%%%%%%%%%%%%%%%%%%%%%%%%%%%%%%%%%%%%%%%%%%%%%%%%%%%%%%%%%%%
\makeatletter
\renewcommand{\theequation}{%
\arabic{equation}}
\makeatother
\setcounter{equation}{0}
%%%%%%%%%%%%%%%%%%%%%%%%%%%%%%% Begin %%%%%%%%%%%%%%%%%%%%%%%%%%%%%%%

Section~2.2 of Masuyama~\cite{Masu17} presents a computable and
nontrivial lower bound $\ol{\phi}_{K,N}^{(\beta)}$ for the factor
$\ol{\phi}_{K}^{(\beta)}$ of the error bounds given in Theorems
2.1, 2.2 and 2.4. The author stated that the lower bound
$\ol{\phi}_{K,N}^{(\beta)}$ exists because (see
\cite[Eq.~(2.66)]{Masu17})
\begin{equation}
\lim_{N\to\infty} \uparrow 
\ol{\phi}_{K,N}^{(\beta)} = \ol{\phi}_K^{(\beta)},
\label{eqn-(2.66)}
\end{equation}
where the symbol $\,\uparrow\,$ represents {\it ``convergence from
  below"}.  However, the proof of (\ref{eqn-(2.66)}), presented in
\cite{Masu17}, is not complete.  Thus, this corrigendum presents a
complete proof of (\ref{eqn-(2.66)}).

It follows from \cite[Section~2.2, Proposition~2.14]{Ande91} that, for
all $t \ge 0$ and $(k,i;\ell,j) \in \bbF^2$,
\[
\lim_{N\to\infty} \uparrow 
\left[ \exp\{ \vc{Q}_{\bbF_N} t\} \right]_{(k,i;\ell,j)}
=
p^{(t)}(k,i;\ell,j),
\]
where $\left[ \exp\{ \vc{Q}_{\bbF_N} t\} \right]_{(k,i;\ell,j)}$
denotes the $(k,i;\ell,j)$th element of $\exp\{ \vc{Q}_{\bbF_N}
t\}$. Therefore, by the monotone convergence theorem, we have, for all
$(k,i;\ell,j) \in \bbF^2$,
\begin{eqnarray}
\lim_{N\to\infty} \uparrow \int_0^{\infty} \beta \rme^{-\beta t}
\left[ \exp\{ \vc{Q}_{\bbF_N} t\} \right]_{(k,i;\ell,j)} \rmd t
=
\int_0^{\infty} \beta \rme^{-\beta t} p^{(t)}(k,i;\ell,j) \rmd t>0.
\label{add-eqn-17101201}
\end{eqnarray}
Using \cite[Eqs.~(2.3) and (2.59)]{Masu17}, we rewrite
(\ref{add-eqn-17101201}) as
\begin{eqnarray}
\lim_{N\to\infty} \uparrow \phi_{\bbF_N}^{(\beta)}(k,i;\ell,j)
=
\phi^{(\beta)}(k,i;\ell,j)
> 0, \qquad \forall (k,i;\ell,j) \in \bbF^2.
\label{eqn-00}
\end{eqnarray}
Although $\phi_{\bbF_N}^{(\beta)}(k,i;\ell,j)$ is defined for
$(k,i;\ell,j) \in (\bbF_N)^2$ (see \cite[Eq.~(2.59)]{Masu17}), we set
\begin{eqnarray}
\phi_{\bbF_N}^{(\beta)}(k,i;\ell,j) = 0, \qquad 
\mbox{$(k,i) \in \bbF\setminus\bbF_N$ or $(\ell,j) \in \bbF\setminus\bbF_N$}.
\label{eqn-01}
\end{eqnarray}
It then follows from (\ref{eqn-00}) and \cite[Eq.~(2.65)]{Masu17} that
$\{\overline{\phi}_{K,N}^{(\beta)};N=K,K+1,\dots\}$ is nondecreasing
and thus
\begin{eqnarray}
\lim_{N\to\infty} \overline{\phi}_{K,N}^{(\beta)}
&=& \sup_{N\ge K} \overline{\phi}_{K,N}^{(\beta)}
\nonumber
\\
&=& \sup_{N\ge K}
\sup_{(\ell,j)\in\bbF_N} \min_{(k,i)\in\bbF_K}
\phi_{\bbF_N}^{(\beta)}(k,i;\ell,j)
\nonumber
\\
&=& \sup_{N\ge K}
\sup_{(\ell,j)\in\bbF} \min_{(k,i)\in\bbF_K}
\phi_{\bbF_N}^{(\beta)}(k,i;\ell,j),
\label{eqn-02}
\end{eqnarray}
where the last equality holds due to (\ref{eqn-01}).  Note here that
the order of double supremum is interchangeable (see the lemma below),
i.e.,
\begin{eqnarray}
\sup_{N\ge K}
\sup_{(\ell,j)\in\bbF} \min_{(k,i)\in\bbF_K}
\phi_{\bbF_N}^{(\beta)}(k,i;\ell,j)
= \sup_{(\ell,j)\in\bbF} 
\sup_{N\ge K}
\min_{(k,i)\in\bbF_K}
\phi_{\bbF_N}^{(\beta)}(k,i;\ell,j).
\label{eqn-03}
\end{eqnarray}
Substituting (\ref{eqn-03}) into (\ref{eqn-02}), and using
(\ref{eqn-00}), we obtain
\begin{eqnarray*}
\lim_{N\to\infty} \overline{\phi}_{K,N}^{(\beta)}
&=& \sup_{(\ell,j)\in\bbF} \sup_{N\ge K}
\min_{(k,i)\in\bbF_K}
\phi_{\bbF_N}^{(\beta)}(k,i;\ell,j)
\nonumber
\\
&=& \sup_{(\ell,j)\in\bbF} \lim_{N\to\infty}
\min_{(k,i)\in\bbF_K}
\phi_{\bbF_N}^{(\beta)}(k,i;\ell,j)
\nonumber
\\
&=& 
\sup_{(\ell,j)\in\bbF} \min_{(k,i)\in\bbF_K} \lim_{N\to\infty} 
\phi_{\bbF_N}^{(\beta)}(k,i;\ell,j)
\nonumber
\\
&=& 
\sup_{(\ell,j)\in\bbF} \min_{(k,i)\in\bbF_K} \phi^{(\beta)}(k,i;\ell,j)
\nonumber
\\
&=& \ol{\phi}_{K}^{(\beta)},
\end{eqnarray*}
where the last equality follows from \cite[Eq.~(2.10)]{Masu17}. As a
result, we have proved that (\ref{eqn-(2.66)}) holds.

We close this corrigendum by providing the lemma, which enables us to
interchange the order of double supremum.
\begin{lemnonum}[Interchanging the Order of Double Supremum]
Let $\{a_{n,m};n,m\in\bbN\}$ denote a sequence of real numbers, where $\bbN=\{1,2,3,\dots\}$. We
then have
\begin{eqnarray*}
\sup_{(n,m) \in \bbN^2}a_{n,m}
= \sup_{n\in\bbN}\sup_{m\in\bbN}a_{n,m}
= \sup_{m\in\bbN}\sup_{n\in\bbN}a_{n,m}.
\end{eqnarray*}
\end{lemnonum}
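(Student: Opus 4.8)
The plan is to show that all three quantities coincide with the single supremum $S := \sup_{(n,m)\in\bbN^2} a_{n,m}$ taken over the entire product set, working throughout in the extended real line $[-\infty,+\infty]$ so as to accommodate the possibility that $S=+\infty$. First I would establish $\sup_{n\in\bbN}\sup_{m\in\bbN}a_{n,m}=S$ by a two-sided inequality. For the ``$\le$'' direction, fix $n$; since $a_{n,m}\le S$ for every $m$, the inner supremum obeys $\sup_{m\in\bbN}a_{n,m}\le S$, and taking the supremum over $n$ preserves this bound, giving $\sup_{n\in\bbN}\sup_{m\in\bbN}a_{n,m}\le S$. For the reverse ``$\ge$'' direction, fix an arbitrary pair $(n,m)$; then $a_{n,m}\le \sup_{m'\in\bbN}a_{n,m'}\le \sup_{n'\in\bbN}\sup_{m'\in\bbN}a_{n',m'}$, and taking the supremum of the left-hand side over all $(n,m)\in\bbN^2$ yields $S\le \sup_{n\in\bbN}\sup_{m\in\bbN}a_{n,m}$.

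Combining the two inequalities gives $\sup_{n\in\bbN}\sup_{m\in\bbN}a_{n,m}=S$. The identity $\sup_{m\in\bbN}\sup_{n\in\bbN}a_{n,m}=S$ then follows by repeating the same argument with the roles of the indices $n$ and $m$ interchanged, which is legitimate because both $S$ and the hypothesis are manifestly symmetric in the two indices. Chaining the two equalities through the common value $S$ proves the asserted chain of identities.

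The only point requiring care — and the nearest thing to an obstacle — is the bookkeeping when $S=+\infty$, or when some inner supremum is itself infinite. Each monotonicity step above uses nothing more than the defining property of the supremum as the least upper bound, which holds verbatim for extended-real-valued families; hence no boundedness of $\{a_{n,m}\}$ need be assumed and all inequalities remain valid in $[-\infty,+\infty]$. Consequently the lemma holds exactly as stated, and in the application (with $n$ playing the role of $N\ge K$ and $m$ that of $(\ell,j)\in\bbF$) it licenses the interchange used in deriving~(\ref{eqn-03}).
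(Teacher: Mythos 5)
Your proof is correct and follows essentially the same route as the paper: both reduce the claim to the identity $\sup_{(n,m)\in\bbN^2}a_{n,m}=\sup_{n\in\bbN}\sup_{m\in\bbN}a_{n,m}$ and obtain the second equality by symmetry of the two indices. The only difference is presentational --- you establish the two comparison inequalities directly (and explicitly note their validity in the extended reals when the supremum is $+\infty$), whereas the paper derives the very same inequalities by assuming strict inequality in each direction and reaching a contradiction.
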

\begin{proof}
By symmetry, it suffices to prove that 
\begin{equation}
\sup_{(n,m) \in \bbN^2}a_{n,m}
= \sup_{n\in\bbN}\sup_{m\in\bbN}a_{n,m}.
\label{eqn-04}
\end{equation}
If
\[
\sup_{(n,m) \in \bbN^2}a_{n,m}
> \sup_{n\in\bbN}\sup_{m\in\bbN}a_{n,m},
\]
then, for some $(n',m') \in \bbN^2$, we have $a_{n',m'} >
\sup_{n\in\bbN}\sup_{m\in\bbN}a_{n,m}$ whereas, by definition,
$a_{n',m'} \le \sup_{m\in\bbN}a_{n',m} \le
\sup_{n\in\bbN}\sup_{m\in\bbN}a_{n,m}$, which yields a contradiction.
On the other hand, if
\[
\sup_{(n,m) \in \bbN^2}a_{n,m}
< \sup_{n\in\bbN}\sup_{m\in\bbN}a_{n,m},
\]
then
\begin{eqnarray*}
\sup_{i\in\bbN}\sup_{j\in\bbN}a_{i,j}
&\le& \sup_{i\in\bbN}\sup_{j\in\bbN} \sup_{(n,m) \in \bbN^2}a_{n,m} 
\nonumber
\\
&=& \sup_{(n,m) \in \bbN^2}a_{n,m} 
<  \sup_{n\in\bbN}\sup_{m\in\bbN}a_{n,m},
\end{eqnarray*}
which also yields a contradiction. Consequently, (\ref{eqn-04}) holds.
\end{proof}

%%%%%%%%%%%%%%%%%%%%%%%%%%%%% End %%%%%%%%%%%%%%%%%%%%%%%%%%%%%%%%%%%%%

%\bibliographystyle{plain} % plain, alpha, abbrv, unsrt
%\bibliographystyle{elsarticle-harv}
%\bibliography{hm}

%%%%%%%%%%%%%%%%%%%%%%%%%%%%%%%%%%%%%%%%%%%%%%%%%%%%%%%%%%%%%%%%%%%%%%
%%%							Springer
%
% BibTeX users please use one of
%\bibliographystyle{spbasic}      % basic style, author-year citations
%\bibliographystyle{spmpsci}      % mathematics and physical sciences
%\bibliographystyle{spphys}       % APS-like style for physics
%%%%%%%%%%%%%%%%%%%%%%%%%%%%%%%%%%%%%%%%%%%%%%%%%%%%%%%%%%%%%%%%%%%%%%
%%%							Elsevier
%
%\bibliographystyle{elsarticle-harv}
%%%%%%%%%%%%%%%%%%%%%%%%%%%%%%%%%%%%%%%%%%%%%%%%%%%%%%%%%%%%%%%%%%%%%%
%%%							AMS
%
%\bibliographystyle{amsplain}
%%%%%%%%%%%%%%%%%%%%%%%%%%%%%%%%%%%%%%%%%%%%%%%%%%%%%%%%%%%%%%%%%%%%%%
%\bibliography{}   % name your BibTeX data base
%%%%%%%%%%%%%%%%%%%%%%%%%%%%%%%%%%%%%%%%%%%%%%%%%%%%%%%%%%%%%%%%%%%%%%
%%%							Plain
%
% Non-BibTeX users please use
\bibliographystyle{plain} % plain, alpha, abbrv, unsrt
%%%%%%%%%%%%%%%%%%%%%%%%%%%%%%%%%%%%%%%%%%%%%%%%%%%%%%%%%%%%%%%%%%%%%%
%\bibliography{hm-150826}
%%%%%%%%%%%%%%%%%%%%%%%%%%%%%%%%%%%%%%%%%%%%%%%%%%%%%%%%%%%%%%%%%%%%%%

%%%%%%%%%%%%%%%%%%%%%%%%%%%%%%%%%%%%%%%%%%%%%%%%%%%%%%%%%%%%%%%%%%%%%%
\if0
\includepdf[pages=1]{2-corrigendum.pdf}
\newpage
\includepdf[pages=2]{2-corrigendum.pdf}
\newpage
\includepdf[pages=3]{2-corrigendum.pdf}
\fi
%%%%%%%%%%%%%%%%%%%%%%%%%%%%%%%%%%%%%%%%%%%%%%%%%%%%%%%%%%%%%%%%%%%%%%
\end{document}